\theoremstyle{plain}
\newtheorem{thm}{Theorem}
\newtheorem{lem}{Lemma}
\newtheorem{pro}{Proposition}
\newtheorem{cor}{Corollary}
\newenvironment{manualpro}[1]{%
  \manualprominner
}{\endmanualprominner}
\newenvironment{manualrem}[1]{%
  \manualremminner
}{\endmanualremminner}
\newenvironment{manualcor}[1]{%
  \manualcorminner
}{\endmanualcorminner}
\theoremstyle{remark}
\newtheorem{rem}{Remark}
\newtheorem{ass}{Assumption}
\newcommand{\ramp}{\mathrm{ramp}}
\def\me{\mathrm{e}}
\def\dif{\mathrm{d}}
\def\E{\mathrm{E}}
\def\var{\mathrm{Var}}
\def\pr{\mathrm{P}}
\def\N{\mathrm{N}}
\def\diag{\mathrm{diag}}
\def\tr{\mathrm{tr}}
\def\T{ {\mathrm{\scriptscriptstyle T}} }
\def\bbR{\mathbb R}
\def\dom{\mathrm{dom}}
\def\sign{\mathrm{sign}}
\def\sigm{\mathrm{sigmoid}}
\def\argmax{\mathrm{argmax}}
\def\argmin{\mathrm{argmin}}
\def\vec{\mathrm{vec}}
\def\op{\mathrm{op}}
\def\fro{\mathrm{F}}
\def\HG{\mathrm{HG}}
\def\TV{\mathrm{TV}}
\def\sp{\mathrm{sp}}
\def\rp{\mathrm{rp}}
\def\rpL{\mathrm{rp1}}
\def\rpQ{\mathrm{rp2}}
\def\spL{\mathrm{sp1}}
\def\spQ{\mathrm{sp2}}
\def\err{\mathrm{Err}}
\def\erf{\mathrm{erf}}
\def\pen{\mathrm{pen}}
\def\dprime{ {\prime\prime} }
\newcites{append}{References}
\begin{document}

\begin{frontmatter}

\title{Tractable and Near-Optimal Adversarial Algorithms for Robust Estimation in Contaminated Gaussian Models}
\runtitle{Adversarial Algorithms for Robust Estimation}

\begin{aug}
\author[A]{\fnms{Ziyue} \snm{Wang}\ead[label=e1, mark]{zw245@stat.rutgers.edu}}
\and
\author[A]{\fnms{Zhiqiang} \snm{Tan}\ead[label=e2, mark]{ztan@stat.rutgers.edu}}
\address[A]{Department of Statistics,
Rutgers University, Piscataway, New Jersey 08854, USA
\printead{e1,e2}}
\end{aug}

\begin{abstract}
Consider the problem of simultaneous estimation of location and variance matrix under Huber's contaminated Gaussian model.
First, we study minimum $f$-divergence estimation at the population level, corresponding to a generative adversarial method with a nonparametric discriminator
and establish conditions on $f$-divergences which lead to robust estimation, similarly to robustness of minimum distance estimation.
More importantly, we develop tractable adversarial algorithms with simple spline discriminators, which can be implemented
via nested optimization such that the discriminator parameters can be fully updated by maximizing a concave objective function given the current generator.
The proposed methods are shown to achieve minimax optimal rates or near-optimal rates depending on the $f$-divergence and the penalty used.
 {This is the first time such near-optimal error rates are established for adversarial algorithms with linear discriminators under Huber's contamination model.}
We present simulation studies to demonstrate advantages of the proposed methods over classic robust estimators, pairwise methods, and
a generative adversarial method with neural network discriminators.
\end{abstract}

\begin{keyword}
\kwd{$f$-divergence}
\kwd{generative adversarial algorithm}
\kwd{Huber's contamination model}
\kwd{minimum divergence estimation}
\kwd{penalized estimation}
\kwd{robust location and scatter estimation}
\end{keyword}

\end{frontmatter}

\section{Introduction}
Consider Huber's contaminated Gaussian model (\cite{Hub64}): independent observations $X_1,\ldots,X_n$ are obtained from
$P_\epsilon = (1-\epsilon) \N(\mu^*, \Sigma^*) + \epsilon Q$,
where $\N(\mu^*,\Sigma^*)$ is a $p$-dimensional Gaussian distribution with mean vector $\mu^*$ and variance matrix $\Sigma^*$,
$Q$ is a probability distribution for contaminated data, and $\epsilon$ is a contamination fraction.
Our goal is to estimate the Gaussian parameters $(\mu^*, \Sigma^*)$, without any restriction on $Q$
for a small $\epsilon$.
 {This allows both outliers located in areas with vanishing probabilities under $\N(\mu^*,\Sigma^*)$
and other contaminated observations in areas with non-vanishing probabilities under $\N(\mu^*,\Sigma^*)$.}
We focus on the setting where the dimension $p$ is small relative to the sample size $n$, and
no sparsity assumption is placed on $\Sigma^*$ or its inverse matrix. The latter, $\Sigma^{*-1}$, is
called the precision matrix and is of particular interest in Gaussian graphical modeling.
In the low-dimensional setting, estimation of $\Sigma^*$ and $\Sigma^{*-1}$ can be treated as being equivalent.

 {There is a vast literature on robust statistics (e.g., Huber and Ronchetti 2009; Maronna et al.~2018).}
In particular, the problem of robust estimation from contaminated Gaussian data has been extensively studied,
and various interesting methods and results have been obtained recently.
Under Huber's contamination model above,
while the bulk of the data are still Gaussian distributed, a challenge is that the contamination status of each observation is hidden,
and the contaminated data may be arbitrarily distributed. In this sense, this problem should be distinguished from various related problems, including
multivariate scatter estimation for elliptical distributions as in \cite{Tyl87} and
estimation in Gaussian copula graphical models as in \cite{LHYLW12} and \cite{XZ12}, among others.
 {For motivation and comparison, we discuss below several existing approaches directly related to our work.}

{\bf Existing work.}\;
As suggested by the definition of variance matrix $\Sigma^*$,
a numerically simple method, proposed in \cite{OC15} and \cite{TMW15}, is to apply
a robust covariance estimator for each pair of variables, for example, based on robust scale and correlation estimators,
and then assemble those estimators into an estimated variance matrix $\hat\Sigma$.
 {These pairwise methods are naturally suitable for both Huber's contamination model and the cellwise contamination model where
the components of a data vector can be contaminated independently, each with a small probability $\epsilon$.}
For various choices of the correlation estimator, such as the transformed Kendall's $\tau$ and Spearman's $\rho$ estimator, this method
is shown in \cite{LT18} to achieve, in the maximum norm $\|\hat \Sigma - \Sigma^*\|_{\max}$, the minimax error rate $\epsilon + \sqrt{\log(p)/n}$
under cellwise contamination and Huber's contamination model.
However, because a transformed correlation estimator is used,
the variance matrix estimator in \cite{LT18} may not be positive semidefinite (\cite{OC15}).
Moreover, this approach seems to rely on the availability of individual elements of $\Sigma^*$ as pairwise covariances and generalization to other multivariate models can be difficult. In our numerical experiments, such pairwise methods
have relatively poor performance when contaminated data are not easily separable from the uncontaminated marginally, especially with nonnegligible $\epsilon$.

For location and scatter estimation under Huber's contamination model, \cite{CGR18} showed that the minimax error rates in the $L_2$ and operator norm,
$\| \hat\mu-\mu^*\|_2$ and $\|\hat\Sigma-\Sigma^*\|_\op$, are
$\epsilon + \sqrt{p/n}$ and attained by maximizing Tukey's half-space depth (\cite{Tuk75}) and a matrix depth function,
which is also studied in \cite{Zha02} and \cite{PB18}.
Both depth functions, defined through minimization of certain discontinuous objective functions, are in general difficult to compute,
and maximization of these depth functions is also numerically intractable.
Subsequently, \cite{GLYZ18} and \cite{GYZ20} exploited a connection between depth-based estimators and generative adversarial nets (GANs) (\cite{GPM14}),
and proposed robust location and scatter estimators in the form of GANs.
These estimators are also proved to achieve the minimax error rates
in the $L_2$ and operator norms under Huber's contamination model.
 {More recent work in this direction includes \cite{ZJT20}, \cite{Wu20}, and \cite{LL21}.}

GANs are a popular approach for learning generative models, with numerous impressive applications (\cite{GPM14}).
In the GAN approach, a generator is defined to transform white noises into fake data,
and a discriminator is then employed to distinguish between the fake and real data.
The generator and discriminator are trained through minimax optimization with a certain objective function.
For GANs used in \cite{GLYZ18} and \cite{GYZ20},
the generator is defined by the Gaussian model and the discriminator is a multi-layer neural network
with sigmoid activations in the top and bottom layers.
Hence the discriminator can be seen as logistic regression with the ``predictors'' defined by the remaining layers of the neural network.
The GAN objective function, usually taken to the log-likelihood function in the classification of fake and real data,
is more tractable than discontinuous depth functions, but remains nonconvex in the discriminator parameters and nonconcave in the generator parameters.
Training such GANs is challenging through nonconvex-nonconcave minimax optimization (\cite{FO20}; \cite{JNJ20}).

There is also an interesting connection between GANs and minimum divergence (or distance) (MD) estimation, which has been traditionally studied for
robust estimation (\cite{DL88}; \cite{Lin94}; \cite{BL94}).
A prominent example is minimum Hellinger distance estimation (\cite{Ber77}; \cite{TB86}).
In fact, as shown in $f$-GANs (\cite{NCT16}),
various choices of the objective function in GANs can be derived from variational lower bounds of $f$-divergences
between the generator and real data distributions.
Familiar examples of $f$-divergences include
the Kullback--Leibler (KL), squared Hellinger divergences, and the total variation (TV) distance (\cite{AS66}; \cite{Csi67}).
In particular, using the log-likelihood function in optimizing the discriminator leads to a lower bound of the Jensen--Shannon (JS) divergence for the generator.
Furthermore, the lower bound becomes tight if the discriminator class is sufficiently rich (to include the nonparametrically optimal discriminator given any generator).
In this sense, $f$-GANs can be said to nearly implement minimum $f$-divergence estimation,
where the parameters are estimated by minimizing an $f$-divergence between the model and data distributions.
 {However, this relationship is {\it only approximate and suggestive}},
because even a class of neural network discriminators may not be nonparametrically rich with population data.
A similar issue can also be found in the previous studies,
where minimum Hellinger estimation and related methods require a smoothed density function of sample data. This approach
is impractical for multivariate continuous data.

In addition to MD estimation mentioned above,
two other methods of MD estimation have also been studied for robust estimation both in general parametric models and in multivariate Gaussian models.
The two methods are defined by minimization of power density divergences (also called $\beta$-divergences) (\cite{BHHJ98}; \cite{MK06})
and that of $\gamma$-divergences (\cite{Win95}; \cite{FE08}; \cite{HFS17}).
See \cite{JHHB01} for a comparison of these two methods.
In contrast with $f$-divergences, these two divergences can be evaluated without requiring smooth density estimation from sample data,
and hence the corresponding MD estimators can be computed by standard optimization algorithms.
To our knowledge, error bounds have not been formally derived for these methods under Huber's contaminated Gaussian model.








Various methods based on iterative pruning or convex programming have been studied with provable error bounds for robust estimation
in Huber's contaminated Gaussian model (\cite{LRV16}; \cite{BD15}; \cite{DKKLMS19}). These methods either handle scatter estimation after location estimation
sequentially in two stages, or resort to using normalized differences of pairs with mean zero for scatter estimation.

{\bf Our work.}\; We propose and study adversarial algorithms with linear spline discriminators, and
establish various error bounds for simultaneous location and scatter estimation under Huber's contaminated Gaussian model. Two distinct types of GANs are exploited.
The first one is logit $f$-GANs (\cite{TSO19}), which corresponds to a specific choice of $f$-GANs with the objective function
formulated as a negative loss function for logistic regression (or equivalently a density ratio model between fake and real data) when
training the discriminator.
The second is hinge GAN (\cite{LY17}; \cite{ZML17}), where the objective function is taken to be the negative hinge loss function
when training the discriminator.
The hinge objective can be derived from a variational lower bound of the total variation distance (\cite{NWJ10}; \cite{TSO19}),
but cannot be deduced as a special case of the $f$-GAN 
objective even though the total variation is also an $f$-divergence.
See Remark \ref{rem:tv-gan}.
In addition, we allow two-objective GANs, including the log$D$ trick in \cite{GPM14}, where
two objective functions are used, one for updating the discriminator and the other for updating the generator.

As a major departure from previous studies of GANs, our methods use a simple linear class of spline discriminators,
where the basis functions
consist of univariate truncated linear functions (or ReLUs shifted) at $5$ knots and the pairwise products of such univariate functions.
For hinge GAN and certain logit $f$-GANs including those based on the reverse KL (rKL) and JS divergences,
the objective function is concave in the discriminator. By the linearity of the spline class,
the objective function is then concave in the spline coefficients. Hence our hinge GAN and logit $f$-GAN methods involve
maximization of a concave function when training the spline discriminator for any fixed generator.
In contrast with nonconvex-nonconcave minimax optimization for GANs with neural network discriminators (\cite{GLYZ18}; \cite{GYZ20}),
our methods can be implemented through nested optimization in a numerically tractable manner.
See Remarks \ref{rem:nash}, \ref{rem:f-condition2-b} and \ref{rem:hinge-concavity} and Algorithm \ref{alg:1}.
 {While the outer minimization for updating the generator remains nonconvex in our methods,
such a single nonconvex optimization is usually more tractable than nonconvex-nonconcave minimax optimization.}

In spite of the limited capacity of the spline discriminators, we establish various error bounds for our location and scatter estimators,
depending on whether the hinge-GAN or logit $f$-GAN is used and whether an $L_1$ or $L_2$ penalty is incorporated when training the discriminator.
See Table \ref{tab:rates} for a summary of existing and our error rates in scatter estimation.
Our $L_1$ penalized hinge GAN method achieves the minimax error rate $\epsilon + \sqrt{\log(p)/n}$ in the maximum norm. Our $L_2$ penalized hinge GAN method achieves the error rate $\epsilon \sqrt{p}+ \sqrt{p/n}$,
whereas the minimax error rate is $\epsilon + \sqrt{p/n}$, in the $p^{-1/2}$-Frobenius norm.
While this might indicate the price paid for maintaining the convexity in training the discriminator,
our error rate reduces to the same order $\sqrt{p/n}$ as the minimax error rate provided that
$\epsilon$ is sufficiently small, $\epsilon = O(\sqrt{1/n})$,
such that the contamination error term $\epsilon\sqrt{p}$ is dominated by the sampling variation term $\sqrt{p/n}$
up to a constant factor.
 {To our knowledge, such near-optimal error rates were previously inconceivable for adversarial algorithms with linear discriminators in robust estimation.}
Moreover, the error rates for our logit $f$-GAN methods exhibit a square-root dependency on the contamination fraction $\epsilon$, instead of
a linear dependency for our hinge GAN methods.  {This shows, for the first time, some theoretical advantage of hinge GAN over logit $f$-GANs,
although comparative performances of these methods may vary in practice, depending on specific settings.}

\begin{table}[]
\begin{threeparttable}[]
\caption{Comparison of existing and proposed methods.}
\label{tab:rates}
\begin{tabular}{lll}
\hline
& Error rates & Computation \\
\hline
OC15, TMW15, LT18    & $\|\hat\Sigma - \Sigma^*\|_{\max} = (\epsilon + \sqrt{\log (p)/n}) O_p(1) $ & Non-iterative computation \\ \\

CGR18   & $\|\hat\Sigma - \Sigma^*\|_\op = (\epsilon + \sqrt{p/n}) O_p(1) $ & Minimax optimization \\
        &                                                                   & with zero-one discriminators \\
DKKLMS19             &  $\|\hat\Sigma - \Sigma^*\|_\op = O_p(\epsilon)$,  & convex optimization \\
                     &  provided $\epsilon \ge p/\sqrt{n}$ up to log factors  &    \\

GYZ20                & $\|\hat\Sigma - \Sigma^*\|_\op = (\epsilon + \sqrt{p/n}) O_p(1) $ & Minimax optimization with \\
                     &                                                                   & neural network discriminators  \\  \hline
$L_1$ logit $f$-GAN  & $\|\hat\Sigma - \Sigma^*\|_{\max} = (\sqrt{\epsilon} + \sqrt{\log (p)/n}) O_p(1)$ &  \\  {(Theorem~\ref{thm:logit-L1})}\\
$L_1$ hinge GAN      & $\|\hat\Sigma - \Sigma^*\|_{\max} = (\epsilon + \sqrt{\log (p)/n}) O_p(1)$ &   {Nested optimization} with an\\  {(Theorem~\ref{thm:hinge-L1})}
& & objective function concave in\\

$L_2$ logit $f$-GAN  & $p^{-\frac{1}{2}} \|\hat\Sigma - \Sigma^*\|_\fro = (\sqrt{\epsilon} + \sqrt{p/n}) O_p(1),$ &linear spline discriminators\\  {(Theorem~\ref{thm:logit-L2})}
                     &       provided $\epsilon \le 1/p$ up to a constant factor                                                                    & \\
$L_2$ hinge GAN      & $p^{-\frac{1}{2}} \|\hat\Sigma - \Sigma^*\|_\fro = (\epsilon\sqrt{p} + \sqrt{p/n}) O_p(1)$ & \\
 {(Theorem~\ref{thm:hinge-L2})}\\
\hline
\end{tabular}
Note: OC15, TMW15, LT18 refer to methods and theory in \cite{OC15}, \cite{TMW15}, \cite{LT18};
CGR18, DKKLMS19, and GYZ20 refer to, respectively, \cite{CGR18},  \cite{DKKLMS19}, and  \cite{GYZ20}.
\end{threeparttable}
\end{table}

 {To facilitate and complement our sample analysis, we provide error bounds for the population version of hinge GAN or logit $f$-GANs with nonparametric discriminators,
that is, minimization of the exact total variation or $f$-divergence at the population level.}
From Theorem \ref{thm:pop-robust}, population minimum TV or $f$-divergence estimation under a simple set of conditions on $f$ (Assumption~\ref{ass:f-condition})
leads to errors of order $O(\epsilon)$ or $O(\sqrt{\epsilon})$ respectively under Huber's contamination model.
Assumption~\ref{ass:f-condition} allows the reverse KL, JS, reverse $\chi^2$, and squared Hellinger divergences,
but excludes the mixed KL divergence, $\chi^2$ divergence, and, as reassurance,
the KL divergence which corresponds to maximum likelihood estimation and is known to be non-robust.
Hence certain (but not all) minimum $f$-divergence estimation achieves robustness under Huber's contamination model or an $\epsilon$ TV-contaminated neighborhood.
 {Such robustness is identified for the first time for minimum $f$-divergence estimation,}
and is related to, but distinct from, robustness of minimum distance estimation under $\epsilon$ contaminated neighborhood with respect to the same distance (\cite{DL88}).
See Remark \ref{rem:pop-robust-DL} for further discussion.
The population error bounds in the $L_2$ and $p^{-1/2}$-Frobenius norms are independent of $p$ and hence tighter than
the corresponding $\epsilon$ terms in our sample error bounds for both hinge GAN and logit $f$-GAN. 
These gaps can be attributed to the use of nonparametric versus spline discriminators.

 {Remarkably, our population analysis also sheds light on the comparison of our sample results and those in \cite{GYZ20}.}
On one hand, another set of conditions (Assumption~\ref{ass:f-condition2}), in addition to Assumption~\ref{ass:f-condition},
are required in our sample analysis of logit $f$-GANs with spline discriminators.
On the other hand, GANs used in \cite{GYZ20} can be recast as logit $f$-GANs with
neural network discriminators (see Section \ref{sec:GYZ}). But minimax error rates are shown to be achieved in \cite{GYZ20} for
an $f$-divergence (for example, the mixed KL divergence)
which, let alone Assumption \ref{ass:f-condition2}, does not even satisfy Assumption~\ref{ass:f-condition} used in our analysis to show robustness of minimum $f$-divergence estimation.
The main reason for this discrepancy is that the neural network discriminator in \cite{GYZ20}
is directly constrained to be of order $\epsilon + \sqrt{p/n}$ in the log odds,
which considerably simplifies the proofs of rate-optimal robust estimation.
In contrast,  {our methods use linear spline discriminators (with penalties independent of $\epsilon$),
and our proofs of robust estimation need to carefully tackle various technical difficulties due to the simple design of our methods.}
See Figure~\ref{fig:1}(b) for an illustration of non-robustness by minimization of the mixed KL divergence,
and Section \ref{sec:GYZ} for further discussion on this subtle issue in \cite{GYZ20}.

{\bf Notation.}\; For a vector $a=(a_1, \dots, a_p)^T \in \bbR^p$, we denote by $\|a\|_1 = \sum_{i=1}^{p}|a_i|$, $\|a\|_{\infty} = \max_{1 \le i \le p} |a_i|$, and $\|a\|_2 = (\sum_{i=1}^{p}a_i^2)^{1/2}$ the $L_1$ norm, $L_{\infty}$ norm, and $L_2$ norm of $a$, respectively. For a matrix $A= (a_{ij}) \in \bbR^{m\times n}$, we define the element-wise maximum norm $\|A\|_{\max} = \max_{1\le i \le m, 1\le j \le n} |a_{ij}|$, the Frobenius norm $\|A\|_{\fro}=(\sum_{i=1}^{m}\sum_{j=1}^{n} a_{ij}^2)^{1/2}$, the vectorized $L_1$ norm $\|A\|_{1,1}=\sum_{i=1}^{m}\sum_{j=1}^{n}|a_{ij}|$, the operator norm $\|A\|_{\op} = \sup_{\|x\|_2 \le 1} \|Ax\|_2$, and the $L_{\infty}$-induced operator norm $\|A\|_{\infty} = \sup_{\|x\|_{\infty} \le 1} \|Ax\|_{\infty}$.
For a square matrix $A$, we write $A\succeq 0$ to indicate that $A$ is positive semidefinite. The tensor product of vectors $a$ and $b$ is denoted by $a \otimes b$, and the vectorization of matrix $A$ is denoted by $\vec(A)$. The cumulative distribution function of the standard normal distribution is denoted by $\Phi(x)$,
and the Gaussian error function is denoted by $\erf(x)$.

\section{Numerical illustration}

    We illustrate the performance of our rKL logit $f$-GAN and six existing methods, with two samples of size $2000$
    from a $10$-dimensional Huber's contaminated Gaussian distribution with $\epsilon = 5\%$ and $10\%$, based on
     {a Toeplitz covariance matrix} and the first contamination $Q$ described in Section \ref{sec:simulation-setting}.
    Figure~\ref{fig:Ellipsoid_TypeA} shows the 95\% Gaussian ellipsoids for the first two coordinates,
    using the estimated location vectors and variance matrices except for Tyler's M-estimator (\cite{Tyl87}), Kendalls's $\tau$ with MAD (\cite{LT18}), and
    Spearman's $\rho$ with $Q_n$-estimator (\cite{OC15}) where the locations are set to the true means.
    The performance of our JS logit $f$-GAN and hinge GAN is close to that of rKL logit $f$-GAN.
    See the Supplement for illustration based on the second contamination in Section \ref{sec:simulation-setting}.

    Among the methods shown in Figure \ref{fig:Ellipsoid_TypeA},
    {the rKL logit $f$-GAN gives an ellipsoid closest to the truth, followed with relatively small but noticeable differences by
    the JS-GAN (\cite{GYZ20}), MCD (\cite{Rou85}), and Mest (\cite{Roc96}), which are briefly described in Section \ref{sec:simulation-method}.}
    The remaining three methods, Kendall's $\tau$ with MAD, Spearman's $\rho$ with $Q_n$-estimator, and Tyler's M-estimator show much less satisfactory performance.
    The estimated distributions from these methods are dragged towards the corner contamination cluster.

    The relatively poor performance of the pairwise methods, Kendall's $\tau$ with MAD and Spearman's $\rho$ with $Q_n$-estimator, may be explained by
    the fact that as shown by the marginal histograms in Figure \ref{fig:Ellipsoid_TypeA},
    the data in each coordinate are one-sided heavy-tailed, but no obvious outliers
    can be seen marginally. The correlation estimates from Kendall's $\tau$ and Spearman's $\rho$ tend to be inaccurate even after
    sine transformations, especially with nonnegligible $\epsilon=10\%$.
    In contrast, our GAN methods as well as JS-GAN, MCD, and the Mest are capable of capturing higher dimensional information so that
    the impact of contamination is limited to various extents.

    \begin{figure}[h]
        \includegraphics[scale=0.11]{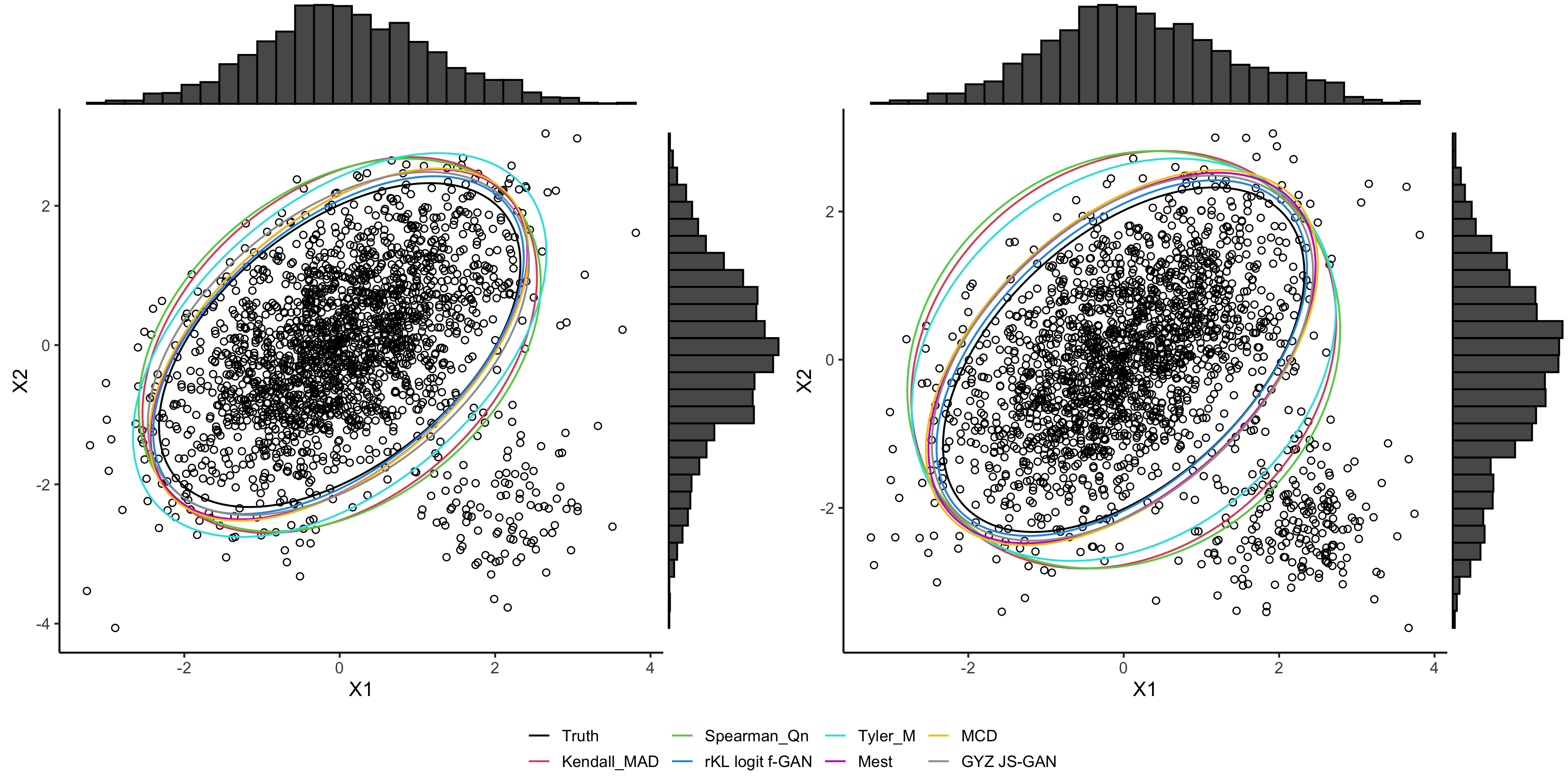}
        \caption{The $95\%$ Gaussian ellipsoids estimated for the first two coordinates and observed marginal histograms,
         from contaminated data based on the first contamination in Section \ref{sec:simulation-setting} with $\epsilon=5\%$ (left) or $10\%$ (right).}
        \label{fig:Ellipsoid_TypeA}
    \end{figure}
\vspace{-30pt}
\section{Adversarial algorithms} \label{sec:review-gan}

We review various adversarial algorithms (or GANs), which are exploited by our methods for robust location and scatter estimation.
To focus on main ideas, the algorithms are stated in their population versions, where the underlying data distribution $P_*$ is involved
instead of the empirical distribution $P_n$.
Let $\{P_\theta: \theta\in\Theta\}$ be a statistical model and
$\{h_\gamma: \gamma\in\Gamma\}$ be a function class, where $P_\theta$ is called a generator and $h_\gamma$ a discriminator.
In our study, $P_\theta$ is a multivariate Gaussian distribution $\N(\mu,\Sigma)$, and
$h_\gamma$ is a pairwise spline function which is specified later in Section \ref{sec:logit-fgan}.

For a convex function $f: (0,\infty) \to \bbR$, the $f$-divergence between the distributions $P_*$ and $P_\theta$ with
density functions $p_*$ and $p_\theta$ is
\begin{align*}
D_f( P_* \| P_\theta) = \int f\left( \frac{p_*(x)}{p_\theta(x)}\right) \,\dif P_\theta.
\end{align*}
For example, taking $f(t) = t \log t$ yields the Kullback--Liebler (KL) divergence $D_{\mathrm{KL}} (P_* \| P_\theta)$.
The logit $f$-GAN (\cite{TSO19}) is defined by solving the minimax program
\begin{align}
& \min_{\theta\in\Theta} \max_{\gamma\in\Gamma}  \; K_f (P_*, P_\theta; h_\gamma),   \label{eq:logit-fgan}
\end{align}
where
\begin{align*}
K_f (P_*, P_\theta; h)
& = \E_{P_*} f^{\prime}(\me^{h (x)}) -\E_{P_\theta}  f^\# (e^{h (x)})  \\
& = \E_{P_*} f^{\prime}(\me^{h (x)}) -\E_{P_\theta} \left\{ \me^{h (x)}f^{\prime}(\me^{h (x)})-f(e^{h (x)}) \right\}  ,
\end{align*}
Throughout, $f^\#(t) = t f^\prime(t) - f(t)$ and $f^\prime$ denotes the derivative of $f$.
A motivation for this method is that the objective $K_f$ is a nonparametrically tight, lower bound of the $f$-divergence (\cite{TSO19}, Proposition S1):
for each $\theta$, it holds that for any function $h$,
\begin{align}
K_f( P_*, P_\theta; h)  \le D_f ( P_* \| P_\theta) , \label{eq:logit-fgan-lbd}
\end{align}
where the equality is attained at $h_{*\theta}(x) = \log\{ p_*(x) / p_\theta(x)\}$, the log density ratio between $P_*$ and $P_\theta$
or equivalently the log odds for classifying whether a data point $x$ is from $P_*$ or $P_\theta$.
There are two choices of $f$ of particular interest.
Taking $f(t) = t\log t -(t+1)\log(t+1) + \log 4$ leads the Jensen--Shannon (JS) divergence,
$ D_{\mathrm{JS}} ( P_* \| P_\theta ) = D_{\mathrm{KL}} ( P_* \| (P_*+P_\theta)/2) + D_{\mathrm{KL}} ( P_\theta \| (P_* + P_\theta)/2 )$,
and the objective function
\begin{align*}
K_{\mathrm{JS}} (P_*, P_\theta; h) = - \E_{P_*} \log (1+ \me^{-h(x)}) - \E_{P_\theta}\log(1+ \me^{h(x)}) + \log 4,
\end{align*}
which is, up to a constant, the expected log-likelihood for logistic regression with log odds function $h(x)$.
For $K_f = K_{\mathrm{JS}}$, program (\ref{eq:logit-fgan}) corresponds to the original GAN (\cite{GPM14}) with discrimination probability $\sigm(h(x))$.
Taking $f(t) = -\log t$ leads to
the reverse KL divergence
$D_{\mathrm{rKL}}( P_* \| P_\theta )= D_{\mathrm{KL}} (P_\theta \| P_* )$ and the objective function
\begin{align*}
K_{\mathrm{rKL}} (P_*, P_\theta; h) = 1 - \E_{P_*} \me^{-h(x)} - \E_{P_\theta} h(x) ,
\end{align*}
which is the negative calibration loss for logistic regression in \cite{Tan20}.

The objective $K_f$ with fixed $\theta$ can be seen as
a proper scoring rule reparameterized in terms of the log odds function $h(x)$ for binary classification (\cite{TZ20}).
Replacing $K_f$ in (\ref{eq:logit-fgan}) by the negative hinge loss (which is not a proper scoring rule) leads to
\begin{align}
& \min _{\theta\in\Theta} \max_{\gamma\in\Gamma}  \; K_{\HG} (P_*, P_\theta; h_\gamma), \label{eq:hinge-gan}
\end{align}
where
\vspace{-5pt}
\begin{align*}
K_{\HG} (P_*, P_\theta; h) & = - \left\{ \E_{P_*} \max(0, 1-h(x)) + \E_{P_\theta} \max(0, 1+h(x)) \right\} + 2 \\
& = \E_{P_*} \min(1, h(x)) + \E_{P_\theta} \min(1, -h(x)) .
\end{align*}
This method is related to the geometric GAN described later in (\ref{eq:geo-gan}) and  will be called hinge GAN.
By \cite{Ngu09} or Proposition 5 in \cite{TSO19}, the objective $K_{\HG}$ is a nonparametrically tight, lower bound of the total variation distance scaled by $2$:
for each $\theta$, it holds that for any function $h(x)$,
\begin{align}
K_{\HG} ( P_*, P_\theta; h)  \le 2D_{\TV} ( P_* \| P_\theta) , \label{eq:hinge-gan-lbd}
\end{align}
where the equality is attained at $h_{*\theta}(x) = \sign(p_*(x) - p_\theta(x))$, and $D_{\TV} ( P_* \| P_\theta) = \int | p_*(x) - p_\theta(x) |/2\,\dif x$.
The objectives $K_f$ and $K_{\HG}$, with fixed $\theta$, represent two types of loss functions for binary classification.
See \cite{BSS05} and \cite{Ngu09} for further discussions about loss functions and scoring rules.

The preceding programs, (\ref{eq:logit-fgan}) and (\ref{eq:hinge-gan}), are defined as minimax optimization, each with a single objective function.
There are also adversarial algorithms, which are formulated as alternating optimization with two objective functions (see Remark \ref{rem:nash}).
For example, GAN with the $\log D$ trick in \cite{GPM14} is defined by solving
\begin{align} \label{eq:gan-logD}
\left\{ \begin{array}{ll}
\max\limits_{\gamma\in\Gamma}\; K_{\mathrm{JS}} (\theta,\gamma) & \text{with $\theta$ fixed},  \\
\min\limits_{\theta\in\Theta}\; \E_{P_\theta} \log ( 1+\me^{-h_\gamma(x)}) & \text{with $\gamma$ fixed}. \\
\end{array} \right.
\end{align}
The second objective is introduced mainly to overcome vanishing gradients in $\theta$ when the discriminator is confident.
The calibrated rKL-GAN (\cite{Hus16}; \cite{TSO19}) is defined by solving
\begin{align} \label{eq:cal-rkl-gan}
\left\{ \begin{array}{ll}
\max\limits_{\gamma\in\Gamma}\; K_{\mathrm{JS}} (\theta,\gamma) & \text{with $\theta$ fixed},  \\
\min\limits_{\theta\in\Theta}\; - \E_{P_\theta} h_\gamma (x) & \text{with $\gamma$ fixed}. \\
\end{array} \right.
\end{align}
The two objectives are chosen to stabilize gradients in both $\theta$ and $\gamma$ during training.
The geometric GAN in \cite{LY17} or, equivalently, the energy-based GAN in \cite{ZML17} as shown in \cite{TSO19}, is defined by solving
\begin{align} \label{eq:geo-gan}
\left\{ \begin{array}{ll}
\max\limits_{\gamma\in\Gamma}\; K_{\HG} (\theta,\gamma) & \text{with $\theta$ fixed},  \\
\min\limits_{\theta\in\Theta}\; - \E_{P_\theta} h_\gamma(x) & \text{with $\gamma$ fixed}. \\
\end{array} \right.
\end{align}
Interestingly, the second line in (\ref{eq:cal-rkl-gan}) or (\ref{eq:geo-gan}) involves the same objective $- \E_{P_\theta} h_\gamma(x) $,
which can be equivalently replaced by $K_{\mathrm{rKL}}(P_*, P_\theta; h_\gamma)$ because $\gamma$ and hence $h_\gamma$ are fixed.

\begin{rem} \label{rem:nash}
We discuss precise definitions for a solution to a minimax problem such as (\ref{eq:logit-fgan}) or (\ref{eq:hinge-gan}),
and a solution to an alternating optimization problem such as (\ref{eq:gan-logD})--(\ref{eq:geo-gan}).
For an objective function $K(\theta,\gamma)$, we say that $(\hat\theta,\hat\gamma)$ is a solution to
\begin{align}\label{eq:nested-opt}
\min_\theta \max_\gamma \; K(\theta,\gamma),
\end{align}
if $K(\hat\theta,\hat\gamma) = \max_\gamma K(\hat\theta, \gamma) \le \max_\gamma K(\theta,\gamma)$ for any $\theta$.
In other words, we treat (\ref{eq:nested-opt}) as nested optimization:
$\hat\theta$ is a minimizer of $K(\theta, \hat\gamma_\theta)$ as a function of $\theta$ and $\hat \gamma = \hat\gamma_{\hat\theta}$,
where $\hat\gamma_\theta$ is a maximizer of $K(\theta, \gamma)$ for fixed $\theta$.
This choice is directly exploited in both numerical implementation and theoretical analysis of our methods later.
For two objective functions $K_1 (\theta,\gamma)$ and $K_2 (\theta,\gamma)$, we say that
$(\hat\theta,\hat\gamma)$ is a solution to the alternating optimization problem
\begin{align} \label{eq:alternating-opt}
\left\{ \begin{array}{ll}
\max\limits_{\gamma} \; K_1(\theta,\gamma) & \text{with $\theta$ fixed},  \\
\min\limits_{\theta} \; K_2(\theta, \gamma) & \text{with $\gamma$ fixed},\\
\end{array} \right.
\end{align}
if $ K_1 (\hat\theta,\hat\gamma) = \max_\gamma K_1(\hat\theta, \gamma)$ and
 $ K_2 (\hat\theta,\hat\gamma) = \min_\theta K_2(\theta, \hat\gamma)$.
In the special case where $K_1(\theta, \gamma)=K_2(\theta, \gamma)$, denoted as $K(\theta, \gamma)$,
a solution $(\hat\theta,\hat\gamma)$ to (\ref{eq:alternating-opt}) is also called a Nash equilibrium of $K(\theta,\gamma)$,
satisfying $ K(\hat\theta,\hat\gamma) = \max_\gamma K(\hat\theta, \gamma) = \min_\theta K (\theta, \hat\gamma)$.
A solution to minimax problem (\ref{eq:nested-opt}) and a Nash equilibrium of $K(\theta,\gamma)$ may in general differ from each other,
although they coincide by Sion's minimax theorem 
in the special case where $K(\theta,\gamma)$ is convex in $\theta$ for each $\gamma$
and concave in $\gamma$ for each $\theta$.
Hence our treatment of GANs in the form (\ref{eq:nested-opt}) as nested optimization should be distinguished
from existing studies where GANs are interpreted as finding  Nash equilibria (\cite{FO20}; \cite{JNJ20}).
\end{rem}

\begin{rem} \label{rem:f-gan}
The population $f$-GAN (\cite{NCT16}) is defined by solving
\begin{align}
& \min _{\theta\in\Theta} \max_{\gamma\in\Gamma}  \; \left\{E_{P_*} T_\gamma (x) -E_{P_\theta} f^*(T_\gamma (x)) \right\} , \label{eq:fgan}
\end{align}
where $f^*$ is the Fenchel conjugate of $f$, i.e., $f^*(s) = \sup_{t\in(0,\infty)} (st - f(t))$ and $T_\gamma$ is a function taking values in the domain of $f^*$.
Typically, $T_\gamma$ is represented as $T_\gamma(x) = \tau_f ( h_\gamma(x))$, where $\tau_f: \bbR \to \dom(f^*)$ is an activation function
and $h_\gamma(x)$ take values unrestricted in $\bbR$. The logit $f$-GAN corresponds to $f$-GAN with the specific choice
$\tau_f(u ) =  f^\prime (\me^u)$ by the relationship $f^* (f^\prime (t) ) = f^\# (t)$ (\cite{TSO19}).
Nevertheless, a benefit of logit $f$-GAN is that the objective $K_f $ in (\ref{eq:logit-fgan}) takes the explicit form of a negative discrimination loss such that $h_\gamma(x)$ can be seen
to approximate the log density ratio between $P_*$ and $P_\theta$.
\end{rem}

\begin{rem} \label{rem:tv-gan}
There is an important difference between hinge GAN and logit $f$-GAN, although the total variation is also an $f$-divergence with $f(t)=|t-1|/2$.
In fact, taking this choice of $f$ in logit $f$-GAN (\ref{eq:logit-fgan}) yields
\begin{align}
& \min_{\theta\in\Theta} \max_{\gamma\in\Gamma}  \;  \left\{E_{P_*} \sign(h_\gamma(x)) -E_{P_\theta} \sign(h_\gamma(x)) \right\}. \label{eq:tv-learning}
\end{align}
This is called TV learning and is related to depth-based estimation in \cite{GLYZ18}. Compared with hinge GAN in (\ref{eq:hinge-gan}),
program (\ref{eq:tv-learning}) is computationally more difficult to solve. Such a difference also exists in the application of general $f$-GAN to the total variation.
For the total variation distance scaled by $2$ with $f(t) = |t-1|$, the conjugate is $f^*(s) = \max(-1,s)$ if $s \le 1$ or $\infty$ if $s >1$.
If $T_\gamma $ is specified as $T_\gamma=\min(1, h_\gamma(x))$, then the objective in $f$-GAN (\ref{eq:fgan}) can be shown to be
\begin{align*}
E_{P_*} \min(1, h_\gamma (x)) + \E_{P_\theta} \min(1, \max(-1, -h_\gamma (x)) ),
\end{align*}
which in general differs from the negative hinge loss in (\ref{eq:hinge-gan}) unless $h_\gamma$ is upper bounded by 1.
If $h_\gamma$ is specified as $2 \,\sigm(\tilde h_\gamma) -1$ for a function $\tilde h_\gamma$ taking values unrestricted in $\bbR$,
the resulting $f$-GAN is equivalent to TV-GAN in \cite{GLYZ18} defined by solving
\begin{align}
& \min _{\theta\in\Theta} \max_{\gamma\in\Gamma}  \; \left\{E_{P_*} \sigm(\tilde h_\gamma (x)) -E_{P_\theta} \sigm( \tilde h_\gamma(x)) \right\} . \label{eq:tv-gan}
\end{align}
However, solving program (\ref{eq:tv-gan}) is numerically intractable as discussed in \cite{GLYZ18}.
\end{rem}

\section{Theory and methods}

We propose and study various adversarial algorithms with simple spline discriminators for robust estimation in a multivariate Gaussian model.
Assume that $X_1,\ldots,X_n$ are  independent observations obtained from Huber's $\epsilon$-contamination model,
that is, the data distribution $P_*$ is of the form
\begin{align}
P_\epsilon = (1-\epsilon) P_{\theta^*} + \epsilon Q , \label{eq:huber}
\end{align}
where $P_{\theta^*}$ is $\N(\mu^*,\Sigma^*)$ with unknown $\theta^* = (\mu^*,\Sigma^*)$, $Q$ is a probability distribution for contaminated data, and $\epsilon $ is
a contamination fraction. Both $Q$ and $\epsilon$ are unknown. The dependency of $P_\epsilon$ on $(\theta^*,Q)$ is suppressed in the notation.
Equivalently, the data $(X_1, \ldots, X_n)$ can be represented in a latent model: $(U_1,X_1), \ldots, (U_n,X_n)$ are independent, and $U_i$
is Bernoulli with $\pr(U_i=1) = \epsilon$
and $X_i$ is drawn from $P_{\theta^*}$ or $Q$ given $U_i=0$ or 1 for $i=1,\ldots,n$.

For theoretical analysis, we consider two choices of the parameter space.
The first choice is
$\Theta_1 = \{ (\mu,\Sigma): \mu \in \bbR^p, \Sigma \succeq 0, \|\Sigma\|_{\max} \le M_1 \}$ for a constant $M_1 >0$.
Equivalently, the diagonal elements of $\Sigma$ is upper bounded by $M_1$ for $(\mu,\Sigma)\in \Theta_1$.
The second choice is
$\Theta_2 = \{ (\mu,\Sigma): \mu \in \bbR^p, \Sigma \succeq 0, \|\Sigma\|_{\op} \le M_2 \}$ for a constant $M_2 >0$.
For simplicity, the dependency of $\Theta_1$ on $M_1$ or $\Theta_2$ on $M_2$ is suppressed in the notation.
For the second parameter space $\Theta_2$, the minimax rates in the $L_2$ and operator norms have been shown
to be achieved using matrix depth (\cite{CGR18}) and GANs with certain neural network discriminators (\cite{GYZ20}).

Our work aims to investigate adversarial algorithms with a simple linear class of spline discriminators for computational tractability,
and establish various error bounds for the proposed estimators, including those
matching the minimax rates in the {maximum} norms for the location and scatter estimation over $\Theta_1$,
and, provided that $\epsilon\sqrt{n}$ is bounded by a constant (independent of $p$), the minimax rates in the $L_2$ and Frobenius norms over $\Theta_2$.

\subsection{Population analysis with nonparametric discriminators} \label{sec:population}

A distinctive feature of GANs is that they can be motivated as approximations to minimum divergence estimation.
For example, if the discriminator class $\{h_\gamma\}$ in (\ref{eq:logit-fgan}) is rich enough to include the nonparametrically optimal discriminator
such that $\max_{\gamma\in\Gamma} K_f( P_*, P_\theta; h_\gamma) = D_f ( P_* \| P_\theta)$ for each $\theta$,
then the (population) logit $f$-GAN amounts to minimizing the $f$-divergence $D_f ( P_* \| P_\theta)$.
Similarly, if the discriminator class $\{h_\gamma\}$ in (\ref{eq:hinge-gan}) is sufficiently rich, then the (population) hinge GAN
amounts to minimizing the total variation $D_{\TV} ( P_* \| P_\theta)$.

As a prelude to our sample analysis, Theorem~\ref{thm:pop-robust} shows that at the population level, minimization of the total variation and certain $f$-divergences
satisfying Assumption~\ref{ass:f-condition}
achieves robustness under Huber's contamination model,
in the sense that the estimation errors are respectively $O(\epsilon)$ and $O( \sqrt{\epsilon})$, uniformly over all possible $Q$.
Hence with sufficiently rich (or nonparametric) discriminators, the population versions of the hinge GAN and certain $f$-GANs
can be said to be robust under Huber's contamination.
From Table~\ref{tab:1}, Assumption~\ref{ass:f-condition} is satisfied by
the reverse KL, JS, 
and squared Hellinger divergences, but violated by the KL divergence.
Minimization of the KL divergence corresponds to maximum likelihood estimation, which is known to be non-robust under Huber's contamination model.

\begin{ass} \label{ass:f-condition}
Suppose that $f: (0,\infty) \to \bbR$ is convex with $f(1)=0$ and satisfies the following conditions.
\begin{itemize}
\item[(i)] $f$ is twice differentiable with $f^\dprime(1) >0$.
\item[(ii)] $f$ is non-increasing.
\item[(iii)] $f^\prime$ is concave (i.e., $f^\dprime$ is non-increasing)
\end{itemize}
See Table \ref{tab:1} for validity of conditions (ii) and (iii) in various $f$-divergences.
\end{ass}

\begin{rem} \label{rem:equi-def}
Given a convex function $f$ with $f(1)=0$, the same $f$-divergence $D_f$ can be defined using the convex function
$f(t) + c(t-1)$ for any constant $c\in \bbR$. Hence condition (ii) in Assumption \ref{ass:f-condition}
can be relaxed such that $f^\prime$ is upper bounded by a constant.
The non-increasingness of $f$ is stated above for ease of interpretation.
The other conditions in Assumption~\ref{ass:f-condition} and Assumption~\ref{ass:f-condition2} are not affected by non-unique choices of $f$.
\end{rem}

\begin{thm}  \label{thm:pop-robust}
Let $\Theta_0 = \{ (\mu,\Sigma): \mu \in \bbR^p, \Sigma \text{ is a $p\times p$ variance matrix} \}$.

(i) Assume that $f$ satisfies Assumption \ref{ass:f-condition}. Let $\bar\theta =\argmin_{\theta \in \Theta_0} D_{f}(P_{\epsilon}||P_{\theta})$.
If $\sqrt{-2 (f^{\dprime}(1))^{-1}f^{\prime}(1/2)\epsilon}+\epsilon < 1/2$, then  {for any contamination distribution $Q$,}
\begin{align}
\|\bar\mu - \mu^*\|_2 \le C\|\Sigma^*\|_{\op}^{1/2}\sqrt{\epsilon},\quad
\|\bar\mu - \mu^*\|_{\infty} \le C\|\Sigma^*\|_{\max}^{1/2}\sqrt{\epsilon},  \label{eq:thm-pop-robust-mu}
\end{align}
and
\begin{align}
\|\bar\Sigma - \Sigma^*\|_{\op} \le  C\|\Sigma^*\|_{\op}\sqrt{\epsilon}, \quad
\|\bar\Sigma - \Sigma^*\|_{\max}  \le  C\|\Sigma^*\|_{\max}\sqrt{\epsilon},   \label{eq:thm-pop-robust-sigma}
\end{align}
where $C > 0$ is a constant depending only on $f$.
The same inequality as in (\ref{eq:thm-pop-robust-sigma}) also holds with $\|\bar\Sigma - \Sigma^*\|_{\op}$ replaced by
$ p^{-1/2} \|\bar\Sigma - \Sigma^*\|_{\fro}$.

(ii) Let $\bar\theta = \argmin_{\theta \in \Theta_0} D_\TV(P_{\epsilon}||P_{\theta})$. If $\epsilon < 1/4$ then
(\ref{eq:thm-pop-robust-mu}) and (\ref{eq:thm-pop-robust-sigma}) hold for an absolute constant $C>0$ with $\sqrt{\epsilon}$ replaced by $\epsilon$ throughout.
\end{thm}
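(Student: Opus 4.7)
The plan is to pass from the optimality defining $\bar\theta$ to a total-variation bound between $P_{\bar\theta}$ and $P_{\theta^*}$, and then convert that bound into the stated parameter bounds. The three conceptual steps are: (a) an optimality inequality from the definition of $\bar\theta$, (b) a Pinsker-type lower bound relating $D_f$ (or $D_\TV$) between $P_\epsilon$ and $P_{\bar\theta}$ to $D_\TV(P_\epsilon,P_{\bar\theta})$, together with an upper bound on the divergence between $P_\epsilon$ and $P_{\theta^*}$, and (c) a Gaussian total-variation to parameter conversion.

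Part (ii) uses only (a) and (c). The identity $p_\epsilon - p_{\theta^*} = \epsilon(q-p_{\theta^*})$ yields $D_\TV(P_\epsilon,P_{\theta^*}) = \epsilon D_\TV(Q,P_{\theta^*}) \le \epsilon$, and the optimality of $\bar\theta$ together with the triangle inequality produces $D_\TV(P_{\bar\theta},P_{\theta^*}) \le 2\epsilon < 1/2$. For part (i), the upper bound exploits that $p_\epsilon/p_{\theta^*} \ge 1-\epsilon$ pointwise and that $f$ is non-increasing by Assumption~\ref{ass:f-condition}(ii), which gives $D_f(P_\epsilon\|P_{\theta^*}) \le f(1-\epsilon)$. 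Writing $1-\epsilon = 2\epsilon\cdot(1/2) + (1-2\epsilon)\cdot 1$ and invoking convexity with $f(1)=0$ gives $f(1-\epsilon) \le 2\epsilon f(1/2)$, and the tangent inequality $f(1) \ge f(1/2) + f'(1/2)/2$ then converts this to $-\epsilon f'(1/2)$. For the lower bound, I would replace $f$ by the equivalent $\tilde f(t) = f(t) - f'(1)(t-1)$, which leaves the divergence unchanged by Remark~\ref{rem:equi-def} but satisfies $\tilde f'(1)=0$, $\tilde f \ge 0$, and $\tilde f''$ still non-increasing. Since $\tilde f''(s) \ge f''(1)$ on $(0,1]$, Taylor's theorem yields $\tilde f(t) \ge (f''(1)/2)(1-t)^2$ for $t\le 1$. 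Restricting the $D_{\tilde f}$ integral to the region $A^c = \{p_\epsilon < p_{\bar\theta}\}$, noting $\int_{A^c}(p_{\bar\theta}-p_\epsilon)\,dx = D_\TV(P_\epsilon,P_{\bar\theta})$, and applying Cauchy--Schwarz (using $\int_{A^c} p_{\bar\theta}\,dx \le 1$) gives $D_f(P_\epsilon\|P_{\bar\theta}) \ge (f''(1)/2)\,D_\TV(P_\epsilon,P_{\bar\theta})^2$. Chaining these with the triangle inequality yields $D_\TV(P_{\bar\theta},P_{\theta^*}) \le \sqrt{-2 f'(1/2)\epsilon/f''(1)} + \epsilon$, which is less than $1/2$ by hypothesis.

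The remaining step, common to both parts, converts a Gaussian bound $D_\TV(P_{\bar\theta},P_{\theta^*}) \le \delta < 1/2$ into parameter bounds. By the data-processing inequality applied to $x \mapsto v^\T x$, the one-dimensional Gaussians $\N(v^\T\bar\mu, v^\T\bar\Sigma v)$ and $\N(v^\T\mu^*, v^\T\Sigma^* v)$ have TV at most $\delta$. Inspecting the half-space probabilities $P_{\bar\theta}(v^\T X > t) - P_{\theta^*}(v^\T X > t)$ at the choices $t = v^\T\mu^*$ and $t = v^\T\mu^* \pm \sqrt{v^\T\Sigma^* v}$ separates the effects of mean and variance deviations and delivers $|v^\T(\bar\mu-\mu^*)| \le C\delta\sqrt{v^\T\Sigma^* v}$ and $|v^\T(\bar\Sigma-\Sigma^*)v| \le C\delta\,v^\T\Sigma^* v$ with a numerical constant $C$. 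Choosing $v$ to be standard basis vectors or rescaled pairs $e_i \pm e_j$ gives the $\ell_\infty$ and max-norm bounds; maximizing over unit $v$ gives the $L_2$, operator, and, via $\|\cdot\|_\fro \le \sqrt{p}\,\|\cdot\|_\op$, the $p^{-1/2}$-Frobenius bounds.

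I expect the main obstacle to be the Pinsker-type lower bound in part (i), which must jointly leverage the monotonicity (ii) and the $f''$-monotonicity (iii) in Assumption~\ref{ass:f-condition} while retaining uniformity over arbitrary $Q$ (so any bound depending on $\chi^2(Q\|P_{\theta^*})$ or similar unbounded quantities is ruled out); the shift to $\tilde f$ and the restriction of integration to $A^c$ are the ingredients that sidestep these difficulties. A secondary technical point is the Gaussian conversion for the scatter component: because the two Gaussians differ simultaneously in location and scale, the half-space thresholds must be chosen so that the extracted bound on $v^\T(\bar\Sigma-\Sigma^*)v$ is not corrupted by the mean deviation.
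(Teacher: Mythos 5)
Your three-step skeleton (optimality inequality giving an upper bound $D_f(P_\epsilon\|P_{\bar\theta}) \le -f'(1/2)\epsilon$, a Pinsker-type lower bound $D_f \ge (f''(1)/2)\,D_\TV^2$ via the shift to $\tilde f$ and a squared-difference integral, and a Gaussian TV-to-parameter conversion) is precisely the paper's strategy, and parts (a) and (b) are correct; your Cauchy--Schwarz restricted to $\{p_\epsilon < p_{\bar\theta}\}$ is equivalent to the paper's Jensen argument. The one place where your sketch would not go through as written is the conversion step (c). Anchoring the half-space threshold at $t = v^\T\mu^*$ makes $P_{\theta^*}(v^\T X > t) = 1/2$ but puts $\sqrt{v^\T\bar\Sigma v}$ in the argument of $\Phi$ for $P_{\bar\theta}$, so you extract a bound $|v^\T(\bar\mu-\mu^*)| \lesssim \delta\sqrt{v^\T\bar\Sigma v}$ scaled by the \emph{estimated} variance, not by $v^\T\Sigma^* v$ as the theorem states; recovering the $\Sigma^*$ scaling requires first proving the variance bound and then bootstrapping, which is exactly the coupling you flag at the end but do not resolve. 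The paper sidesteps this: it anchors at $t = v^\T\bar\mu$, so $P_{\bar\theta}(v^\T X \le t) = 1/2$ identically (independent of $\bar\Sigma$) while $P_{\theta^*}(v^\T X \le t) = \Phi\bigl(v^\T(\bar\mu-\mu^*)/\sqrt{v^\T\Sigma^* v}\bigr)$, yielding the $\Sigma^*$ scaling directly. For the scatter bound the paper similarly decouples the two deviations up front: it replaces $\mu^*$ by $\bar\mu$ in $P_{\theta^*}$ at the cost of an extra TV term controlled by Pinsker and the Gaussian KL formula (reusing the already-proved location bound), then compares $\N(\bar\mu,\Sigma^*)$ against $\N(\bar\mu,\bar\Sigma)$ via the centered event $\{(v^\T X - v^\T\bar\mu)^2 \le z_0\, v^\T\bar\Sigma v\}$, whose $P_{\bar\theta}$-probability is exactly $1/2$ by the choice of $z_0$. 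You should adopt this anchoring at $\bar\mu$ and $\bar\Sigma$ to close the gap in step (c); everything else in your plan is sound and matches the paper.
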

\begin{figure}
    \includegraphics[scale=0.1]{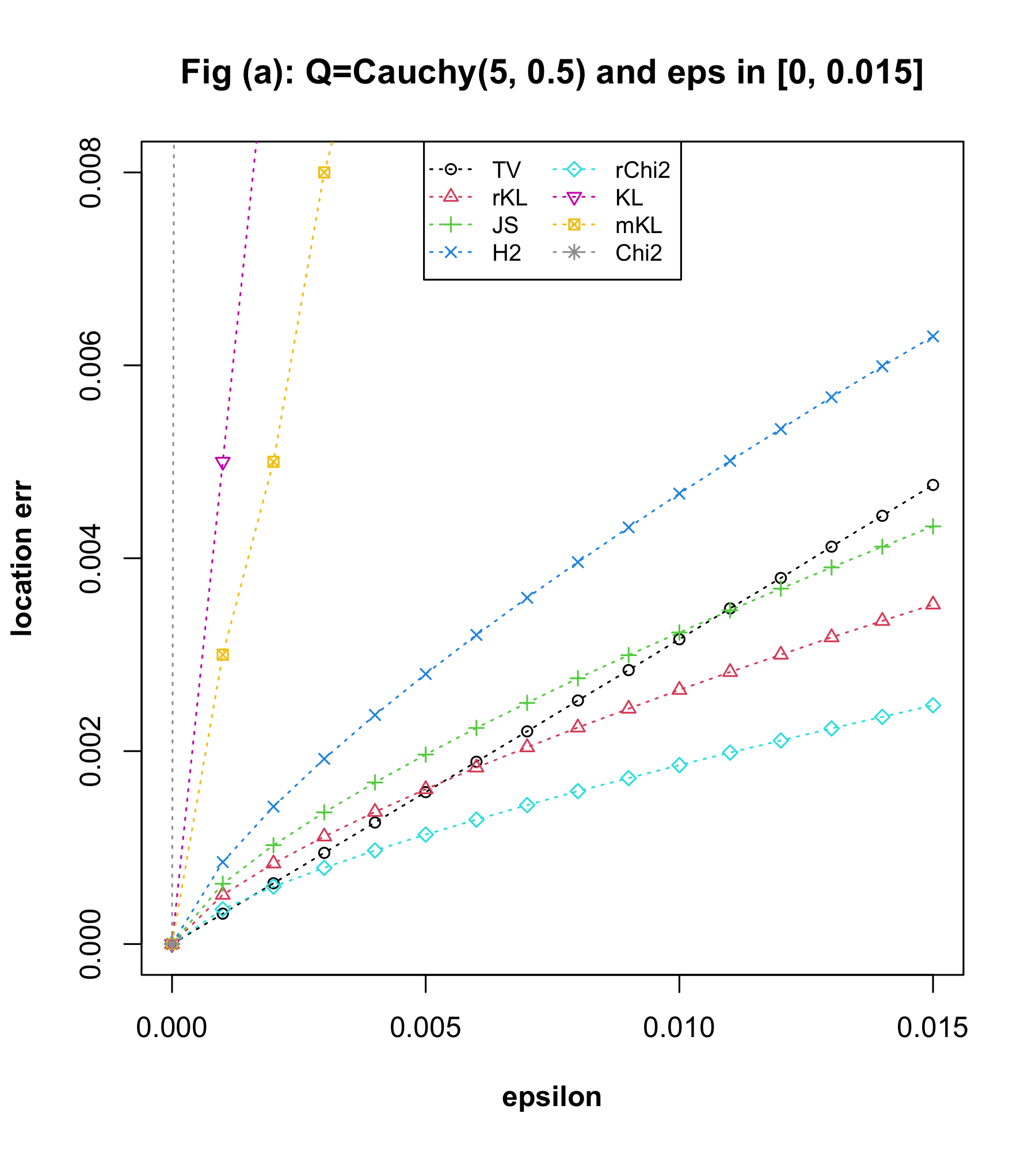}
    \includegraphics[scale=0.1]{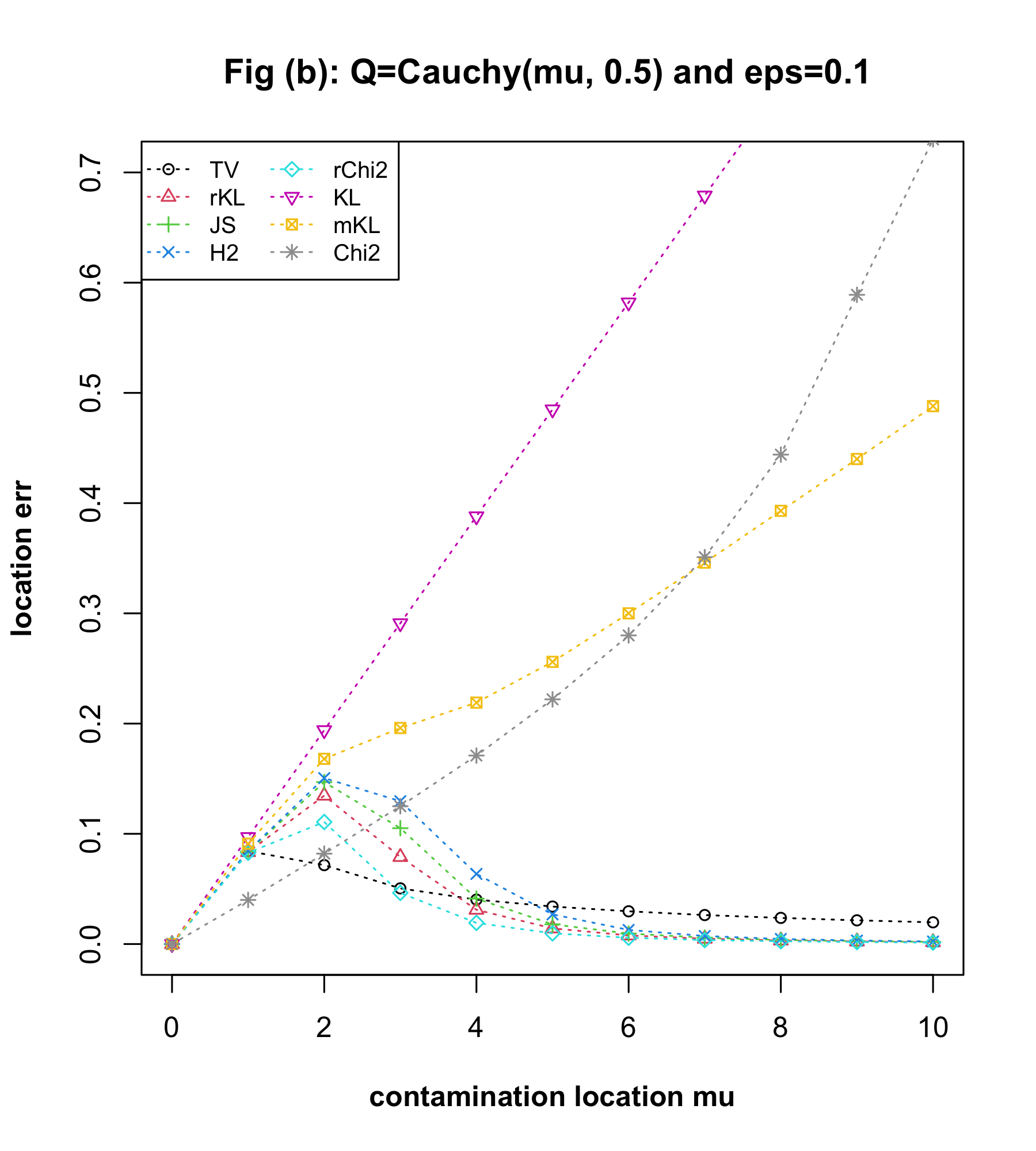}
    \caption{Illustration of robustness of minimum $f$-divergence location estimation. Figure (a): Location error $|\bar\mu - \mu^*|$ against contamination fraction $\epsilon$ from $0$ to $0.015$, with $P_{\theta^*}$ being $\N(0, 1)$ and contamination $Q$ being $\N(5, 1/4)$ fixed; Figure (b): Location error $|\bar\mu - \mu^*|$ against contamination location $\mu_Q$  from $0$ to $10$, with $P_{\theta^*}$ being $\N(0,1)$, contamination fraction $\epsilon = 0.1$ fixed, and contamination $Q$ being $\N(\mu_Q, 1/4)$.
    The squared Hellinger, reverse $\chi^2$, and mixed KL are denoted by H2, rChi2, and mKL respectively.}
    \label{fig:1}
\end{figure}

Figure \ref{fig:1} provides a simple numerical illustration.
From Figure~\ref{fig:1}(a), the location errors $|\bar\mu - \mu^*|$ of minimum divergence estimators corresponding to the four robust $f$-divergences (reverse KL, JS, squared Hellinger, and reverse $\chi^2$) satisfying Assumption \ref{ass:f-condition} are of shapes in agreement with the order $\sqrt{\epsilon}$ in Theorem \ref{thm:pop-robust},
whereas those corresponding to TV appear to be linear in $\epsilon$, for $\epsilon$ close to 0.
For the KL, mixed KL, and $\chi^2$ divergences, which do not satisfy Assumption~\ref{ass:f-condition}(ii), their corresponding errors quickly increase out of the plotting range, indicating non-robustness of the associated minimum divergence estimation.
The differences between robust and non-robust $f$-divergences are further demonstrated in Figure \ref{fig:1}(b).
As the contamination location moves farther away, the errors of the robust $f$-divergences increase initially but then decrease to near 0,
whereas those of the non-robust $f$-divergences appear to increase unboundedly.

\begin{table}[]
\begin{threeparttable}
\caption{Common $f$-divergences and validity of Assumptions \ref{ass:f-condition} (ii)--(iii) and \ref{ass:f-condition2} (i)--(ii).}
\label{tab:1}
\begin{tabular}{llllll}
\hline
\multirow{2}{*}{Name} & \multirow{2}{*}{Convex $f(t)$} & Non-incr. & Concave & Concave & Lipschitz \\
& & $f(t)$ & $f^\prime(t)$ & $f^\prime(\me^u)$ & $f^\# (\me^u)$ \\
\hline
Total variation  & $(1-t)_+, |t-1|/2$ & \checkmark & --- & --- & --- \\
Reverse KL & $-\log t$ & \checkmark & \checkmark & \checkmark & \checkmark \\
Jensen-Shannon  & $t\log t-(t+1)\log(t+1)+\log 4$ & \checkmark & \checkmark & \checkmark & \checkmark \\
Squared Hellinger & $(\sqrt{t}-1)^2$ & \checkmark & \checkmark & \checkmark & \\
Reverse $\chi^2$  & $t^{-1}-1$ & \checkmark & \checkmark & \checkmark &                 \\
\hline
KL & $t\log t$ & & \checkmark & \checkmark & \\
Mixed KL & $\{(t-1)\log t\}/2$ & & \checkmark & \checkmark & \\
$\chi^2$ & $(t-1)^2$ &  & \checkmark &  & \\ \hline
\end{tabular}
Note: The mixed KL divergence is defined as $D_{\mathrm{mKL}}(P||Q) = D_{\mathrm{KL}}(P||Q)/2 + D_{\mathrm{KL}}(Q||P)/2$.
\end{threeparttable}

\end{table}

\begin{rem} \label{rem:pop-robust-ass1}
From the proof in Section \ref{sec:prf-thm-pop-robust}, Theorem~\ref{thm:pop-robust}(i) remains valid if $f^\dprime(1)$
is replaced by $C_f =\inf_{t\in (0,1]} f^\dprime (t)$ in Assumption \ref{ass:f-condition}(i) and the definition of $\err_{f0}(\epsilon)$,
and Assumption \ref{ass:f-condition}(iii), the concavity of $f^\prime$, is removed.
On the other hand, a stronger condition than Assumption \ref{ass:f-condition}(iii) is used in our sample analysis:
for convex $f$, the concavity of $f^\prime$ is implied by Assumption \ref{ass:f-condition2}(i), as discussed in Remark \ref{rem:f-condition2-a}.
\end{rem}

\begin{rem} \label{rem:pop-robust-adaptive}
The population bounds in Theorem \ref{thm:pop-robust} are more refined than those in our sample analysis later.
The population minimizer $\bar \theta = (\bar\mu, \bar\Sigma)$ is defined by minimization over the unrestricted space $\Theta_0$
instead of $\Theta_1$ or $\Theta_2$ with the restriction $\| \Sigma \|_{\max} \le M_1$ or $\| \Sigma\|_\op \le M_2$.
The scaling factors in the population bounds also depend directly on the maximum or operator norm of
the true variance matrix $\Sigma^*$, instead of pre-specified constants $M_1$ or $M_2$.
 {Note that the parameter space is also restricted such that $\| \Sigma\|_\op \le M_2$ and the error bounds depend on $M_2$
in sample analysis of \cite{GYZ20}.}
Nevertheless, the population bounds share a similar feature as in our sample bounds later:
the error bounds in the maximum norms are governed by $\| \Sigma^* \|_{\max}$, which
can be much smaller than $\| \Sigma^* \|_\op$ involved in the error bounds in the operator norm.
\end{rem}

\begin{rem} \label{rem:pop-robust-DL}
It is interesting to connect and compare our results with \cite{DL88},
where minimum distance (MD) estimation is studied, that is, minimization of a proper distance $D(P, P_\theta)$ satisfying the triangle inequality.
For minimum TV estimation, let $\bar\theta_P =(\bar\mu_P, \bar \Sigma_P) = \argmin_\theta D_\TV ( P \| P_\theta )$.
For location estimation, define
\begin{align*}
& b (\epsilon) = \sup_{P: D_\TV (P \| P_{\theta^*}) \le \epsilon } \| \bar\mu_P - \mu^* \|_2 , \quad
& b_0 (\epsilon) = \sup_{\theta: D_\TV (P_\theta \| P_{\theta^*}) \le \epsilon } \| \mu - \mu^* \|_2 ,
\end{align*}
which are called the bias distortion curve and the gauge function.
Scatter estimation can be discussed in a similar manner.
For a general family $\{P_\theta\}$, the first half in our proof of Theorem \ref{thm:pop-robust}(ii) shows that
for any $P$ satisfying $D_\TV (P \| P_{\theta^*}) \le \epsilon$, we have
$D_\TV ( P_{\bar\theta_P} \| P_{\theta^*} ) \le 2\epsilon$.
This implies a bound similar to Proposition 5.1 in \cite{DL88}:
\begin{align}
b (\epsilon ) \le b_0 ( 2 \epsilon) . \label{eq:DL}
\end{align}
For the multivariate Gaussian family $\{P_\theta\}$, the second half in our proof of Theorem \ref{thm:pop-robust}(ii) derives an explicit upper bound on $b_0(\epsilon)$ provided that
$ 2\epsilon \le a $ for a constant $a \in [0, 1/2)$:
\begin{align*}
b_0 (2 \epsilon) \le S_{1,a}\|\Sigma^*\|_{\op}^{1/2} (2\epsilon),
\end{align*}
where $S_{1,a}=\{ \Phi^\prime(\Phi^{-1}(1/2+a)) \}^{-1}$. Combining the preceding inequalities yields
$ b(\epsilon) \le C\|\Sigma^*\|_{\op}^{1/2}\epsilon$ in Theorem \ref{thm:pop-robust}(ii), with $C = 2 S_{1,a}$.
In addition, Proposition 5.1 in \cite{DL88} gives the same bound as (\ref{eq:DL}) for MD estimation using certain other distances $D ( P, P_\theta )$,
including the Hellinger distance, where the MD functional
$\bar\theta_P =(\bar\mu_P, \bar \Sigma_P)$ is defined as $ \argmin_\theta D ( P, P_\theta )$,
and $b (\epsilon)$ and $b_0 (\epsilon)$ are defined with $D_\TV (P \| P_{\theta^*})$ replaced by $D ( P, P_\theta )$.
The distances used in defining the MD functional and the contamination neighborhood are tied to each other.
Hence, except for minimum TV estimation, our setting differs from \cite{DL88} in studying different choices of minimum $f$-divergence estimation
over the same Huber's contamination neighborhood.

\end{rem}

\begin{rem} \label{rem:breakdown}
 {We briefly comment on how our result is related to breakdown points in robust statistics (\cite{Hub81}, Section 1.4).}
For estimating $\mu^*$, the population breakdown point of a functional $T = T(P)$ can be defined as
$ \sup\{ \epsilon :  b_T (\epsilon) < \infty \}$,
where $b_T (\epsilon) =  \sup_{P: D_\TV (P \| P_{\theta^*}) \le \epsilon} \| T(P)-\mu^* \|_2 $.
Scatter estimation can be discussed in a similar manner.
For $T$ defined from minimum TV estimation, Theorem \ref{thm:pop-robust}(ii) shows that if $\epsilon < 1/4$, then
$ b_T (\epsilon) \le C\|\Sigma^*\|_{\op}^{1/2}\epsilon$, as noted in Remark \ref{rem:pop-robust-DL}.
This not only provides an explicit bound on $b_T(\epsilon)$, but also implies that the population breakdown point is at least $1/4$ for minimum TV estimation.
Similar implications can be obtained from Theorem \ref{thm:pop-robust}(i) for minimum $f$-divergence estimation.
For $T$ defined from minimum rKL divergence estimation, Theorem \ref{thm:pop-robust}(i) shows that
if $2\sqrt{\epsilon} + \epsilon <1/2$, then $ b_T (\epsilon) \le C\|\Sigma^*\|_{\op}^{1/2} \sqrt\epsilon$,
and hence the population breakdown point is at least $0.051$.
While these estimates of breakdown points can potentially be improved,
our population analysis as well as sample analysis in the subsequent sections focus on deriving
quantitative error bounds in terms of sufficiently small $\epsilon$ and some scaling constants free of $\epsilon$.
\end{rem}

\subsection{Logit $f$-GAN with spline discriminators}  \label{sec:logit-fgan}

For the population analysis in Section~\ref{sec:population}, a discriminator class is assumed to be rich enough to include the nonparametrically optimal discriminator which depends on unknown $(\epsilon,Q)$.
Because $Q$ can be arbitrary, this nonparametric assumption is inappropriate for sample analysis.
 {Recently, GANs with certain neural network discriminators are shown to achieve
sample error bounds matching minimax rates (\cite{GLYZ18, GYZ20}).
It is interesting to study whether similar results can be obtained when using GANs with simpler
and computationally more tractable discriminators.}

We propose and study adversarial algorithms, including logit $f$-GAN in this section and
hinge GAN in Section {\ref{sec:hinge-gan}}, each with simple spline discriminators.
Define a linear class of pairwise spline functions, denoted as $\mathcal H_{\sp}$:
\begin{align*}
h_{\sp,\gamma}(x)=
\gamma_0 + \gamma_1^\T \varphi(x) + \gamma_2^\T \vec( \varphi(x) \otimes \varphi(x) ),
\end{align*}
where $\gamma =(\gamma_0, \gamma_1^\T, \gamma_2^\T)^\T \in \Gamma$ with $\Gamma=\bbR^{1+5p+(5p)^2}$ and $\varphi(x)=  (\varphi_1^\T(x), \ldots, \varphi_5^\T (x))^\T$.
The basis vector $\varphi_l(x) \in \bbR^p$ is obtained by applying $t \mapsto (t-\xi_l)_+$ componentwise to $x \in \bbR^p$,
with the knot $\xi_l = -2, -1, 0, 1$, or $2$ for $l =1,\ldots,5$ respectively.
For concreteness, assume that every two components of $\gamma_2$ are identical if associated with the same product of two components of $\varphi(x)$,
that is, $\gamma_2$ can be arranged to a symmetric matrix.
The preceding specification is sufficient for our theoretical analysis.
Nevertheless, similar results can also be obtained, while allowing various changes to the basis functions, for example,
adding $x$ as a subvector to $\varphi(x)$.
With this change, a function in $\mathcal H_{\sp}$ has a main effect term in each $x_j$, which
is a linear spline with knots in $\{-2,-1,0,1,2\}$, and a square or interaction term in each pair $(x_{j_1},x_{j_2})$, which is a product
of two spline functions in $x_{j_1}$ and $x_{j_2}$ for $1\le j_1, j_2 \le p$.

We consider two logit $f$-GAN methods with an $L_1$ or $L_2$ penalty on the discriminator,
which lead to meaningful error bounds over the parameter space $\Theta_1$ or $\Theta_2$ respectively
under the following conditions on $f$, in addition to Assumption \ref{ass:f-condition}.
Among the $f$-divergences in Table \ref{tab:1},
the reverse KL and JS divergences satisfy both Assumptions \ref{ass:f-condition} and \ref{ass:f-condition2},
and hence the corresponding logit $f$-GANs achieve sample robust estimation using spline discriminators.
The squared Hellinger and reverse $\chi^2$ divergences satisfy Assumption \ref{ass:f-condition},
but not the Lipschitz condition in Assumption \ref{ass:f-condition2}(ii).
For such $f$-divergences, it remains a theoretical question whether sample robust estimation can be achieved using spline discriminators.


\begin{ass} \label{ass:f-condition2}
Suppose that $f: (0,\infty) \to \bbR$ is strictly convex and three-times continuously differentiable with $f(1)=0$ and satisfies the following conditions.
\begin{itemize}
\item[(i)] $f^\prime(\me^u)$ is concave in $u \in \bbR$.
\item[(ii)] $f^\# (\me^u)$ is $R_1$-Lipschitz in $u \in \bbR$ for a constant $R_1 >0$.
\end{itemize}
See Table \ref{tab:1} for validity of conditions (i) and (ii) in various $f$-divergences,
and Remarks \ref{rem:f-condition2-a} and \ref{rem:f-condition2-b} for further discussions.
\end{ass}

The first method, $L_1$ penalized logit $f$-GAN, is defined by solving
\begin{align}
& \min_{\theta\in\Theta_1} \max_{\gamma\in\Gamma}  \; \left\{ K_f (P_n, P_\theta; h_{\gamma,\mu}) - \lambda_1 \;\pen_1 (\gamma) \right\},   \label{eq:logit-fgan-L1}
\end{align}
where $K_f (P_n, P_\theta; h)$ is  $K_f (P_*, P_\theta; h)$ in {(\ref{eq:logit-fgan})} with $P_*$ replaced by $P_n$,
$ h_{\gamma,\mu}(x) = h_{\sp,\gamma} ( x-\mu)$,
$\pen_1(\gamma) = \|\gamma_1\|_1 + \|\gamma_2\|_1$, the $L_1$ norm of $\gamma$ excluding the intercept $\gamma_0$,
and $\lambda_1 \ge 0$ is a tuning parameter.
In addition to the replacement of $P_*$ by $P_n$, there are two notable modifications in (\ref{eq:logit-fgan-L1}) compared with the population version (\ref{eq:logit-fgan}).
First, a penalty term is introduced on $\gamma$, to achieve suitable control of sampling variation.
Second, the discriminator $h_{\gamma,\mu}$ is a spline function with knots depending on $\mu$, the location parameter for the generator. By a change of variables,
the non-penalized objective in (\ref{eq:logit-fgan-L1}) can be equivalently written as
\begin{align}
 K_f (P_n, P_\theta; h_{\gamma,\mu}) = \E_{P_n-\mu} f^{\prime}(\me^{h_{\sp,\gamma}(x)}) -
 \E_{P_{0,\Sigma}} f^\#(e^{h_{\sp,\gamma}(x)}),   \label{eq:logit-fgan-L1-transf}
\end{align}
where $P_n -\mu$ denotes the empirical distribution on $\{X_1-\mu, \ldots, X_n-\mu\}$.
Hence $K_f (P_n, P_\theta; h_{\gamma,\mu})$ is a negative loss for discriminating between the shifted empirical distribution $P_n-\mu$ and the mean-zero generator $P_{0,\Sigma}$.
{The adaptive choice of knots for the spline discriminator $h_{\gamma,\mu}$
not only is numerically desirable but also facilitates the control of sampling variation in our theoretical analysis.
See Propositions~\ref{pro:logit-L1-upper}, \ref{pro:logit-L2-upper}, \ref{pro:hinge-L1-upper}, and \ref{pro:hinge-L2-upper}.}

\begin{thm} \label{thm:logit-L1}
Assume that $\| \Sigma^* \|_{\max}\le M_1$ and $f$ satisfies Assumptions~\ref{ass:f-condition}--\ref{ass:f-condition2}.
Let $\hat \theta=(\hat\mu,\hat\Sigma)$ be a solution to (\ref{eq:logit-fgan-L1}). For $\delta <1/7$, if $\lambda _1 \ge C_1\left(\sqrt{\log{p}/n} + \sqrt{\log(1/\delta)/n}\right)$ and $\sqrt{\epsilon}+\sqrt{1/(n\delta)} + \lambda_1 \le C_2 $, then with probability at least $1-7\delta$
the following bounds hold  {uniformly over contamination distribution $Q$,}
\begin{align*}
 \| \hat\mu - \mu^* \|_\infty & \le C\left( \sqrt{\epsilon} + \sqrt{1/(n\delta)} + \lambda_1 \right), \\
 \| \hat\Sigma - \Sigma^* \|_{\max} & \le  C\left( \sqrt{\epsilon} + \sqrt{1/(n\delta)} + \lambda_1 \right),
\end{align*}
where $C_1, C_2, C > 0$ are constants, depending on $M_1$ and $f$ but independent of $(n, p, \epsilon, \delta)$.
\end{thm}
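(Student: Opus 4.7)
My plan is to follow a ``basic inequality'' argument derived from the nested-optimization interpretation of (\ref{eq:logit-fgan-L1}) in Remark~\ref{rem:nash}. Writing $L(\theta) = \max_\gamma\{K_f(P_n, P_\theta; h_{\gamma,\mu}) - \lambda_1 \pen_1(\gamma)\}$, the optimality of $\hat\theta$ yields $L(\hat\theta) \le L(\theta^*)$; the task then splits into (a)~upper-bounding $L(\theta^*)$ by a small quantity of order $\epsilon + 1/(n\delta) + \lambda_1^2$ and (b)~lower-bounding $L(\hat\theta)$ by a function of the parameter errors, with the final rate coming from solving the resulting quadratic inequality.

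For step (a), I would decompose $K_f(P_n, P_{\theta^*}; h) = K_f(P_\epsilon, P_{\theta^*}; h) + K_f(P_n - P_\epsilon, P_{\theta^*}; h)$. The population summand is upper bounded by $D_f(P_\epsilon \| P_{\theta^*})$ via~(\ref{eq:logit-fgan-lbd}); since $dP_\epsilon/dP_{\theta^*} \ge 1-\epsilon$ and $f$ is nonincreasing by Assumption~\ref{ass:f-condition}(ii), the integrand $f(dP_\epsilon/dP_{\theta^*})$ is uniformly bounded by a quantity of order $\epsilon$. The empirical summand is controlled by $L_1$–$L_\infty$ duality: under the Lipschitz conditions on $f'(\me^u)$ and $f^\#(\me^u)$ (Assumption~\ref{ass:f-condition2}), the integrands are Lipschitz images of $h_{\sp,\gamma}$, so the deviation is bounded by $\pen_1(\gamma)\cdot\max_{i,j}|(P_n-P_\epsilon)(\varphi_i\varphi_j)|$; a Bernstein/Hoeffding union bound over the $O(p^2)$ uniformly bounded basis products gives $\sqrt{\log p/n} + \sqrt{\log(1/\delta)/n}$, which is absorbed into $\lambda_1$ by the stated choice. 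This is the substance of Proposition~\ref{pro:logit-L1-upper}.

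For step (b), I would construct test splines targeting individual coordinates: for each $j$, choose $h = t\,\varphi_l(x_j - \hat\mu_j)$ so that $\pen_1(t\gamma^\dagger) = |t|$. A second-order Taylor expansion of $K_f(P_\epsilon, P_{\hat\theta}; \cdot)$ at the zero discriminator, valid by the smoothness in Assumption~\ref{ass:f-condition2}(i) and the strict convexity $f^\dprime(1)>0$ in Assumption~\ref{ass:f-condition}(i), gives
\begin{align*}
K_f(P_\epsilon, P_{\hat\theta}; t\varphi_l) = t f'(1)\Delta_j^{(l)} - \tfrac{t^2}{2}f^\dprime(1)\,\var_{P_{\hat\theta}}(\varphi_l) + O(t^3),
\end{align*}
where $\Delta_j^{(l)} = \E_{P_\epsilon}\varphi_l(x_j-\hat\mu_j) - \E_{P_{\hat\theta}}\varphi_l(x_j-\hat\mu_j)$. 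Optimizing $t$ against the penalty $\lambda_1|t|$ yields $L(\hat\theta) \ge c\,(|\Delta_j^{(l)}| - C\lambda_1)_+^2$ for some knot $l$. Splitting $\Delta_j^{(l)} = (1-\epsilon)(\E_{P_{\theta^*}}-\E_{P_{\hat\theta}})\varphi_l + \epsilon(\E_Q - \E_{P_{\hat\theta}})\varphi_l$, the second term is $O(\epsilon)$ uniformly while the first has a linear, nondegenerate dependence on $\hat\mu_j - \mu_j^*$ (obtained from the explicit Gaussian mean of a truncated linear function, whose derivative in $\mu$ is proportional to $\Phi(\cdot)$, bounded away from 0 for at least one of the five knots uniformly on $\Theta_1$). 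Combining the bounds, $L(\hat\theta)\le L(\theta^*)$ produces a quadratic inequality $c_1(\hat\mu_j - \mu_j^*)^2 - c_2\lambda_1|\hat\mu_j - \mu_j^*| \le c_3\epsilon + c_4/(n\delta)$, whose solution is $|\hat\mu_j - \mu_j^*| \lesssim \sqrt\epsilon + \lambda_1 + \sqrt{1/(n\delta)}$. Taking maxima over $j$, and running an analogous argument with pairwise splines $\varphi_{l_1}(x_j-\hat\mu_j)\varphi_{l_2}(x_k-\hat\mu_k)$ for the scatter bound, yields the theorem.

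The main obstacle I anticipate is step (b) with adaptively centered splines: the discriminator class depends on the unknown $\hat\mu$ through the knot locations, coupling the empirical-process analysis to the generator parameter, and the nondegeneracy of the Gaussian signal $\Delta_j^{(l)}$ with respect to $\hat\mu_j - \mu_j^*$ must be established uniformly over $\Theta_1$ (and not just locally near $\mu^*$), with the perturbation from the unknown $\hat\Sigma - \Sigma^*$ controlled as a lower-order cross-term. The careful choice of five knots in $\{-2,-1,0,1,2\}$ together with the Lipschitz regularity in Assumption~\ref{ass:f-condition2} is precisely what makes the uniform empirical process bound and the uniform nondegeneracy of the Gaussian signal simultaneously achievable.
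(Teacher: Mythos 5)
Your step (b) and the overall architecture — optimality of $\hat\theta$, a lower bound on the discriminator objective via a Taylor expansion of $K_f(P_\epsilon,P_{\hat\theta};th)$ at $t=0$ leading to a moment-matching functional, and an inversion from moment differences to parameter errors using the explicit Gaussian means of ramp functions — do track the paper's proof (Propositions~\ref{pro:logit-L1-upper}--\ref{pro:logit-L1-combine} and Lemma~\ref{lem:logit-L1-lower}; note the first-order coefficient is $f^{\prime\prime}(1)$, not $f^\prime(1)$). The handling of the $\hat\mu$-dependent knots is indeed done in the paper via symmetrization over a VC class of moving-knot ramps (Lemma~\ref{lem:logit-concen-L1-lower} with Lemma~\ref{lem:ramp-VC}), essentially confirming the difficulty you flagged.

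The genuine gap is in step (a). You write $K_f(P_n, P_{\theta^*};h) = K_f(P_\epsilon, P_{\theta^*};h) + (P_n - P_\epsilon) f^\prime(\me^{h(x)})$ and propose to control the empirical term ``by $L_1$--$L_\infty$ duality'' through $\pen_1(\gamma)\max_{i,j}|(P_n-P_\epsilon)(\varphi_i\varphi_j)|$, invoking ``Lipschitz conditions on $f^\prime(\me^u)$ and $f^\#(\me^u)$.'' This fails on two counts. First, Assumption~\ref{ass:f-condition2}(ii) only gives a global Lipschitz bound on $f^\#(\me^u)$, not on $f^\prime(\me^u)$; for the reverse-KL case $f(t)=-\log t$, $f^\prime(\me^u)=-\me^{-u}$ has unbounded derivative as $u\to-\infty$, so there is no global Lipschitz constant to invoke. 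Second, and more structurally, $f^\prime(\me^{h_\gamma(x)})$ is a nonlinear transformation of $h_\gamma$, so $(P_n-P_\epsilon)f^\prime(\me^{h_\gamma})$ does not decompose as $\gamma^\T(P_n-P_\epsilon)(\text{basis})$; duality does not apply directly. Replacing duality by symmetrization plus the contraction principle would require both a finite Lipschitz constant (unavailable) and a concentration step around the symmetrized expectation, which (via bounded differences) requires a uniform bound on $|h_\gamma(x)|$ over the unconstrained $\Gamma$ — also unavailable. The paper explicitly warns against exactly this route in Remark~S5 (the version controlling $|\E_{P_{\theta^*,n}}f^\#(\me^h)-\E_{P_{\theta^*}}f^\#(\me^h)|$ or the $f^\prime$ analogue is stated to be ``difficult to control for $h$ in our spline class'').

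The missing idea is the concavity argument of Lemma~\ref{lem:logit-upper}(iii): by Assumptions~\ref{ass:f-condition}(ii) and \ref{ass:f-condition2}(i), both $f^\prime(\me^u)$ and $-f^\#(\me^u)$ are concave in $u$ (Remark~\ref{rem:f-condition2-a}), so Jensen's inequality gives
$(1-\hat\epsilon)\E_{P_{\theta^*,n}}f^\prime(\me^{h}) - \E_{P_{\theta^*}}f^\#(\me^{h}) \le (1-\hat\epsilon)f^\prime(\me^{\E_{P_{\theta^*,n}}h}) - f^\#(\me^{\E_{P_{\theta^*}}h})$,
which after the pointwise bound and the $R_1$-Lipschitzness of $f^\#(\me^u)$ reduces the empirical deviation to the quantity $|\E_{P_{\theta^*,n}}h - \E_{P_{\theta^*}}h|$. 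This \emph{is} linear in $\gamma$, and only then does $L_1$--$L_\infty$ duality over the $O(p^2)$ basis products (Lemma~\ref{lem:spline-L1-upper}) deliver the $\pen_1(\gamma)\,\lambda_{11}$-type bound that your choice of $\lambda_1$ absorbs. Without this step your argument does not close.
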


For $L_1$ penalized logit $f$-GAN,
Theorem \ref{thm:logit-L1} shows that the estimator $(\hat\mu,\hat\Sigma)$
achieves error bounds in the {maximum} norms in the order $\sqrt\epsilon + \sqrt{\log(p)/n}$.
These error bounds match sampling errors of order $\sqrt{\log (p)/n}$ in the maximum norms for the standard estimators
(i.e., the sample mean and variance) in a multivariate Gaussian model in the case of $\epsilon=0$.
Moreover, up to sampling variation, the error bounds also match the population error bounds of order $\sqrt{\epsilon}$ in the maximum norms
with nonparametric discriminators in Theorem~\ref{thm:pop-robust}(i),
even though a simple, {\it linear} class of spline discriminators is used.

The second method, $L_2$ penalized logit $f$-GAN, is defined by solving
\begin{align}
& \min_{\theta\in\Theta_2} \max_{\gamma\in\Gamma}  \; \left\{ K_f (P_n, P_\theta; h_{\gamma,\mu}) - \lambda_2 \;\pen_2(\gamma_1) - \lambda_3 \;\pen_2(\gamma_2) \right\},   \label{eq:logit-fgan-L2}
\end{align}
where $K_f (P_n, P_\theta; h)$  and $ h_{\gamma,\mu}(x)$ are defined as in (\ref{eq:logit-fgan-L1}),
$\pen_2(\gamma_1) = \|\gamma_1\|_2$ and $\pen_2(\gamma_2) =\|\gamma_2\|_2$, the $L_2$ norms of $\gamma_1$ and $\gamma_2$,
and $\lambda_2 \ge 0$ and $\lambda_3 \ge 0$ are tuning parameters.
Compared with $L_1$ penalized logit $f$-GAN (\ref{eq:logit-fgan-L1}),
the $L_2$ norms of $\gamma_1$ and $\gamma_2$ are separately associated with tuning parameters $\lambda_2$ and $\lambda_3$ in (\ref{eq:logit-fgan-L2}),
in addition to the change from $L_1$ to $L_2$ penalties.
As seen from our proofs in Sections~\ref{sec:logit-L2} and \ref{sec:hinge-L2} in the Supplement,
the use of separate tuning parameters $\lambda_2$ and $\lambda_3$ is crucial for achieving meaningful error bounds in the $L_2$ and Frobenius norms
for simultaneous estimation of $(\mu^*,\Sigma^*)$. Our method does not rely on the use of normalized differences of pairs of the observations to
reduce the unknown mean to 0 for scatter estimation as in \cite{DKKLMS19}.

\begin{thm} \label{thm:logit-L2}
Assume that $\| \Sigma^* \|_\op \le M_2$, $f$ satisfies Assumptions~\ref{ass:f-condition}--\ref{ass:f-condition2}, and $p\epsilon$ is upper bounded by a constant $B$. Let $\hat \theta=(\hat\mu,\hat\Sigma)$ be a solution to (\ref{eq:logit-fgan-L2}). For $\delta <1/8$, if
$\lambda_2 \ge C_1\left(\sqrt{p/n} + \sqrt{\log(1/\delta)/n}\right)$, $\lambda_3 \ge C_1\sqrt{p}\left(\sqrt{p/n} + \sqrt{\log(1/\delta)/n}\right)$,
and $\sqrt{\epsilon} + \sqrt{1/(n\delta)} + \lambda_2 \le C_2$, then with probability at least $1- {8}\delta$
the following bounds hold  {uniformly over contamination distribution $Q$,}
\begin{align*}
 \| \hat\mu - \mu^* \|_2 & \le C\left(\sqrt{\epsilon} + \sqrt{1/(n\delta)} + \lambda_2\right), \\
 p^{-1/2}\| \hat\Sigma - \Sigma^* \|_\fro
& \le C\left(\sqrt{\epsilon} + \sqrt{1/(n\delta)} + \lambda_2 + \lambda_3/\sqrt{p}\right),
\end{align*}
where $C_1, C_2, C > 0$ are constants, depending on $M_2$ and $f$ but independent of $(n, p, \epsilon, \delta)$ except through the bound $B$ on $p \epsilon$.
\end{thm}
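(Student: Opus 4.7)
\noindent\textbf{Proof plan for Theorem \ref{thm:logit-L2}.}\;
The strategy parallels the plan for Theorem \ref{thm:logit-L1} but is adapted to the $\Theta_2$ setting, $L_2$ penalties, and Frobenius-norm targets. Let $\hat K(\theta,\gamma)=K_f(P_n,P_\theta;h_{\gamma,\mu})-\lambda_2\|\gamma_1\|_2-\lambda_3\|\gamma_2\|_2$ and $\hat L(\theta)=\max_\gamma \hat K(\theta,\gamma)$. By the nested interpretation of (\ref{eq:logit-fgan-L2}) discussed in Remark \ref{rem:nash}, $\hat L(\hat\theta)\le \hat L(\theta^*)$. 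I would then sandwich the estimation error between an empirical upper bound at $\theta^*$ and a curvature-based lower bound at $\hat\theta$ that is linear in the discriminator norms.

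The first step is to establish a sample upper bound $\hat L(\theta^*)\le C(\epsilon+1/(n\delta))$ with high probability, analogous to Proposition~\ref{pro:logit-L2-upper}. To prove it, decompose $K_f(P_n,P_{\theta^*};h_{\gamma,\mu^*})$ into the population part $K_f(P_\epsilon,P_{\theta^*};h_{\gamma,\mu^*})$ plus an empirical fluctuation. Using the identity
\[
K_f(P_\epsilon,P_{\theta^*};h)=K_f(P_{\theta^*},P_{\theta^*};h)+\epsilon\,(\E_Q-\E_{P_{\theta^*}})f^\prime(\me^{h})
\]
together with $K_f(P_{\theta^*},P_{\theta^*};h)\le 0$ (from (\ref{eq:logit-fgan-lbd})) and Assumptions \ref{ass:f-condition}--\ref{ass:f-condition2}, the population contribution is bounded by $\epsilon$ times a quantity linear in $(\|\gamma_1\|_\infty,\|\gamma_2\|_\infty)$, which is absorbed by the penalties after invoking $p\epsilon\le B$ (this is where the $\sqrt\epsilon$ rather than $\epsilon\sqrt p$ term arises). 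The empirical fluctuation is handled by a uniform Bernstein-type bound for linear functionals of $\varphi(X)$ and $\varphi(X)\otimes\varphi(X)$; since these basis vectors have dimensions $5p$ and $(5p)^2$ and the Lipschitz property of $f^\#(\me^u)$ (Assumption~\ref{ass:f-condition2}(ii)) gives uniformly bounded derivatives, one obtains fluctuation bounds of order $\sqrt{p/n}\|\gamma_1\|_2$ and $\sqrt{p^2/n}\|\gamma_2\|_2$ that are exactly absorbed by the stated $\lambda_2,\lambda_3$.

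The second step is to lower bound $\hat L(\hat\theta)$ by a function of the estimation errors. Evaluate $\hat K(\hat\theta,\gamma)$ at the scaled oracle discriminator $\gamma^\dagger=t\gamma^\circ$, where $\gamma^\circ$ is the $L_2$ projection onto $\mathcal H_\sp$ of the Gaussian log-density-ratio $\log(p_{\theta^*}/p_{\hat\theta})$ centered at $\hat\mu$. Because this log ratio is affine-plus-quadratic in $x$, the projection is faithful up to controlled tail truncation of the spline basis, with $\|\gamma_1^\circ\|_2\lesssim \|\hat\mu-\mu^*\|_2$ and $\|\gamma_2^\circ\|_2\lesssim \|\hat\Sigma-\Sigma^*\|_\fro$. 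Taylor-expanding $\hat K(\hat\theta,t\gamma^\circ)$ in $t$ around $0$, the first-order term yields $t\cdot D(\hat\theta,\theta^*)$ for a quadratic form $D$ bounded below by $c(\|\hat\mu-\mu^*\|_2^2+p^{-1}\|\hat\Sigma-\Sigma^*\|_\fro^2)/M_2$ via $f^\dprime(1)>0$ (Assumption~\ref{ass:f-condition}(i)), the second-order term is of order $t^2(\|\gamma_1^\circ\|_2^2+\|\gamma_2^\circ\|_2^2)$, and the penalty contributes $t(\lambda_2\|\gamma_1^\circ\|_2+\lambda_3\|\gamma_2^\circ\|_2)$. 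Optimizing over $t$ and combining with Step 1 gives, after a few algebraic manipulations,
\[
\|\hat\mu-\mu^*\|_2^2+\frac{\|\hat\Sigma-\Sigma^*\|_\fro^2}{p}\lesssim \epsilon+\frac{1}{n\delta}+\lambda_2^2+\frac{\lambda_3^2}{p},
\]
from which the stated bounds follow on taking square roots and matching the prescribed rates for $\lambda_2,\lambda_3$.

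The main obstacle is Step 2: constructing a spline surrogate $\gamma^\circ$ for the Gaussian log-density-ratio whose $L_2$ coefficient norms are proportional to $\|\hat\mu-\mu^*\|_2$ and $\|\hat\Sigma-\Sigma^*\|_\fro$ \emph{without} loss of the curvature $f^\dprime(1)$. Because the true log-ratio is unbounded while $\varphi(x)$ uses only knots in $[-2,2]$, one must combine a Taylor expansion of the log-ratio (valid when $\hat\theta$ lies in an $O(1)$ operator-norm neighborhood of $\theta^*$, which is ensured by $\|\Sigma\|_\op\le M_2$ and the condition $p\epsilon\le B$) with an explicit $L_2$ decomposition of linear and quadratic forms in $\mathcal H_\sp$ under the Gaussian measure. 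The required neighborhood control is also the technical reason for the $p\epsilon\le B$ assumption, which keeps the higher-order Taylor terms dominated by the leading quadratic curvature and thus preserves the $\sqrt\epsilon$ rate rather than the coarser $\epsilon\sqrt p$ rate that would come from a worst-case quadratic-term bound.
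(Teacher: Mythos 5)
Your plan is genuinely different from the paper's in the lower-bound step, but it has two gaps that I think are fatal in the form described.

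\textbf{Gap 1: unboundedness of the oracle discriminator.}\;
Your Step 2 plugs $h_{t\gamma^\circ,\hat\mu}$ into $\hat K(\hat\theta,\cdot)$, where $\gamma^\circ$ is the $L_2$ projection of $\log(p_{\theta^*}/p_{\hat\theta})$ onto $\mathcal H_\sp$, and then Taylor-expands in $t$. But the spline basis $(x_j-\xi_l)_+$ is unbounded, so any generic $\gamma^\circ$ gives an unbounded discriminator; in particular it can be driven to $-\infty$ along some direction. The lower bound has to control
$\E_{P_n}f^\prime(\me^{t h_{\gamma^\circ,\hat\mu}})$, and $P_n$ contains an $\hat\epsilon$ fraction of contaminated points with arbitrary $Q$; since $f^\prime$ is non-decreasing with $f^\prime(\me^u)\to-\infty$ as $u\to-\infty$ for the rKL, JS, etc., an adversarial $Q$ can put mass where $h\to-\infty$ and make the contamination contribution $-\infty$, destroying the lower bound. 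This is exactly why the paper restricts the lower-bound evaluation to the bounded ramp subclasses $\Gamma_{10}\subset\Gamma_\rpL$ and $\Gamma_{20}\subset\Gamma_\rpQ$: with $h\in[-b,b]$, Lemma~\ref{lem:logit-L2-lower} gives the contamination contribution $\ge f^\prime(\me^{-b})\epsilon$, which is $O(\epsilon)$ once $b$ is bounded. The same boundedness is what lets Lemma~\ref{lem:lip-gaus2} (the bounded version) bound the second-order variance term by $O(b^2)$ rather than the $O(p)$-inflated bound of Lemma~\ref{lem:lip-gaus}(ii), which your unbounded quadratic projection would trigger. The boundedness of the test discriminators is not a technical convenience here; it is the mechanism by which contamination is controlled, and your oracle $\gamma^\circ$ does not have it.

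\textbf{Gap 2: coupling of the location/scale and correlation bounds.}\;
Even granting boundedness, your Step 2 uses a single amplitude $t$ for a discriminator containing both $\gamma_1^\circ$ and $\gamma_2^\circ$. The symmetrized-KL identity gives $D\asymp a^2 + p b^2$ with $a=\|\hat\mu-\mu^*\|_2$ and $b=p^{-1/2}\|\hat\Sigma-\Sigma^*\|_\fro$, and the penalty is $\lambda_2\|\gamma_1^\circ\|_2 + \lambda_3\|\gamma_2^\circ\|_2\asymp\lambda_2 a + \lambda_3\sqrt{p}b$. Completing the square in each variable separately gives $a^2+pb^2\lesssim\epsilon+1/(n\delta)+\lambda_2^2+\lambda_3^2$, and then $a\lesssim\lambda_3=\sqrt{p}\lambda_2$, off by $\sqrt{p}$ from the target. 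The algebra you wrote down, with $\lambda_3^2/p$ instead of $\lambda_3^2$, does not follow from a coupled bound; to get it you must evaluate the objective twice, once over $\Gamma_1=\{\gamma:\gamma_2=0\}$ (so the $\lambda_3$ penalty vanishes and you may use a larger amplitude $b_2=\sqrt{\epsilon}+\sqrt{1/(np)}$) and once over $\Gamma_2=\{\gamma:\gamma_1=0\}$ (so the $\lambda_2$ penalty vanishes and the amplitude $b_3=\sqrt{\epsilon/p}+\sqrt{1/(np^2)}$ is $\sqrt{p}$ smaller). This is exactly the two-pass structure in the paper's Steps 2--3 and 4--5, and it is what yields separate bounds on $\|\hat\mu-\mu^*\|_2,\|\hat\sigma-\sigma^*\|_2$ (via Proposition~\ref{pro:logit-L2-combine} and the main-effect ramps) and on $\|\hat\Sigma-\Sigma^*\|_\fro$ (via Proposition~\ref{pro:logit-L2v-combine} and the interaction ramps), each with the correct penalty dependence.

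Finally, your diagnosis of the role of $p\epsilon\le B$ ("keeps the higher-order Taylor terms dominated") is not the reason it appears in the paper's proof. There, $b_2^\dag=b_2\sqrt{2p}\lesssim\sqrt{p\epsilon}+\sqrt{1/n}$ is the amplitude of the discriminator, and $p\epsilon\le B$ ensures $b_2^\dag$ is bounded so that the Lipschitz/curvature constants $R_{2,b_2^\dag}, R_{3,b_2^\dag}, R_{4,b_2^\dag}$ remain constants; without it, these blow up with $p$. It has nothing to do with keeping $\hat\theta$ near $\theta^*$ in operator norm, which is enforced by the parameter space restriction $\|\Sigma\|_\op\le M_2$ alone.
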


For $L_2$ penalized logit $f$-GAN, Theorem \ref{thm:logit-L2} provides error bounds of order
$\sqrt\epsilon + \sqrt{p/n}$, in the $L_2$ and $p^{-1/2}$-Frobenius norms for location and scatter estimation.
A technical difference from Theorem \ref{thm:logit-L1} is that these bounds are derived under an extraneous condition that $p \epsilon$ is upper bounded.
Nevertheless, the error rate, $\sqrt\epsilon + \sqrt{p/n}$, matches
the population error bounds of order $\sqrt{\epsilon}$ in Theorem \ref{thm:pop-robust}(i), up to sampling variation of order $\sqrt{p/n}$
in the $L_2$ and $p^{-1/2}$-Frobenius norms.
We defer to Section \ref{sec:hinge-gan} further discussion about the error bounds in Theorems \ref{thm:logit-L1}--\ref{thm:logit-L2}
compared with minimax error rates.

\begin{rem} \label{rem:f-condition2-a}
There are important implications of Assumption \ref{ass:f-condition2}(i) together with Assumption \ref{ass:f-condition}(ii), based on the
fact (``composition rule'') that the composition of a non-decreasing concave function and a concave function is concave.
First, for convex $f$, concavity of $f^\prime (\me^u)$ in $u \in \bbR$ implies Assumption \ref{ass:f-condition}(iii), that is, concavity of $f^\prime (t)$ in $t \in (0,\infty)$.
This follows by writing $f^\prime (t) = g( \log t)$ and applying the composition rule, where $g(u) = f^\prime (\me^u)$, in addition to being concave,
 is non-decreasing by convexity of $f$, and $\log t$ is concave in $t$.
Note that  concavity of $f^\prime (t)$ in $t$ may not imply concavity of $f^\prime (\me^u)$ in $u$,
as shown by the Pearson $\chi^2$  in Table \ref{tab:1}.
Second, for convex and non-increasing $f$, concavity of $f^\prime (\me^u)$ in $u \in \bbR$ also implies concavity of $-f^\# (\me^u) $ in $u \in \bbR$.
In fact, as mentioned in Remark \ref{rem:f-gan}, $f^\# (t)$ can be equivalently obtained as $f^\# (t) = f^*( f^\prime (t))$,
where $f^*$ is the Fenchel conjugate of $f$ (\cite{TSO19}).
By the composition rule, $-f^\# (\me^u) = g( f^\prime(\me^u))$ is concave,
where $g  = - f^*$ is concave and non-decreasing by non-increasingness of $f$.
\end{rem}

\begin{rem} \label{rem:f-condition2-b}
The concavity of $f^\prime (\me^u)$ and $- f^\# (\me^u)$ in $u$
from Assumptions \ref{ass:f-condition}(ii) and \ref{ass:f-condition2}(i), as discussed in Remark \ref{rem:f-condition2-a},
is instrumental from both theoretical and computational perspectives.
These concavity properties are crucial to our proofs of Theorems \ref{thm:logit-L1}--\ref{thm:logit-L2} and Corollary \ref{cor:two-obj}(i).
See Lemmas \ref{lem:logit-upper} and \ref{lem:two-obj-centered-upr-bound} in the Supplement.
Moreover, the concavity of $f^\prime (\me^u)$ and $- f^\# (\me^u)$ in $u$,
in conjunction with the linearity of the spline discriminator $h_{\gamma,\mu}$ in $\gamma$,
indicates that the objective function $K_f (P_n, P_\theta; h_{\gamma,\mu})$
is concave in $\gamma$ for any fixed $\theta$.
Hence our penalized logit $f$-GAN (\ref{eq:logit-fgan-L1}) or (\ref{eq:logit-fgan-L2}) under Assumptions \ref{ass:f-condition}--\ref{ass:f-condition2}
can be implemented through nested optimization as shown in Algorithm \ref{alg:1},
where a concave optimizer is used in the inner stage to train the spline discriminators.
See Remark \ref{rem:nash} for further discussion.
\end{rem}
\vspace{-20pt}
\begin{algorithm} \label{alg:1}
    \caption{Penalized logit $f$-GAN or hinge GAN}
    \KwSty{Require} A penalized GAN objective function $K(\theta,\gamma; \lambda)$ as in (\ref{eq:logit-fgan-L1}), (\ref{eq:logit-fgan-L2}), (\ref{eq:hinge-gan-L1}), (\ref{eq:hinge-gan-L2}), initial value $\theta_0$ for the generator, and a decaying learning rate $\alpha_t$.\\
    \Repeat{}{
        \KwSty{Sampling:} Draw $(Z_1,\ldots,Z_n)$ from $\N(0,I)$ and approximate
        $P_{\theta_{t-1}}$ by \\
        the empirical distribution on the fake data $\xi_i = \mu_{t-1} + \Sigma_{t-1}^{1/2} z_i$, $i=1,\ldots,n$. \\
        \KwSty{Updating:} Compute $\gamma_{t} =\argmax_{\gamma} K(\theta_{t-1}, \gamma; \lambda)$ by a concave optimizer,\\
        and compute $\theta_t = \theta_{t-1} - \alpha_t \nabla_{\theta} K (\theta, \gamma_t; \lambda) |_{\theta_{t-1}}$ by gradient descent.
    }(\KwSty{until} convergence.)
\end{algorithm}
\vspace{-20pt}
\subsection{Hinge GAN with spline discriminators}  \label{sec:hinge-gan}

We consider two hinge GAN methods with an $L_1$ or $L_2$ penalty on the spline discriminator,
which leads to theoretically improved error bounds in terms of dependency on $(\epsilon,p)$ over the parameter space $\Theta_1$ or $\Theta_2$ respectively,
compared with the corresponding logit $f$-GAN methods in Section~\ref{sec:logit-fgan}.

The first method, $L_1$ penalized hinge GAN, is defined by solving
\begin{align}
& \min_{\theta\in\Theta_1} \max_{\gamma\in\Gamma}  \;
\left\{ K_\HG (P_n, P_\theta; h_{\gamma,\mu}) - \lambda_1 \;\pen_1 (\gamma) \right\},   \label{eq:hinge-gan-L1}
\end{align}
where $K_\HG (P_n, P_\theta; h)$ is the hinge objective $K_\HG (P_*, P_\theta; h)$ in (\ref{eq:hinge-gan}) with $P_*$ replaced by $P_n$ and,
similarly as in $L_1$ penalized logit $f$-GAN (\ref{eq:logit-fgan-L1}),
$ h_{\gamma,\mu}(x) = h_{\sp,\gamma} ( x-\mu)$, $\pen_1(\gamma) = \|\gamma_1\|_1 + \|\gamma_2\|_1$, and $\lambda_1 \ge 0$ is a tuning parameter.

\begin{thm} \label{thm:hinge-L1}
Assume that $\|\Sigma^{*}\|_{\max} \le M_1$. Let $\hat \theta=(\hat\mu,\hat\Sigma)$ be a solution to (\ref{eq:hinge-gan-L1}). For $\delta <1/7$,
if $\lambda _1 \ge C_{1} \left( \sqrt{\log{p}/n} + \sqrt{\log{(1/\delta)}/n}\right) $ and $\epsilon + \sqrt{\epsilon/(n\delta)} + \lambda_{1} \le C_2 $,
then with probability at least $1-7\delta$ the following bounds hold  {uniformly over contamination distribution $Q$,}
\begin{align*}
 \| \hat\mu - \mu^* \|_\infty & {\le} C \left(\epsilon + \sqrt{\epsilon/(n\delta)} + \lambda_1\right), \\
 \| \hat\Sigma - \Sigma^*\|_{\max} & {\le} C\left(\epsilon + \sqrt{\epsilon/(n\delta)} + \lambda_1\right),
\end{align*}
where $C, C_1, C_2 > 0$ are constants, depending on $M_1$ but independent of $(n, p, \epsilon,\delta)$.
\end{thm}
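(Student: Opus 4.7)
The plan is to combine a basic inequality with matched upper and lower bounds on the maximized penalized hinge objective, in a structure analogous to Theorems \ref{thm:logit-L1} and \ref{thm:logit-L2}. Writing $L(\theta, \gamma) = K_{\HG}(P_n, P_\theta; h_{\gamma,\mu}) - \lambda_1 \pen_1(\gamma)$ and $L^*(\theta) = \max_\gamma L(\theta, \gamma)$, optimality of $(\hat\theta, \hat\gamma)$ gives $L^*(\hat\theta) \le L^*(\theta^*)$, so the task reduces to an upper bound on $L^*(\theta^*)$ and a lower bound on $L^*(\hat\theta)$ in terms of the parameter errors.

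For the upper bound on $L^*(\theta^*)$, I would first invoke the variational inequality (\ref{eq:hinge-gan-lbd}): for every $h$, $K_{\HG}(P_\epsilon, P_{\theta^*}; h) \le 2 D_{\TV}(P_\epsilon \| P_{\theta^*}) \le 2\epsilon$. What remains is to control the empirical-process gap $\E_{P_n - P_\epsilon} \min(1, h_{\gamma, \mu^*}(x))$ uniformly in $\gamma$, against the $L_1$ penalty. Because $\min(1, \cdot)$ is $1$-Lipschitz, Rademacher contraction combined with the linearity of $h_{\sp, \gamma}$ in $\gamma$ yields a bound of the form $\pen_1(\gamma) \cdot O(\sqrt{\log p/n} + \sqrt{\log(1/\delta)/n})$ plus a separately handled intercept contribution that uses only that $\min(1, \cdot)$ is bounded. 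Choosing $\lambda_1$ at this sampling rate absorbs the gap into the penalty, giving $L^*(\theta^*) \le C(\epsilon + \sqrt{\epsilon/(n\delta)} + \lambda_1)$ with the claimed probability, uniformly over $Q$. This is the content of a proposition of the type \ref{pro:hinge-L1-upper}.

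For the lower bound on $L^*(\hat\theta)$, the key observation is that if a feasible $h$ satisfies $|h| \le 1$ everywhere, then $\min(1, \pm h) = \pm h$, so
\begin{equation*}
K_{\HG}(P_\epsilon, P_{\hat\theta}; h) = \E_{P_\epsilon} h - \E_{P_{\hat\theta}} h = \E_{P_{\theta^*}} h - \E_{P_{\hat\theta}} h + \epsilon(\E_Q h - \E_{P_{\theta^*}} h),
\end{equation*}
and the hinge collapses to a linear moment-matching functional up to an $O(\epsilon)$ slack. For each coordinate $j$ and knot $\xi_l$ I would construct a scaled univariate spline $h(x) = \pm c\{\varphi_l(x_j - \hat\mu_j) - a_l\}$ with $c$ small enough so that $|h|\le 1$ and $\pen_1(\gamma) = O(1)$; a direct Gaussian-moment calculation then shows $\E_{P_{\theta^*}} h - \E_{P_{\hat\theta}} h \gtrsim |\hat\mu_j - \mu_j^*|$ up to cross-effects from the scatter error that are absorbed after maximizing over $(j, l)$ and sign. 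An analogous construction using product splines $\varphi_l(x_{j_1})\varphi_l(x_{j_2})$ produces a lower bound proportional to $|\hat\Sigma_{j_1 j_2} - \Sigma^*_{j_1 j_2}|$. Transferring $P_\epsilon$ back to $P_n$ in these tests costs only $O(\lambda_1)$, again by Lipschitz-contraction concentration, because the chosen $\gamma$ has $O(1)$ penalty.

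Combining the two directions,
\begin{equation*}
c\bigl(\|\hat\mu - \mu^*\|_\infty + \|\hat\Sigma - \Sigma^*\|_{\max}\bigr) \le L^*(\hat\theta) + C_0(\epsilon + \sqrt{\epsilon/(n\delta)} + \lambda_1) \le 2C_0(\epsilon + \sqrt{\epsilon/(n\delta)} + \lambda_1),
\end{equation*}
which upon rearrangement yields the stated rates; the smallness condition $\epsilon + \sqrt{\epsilon/(n\delta)} + \lambda_1 \le C_2$ keeps $\hat\theta$ in the regime where the spline test functions stay bounded. The principal obstacle is the explicit construction and analysis in Step~3: the spline tests must be scaled so that $|h|\le 1$ uniformly over generators $\N(\hat\mu, \hat\Sigma)$ with $\|\hat\Sigma\|_{\max}\le M_1$, while still producing an entry-wise, $p$-free lower bound on the moment discrepancy. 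This linearization of the hinge on $\{|h|\le 1\}$, unavailable for logit $f$-GAN, is precisely what delivers the improved linear $\epsilon$-dependence in Theorem~\ref{thm:hinge-L1} compared with the $\sqrt{\epsilon}$ rate of its logit counterpart in Theorem~\ref{thm:logit-L1}.
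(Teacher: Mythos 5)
Your overall architecture matches the paper's: the basic inequality $L^*(\hat\theta)\le L^*(\theta^*)$, an upper bound on $L^*(\theta^*)$ of order $\epsilon+\sqrt{\epsilon/(n\delta)}+\lambda_1$, a lower bound on $L^*(\hat\theta)$ by a moment-matching term, and an inversion of the moment discrepancy into entry-wise parameter errors. You also correctly spot the key structural fact that produces the linear $\epsilon$-dependence: when $|h|\le 1$, $\min(1,\pm h)=\pm h$, so the hinge collapses to a linear functional of $h$. That observation is exactly what Steps 2--3 of the paper's proof exploit through $\Gamma_0\subset\Gamma_\rp$.

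On the \emph{upper bound}, your route differs from the paper's in a substantive way. You propose to bound the empirical-process gap $\E_{P_n-P_\epsilon}\min(1,h_{\gamma,\mu^*})$ directly via Lipschitz (Rademacher) contraction applied to the nonlinear $\min(1,\cdot)$. The paper instead uses the \emph{concavity} of $\min(1,\cdot)$ and $\min(1,-\cdot)$ and Jensen's inequality (Lemma~\ref{lem:hinge-upr-bound}(ii)) to push the expectation through the nonlinearity, reducing to $|\E_{P_{\theta^*,n}}h-\E_{P_{\theta^*}}h|$ for the \emph{linear} function $h$ itself, after which Hölder cleanly gives $\pen_1(\gamma)\cdot\|\hat g-g\|_\infty$. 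This matters for two reasons. First, in your route the additive intercept $\gamma_0$ does not cancel inside $\min(1,\gamma_0+\cdots)$, so it must be absorbed via VC-type bounds over a class with an unpenalized free parameter; in the paper's route $\gamma_0$ cancels exactly in the moment difference. Second, contraction controls an \emph{expected} supremum over $\{\pen_1(\gamma)\le B\}$ linearly in $B$, but converting to a high-probability bound needs a concentration (McDiarmid) step whose deviation term is \emph{not} proportional to $\pen_1(\gamma)$; you would then need a peeling argument over dyadic $B$, or to absorb a constant deviation into a $\lambda_1$ term (which your final display suggests, but is not argued). The paper's linearization avoids all of this: concentration is applied once to the fixed vector $\hat g - g$ and the entire bound is proportional to $\pen_1(\gamma)$.

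On the \emph{lower bound}, there is a concrete gap in your test-function construction. You propose $h(x)=\pm c\{\varphi_l(x_j-\hat\mu_j)-a_l\}$ and claim $|h|\le 1$ for small $c$, but $\varphi_l(t)=(t-\xi_l)_+$ is \emph{unbounded above}; no positive $c$ makes this bounded pointwise, and the same is true for the product test $\varphi_l(x_{j_1})\varphi_l(x_{j_2})$. The collapse $\min(1,\pm h)=\pm h$ therefore fails. The fix, which is exactly what the paper does, is to use \emph{differences} of shifted truncated-linear basis functions: the ramp $\ramp(t)=\tfrac12(t+1)_+-\tfrac12(t-1)_+ \in[0,1]$ and its products define the class $\mathcal{H}_\rp$, and restricting to $\Gamma_0=\{\gamma\in\Gamma_\rp:\gamma_0=0,\,\pen_1(\gamma)=1\}$ forces $h_{\gamma,\hat\mu}\in[-1,1]$ by Hölder. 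These ramp functions live inside your spline class $\mathcal{H}_\sp$ and carry over the $L_1$ penalty exactly, which is why they are the right choice here. Finally, the ``direct Gaussian-moment calculation'' you flag as the principal obstacle is indeed where most of the work lies: one needs a quantitative local-inversion statement (the paper's Lemmas~\ref{lem:local-linear1}--\ref{lem:local-linear2} and Proposition~\ref{pro:logit-L1-combine}), which requires lower-bounding derivatives of Gaussian integrals of ramp functions and handling cross-contamination between the mean, scale, and correlation errors; it is not a hand-wave.
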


For $L_1$ penalized hinge GAN,
Theorem \ref{thm:hinge-L1} shows that the estimator $(\hat\mu,\hat\Sigma)$
achieves error bounds in the maximum norms in the order $\epsilon + \sqrt{\log(p)/n}$,
which improve upon the error rate $\sqrt\epsilon + \sqrt{\log(p)/n}$ in terms of dependency on $\epsilon$ for $L_1$ penalized logit $f$-GAN.
This difference can be traced to that in the population error bounds in Theorem \ref{thm:pop-robust}.
Moreover, Theorem 5.1 in \cite{CGR18} indicates that a minimax lower bound on estimator errors $\| \hat\mu-\mu^*\|_\infty$ or
$\| \hat\Sigma-\Sigma^* \|_{\max}$ is also of order $\epsilon + \sqrt{\log(p)/n}$ in Huber's contaminated Gaussian model,
where $\sqrt{\log (p)/n}$ is a minimax lower bound in the maximum norms in the case of $\epsilon=0$.
Therefore, our $L_1$ penalized hinge GAN achieves the minimax rates in the maximum norms for Gaussian
location and scatter estimation over $\Theta_1$.

The second method, $L_2$ penalized hinge GAN, is defined by solving
\begin{align}
& \min_{\theta\in\Theta_2} \max_{\gamma\in\Gamma}  \; \left\{ K_\HG (P_n, P_\theta; h_{\gamma,\mu}) - \lambda_2 \;\pen_2(\gamma_1) - \lambda_3 \;\pen_2(\gamma_2) \right\},   \label{eq:hinge-gan-L2}
\end{align}
where, similarly as in $L_2$ penalized logit $f$-GAN (\ref{eq:logit-fgan-L2}),
$ h_{\gamma,\mu}(x) = h_{\sp,\gamma} ( x-\mu)$, $\pen_2(\gamma_1) = \|\gamma_1\|_2$ and $\pen_2(\gamma_2) =\|\gamma_2\|_2$,
and $\lambda_2 \ge 0$ and $\lambda_3 \ge 0$ are tuning parameters.

\begin{thm} \label{thm:hinge-L2}
Assume that $\| \Sigma^* \|_\op \le M_2$ and $f$ satisfies Assumptions~\ref{ass:f-condition}--\ref{ass:f-condition2}. Let $\hat \theta=(\hat\mu,\hat\Sigma)$ be a solution to (\ref{eq:hinge-gan-L2}). For $\delta < 1/8$, if
$\lambda_2 \ge C_1\left(\sqrt{p/n} + \sqrt{\log(1/\delta)/n}\right)$, $\lambda_3 \ge C_1\sqrt{p}\left(\sqrt{p/n} + \sqrt{\log(1/\delta)/n}\right)$,
and $\sqrt{p}\left(\epsilon + \sqrt{\epsilon/(n\delta)}\right) + \lambda_2 \le C_2 $, then with probability at least $1- {8}\delta$
the following bounds hold  {uniformly over contamination distribution $Q$,}
\begin{align*}
 \| \hat\mu - \mu^* \|_2 & \le C\left(\sqrt{p}\left(\epsilon + \sqrt{\epsilon/(n\delta)}\right) + \lambda_2\right), \\
 p^{-1/2}\| \hat\Sigma - \Sigma^* \|_\fro
& \le C\left(\sqrt{p}\left(\epsilon + \sqrt{\epsilon/(n\delta)}\right) + \lambda_2 + \lambda_3/\sqrt{p}\right),
\end{align*}
where $C_1, C_2, C > 0$ are constants, depending on $M_2$ but independent of $(n, p, \epsilon, \delta)$.
\end{thm}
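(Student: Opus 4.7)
The plan is to combine the hinge-specific linearization from the proof of Theorem \ref{thm:hinge-L1} with the $L_2$-penalty machinery (and $L_2$/Frobenius-norm identification) from the proof of Theorem \ref{thm:logit-L2}. The central object is $J(\theta) = \max_{\gamma}\{K_\HG(P_n,P_\theta;h_{\gamma,\mu})-\lambda_2\|\gamma_1\|_2-\lambda_3\|\gamma_2\|_2\}$. By the nested-optimization definition of $(\hat\theta,\hat\gamma)$, $J(\hat\theta)\le J(\theta^*)$, so I would prove an \emph{oracle upper bound} $J(\theta^*)\le\text{(something small)}$ and an \emph{identifiability lower bound} $J(\hat\theta)\ge c\cdot(\text{estimation errors})-\text{(something small)}$, then combine.

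\textbf{Step 1 (oracle upper bound).} Decompose $P_n-P_{\theta^*}=(P_n-P_\epsilon)+\epsilon(Q-P_{\theta^*})$ and use the contraction $|\min(1,h)-\min(1,h')|\le|h-h'|$ together with $|\min(1,h)|\le 1+|h|$. For any spline with $\|\gamma_1\|_2\le r_1$, $\|\gamma_2\|_2\le r_2$, the $L^1(P_{\theta^*})$ size of $h_{\sp,\gamma}$ is $\lesssim \sqrt{p}\,r_1 + p\,r_2$ by the product structure of the tensor basis and sub-Gaussianity of $\varphi(X-\mu^*)$. This produces a bias contribution of order $\epsilon(\sqrt{p}\,r_1+p\,r_2)$ and, via a standard Rademacher-complexity bound on the $L_2$-ball of the spline class, a sampling contribution of order $(\sqrt{p/n}+\sqrt{\log(1/\delta)/n})(r_1+\sqrt{p}\,r_2)$. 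Under the stated $\lambda_2\gtrsim\sqrt{p/n}+\sqrt{\log(1/\delta)/n}$ and $\lambda_3\gtrsim\sqrt{p}\,\lambda_2$, the sampling terms are absorbed into $\lambda_2 r_1+\lambda_3 r_2$, yielding $J(\theta^*)\le C\bigl(\sqrt{p}\,\epsilon+\sqrt{p\epsilon/(n\delta)}\bigr)\cdot\bigl(r_1^{\max}+\sqrt{p}\,r_2^{\max}\bigr)$ where $r_1^{\max},r_2^{\max}$ are the $L_2$-norms of the lower-bound witness constructed in Step~2. This is the hinge analogue of Proposition~\ref{pro:hinge-L1-upper} transported to the $L_2$-penalty setting of Proposition~\ref{pro:logit-L2-upper}, and the $\sqrt{p}\,\epsilon$ scaling (rather than the $\epsilon$ of Theorem~\ref{thm:hinge-L1}) is the price paid for the $L_2$ penalty.

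\textbf{Step 2 (identifiability lower bound).} I would exhibit a single spline $h_{\gamma_*,\hat\mu}$ with $\|\gamma_{*,1}\|_2,\|\gamma_{*,2}\|_2=O(1)$ such that $E_{P_{\theta^*}}h_{\gamma_*,\hat\mu}-E_{P_{\hat\theta}}h_{\gamma_*,\hat\mu}\ge c\,\|\hat\mu-\mu^*\|_2+c\,p^{-1/2}\|\hat\Sigma-\Sigma^*\|_\fro$. Concretely, $\gamma_{*,1}$ is a bounded projection of $\Sigma^{*-1}(\hat\mu-\mu^*)$ onto the shifted knot basis $\varphi(x-\hat\mu)$, and $\gamma_{*,2}$ is a bounded projection of $\mathrm{vec}(\Sigma^{*-1}(\hat\Sigma-\Sigma^*)\Sigma^{*-1})$ onto the tensor basis $\vec(\varphi(x-\hat\mu)\otimes\varphi(x-\hat\mu))$. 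Under the smallness hypothesis $\sqrt{p}(\epsilon+\sqrt{\epsilon/(n\delta)})+\lambda_2\le C_2$, one can normalize so $|h_{\gamma_*,\hat\mu}|\le 1$ on the intersection of the $P_{\theta^*}$- and $P_{\hat\theta}$-bulks, whence the hinge \emph{linearizes}: $\min(1,\pm h)=\pm h$ on the bulk and the tails contribute $O(\epsilon)$. Thus $K_\HG(P_{\theta^*},P_{\hat\theta};h_{\gamma_*,\hat\mu})$ reduces to the bilinear identification quantity above. Transporting this to $P_n$ by the same bias/sampling decomposition as in Step~1 (which adds another $O(\sqrt{p}\,\epsilon+\lambda_2)$ slack) gives $J(\hat\theta)\ge c\,\|\hat\mu-\mu^*\|_2+c\,p^{-1/2}\|\hat\Sigma-\Sigma^*\|_\fro-C(\sqrt{p}\epsilon+\sqrt{p\epsilon/(n\delta)}+\lambda_2+\lambda_3/\sqrt{p})$.

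\textbf{Step 3 (combine).} Chaining $J(\hat\theta)\le J(\theta^*)$ with the bounds from Steps~1 and~2 and rearranging yields the two stated inequalities. The probability accounting comes from the concentration inequalities used to control the empirical-process terms (uniform deviations of $E_{P_n-P_\epsilon}\min(1,h)$ and of $E_{P_n-P_\epsilon}h$ over the $L_2$-balls in $\gamma_1$ and $\gamma_2$), each producing a $\delta$ failure event, together with a Gaussian concentration event on the fake-data moments; these are assembled into the $1-8\delta$ in the statement (matching Theorem~\ref{thm:logit-L2}).

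\textbf{Main obstacle.} The hardest step is Step~1: controlling the bias term $\epsilon\,E|h_{\gamma,\mu^*}|$ uniformly in $\gamma$ under only an $L_2$ penalty. Unlike the $L_1$/max-norm case of Theorem~\ref{thm:hinge-L1}, where $h$ is effectively $\ell^\infty$-bounded and bias is $O(\epsilon)$, the $L_2$ penalty permits $h$ to be of $L^1(P_{\theta^*})$-size $\sqrt{p}$, so the hinge linearization cost and the bias combine to give the $\sqrt{p}\,\epsilon$ factor. A related subtlety is ensuring the tensor/quadratic part of the discriminator, despite the $L_2$ coupling $\lambda_3\approx\sqrt{p}\,\lambda_2$, interacts with $\|\hat\Sigma-\Sigma^*\|_\fro$ at exactly the $p^{-1/2}$ Frobenius scale, so the scatter bound comes out cleanly as $\sqrt{p}\epsilon+\sqrt{p/n}$ rather than a mismatched rate.
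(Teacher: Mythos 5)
Your high-level skeleton is right (oracle inequality via $J(\hat\theta)\le J(\theta^*)$, then upper-bound and identifiability steps), but both of the technical pillars as described would fail, and for a common reason: you have the wrong picture of where the $\sqrt{p}\,\epsilon$ factor comes from.

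\textbf{Step~1 is wrong.} You claim the contamination produces a bias of order $\epsilon(\sqrt{p}\,r_1 + p\,r_2)$ by relating $E|\min(1,h)|$ to $E_{P_{\theta^*}}|h|$. But the $\epsilon$-weighted term is an expectation under the arbitrary contaminant $Q$, not under $P_{\theta^*}$, so $E_Q|h|$ cannot be controlled that way; and if instead you use $\min(1,h)\le 1$, you get bias $O(\epsilon)$, not $O(\sqrt{p}\,\epsilon)$. The latter is what the paper's Lemma~\ref{lem:hinge-upr-bound}(ii) proves: because $\min(1,u)\le 1$ and $\min(1,u)+\min(1,-u)\le 0$, the contaminated summands contribute $2\hat\epsilon$ \emph{independently of $\gamma$}, and Jensen's inequality on the concave $\min(1,\cdot)$ reduces the clean part to a moment difference absorbed by the penalty. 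Thus the oracle upper bound is $J(\theta^*)\le 2(\epsilon+\sqrt{\epsilon/(n\delta)})$ with no $\sqrt{p}$; this is the distinctive structural advantage of the hinge over logit $f$-GAN, and your contraction-only argument throws it away.

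\textbf{Step~2 is also wrong.} An $O(1)$-$L_2$-norm ramp witness does not satisfy $|h|\le 1$ on the Gaussian bulk --- by Cauchy--Schwartz, $|\gamma_1^\T\varphi|$ is of order $\sqrt{p}\|\gamma_1\|_2$ on the bulk, and your promised ``$O(\epsilon)$ tail contribution'' is unsupported. The paper instead restricts to $\Gamma_{10}=\{\gamma\in\Gamma_{\rpL}: \gamma_0=0,\ \pen_2(\gamma)=(2p)^{-1/2}\}$ (and $\Gamma_{20}$ with $\pen_2=\{4p(p-1)\}^{-1/2}$ for the quadratic part): with those norms the $\ell^1$--$\ell^2$ inequality forces $|h_\gamma|\le 1$ \emph{everywhere}, so the hinge is \emph{exactly} linear, not approximately linear on a bulk. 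The $\sqrt{p}\,\epsilon$ scaling in the theorem then arises at the \emph{combining} stage: the $\gamma$-free $O(\epsilon)$ oracle bound must be divided by the witness norm $(2p)^{-1/2}$ when normalizing the moment-matching functional $\sup_{\pen_2(\gamma)=\sqrt{1/2}}\{E_{P_{\theta^*}}h - E_{P_{\hat\theta}}h\}$ back to unit scale (Proposition~\ref{pro:logit-L2-combine}). This is the opposite bookkeeping from yours --- if you tried to push your Steps~1 and~2 through consistently you would get either a $p\epsilon$ rate or no rate at all.

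\textbf{Minor.} You also omit the separate treatment of the linear and quadratic coordinates: the paper runs two lower-bound passes (one over $\Gamma_{10}\subset\Gamma_\rpL$ with the $\gamma_2=0$ slice for $(\hat\mu,\hat\sigma)$, one over $\Gamma_{20}\subset\Gamma_\rpQ$ with the $\gamma_1=0$ slice for $\hat\Sigma$), and the covariance bound is obtained by chaining the $\hat\sigma$ error through Lemma~\ref{lem:local-linear2}/Proposition~\ref{pro:logit-L2v-combine}; your ``single witness'' picture collapses this structure and would not account cleanly for the $\lambda_3/\sqrt{p}$ term.
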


For $L_2$ penalized hinge GAN,
Theorem \ref{thm:hinge-L2} shows that the estimator $(\hat\mu,\hat\Sigma)$
achieves error bounds in the $L_2$ and $p^{-1/2}$-Frobenius norms in the order $\epsilon \sqrt{p} + \sqrt{p/n}$.
On one hand, these error bounds reduce to the same order, $\sqrt{\epsilon} + \sqrt{p/n}$, as those for
$L_2$ penalized logit $f$-GAN, under the condition that $p \epsilon$ is upper bounded by a constant.
On the other hand, when compared with the minimax rates,
there remain nontrivial differences between $L_2$ penalized hinge GAN and logit $f$-GAN.
In fact, the minimax rates in the $L_2$ and operator norms for location and scatter estimation over $\Theta_2$ is known to be $\epsilon + \sqrt{p/n}$
in Huber's contaminated Gaussian model (\cite{CGR18}). The same minimax rate can also be shown in the $p^{-1/2}$-Frobenius norm for scatter estimation.
Then the error rate for $L_2$ penalized hinge GAN in Theorem \ref{thm:hinge-L2} matches the minimax rate, and both reduce to
the contamination-free error rate $\sqrt{p/n}$,
provided that $\epsilon \sqrt{n}$ is bounded by a constant, i.e., $\epsilon = O( \sqrt{1/n} )$, independently of $p$.
For $L_2$ penalized logit $f$-GAN associated with the reverse KL or JS divergence (satisfying Assumptions \ref{ass:f-condition}--\ref{ass:f-condition2}),
the error bounds from Theorem \ref{thm:logit-L2} match the minimax rate provided both $\epsilon = O(p/n)$ and
$\epsilon = O(1/p)$. The latter condition can be restrictive when $p$ is large.

\begin{rem} \label{rem:hinge-concavity}
The two functionals, $\min(1,h)$ and $\min(-1,h)$, are concave in $h$ in the hinge objective $K_\HG (P_n, P_\theta; h)$.
This is reminiscent of the concavity of $f^\prime (\me^h)$ and $- f^\# (\me^h)$ in $h$ in the logit $f$-GAN objective $K_f (P_n, P_\theta; h)$ under
Assumptions \ref{ass:f-condition}(ii) and \ref{ass:f-condition2}(i) as discussed in Remark \ref{rem:f-condition2-b}. These concavity properties are crucial to our proofs of Theorems \ref{thm:hinge-L1}--\ref{thm:hinge-L2} and Corollary \ref{cor:two-obj}(ii) .
See Lemmas \ref{lem:hinge-upr-bound} and \ref{lem:hinge-two-obj-upr-bound} in the Supplement.
Moreover, the concavity of $K_\HG (P_n, P_\theta; h)$ in $h$,
together with the linearity of the spline discriminator $h_{\gamma,\mu}$ in $\gamma$,
implies that the objective function $K_\HG (P_n, P_\theta; h_{\gamma,\mu})$
is concave in $\gamma$ for any fixed $\theta$.
Hence similarly to penalized logit $f$-GAN, our penalized hinge GAN (\ref{eq:hinge-gan-L1}) or (\ref{eq:hinge-gan-L2})
can also be implemented through nested optimization with a concave inner stage in training spline discriminators as shown in Algorithm \ref{alg:1}.
\end{rem}

\subsection{Two-objective GAN with spline discriminators}  \label{sec:two-obj-gan}

We study two-objective GANs, where the spline discriminator is trained
using the objective function in logit $f$-GAN or hinge GAN, but the generator is trained using a different objective function.

Consider the following two-objective GAN related to logit $f$-GANs (\ref{eq:logit-fgan-L1}) and (\ref{eq:logit-fgan-L2}):
\begin{align} \label{eq:two-obj-gan-centered}
\left\{ \begin{array}{ll}
\max\limits_{\gamma\in\Gamma}\; K_{f}({P_n}, P_{\theta} ; h_{\gamma, \mu}) - \pen(\gamma; \lambda) & \quad \text{with $\theta$ fixed},  \\
\min\limits_{\theta\in\Theta}\; \E_{ {P_n} }f^{\prime}(\me^{h_{\gamma, \mu}(x)})-\E_{P_{\theta}}G (h_{\gamma, \mu}(x)) & \quad \text{with $\gamma$ fixed}. \\
\end{array} \right.
\end{align}
Similarly, consider the two-objective GAN related to the hinge GAN (\ref{eq:hinge-gan-L1}) and (\ref{eq:hinge-gan-L2}):
\begin{align} \label{eq:two-obj-hinge-centered}
\left\{ \begin{array}{ll}
\max\limits_{\gamma\in\Gamma}\; K_{\HG}( {P_n}, P_{\theta} ; h_{\gamma, \mu}) - \pen(\gamma; \lambda) & \quad \text{with $\theta$ fixed},  \\
\min\limits_{\theta\in\Theta}\; \E_{ {P_n} }\min(h_{\gamma, \mu}(x), 1)-\E_{P_{\theta}}G (h_{\gamma, \mu}(x)) & \quad \text{with $\gamma$ fixed}. \\
\end{array} \right.
\end{align}
Here $\pen(\gamma; \lambda)$ is an $L_1$ penalty, $\lambda_1 ( \| \gamma_1 \|_1 + \|\gamma_2\|_1) $
and $\Theta$ is $\Theta_1 = \{ (\mu,\Sigma): \mu \in \bbR^p, \| \Sigma\|_{\max} \le M_1\}$ as in (\ref{eq:logit-fgan-L1}),
or $\pen(\gamma; \lambda)$ is an $L_2$ penalty $\lambda_2 \| \gamma_1\|_2 + \lambda_3 \| \gamma_2\|_2$
and $\Theta$ is $\Theta_2 = \{ (\mu,\Sigma): \mu \in \bbR^p, \|\Sigma\|_\op \le M_2\}$ as in (\ref{eq:logit-fgan-L2}),
and $G$ is a function satisfying Assumption \ref{ass:generator-condition}.
Note that the discriminator $h_{\gamma, \mu}$ is a spline function
with knots depending on $\mu$, so that $\E_{ {P_n} }f^{\prime}(\me^{h_{\gamma, \mu}(x)})$ cannot be dropped
in the optimization over $\theta$ in (\ref{eq:two-obj-gan-centered}) or (\ref{eq:two-obj-hinge-centered}).
We show that the two-objective logit $f$-GAN and hinge GAN achieve similar error bounds as the
corresponding one-objective versions in Theorems \ref{thm:logit-L1}--\ref{thm:hinge-L2}.

\begin{ass} \label{ass:generator-condition}
Function $G$ in (\ref{eq:two-obj-gan-centered}) or (\ref{eq:two-obj-hinge-centered})
is convex and strictly increasing. Hence the inverse function $G^{-1}$ exists and is concave and strictly increasing.
\end{ass}

\begin{cor} \label{cor:two-obj}
(i) If $\hat\theta$ is replaced by a solution to the alternating optimization problem (\ref{eq:two-obj-gan-centered}) with
the $L_1$ or $L_2$ penalty on $\gamma$ as in (\ref{eq:logit-fgan-L1}) or (\ref{eq:logit-fgan-L2}) and
the corresponding choice of $\Theta$,
then the results in Theorem \ref{thm:logit-L1} or \ref{thm:logit-L2} remains valid respectively.

(ii) If $\hat\theta$ is replaced by a solution to the alternating optimization problem (\ref{eq:two-obj-hinge-centered}) with
the $L_1$ or $L_2$ penalty on $\gamma$ as in (\ref{eq:hinge-gan-L1}) or (\ref{eq:hinge-gan-L2}) and
the corresponding choice of $\Theta$,
then the results in Theorem \ref{thm:hinge-L1} or \ref{thm:hinge-L2} remains valid respectively.
\end{cor}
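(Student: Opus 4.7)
The plan is to follow the proofs of Theorems \ref{thm:logit-L1}--\ref{thm:hinge-L2} step by step, substituting only the single step that invokes the one-objective minimax structure of $\hat\theta$. Those proofs proceed in two stages: first, the optimality of $\hat\gamma$ for the penalized discriminator objective (either $K_f$ or $K_\HG$) at fixed $\hat\theta$ gives a lower bound on the objective at $(\hat\theta, \hat\gamma)$ via the concavity of $K_f(P_n,P_\theta;h_{\gamma,\mu})$ or $K_\HG(P_n,P_\theta;h_{\gamma,\mu})$ in $\gamma$ (cf.\ Remarks \ref{rem:f-condition2-b} and \ref{rem:hinge-concavity}); second, the optimality of $\hat\theta$ is used to compare this value against its counterpart at $\theta^*$, which, together with concentration over the spline class and the population robustness in Theorem~\ref{thm:pop-robust}, yields the claimed bounds on $\hat\mu-\mu^*$ and $\hat\Sigma-\Sigma^*$. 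The first stage in the two-objective version is identical to the one-objective version, so the lower-bound argument from the optimality of $\hat\gamma$ carries over verbatim.

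For part (i), only the second stage changes. The optimality of $\hat\theta$ for the second line of (\ref{eq:two-obj-gan-centered}) now reads, for every $\theta\in\Theta$,
\begin{align*}
\E_{P_n} f'(\me^{h_{\hat\gamma,\hat\mu}(x)}) - \E_{P_{\hat\theta}} G(h_{\hat\gamma,\hat\mu}(x))
\le \E_{P_n} f'(\me^{h_{\hat\gamma,\mu}(x)}) - \E_{P_\theta} G(h_{\hat\gamma,\mu}(x)).
\end{align*}
Setting $\theta=\theta^*$, rearranging, and applying Jensen's inequality via the concavity of $G^{-1}$ (Assumption \ref{ass:generator-condition}) lets us trade the new integrand $G(h_{\hat\gamma,\mu})$ against the linear integrand $h_{\hat\gamma,\mu}$ that appears in $K_f(P_n,P_\theta;h_{\hat\gamma,\mu})$ after a Taylor-type expansion around the ``oracle'' discriminator. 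This is encapsulated in Lemma \ref{lem:two-obj-centered-upr-bound}, which plays the same role as Lemma \ref{lem:logit-upper} in the one-objective proof and yields the same type of upper bound on the functional that drives the rest of the argument. With this substitution, the concentration bounds from Propositions \ref{pro:logit-L1-upper} and \ref{pro:logit-L2-upper} for the empirical process over the pairwise spline class, together with the population error bound in Theorem \ref{thm:pop-robust}(i), deliver the same rates as in Theorems \ref{thm:logit-L1} and \ref{thm:logit-L2} respectively.

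Part (ii) is entirely parallel: the role of $K_f$ is played by $K_\HG$, the role of $f'(\me^h)$ by $\min(h,1)$, and the concavity in $h$ of both integrands (automatic for the hinge case, cf.\ Remark \ref{rem:hinge-concavity}) again makes the first-stage optimality of $\hat\gamma$ produce a tight lower bound. The convexity and strict monotonicity of $G$ then allow the analogue of Lemma~\ref{lem:hinge-two-obj-upr-bound} to replace Lemma \ref{lem:hinge-upr-bound}, and the proofs of Theorems \ref{thm:hinge-L1} and \ref{thm:hinge-L2} go through unchanged. The main obstacle in both parts is precisely the construction of these two replacement lemmas: one has to verify that the substitution of a $G$-distorted second objective does not inflate the error rate, and that the mild assumptions on $G$ (convex, strictly increasing, with concave strictly increasing inverse) are enough to transfer the one-objective upper bound — in particular, to absorb the extra slack introduced by $G$ into the constants without disturbing the dependence on $(\epsilon,p,n,\delta)$.
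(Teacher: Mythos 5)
Your proposal matches the paper's proof: only the upper-bound step of the ``main frame'' inequality changes, and it is replaced by Lemma~\ref{lem:two-obj-centered-upr-bound} (logit case) or Lemma~\ref{lem:hinge-two-obj-upr-bound} (hinge case), with the lower-bound steps and concentration bounds carrying over verbatim. Two small inaccuracies in your description of what happens inside the key lemma: the paper does not ``set $\theta=\theta^*$'' in one shot but applies the generator optimality twice, once with $(\mu^*,\hat\Sigma)$ and once with $(\hat\mu,\Sigma^*)$, precisely because $h_{\hat\gamma,\mu}$ itself depends on $\mu$ so changing both components simultaneously would not cancel cleanly; and there is no Taylor expansion around an oracle discriminator --- the mechanism is a chain of Jensen inequalities (on the convex $f^*$, the convex $G$, and the concave $T=f'(\me^u)$) combined with the monotonicity of the composition $-f^*(T(G^{-1}(\cdot)))$, culminating in the Fenchel-conjugate bound $(1-\epsilon)s - f^*(s) \le f(1-\epsilon) \le -f'(1-\epsilon_0)\epsilon$. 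Since you treat that lemma as a black box, these are descriptive slips rather than a gap, but the separate-coordinate optimality trick is the nontrivial idea you would have had to rediscover to actually prove it.
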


The two-objective GANs studied in Corollary~\ref{cor:two-obj} differ slightly from existing ones
as described in (\ref{eq:gan-logD})--(\ref{eq:geo-gan}), due to the use of the discriminator $h_{\gamma,\mu}$ depending on $\mu$
to facilitate theoretical analysis as mentioned in Section \ref{sec:logit-fgan}.
If $h_{\gamma,\mu}$ were replaced by a discriminator $h_\gamma$ defined independently of $\theta$, then
taking $K_f = K_{\mathrm{JS}}$ and $G(h) = -  \log ( 1+\me^{-h})$ or $G(h) = -h$ in (\ref{eq:two-obj-gan-centered})
reduces to GAN with log$D$ trick (\ref{eq:gan-logD}) or calibrated rKL-GAN (\ref{eq:cal-rkl-gan}) respectively,
and taking $K_f = K_\HG$ and $G(h) = -h$ in (\ref{eq:two-obj-hinge-centered}) reduces to geometric GAN (\ref{eq:geo-gan}).

\section{Discussion} \label{sec:discussion}

\subsection{GANs with data transformation} \label{sec:transform-GAN}

Compared with the usual formulations (\ref{eq:logit-fgan}) and (\ref{eq:hinge-gan}),
our logit $f$-GAN and hinge GAN methods in Sections \ref{sec:logit-fgan}--\ref{sec:hinge-gan} involve a notable modification that
both the real and fake data are discriminated against each other after being shifted by the current location parameter.
Without the modification, a direct approach based on logit $f$-GAN would use the objective function
\begin{align}
 K_f (P_n, P_\theta; h_{\sp,\gamma}) = \E_{P_n} f^{\prime}(\me^{h_{\sp,\gamma}(x)}) -
 \E_{P_{\mu,\Sigma}} f^\#(e^{h_{\sp,\gamma}(x)}),   \label{eq:logit-fgan-no-transf}
\end{align}
where the real data and the Gaussian fake data generated from standard noises are discriminated again each other given the parameters $(\mu,\Sigma)$.
The idea behind our modification can be extended by allowing both location and scatter transformation.
For example, consider logit $f$-GAN with full transformation:
\begin{align}
& \min_{\theta\in\Theta} \max_{\gamma\in\Gamma}  \;
\left\{ K_f (P_n, P_\theta; h_{\gamma,\mu,\Sigma}) - \pen(\gamma; \lambda) \right\},   \label{eq:logit-fgan-transf}
\end{align}
where $K_f$ is the logit $f$-GAN objective as in (\ref{eq:logit-fgan-L1}) and (\ref{eq:logit-fgan-L2}),
$h_{\gamma,\mu,\Sigma}(x) = h_{\sp,\gamma} ( \Sigma^{-1/2} (x-\mu))$
and $\pen(\gamma; \lambda)$ is an $L_1$ or $L_2$ penalty term.
The discriminator $h_{\gamma,\mu,\Sigma}(x)$ is obtained by applying $h_{\sp,\gamma}(\cdot)$ with fixed knots
to the transformed data $\Sigma^{-1/2} (x-\mu)$.
Similarly to (\ref{eq:logit-fgan-L1-transf}), the non-penalized objective in (\ref{eq:logit-fgan-transf}) can be equivalently written as
\begin{align}
 K_f (P_n, P_\theta; h_{\gamma,\mu,\Sigma}) = \E_{\Sigma^{-1/2}(P_n-\mu)} f^{\prime}(\me^{h_{\sp,\gamma}(x)}) -
 \E_{P_{0,I}} f^\#(e^{h_{\sp,\gamma}(x)}),   \label{eq:logit-fgan-transf2}
\end{align}
where $\Sigma^{-1/2} (P_n -\mu)$ denotes the empirical distribution on $\{\Sigma^{-1/2} (X_1-\mu), \ldots, \Sigma^{-1/2} ( X_n-\mu) \}$.
 {Compared with (\ref{eq:logit-fgan-L1-transf}) and (\ref{eq:logit-fgan-no-transf}), there are
two advantages of using (\ref{eq:logit-fgan-transf2}) with full transformation.
First, due to both location and scatter transformation, logit $f$-GAN (\ref{eq:logit-fgan-transf}), but not (\ref{eq:logit-fgan-L1}) or (\ref{eq:logit-fgan-L2}),
can be shown to be affine equivariant.}
Second, the transformed real data and the standard Gaussian noises in (\ref{eq:logit-fgan-transf2}) are discriminated against each other given the current parameters $(\mu,\Sigma)$,
while employing the spline discriminators $h_{\sp,\gamma}(x)$ with knots fixed at $\{-2,-1,0,1,2\}$.
Because standard Gaussian data are well covered by the grid formed from these marginal knots,
the discrimination involved in (\ref{eq:logit-fgan-transf2}) can be informative even when the parameters $(\mu,\Sigma)$ are updated.
The discrimination involved in (\ref{eq:logit-fgan-no-transf}) may be problematic when employing the fixed-knot spline discriminators, because
both the real and fake data may not be adequately covered by the grid formed from the knots.

From the preceding discussion, it can be more desirable to incorporate both location and scatter transformation as in (\ref{eq:logit-fgan-transf2})
than just location transformation as in (\ref{eq:logit-fgan-L1-transf}), which only aligns the centers, but not the scales and correlations, of the Gaussian fake data with the knots
in the spline discriminators.
As mentioned in Section \ref{sec:logit-fgan}, our sample analysis exploits the location transformation in
establishing certain concentration properties in the proofs.
On the other hand, our current proofs are not directly applicable while allowing both location and scatter transformation.
It is desired in future work to extend our theoretical analysis in this direction.

\subsection{Comparison with \cite{GYZ20}} \label{sec:GYZ}

We first point out a connection between logit $f$-GANs and the GANs based on proper scoring rules in \cite{GYZ20}.
For a convex function $g: (0,1) \to \bbR$, a proper scoring rule can be defined as (\cite{Sav71}; \cite{BSS05}; \cite{GR07})
\begin{align*}
S_g (\eta, 1) = g(\eta) + (1-\eta) g^\prime (\eta),\quad S_g(\eta,0) = g(\eta) - \eta g^\prime(\eta).
\end{align*}
The population verion of the GAN studied in \cite{GYZ20} is defined as
\begin{align}
& \min_{\theta\in\Theta} \max_{\gamma\in\Gamma}  \; L_g (P_*, P_\theta; q_\gamma),   \label{eq:GYZ-gan}
\end{align}
where $q_\gamma(x) \in [0,1]$, also called a discriminator, represents the probability that an observation $x$ comes from $P_*$ rather than $P_\theta$, and
\begin{align*}
L_g (P_*, P_\theta; q)
& = (1/2) \left\{\E_{P_*} S_g( q(x), 1) + \E_{P_\theta} S_g( q(x), 0) \right\}- g(1/2).
\end{align*}
The objective $L_g (P_*, P_\theta; q)$ is shown to be a lower bound, being tight if $q= 2 \,\dif P_* /\dif (P_*+P_\theta)$, for the divergence
$ D_{g_0} ( P_* \| (P_* + P_\theta)/2 )$, where $g_0 (t) = g(t/2) - g(1/2)$ for $t\in (0,2)$.
For example, taking $g(\eta) = \eta\log \eta + (1-\eta)\log(1-\eta)$ leads to the log score,
$S_g(\eta,1)= \log\eta$ and $S_g(\eta,0) = \log(1-\eta)$. The corresponding objective function $L_g(P_*, P_\theta; q_\gamma)$
reduces to the expected log-likelihood with discrimination probability $q_\gamma(x)$ as used in \cite{GPM14}.
We show that if $q_\gamma(x)$ is specified as a sigmoid probability, then
$L_g (P_*, P_\theta; q_\gamma)$ can be equivalently obtained as a logit $f$-GAN objective for a suitable choice of $f$.

\begin{pro} \label{pro:GYZ}
Suppose that the discriminator is specified as $q_\gamma(x) = \sigm (h_\gamma(x))$. Then
$ L_g (P_*, P_\theta; q_\gamma) = K_f ( P_*, P_\theta; h_\gamma)$ for $K_f$ defined in (\ref{eq:logit-fgan}) and
$ f(t) = \frac{1+t}{2} g_0 ( \frac{2t}{1+t} )$ satisfying that
$ D_{g_0} ( P_* \| (P_* + P_\theta)/2 ) = D_f ( P_* \| P_\theta )$.
\end{pro}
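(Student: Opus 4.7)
The plan is to verify both assertions by direct substitution, exploiting the natural change of variables between $h$, $t = \me^h$, and $\eta = \sigm(h) = t/(1+t)$.

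First I would reparameterize. Set $\eta = \sigm(h)$ and $t = \me^h$, so that $\eta = t/(1+t)$, $1-\eta = 1/(1+t)$, and $t(1-\eta) = \eta$. Writing $f(t) = \tfrac{1+t}{2}\bigl(g(\eta) - g(1/2)\bigr)$ from the stated definition of $f$ via $g_0(2t/(1+t)) = g(\eta) - g(1/2)$, I would compute $f^\prime(t)$ by the product rule, using $\dif \eta/\dif t = 1/(1+t)^2 = (1-\eta)^2/\cdot$ or more directly $(1-\eta)/(1+t)$, to obtain
\begin{align*}
f^\prime(t) = \tfrac{1}{2}\bigl(g(\eta) + (1-\eta) g^\prime(\eta)\bigr) - \tfrac{1}{2} g(1/2) = \tfrac{1}{2}S_g(\eta,1) - \tfrac{1}{2}g(1/2).
\end{align*}
Then, using $t(1-\eta) = \eta$, the identity $f^\#(t) = t f^\prime(t) - f(t)$ would collapse to
\begin{align*}
f^\#(t) = -\tfrac{1}{2}\bigl(g(\eta) - \eta g^\prime(\eta)\bigr) + \tfrac{1}{2}g(1/2) = -\tfrac{1}{2}S_g(\eta,0) + \tfrac{1}{2}g(1/2).
\end{align*}

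Plugging these two identities (with $\eta = q_\gamma(x) = \sigm(h_\gamma(x))$) into $K_f(P_*, P_\theta; h_\gamma) = \E_{P_*} f^\prime(\me^{h_\gamma}) - \E_{P_\theta} f^\#(\me^{h_\gamma})$, the constants $\tfrac12 g(1/2)$ combine into $-g(1/2)$ and the two expectation terms assemble into $\tfrac12\{\E_{P_*} S_g(q_\gamma,1) + \E_{P_\theta} S_g(q_\gamma, 0)\}$, yielding $L_g(P_*,P_\theta; q_\gamma)$ as defined.

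For the divergence identity, I would change variables in the definition of $D_f$ with $t = p_*(x)/p_\theta(x)$. Observing $\tfrac{1+t}{2} p_\theta(x) = \tfrac{p_*(x)+p_\theta(x)}{2}$ and $2t/(1+t) = 2p_*(x)/(p_*(x)+p_\theta(x))$, the definition $f(t) = \tfrac{1+t}{2} g_0(2t/(1+t))$ gives
\begin{align*}
D_f(P_* \| P_\theta) = \int f\!\left(\tfrac{p_*}{p_\theta}\right) p_\theta \,\dif x = \int g_0\!\left(\tfrac{p_*}{(p_*+p_\theta)/2}\right)\tfrac{p_*+p_\theta}{2}\,\dif x = D_{g_0}\!\left(P_* \,\Big\|\, \tfrac{P_*+P_\theta}{2}\right),
\end{align*}
as desired.

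There is no hard technical obstacle: the whole proposition is an algebraic identity. The only step requiring care is bookkeeping of the constants $g(1/2)$ (which must exactly account for the $-g(1/2)$ appearing in the definition of $L_g$) and correctly applying $t(1-\eta) = \eta$ in the simplification of $f^\#$; once these two identifications are made cleanly, both conclusions follow.
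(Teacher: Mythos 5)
Your proposal is correct and takes essentially the same approach as the paper's proof: direct algebraic substitution using the change of variables $t=\me^h$, $\eta=\sigm(h)=t/(1+t)$, computing $f'(t)$ and $f^\#(t)$ and matching them to $\tfrac12 S_g(\eta,1)-\tfrac12 g(1/2)$ and $-\tfrac12 S_g(\eta,0)+\tfrac12 g(1/2)$, respectively, followed by the substitution $t=p_*/p_\theta$ for the divergence identity. The paper does the same calculation but carries $g_0$ one step further before converting to $g$; your version, which substitutes $g_0(2\eta)=g(\eta)-g(1/2)$ up front, is just slightly tidier bookkeeping of the same argument.
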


In contrast with $h_\gamma(x)$ parameterized as a pairwise spline function, \cite{GYZ20} studied robust estimation in Huber's contaminated Gaussian model,
where $q_\gamma(x)$ is parameterized as a neural network with two or more layers and sigmoid activations in the top and bottom layers.
In the case of two layers, the neural network in \cite{GYZ20}, Section 4, is defined as
\begin{align}
q_\gamma(x) = \sigm ( h_\gamma(x) ), \quad h_\gamma(x) = \sum_{j=1}^J \gamma_j^{(1)} \sigm ( \gamma^{(2) \T}_j x + \gamma_{0j}^{(2)} ),
\label{eq:GYZ-discriminator}
\end{align}
where $ (\gamma_j^{(2)}, \gamma_{j0}^{(2)})$, $j=1,\ldots,J$, are the weights and intercepts in the bottom layer,
and $ \gamma_j^{(1)}$, $j=1,\ldots,J$, are the weights in the top layer constrained such that
$\sum_{j=1}^J |\gamma_j^{(1)}| \le \kappa$ for a tuning parameter $\kappa$.
Assume that $g(\eta)$ is three-times continuously differentiable at $\eta=1/2$, $g^\dprime(1/2) >0$, and
for a universal constant $c_0 >0$,
\begin{align}
 2 g^\dprime(1/2) \ge g^{\prime\prime\prime}(1/2) + c_0, \label{eq:GYZ-condition}
\end{align}
Then \cite{GYZ20} showed that the location and scatter estimators from the sample version of (\ref{eq:GYZ-gan})
with discriminator (\ref{eq:GYZ-discriminator})
achieve the minimax error rates, $O( \epsilon + \sqrt{p/n})$, in the $L_2$ and operator norms,
provided that $\kappa = O (\epsilon+ \sqrt{p/n})$ among other conditions.
However, with sigmoid activations used inside $h_\gamma(x)$,
the sample objective $L_g (P_n, P_\theta; q_\gamma)$
may exhibit a complex, non-concave landscape in $\gamma$, which makes minimax optimization difficult.

There is also a subtle issue in how the above result from \cite{GYZ20} can be compared with even our population analysis
for minimum $f$-divergence estimation, i.e., population versions of GANs with nonparametric discriminators.
In fact, condition (\ref{eq:GYZ-condition}) can be directly shown to be equivalent to saying that
$\frac{\dif^2}{\dif u^2} f^\prime (\me^u) | _{u=0} \ge c_0$ for $f$ associated with $g$ in Proposition \ref{pro:GYZ}.
This condition can be satisfied, while Assumption \ref{ass:f-condition} is violated,
for example, by the choice $g(\eta) = (\eta-1) \log(\eta/(2-\eta))$ and $f(t)= \{(t-1)\log t\}/2$, corresponding to the mixed KL divergence
$ D_{\mathrm{KL}} (P || Q)/2 + D_{\mathrm{KL}} (Q || P)/2$.
As shown in Figure \ref{fig:1}, minimization of the mixed KL does not in general lead to robust estimation.
Hence it seems paradoxical that minimax error rates can be achieved by the GAN in \cite{GYZ20} with
its objective function derived from the mixed KL.
On the other hand, a possible explanation can be seen as follows.
By the sigmoid activation and the constraint $\sum_{j=1}^J |\gamma_j^{(1)}| \le \kappa $,
the log-odds discriminator $h_\gamma(x)$ in (\ref{eq:GYZ-discriminator}) is forced to be bounded, $ | h_\gamma(x) | \le \kappa$,
where $\kappa$ is further assumed to small, of the same order as the minimax rate $O( \epsilon + \sqrt{p/n})$.
As a result, maximization of the population objective $L_g (P_*, P_\theta; q_\gamma)$ over such
constrained discriminators may produce a divergence with a substantial gap to the
actual divergence $D_f (P_* \| P_\theta)$ for any fixed $\theta$.
Instead, the implied divergence measure may behave more similarly
as the total variation $D_\TV (P_* \| P_\theta)$ than as $D_f (P_* \| P_\theta)$, due to the boundedness of $h_\gamma(x)$ by a sufficiently small $\kappa$,
so that minimax error rates can still be achieved.

    \section{Simulation studies} \label{sec:Simulation}
    We conducted simulation studies to compare the performance of our logit $f$-GAN and hinge GAN methods with several existing methods in various settings
    depending on $Q$, $\epsilon$, $n$, and $p$. Results about error dependency on $\epsilon$ are provided in the main paper and others are presented in the Supplement.
    Two contamination distributions $Q$ are considered to allow different types of contaminations.

\subsection{Implementation of methods} \label{sec:simulation-method}

Our methods are implemented following Algorithm \ref{alg:1}, where the penalized GAN objective function $K(\theta,\gamma;\lambda)$ is defined
as in (\ref{eq:logit-fgan-transf}) for logit $f$-GAN or with $K_f$ replaced by $K_{\HG}$ for hinge GAN.
As discussed in Section \ref{sec:transform-GAN}, this scheme allows adequate discrimination between
the back-transformed real data, $\Sigma^{-1/2} (x-\mu)$, and the standard Gaussian noises using
spline discriminators with {\it fixed knots}.
In the following, we describe the discriminator optimization and penalty choices. See the Supplement
for further details including the initial values $(\mu_0,\Sigma_0)$ and learning rate $\alpha_t$.

For Algorithm \ref{alg:1} with spline discriminators, the training objective is concave in the discriminator parameter $\gamma$
and hence the discriminator can be fully optimized to provide a proper updating direction for the generator in each iteration.
This implementation via nested optimization can be much more tractable than nonconvex-nonconcave minimax optimization in other GAN methods,
where the discriminator and generator are updated by gradient ascent and descent, with careful tuning of the learning rates.
In numerical work, we formulate the discriminator optimization from our GAN methods in the CVX framework (\cite{CVXR}) and then
apply the convex optimization solver MOSEK (\url{https://docs.mosek.com/latest/rmosek/index.html}).

As dictated by our theory, we employ $L_1$ or $L_2$ penalties on the spline discriminators to control sampling variation,
especially when the sample size $n$ is relatively smaller compared to the dimension of the discriminator parameter $\gamma$.
Numerically, these penalties help stabilize the training process by weakening the discriminator power in the early stage.
We tested our methods under different penalty levels and identified default choices of $\lambda$
for our rKL and JS logit $f$-GANs and hinge GAN. These penalty choices are then fixed in all our subsequent simulations.
See the Supplement for results from our tuning experiments.

For comparison, we also implement ten existing methods for robust estimation.
     \begin{itemize}
         \item \emph{JS-GAN} (\cite{GYZ20}). We use the code from \cite{GYZ20} with minimal modification.
         The batch size is set to $1/10$ of the data size because the default choice $500$ is too large in our experiment settings.
         We use the network structure $p\text{-}2p\text{-}\lfloor p/2 \rfloor\text{-}1$ with LeakyReLU and Sigmoid activations as recommended in \cite{GYZ20}.
         \item \emph{Tyler's M-estimator} (\cite{Tyl87}). This method is included for completeness, being designed for multivariate scatter estimation from elliptical distributions, not Huber's contaminated Gaussian distribution. We use R package \texttt{fastM} for implementation (\url{https://cran.r-project.org/web/packages/fastM/index.html}).
         \item \emph{Kendall's $\tau$ with MAD} (\cite{LT18}). Kendall's $\tau$ (\cite{Ken38}) is used to estimate the correlations after sine transformation
        and the median absolute deviation (MAD) (\cite{Ham74}) is used to estimate the scales. We use the R built-in function \texttt{cor} to compute Kendall's $\tau$ correlations and use the built-in function \texttt{mad} for MAD.
        \item \emph{Spearman's $\rho$ with $Q_n$-estimator} (\cite{OC15}).
         The $Q_n$-estimator (\cite{RC93}) is used for scale estimation
         and Spearman's $\rho$ (\cite{Spe87}) is used with sine transformation for correlation estimation.
         We use the R function \texttt{cor} to compute Spearman's $\rho$ correlations and the \texttt{Qn} function in R package \texttt{robustbase} (\url{https://cran.r-project.org/web/packages/robustbase/index.html}).
         \item \emph{MVE} (\cite{Rou85}). The minimum volume ellipsoid (MVE) estimator is a high-breakdown robust method for multivariate location and scatter estimation. We use the function \texttt{CovMve} in R package \texttt{rrcov} for implementation (\url{https://cran.r-project.org/web/packages/rrcov/index.html}).
         {
        \item \emph{MCD} (\cite{Rou85}). The minimum covariance determinant (MCD) estimator is a high-breakdown robust method
        and is shown to be superior to MVE in statistical efficiency (\cite{BDJ93}).
            We use the function \texttt{CovMcd} in R package \texttt{rrcov} for implementation.
         \item \emph{Sest} (\cite{Dav87, Lop89}). This is an S-estimator of multivariate location and scatter,
         based on Tukey's biweight function. It has high-breakdown point and improved statistical efficiency over MVE.
         We use the function \texttt{CovSest} in R package \texttt{rrcov} for implementation.
        \item \emph{Mest} (\cite{Roc96}). This is a constrained M-estimator of multivariate location and scatter, based on a translated biweight function.
        We use the function \texttt{CovMest} in R package \texttt{rrcov} for implementation, with MVE as the initial value.
          \item \emph{MMest} (\cite{TT00}). This is a constrained M-estimator which is a multivariate
          version of MM-functionals in \cite{Yoh87}. We use the function \texttt{CovMMest} in R package \texttt{rrcov} for implementation,
          with an S-estimate as the initial value. }
         \item \emph{$\gamma$-Lasso} (\cite{HFS17}). The method is implemented in the R package \texttt{rsggm}, which has become unavailable on CRAN.
         We use an archived version (\url{https://mran.microsoft.com/snapshot/2017-02-04/web/packages/rsggm/index.html}). We
             set $\gamma=0.05$ and deactivate the vectorized $L_1$ penalty.
     \end{itemize}
    Tyler's M-estimator, Kendall's $\tau$ with MAD, and Spearman's $\rho$ with $Q_n$ deal with scatter estimation only, whereas the other methods
    handle both location and scatter estimation.
    In our experiments, we focus on comparing the performance of
    existing and proposed methods in terms of scatter estimation (i.e., variance matrix estimation).

    \subsection{Simulation settings} \label{sec:simulation-setting}
    The uncontaminated distribution is $\N(0, \Sigma^*)$ where $\Sigma^*$ is 
    a Toeplitz matrix with $(i,j)$ component equal to $(1/2)^{|i-j|}$.
    The location parameter is unknown and estimated together with the variance matrix, except for Tyler's $M$-estimator, Kendall's $\tau$, and Spearman's $\rho$.
    Consider two contamination distributions $Q$ of different types. Denote a $p\times p$ identity matrix as $I_p$
    and a $p$-dimensional vector of ones as $1_p$.
    \begin{itemize}
        \item $Q=\N\left(2.25 c, \frac{1}{3}I_p \right)$ where $c = (1, -1, 1, -1, 1, \dots)$ is a $p$-dimensional vector of alternating $\pm 1$. In this setting, the contaminated points may not be seen as outliers marginally in each coordinate.
            On the other hand, these contaminated points can be easily separated as outliers from the uncontaminated Gaussian distribution in higher dimensions.

        \item $Q=\N(5 1_p, 5I_p )$.  {Contaminated points may lie in both low-density and high-density regions of the uncontaminated Gaussian distribution.}
        The majority of contaminated points are outliers that are far from the uncontaminated data, and there are also contaminated points that are enclosed by the uncontaminated points. This setting is also used in \cite{GYZ20}.
    \end{itemize}
    The Gaussianity of the above contamination distributions is used for convenience and easy characterization of data patterns,
     given the specific locations and scales. See Figure \ref{fig:Ellipsoid_TypeA} for an illustration of the first contamination,
     and the Supplement for that of the second.

     \subsection{Experiment results}

     Table~\ref{tab:eps-l1} summarizes scatter estimation errors in the maximum norm from $L_1$ penalized hinge GAN and logit $f$-GANs and existing methods, where $p=10$, $n=2000$, and $\epsilon$ increases from $0.025$ to $0.1$. The errors are obtained by averaging $20$ repeated runs and the numbers in brackets are standard deviations.
     From these results, the logit $f$-GANs, JS and rKL, have the best performance, followed closely by the hinge GAN
     and then with more noticeable differences by JS-GAN in \cite{GYZ20} and  {the five high-breakdown methods, MVE, MCD, Sest, Mest, and MMest.}
     The pairwise methods, Kendall's $\tau$ with MAD and Spearman's $\rho$ with $Q_n$-estimator, have relatively poor performance, especially for the first contamination as expected from Figure \ref{fig:Ellipsoid_TypeA}. The $\gamma$-Lasso performs competitively when $\epsilon$ is small (e.g., 2.5\%), but deteriorates quickly as $\epsilon$ increases. A possible explanation is that the default choice $\gamma=0.05$ may not work well in the current settings,
     even though the numerical studies in \cite{HFS17} suggest that $\gamma$ does not require special tuning.
     Tyler's M-estimator performs poorly, primarily because it is not designed for robust estimation under Huber's contamination.

     Estimation errors in the Frobenius norm from our $L_2$ penalized GAN methods and existing methods are shown in Table~\ref{tab:eps-l2}. We observe a similar pattern of comparison as in Table~\ref{tab:eps-l1}.

     From Tables \ref{tab:eps-l1}--\ref{tab:eps-l2}, we see that the estimation errors of our GAN methods, as well as other methods, increase as $\epsilon$ increases.
     However, the dependency on $\epsilon$ is not precisely linear for the hinge GAN, and not in the order $\sqrt{\epsilon}$ for the two logit $f$-GANs.
      {This does not violate our theoretical bounds, which are derived to hold over all possible contamination distributions, i.e., for the worst scenario of contamination.
     For specific contamination settings,
     it is possible for logit $f$-GAN to outperform hinge GAN, and
     for each method to achieve a better error dependency on $\epsilon$ than in the worst scenario.}
        \begin{table}[t]
        \caption{Comparison of existing methods and proposed $L_1$ penalized GAN methods ($p=10, n=2000$).
        Estimation error of the variance matrix is reported in the maximum norm $\|\cdot\|_{\max}$.}
        \label{tab:eps-l1}
        \resizebox{\textwidth}{!}{%
        \begin{tabular}{llllllll}
        \hline
        \multicolumn{1}{l|}{$\epsilon$} & hinge GAN          & JS logit $f$-GAN              & rKL logit $f$-GAN             & GYZ JS-GAN     & Tyler\_M & Kendall\_MAD         & Spearman\_Qn             \\ \hline
        \multicolumn{8}{c}{$Q \sim \N\left(2.25 c,\frac{1}{3}I_p\right)$} \\
        2.5 & 0.0711 (0.0203) & 0.0692 (0.0191) & \textbf{0.0661 (0.0121)} & 0.0960 (0.0384) & 0.3340 (0.0360) & 0.1484 (0.0320) & 0.1434 (0.0245)\\
        5   & 0.0768 (0.0201) & 0.0694 (0.0166) & \textbf{0.0658 (0.0161)} & 0.1049 (0.0423) & 0.3115 (0.0279) & 0.2235 (0.0351) & 0.2351 (0.0284) \\
        7.5 & 0.0828 (0.0203) & 0.0690 (0.0171) & \textbf{0.0661 (0.0092)} & 0.0957 (0.0303) & 0.3048 (0.0234) & 0.3164 (0.0319) & 0.3294 (0.0241) \\
        10  & 0.0981 (0.0221) & 0.0737 (0.0207) & \textbf{0.0720 (0.0151)} & 0.1029 (0.0467) & 0.3526 (0.0226) & 0.4133 (0.0264) & 0.4362 (0.0256) \\
    \multicolumn{8}{c}{$Q \sim \N(5 1_p, 5 I_p)$} \\
    2.5 & 0.0742 (0.0214) & \textbf{0.0732 (0.0207)} & 0.0765 (0.0209) & 0.0994 (0.0334) & 0.3886 (0.0346) & 0.1428 (0.0302) & 0.1578 (0.0266)  \\
    5   & 0.0814 (0.0216) & \textbf{0.0739 (0.0204)} & 0.0824 (0.0196) & 0.1053 (0.0235) & 0.4322 (0.0242) & 0.2211 (0.0286) & 0.2812 (0.0297)  \\
    7.5 & 0.0901 (0.0206) & \textbf{0.0757 (0.0209)} & 0.0893 (0.0155) & 0.1063 (0.0400) & 0.4788 (0.0312) & 0.3149 (0.0278) & 0.4153 (0.0318)\\
    10  & 0.1051 (0.0236) & \textbf{0.0802 (0.0225)} & 0.0935 (0.0219) & 0.1275 (0.0453) & 0.5295 (0.0332) & 0.4074 (0.0278) & 0.5693 (0.0358) \\ \hline

\multicolumn{1}{l|}{$\epsilon$} & $\gamma$-Lasso        &  MVE          & MCD        &  Sest& Mest      & MMest\\ \hline
\multicolumn{8}{c}{$Q \sim \N\left(2.25 c, \frac{1}{3}I_p\right)$}   \\
2.5 & 0.0737 (0.0182) & 0.0935 (0.0266) & 0.0834 (0.0270) & 0.0892 (0.0282) & 0.0802 (0.0236) & 0.0876 (0.0278) \\
5   & 0.2182 (0.0162) & 0.1124 (0.0257) & 0.1078 (0.0267) & 0.1314 (0.0257) & 0.0891 (0.0245) & 0.1299 (0.0259) \\
7.5 & 0.3740 (0.0153) & 0.1373 (0.0276) & 0.1306 (0.0247) & 0.1774 (0.0245) & 0.1026 (0.0239) & 0.1759 (0.0240) \\
10  & 0.5277 (0.0154) & 0.1700 (0.0284) & 0.1619 (0.0258) & 0.2358 (0.0292) & 0.1289 (0.0269) & 0.2349 (0.0292) \\
\multicolumn{8}{c}{$Q \sim \N(5 1_p, 5 I_p)$} \\
2.5 & 0.0769 (0.0243) & 0.0984 (0.0284) & 0.0837 (0.0271) & 0.0892 (0.0282) & 0.0799 (0.0229) & 0.0876 (0.0278) \\
5   & 0.1044 (0.0215) & 0.1171 (0.0258) & 0.1081 (0.0267) & 0.1314 (0.0257) & 0.0890 (0.0243) & 0.1299 (0.0259) \\
7.5 & 0.1668 (0.0312) & 0.1383 (0.0251) & 0.1301 (0.0237) & 0.1774 (0.0245) & 0.1031 (0.0243) & 0.1759 (0.0240) \\
10  & 0.2935 (0.0619) & 0.1697 (0.0240) & 0.1618 (0.0250) & 0.2358 (0.0292) & 0.1289 (0.0262) & 0.2349 (0.0291)\\
\hline
        \end{tabular}%
        }
        \end{table}

    \begin{table}[t]
        \caption{Comparison of existing methods and proposed $L_2$ penalized GAN methods ($p=10, n=2000$).
        Estimation error of the variance matrix is reported in the Frobenius norm $\|\cdot\|_{\fro}$.}
        \label{tab:eps-l2}
        \resizebox{\textwidth}{!}{%
        \begin{tabular}{llllllll}
        \hline
        \multicolumn{1}{l|}{$\epsilon$} & hinge GAN          & JS logit $f$-GAN              & rKL logit $f$-GAN             & GYZ JS-GAN & Tyler\_M        & Kendall\_MAD         & Spearman\_Qn                  \\ \hline
        \multicolumn{8}{c}{$Q \sim \N\left(2.25 c, \frac{1}{3}I_p\right)$}   \\
        2.5 & 0.2673 (0.0457) & 0.2574 (0.0461) & \textbf{0.2502 (0.0345)} & 0.3354 (0.0904) & 1.0973 (0.0890) & 0.7524 (0.0286) & 0.7936 (0.0201) \\
5   & 0.2729 (0.0344) & 0.2649 (0.0385) & \textbf{0.2528 (0.0404)} & 0.3511 (0.0757) & 1.0190 (0.0525) & 1.4888 (0.0282) & 1.5991 (0.0217) \\
7.5 & 0.2985 (0.0573) & 0.2790 (0.0554) & \textbf{0.2568 (0.0339)} & 0.3349 (0.0655) & 1.2802 (0.0439) & 2.3065 (0.0444) & 2.4908 (0.0401) \\
10  & 0.3222 (0.0547) & 0.2898 (0.0565) & \textbf{0.2723 (0.0477)} & 0.3642 (0.0885) & 2.3363 (0.0557) & 3.2143 (0.0489) & 3.4669 (0.0398) ) \\
\multicolumn{8}{c}{$Q \sim \N(5 1_p, 5 I_p)$} \\
2.5 & 0.2705 (0.0475) & 0.2688 (0.0526) & \textbf{0.2454 (0.0376)} & 0.3350 (0.0690) & 1.5503 (0.1226) & 0.7688 (0.1108) & 0.8884 (0.1040) \\
5   & 0.2754 (0.0335) & 0.2738 (0.0365) & \textbf{0.2466 (0.0324)} & 0.3526 (0.0489) & 1.9679 (0.0806) & 1.5229 (0.0793) & 1.8049 (0.0736) \\
7.5 & 0.3071 (0.0621) & 0.3026 (0.0688) & \textbf{0.2685 (0.0435)} & 0.3570 (0.0778) & 2.6099 (0.1545) & 2.3906 (0.1473) & 2.9247 (0.1490) \\
10  & 0.3301 (0.0583) & 0.3049 (0.0635) & \textbf{0.2744 (0.0505)} & 0.3829 (0.0852) & 3.2420 (0.2083) & 3.3202 (0.1462) & 4.1527 (0.1602)  \\
\hline
\multicolumn{1}{l|}{$\epsilon$} & $\gamma$-Lasso        &  MVE          & MCD        &  Sest& Mest      & MMest\\ \hline
\multicolumn{8}{c}{$Q \sim \N\left(2.25 c, \frac{1}{3}I_p\right)$}   \\
2.5 & 0.2679 (0.0339) & 0.3262 (0.0610) & 0.2906 (0.0567) & 0.2982 (0.0619) & 0.2853 (0.0615) & 0.2921 (0.0616) \\
5   & 1.6199 (0.0381) & 0.3583 (0.0452) & 0.3454 (0.0484) & 0.4059 (0.0489) & 0.3016 (0.0425) & 0.4032 (0.0487) \\
7.5 & 3.0849 (0.0461) & 0.4562 (0.0877) & 0.4326 (0.0803) & 0.5805 (0.0863) & 0.3549 (0.0699) & 0.5768 (0.0870) \\
10  & 4.4364 (0.0688) & 0.5409 (0.0799) & 0.5183 (0.0767) & 0.7732 (0.0839) & 0.4079 (0.0681) & 0.7707 (0.0831) \\
\multicolumn{8}{c}{$Q \sim \N(5 1_p, 5 I_p)$} \\
2.5 & 0.3016 (0.0853) & 0.3299 (0.0658) & 0.2896 (0.0568) & 0.2982 (0.0619) & 0.2841 (0.0599) & 0.2921 (0.0616) \\
5   & 0.4872 (0.0670) & 0.3781 (0.0567) & 0.3460 (0.0487) & 0.4059 (0.0489) & 0.3011 (0.0439) & 0.4032 (0.0487) \\
7.5 & 1.0551 (0.2097) & 0.4532 (0.0740) & 0.4342 (0.0792) & 0.5805 (0.0863) & 0.3580 (0.0717) & 0.5768 (0.0870) \\
10  & 2.1356 (0.4419) & 0.5331 (0.0715) & 0.5167 (0.0753) & 0.7732 (0.0839) & 0.4076 (0.0667) & 0.7708 (0.0831)\\
\hline
        \end{tabular}%
        }
    \end{table}











\section{Main proofs} \label{sec:main-proofs}
We present main proofs of Theorems \ref{thm:pop-robust} and \ref{thm:hinge-L1} in this section.
The main proofs of the other results and details of all main proofs are provided in the Supplementary Material.

 {At the center of our proofs is a unified strategy designed to establish error bounds for GANs. See, for example,
(\ref{eq:logit-pop-main-frame}) and (\ref{eq:main-frame}).
Moreover, we carefully exploit the location transformation and $L_1$ or $L_2$ penalties in our GAN objective functions
and develop suitable concentration properties, in addition to leveraging the concavity in updating the spline discriminators, as discussed in
Remarks \ref{rem:f-condition2-b} and \ref{rem:hinge-concavity}.}

\subsection{Proof of Theorem~\ref{thm:pop-robust}} \label{sec:prf-thm-pop-robust}
We state and prove the following result which implies Theorem~\ref{thm:pop-robust}.
\begin{pro} \label{pro:pop-robust}
Let $\Theta_0 = \{ (\mu,\Sigma): \mu \in \bbR^p, \Sigma $ is a $p\times p$ variance matrix $\}$.

(i) Assume that $f$ satisfies Assumption \ref{ass:f-condition}, and
$\epsilon \in [0, \epsilon_0]$ for a constant $\epsilon_0 \in [0, 1/2)$.
Let $\bar\theta =\argmin_{\theta \in \Theta_0} D_{f}(P_{\epsilon}||P_{\theta})$. If
$\err_{f0}(\epsilon) \le a$ for a constant $a \in [0,1/2)$, then we have
\begin{align*}
\|\bar\mu - \mu^*\|_2 &\le S_{1,a}\|\Sigma^*\|_{\op}^{1/2}\err_{f0}(\epsilon),\\
\|\bar\mu - \mu^*\|_{\infty} &\le S_{1,a}\|\Sigma^*\|_{\max}^{1/2}\err_{f0}(\epsilon),
\end{align*}
where $S_{1,a}=\{ \Phi^\prime(\Phi^{-1}(1/2+a)) \}^{-1}$ and $\err_{f0}(\epsilon) = \sqrt{-2 (f^{\dprime}(1))^{-1}f^{\prime}(1-\epsilon_0)\epsilon}+\epsilon $.
If further $\err_{f0}(\epsilon) \le a/(1+S_{1,a})$, then
\begin{align}
\|\bar\Sigma - \Sigma^*\|_{\op} &\le  2S_{3,a}\|\Sigma^*\|_{\op}\err_{f0}(\epsilon) + S^2_{3,a}\|\Sigma^*\|_{\op}(\err_{f0}(\epsilon))^2, \label{eq:thm-pop-robust-op} \\
\|\bar\Sigma - \Sigma^*\|_{\max}  &\le {4 S_{3,a}}\|\Sigma^*\|_{\max}\err_{f0}(\epsilon) + {2 S_{3,a}^2}\|\Sigma^*\|_{\max}(\err_{f0}(\epsilon))^2, \nonumber
\end{align}
where $S_{3,a}= S_{2,a}(1+S_{1,a})$, $S_{2,a}= \{ \sqrt{z_0/2}\, \erf^\prime(\sqrt{2/z_0}\, \erf^{-1}(1/2+a)) \}^{-1}$, and the constant $z_0$ is defined
such that $\erf(\sqrt{z_0/2}) = 1/2$.
The same inequality as (\ref{eq:thm-pop-robust-op}) also holds with $\|\bar\Sigma - \Sigma^*\|_{\op}$ replaced by
$ p^{-1/2} \|\bar\Sigma - \Sigma^*\|_{\fro}$.

(ii) Let $\bar\theta = \argmin_{\theta \in \Theta_0} D_\TV(P_{\epsilon}||P_{\theta})$. Then the statements in (i) hold
with $\err_{f0}(\epsilon)$ replaced by $\err_{h0}(\epsilon) = 2\epsilon$ throughout.
\end{pro}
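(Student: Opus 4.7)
The plan is to proceed in two stages: first convert the divergence-minimization property into a total variation bound $\TV(P_{\theta^*}, P_{\bar\theta}) \le \err_{f0}(\epsilon)$ (or $\err_{h0}(\epsilon) = 2\epsilon$ for part (ii)), then convert this TV bound into the parameter bounds via one-dimensional Gaussian projections. For part (ii), optimality of $\bar\theta$ gives $\TV(P_\epsilon, P_{\bar\theta}) \le \TV(P_\epsilon, P_{\theta^*}) \le \epsilon$, and the triangle inequality yields $\TV(P_{\theta^*}, P_{\bar\theta}) \le 2\epsilon$. For part (i), I combine three ingredients. First, optimality gives $D_f(P_\epsilon \| P_{\bar\theta}) \le D_f(P_\epsilon \| P_{\theta^*})$. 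Second, since $p_\epsilon/p_{\theta^*} \ge 1-\epsilon$ pointwise and $f$ is non-increasing (Assumption \ref{ass:f-condition}(ii)), $D_f(P_\epsilon \| P_{\theta^*}) \le f(1-\epsilon) \le -\epsilon f'(1-\epsilon_0)$ by convexity of $f$ and monotonicity of $f'$ on $[1-\epsilon_0, 1]$. Third, a Pinsker-type lower bound $D_f(P \| Q) \ge (f''(1)/2)\TV^2(P,Q)$ follows from the Taylor expansion $f(t) = f(1) + f'(1)(t-1) + \int_1^t(t-s)f''(s)\,ds$: on $\{p \le q\}$ the remainder integral is at least $(f''(1)/2)(1-t)^2$ because concavity of $f'$ (Assumption \ref{ass:f-condition}(iii)) gives $f''(s) \ge f''(1)$ for $s \le 1$, while on $\{p > q\}$ the remainder is nonnegative. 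Integrating against $q$, the linear term $f'(1)\int(p-q)\,dx$ vanishes, and Cauchy--Schwarz gives $\int_{p \le q}(q-p)^2/q\,dx \ge \TV^2(P, Q)$. Combining, $\TV(P_\epsilon, P_{\bar\theta}) \le \sqrt{-2 f'(1-\epsilon_0)\epsilon/f''(1)}$, and the triangle inequality yields the $\err_{f0}$ bound.

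Denote the resulting TV bound by $a$. For any unit vector $v$, $v^\T X$ is Gaussian under both $P_{\theta^*}$ and $P_{\bar\theta}$, and the data-processing inequality preserves $\TV \le a$. For the mean, the TV inequality applied to the half-space $\{v^\T X > v^\T \bar\mu\}$ (probability $1/2$ under $P_{\bar\theta}$) gives $|\Phi(v^\T(\bar\mu-\mu^*)/\sqrt{v^\T \Sigma^* v}) - 1/2| \le a$; since $\Phi^{-1}$ is convex on $[1/2, 1]$, the mean value theorem yields $|v^\T(\bar\mu-\mu^*)| \le S_{1,a}\,a\,\sqrt{v^\T \Sigma^* v}$. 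Choosing $v$ in the direction of $\bar\mu - \mu^*$ gives the $L_2$ bound with $\|\Sigma^*\|_\op^{1/2}$, and $v = e_j$ gives the $L_\infty$ bound with $\|\Sigma^*\|_{\max}^{1/2}$. For the scatter, apply the TV inequality to events $\{v^\T X > v^\T \mu^* \pm \sqrt{z_0\,v^\T \bar\Sigma v}\}$, whose $P_{\theta^*}$-probabilities involve $\Phi(\sqrt{z_0 r_v})$ with $r_v = v^\T \bar\Sigma v / v^\T \Sigma^* v$ and whose $P_{\bar\theta}$-probabilities involve $\Phi(\sqrt{z_0} \mp \zeta_v)$ with $\zeta_v = v^\T(\bar\mu-\mu^*)/\sqrt{v^\T \bar\Sigma v}$. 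Inverting $\Phi$ via MVT and eliminating $\zeta_v$ using the previously derived mean bound produces $|\sqrt{r_v} - 1| \le S_{3,a}\,a$ with $S_{3,a} = S_{2,a}(1+S_{1,a})$, the factor $(1+S_{1,a})$ absorbing the mean-shift contribution and $S_{2,a}$ arising from the $\erf$-inversion via $\phi(\sqrt{z_0}) = \erf'(\sqrt{z_0/2})/(2\sqrt{2})$. Squaring via $|r_v - 1| = |\sqrt{r_v}-1|(\sqrt{r_v}+1)$ yields $|r_v - 1| \le 2 S_{3,a}\,a + S_{3,a}^2 a^2$. Finally, $\|\bar\Sigma - \Sigma^*\|_\op \le \|\Sigma^*\|_\op \sup_v|r_v - 1|$, the Frobenius bound follows from $p^{-1/2}\|\cdot\|_\fro \le \|\cdot\|_\op$ for symmetric matrices, and the max-norm bound uses $v = e_j$ for diagonal entries and $v = (e_i \pm e_j)/\sqrt{2}$ for off-diagonals, yielding the factor-2 increase in the stated constants.

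The most subtle step is the Pinsker-type inequality $D_f \ge (f''(1)/2)\TV^2$: one cannot reduce to the case $f'(1) = 0$ because this is incompatible with $f$ being non-increasing, so the Taylor argument must retain $f'(1) \le 0$ explicitly and rely on the cancellation $\int(p - q)\,dx = 0$ after integration (i.e., the linear-in-$f'(1)$ contribution vanishes even though $f'(1) \neq 0$). A second delicate point is bookkeeping the mean-shift contribution in the scatter inversion: producing the exact factor $(1+S_{1,a})$ in $S_{3,a}$ requires a careful choice of one-sided events (rather than a single symmetric event, which would only give a quadratic mean-shift correction) so that $\zeta_v$ enters linearly and combines additively with the direct $\erf$-inversion term to yield the stated $\err_{f0} + \err_{f0}^2$ bound structure.
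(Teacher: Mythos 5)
Your proof follows essentially the same two-stage strategy as the paper: first establish a total-variation bound $\TV(P_{\theta^*},P_{\bar\theta})\le\err_{f0}(\epsilon)$ (or $2\epsilon$ for (ii)) by combining the optimality of $\bar\theta$, the upper bound $D_f(P_\epsilon\|P_{\theta^*})\le f(1-\epsilon)\le -f'(1-\epsilon_0)\epsilon$, and a Pinsker-type inequality $D_f\ge\tfrac{1}{2}f''(1)\TV^2$; then convert that TV bound into parameter errors via one-dimensional Gaussian projections and inversion of $\Phi$ and $\erf$ (the paper's Lemma~\ref{lem:invert-pop} and Proposition~\ref{pro:local-linear-pop}). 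Your derivation of the Pinsker bound is a valid minor variant: you use the integral remainder plus Cauchy--Schwarz on $\{p\le q\}$, where the paper uses Jensen applied to $(1-t)_+^2$ together with a Lagrange remainder; both hinge on the same key cancellation $f'(1)\int(p-q)\,dx=0$, and both give the sharp constant $f''(1)$ under Assumption~\ref{ass:f-condition}(iii). Your remark that one cannot assume $f'(1)=0$ while retaining non-increasingness is exactly the subtlety the paper handles through the redefinition $\tilde f(t)=f(t)-f'(1)(t-1)$.

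The one genuine issue is in the scatter step. You center your test events at $\mu^*$ and scale by $\sqrt{z_0\,v^\T\bar\Sigma v}$, so the $P_{\bar\theta}$-side probability $\Phi(\sqrt{z_0}\mp\zeta_v)$ involves $\zeta_v=v^\T(\bar\mu-\mu^*)/\sqrt{v^\T\bar\Sigma v}$. When you then ``eliminate $\zeta_v$ using the previously derived mean bound,'' the mean bound gives $|v^\T(\bar\mu-\mu^*)|\le S_{1,a}a\sqrt{v^\T\Sigma^* v}$, so $|\zeta_v|\le S_{1,a}a/\sqrt{r_v}$ --- which re-introduces the very ratio $r_v$ you are trying to bound. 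Without an a~priori bound keeping $r_v$ away from zero, the resulting inequality $\sqrt{z_0}\phi(\xi')|\sqrt{r_v}-1|\le a+\phi(\xi)S_{1,a}a/\sqrt{r_v}$ is not closed: for $r_v$ small the right-hand side explodes and nothing is concluded. A bootstrap could patch this, but it would not deliver the exact constant $S_{3,a}=S_{2,a}(1+S_{1,a})$ that the statement requires. The paper avoids the circularity entirely: it first bounds $\TV(P_{\bar\mu,\Sigma^*},P_{\mu^*,\Sigma^*})\le S_{1,a}d$ using the Gaussian KL formula together with the mean bound at the specific direction $u=\Sigma^{*-1}(\bar\mu-\mu^*)$ (no $\bar\Sigma$ appears), so that the triangle inequality gives $\TV(P_{\bar\mu,\Sigma^*},P_{\bar\mu,\bar\Sigma})\le(1+S_{1,a})d$, and \emph{then} applies \emph{symmetric} events centered at $\bar\mu$, whose $P_{\bar\theta}$-probability is exactly $1/2$ and carries no $\zeta_v$-dependence. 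That is what cleanly yields $S_{3,a}$. Separately, you implicitly assume $v^\T\Sigma^* v>0$ and $v^\T\bar\Sigma v>0$; the paper treats the singular-covariance case in step~(iii) of the proof of Proposition~\ref{pro:local-linear-pop} (showing a TV bound strictly below $1$ forces the two null spaces and the projected means to agree), which your writeup omits but is easy to add.
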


\begin{proof}[Proof of Proposition~\ref{pro:pop-robust}]
(i) Our main strategy is to show the following inequalities hold:
\begin{align}
d(\bar\theta, \theta^*) - \Delta_1(\epsilon) \le \sqrt{D_{f}(P_\epsilon || P_{\bar\theta})} \le \Delta_2(\epsilon, f), \label{eq:logit-pop-main-frame}
\end{align}
where $\Delta_1(\epsilon)$ and $\Delta_2(\epsilon, f)$ are bias terms, depending on $\epsilon$ and $(\epsilon, f)$ respectively and $d(\bar\theta, \theta^*)$ is the total variation $D_\TV(P_{\bar\theta} \| P_{\theta^*})$ or simply $\TV(P_{\bar\theta}, P_{\theta^*})$. Under certain conditions, $d(\bar\theta, \theta^*)$ delivers upper bounds, up to scaling constants, on the estimation bias to be controlled, $\|\bar\mu - \mu^*\|_{\infty}$, $\|\bar\mu - \mu^*\|_{2}$, $\|\bar{\Sigma} - \Sigma^*\|_{\max}$, and $\|\bar{\Sigma} - \Sigma^*\|_{\op}$.

(Step 1) The upper bound in (\ref{eq:logit-pop-main-frame}) follows from Lemma \ref{lem:logit-upper} (iv):
for any $f$ satisfying Assumption~\ref{ass:f-condition} and any $\epsilon \in [0, \epsilon_0]$, we have
\begin{align*}
D_{f}(P_\epsilon || P_{\bar\theta}) \le D_{f}(P_\epsilon || P_{\theta^*}) \le -f^\prime(1-\epsilon_0)\epsilon = \Delta_2^2(\epsilon, f), 
\end{align*}
where $\Delta_2(\epsilon, f) = \sqrt{-f^\prime(1-\epsilon_0)\epsilon }$.
The constant $-f^\prime(1-\epsilon_0)$ is nonnegative because $f$ is non-increasing by Assumption \ref{ass:f-condition} (ii).

(Step 2) We show the lower bound in (\ref{eq:logit-pop-main-frame}) as follows:
\begin{align}
d(\bar\theta, \theta^*) &\le  \TV(P_{\bar\theta}, P_{\epsilon}) +  \TV(P_{\epsilon}, P_{\theta^*}) \le  \TV(P_{\bar\theta}, P_{\epsilon}) + \Delta_1(\epsilon)
\label{eq:logit-pop-combine-1}\\
&\le  \sqrt{2(f^{\dprime}(1))^{-1} D_f(P_{\epsilon} || P_{\bar\theta})} + \Delta_1(\epsilon) ,\label{eq:logit-pop-combine-2}
\end{align}
where $\Delta_1(\epsilon) = \epsilon$.
Line (\ref{eq:logit-pop-combine-1}) follows by the triangle inequality and the fact that
$\TV(P_{\epsilon}, P_{\theta^*}) \le \epsilon \TV(P_{Q}, P_{\theta^*}) \le \epsilon$.
Line (\ref{eq:logit-pop-combine-2}) follows from Lemma \ref{lem:sqr-tv-lwr-bound}:
for any $f$-divergence satisfying {Assumption \ref{ass:f-condition} (iii)}, we have
\begin{align*}
D_f(P_{\epsilon} || P_{\bar\theta}) &\ge \frac{f^{\dprime}(1)}{2} \TV(P_{\epsilon}, P_{\bar\theta})^2. 
\end{align*}
The scaling constant, $\inf_{t\in (0,1]}f^{\dprime}(t)/2$, in Lemma~\ref{lem:sqr-tv-lwr-bound}
reduces to $f^{\dprime}(1) /2$, because $f^\dprime$ is non-increasing by Assumption \ref{ass:f-condition} (iii).

(Step 3) Combining the lower and upper bounds in (\ref{eq:logit-pop-main-frame}), we have
\begin{align*}
d(\bar\theta, \theta^*)
&\le \sqrt{2(f^{\dprime}(1))^{-1} }\Delta_2(\epsilon, f)+ \Delta_1(\epsilon) = \err_{f0}(\epsilon) , 
\end{align*}
where $\err_{f0}(\epsilon) = \sqrt{-2 (f^{\dprime}(1))^{-1}f^{\prime}(1-\epsilon_0)\epsilon}+\epsilon $.
The location result then follows from Proposition~\ref{pro:local-linear-pop} provided that $\err_{f0}(\epsilon) \le a$ for a constant $a \in [0, 1/2)$.
The variance matrix result follows if $\err_{f0}(\epsilon) \le a/(1+S_{1,a})$.

(ii) For the TV minimizer $\bar\theta$, Steps 1 and 2 in (i) can be combined to directly obtain an upper bound on $d(\bar\theta, \theta^*)$ as follows:
\begin{align}
d(\bar\theta, \theta^*) &\le \TV( P_{\bar\theta}, P_\epsilon) + \TV(P_\epsilon, P_{\theta^*}) \label{eq:hinge-pop-combine-1} \\
&\le  2\TV(P_\epsilon, P_{\theta^*}) \label{eq:hinge-pop-combine-2}\\
&\le 2\epsilon. \label{eq:hinge-pop-combine-3}
\end{align}
Line (\ref{eq:hinge-pop-combine-1}) is due to the triangle inequality.
Line (\ref{eq:hinge-pop-combine-2}) follows because $\TV( P_{\bar\theta}, P_\epsilon)\le \TV( P_{\theta^*}, P_\epsilon) = \TV(P_\epsilon, P_{\theta^*})$ by the definition of $\bar\theta$ and the symmetry of TV.\
Line (\ref{eq:hinge-pop-combine-3}) follows because $\TV(P_{\epsilon}, P_{\theta^*}) \le \epsilon \TV(P_{Q}, P_{\theta^*}) \le \epsilon$
as in (\ref{eq:logit-pop-combine-1}).

Given the upper bound on $d(\bar\theta, \theta^*)$, the location result then follows from Proposition~\ref{pro:local-linear-pop} provided that $\err_{h0}(\epsilon) \le a$ for a constant $a \in [0, 1/2)$. The variance matrix result follows if $\err_{h0}(\epsilon) \le a/(1+S_{1,a})$.
\end{proof}

\subsection{Proof of Theorem~\ref{thm:hinge-L1}}

We state and prove the following result which implies Theorem \ref{thm:hinge-L1}.
See the Supplement for details about how Proposition~\ref{pro:hinge-L1-detailed} implies Theorem~\ref{thm:hinge-L1}.
For $\delta \in (0,1/7)$, define
\begin{align*}
& \lambda_{11} = \sqrt{\frac{2\log({5 p}) + \log(\delta^{-1})}{n}} + \frac{2\log({5 p}) + \log(\delta^{-1})}{n} , \\
& \lambda_{12} =  2C_{\mathrm{rad4}} \sqrt{\frac{ \log(2p(p+1))}{n}}  + \sqrt{\frac{2 \log(\delta^{-1})}{n}} ,
\end{align*}
where $C_{\mathrm{rad4}}=C_{\mathrm{sg6}} C_{\mathrm{rad3}}$, depending on universal constants $C_{\mathrm{sg6}}$ and $C_{\mathrm{rad3}}$ in
Lemmas~\ref{lem:subg-max} and Corollary \ref{cor:entropy-sg2} in the Supplement. Denote
\begin{align*}
\err_{h1} (n,p, \delta, \epsilon) =  3\epsilon + 2\sqrt{\epsilon/(n\delta)} +  \lambda_{12} + \lambda_1 .
\end{align*}
\begin{pro} \label{pro:hinge-L1-detailed}
Assume that $\|\Sigma_{*}\|_{\max} \le M_1$ and $\epsilon \le 1/5$. Let $\hat \theta=(\hat\mu,\hat\Sigma)$ be a solution to (\ref{eq:hinge-gan-L1}) with $\lambda _1 \ge C_{\mathrm{sp13}} M_{11} \lambda_{11}$ where $M_{11} =  M_1^{1/2} ( M_1^{1/2} +  {2} \sqrt{2\pi})$ and $C_{\mathrm{sp13}} = (5/3) ( C_{\mathrm{sp11}} \vee C_{\mathrm{sp12}})$,
depending on universal constants $ C_{\mathrm{sp11}}$ and $ C_{\mathrm{sp12}}$ in Lemma~\ref{lem:spline-L1-upper} in the Supplement. If $\sqrt{\epsilon(1-\epsilon)/(n \delta) }  \leq 1/5$ and $\err_{h1} (n,p, \delta, \epsilon) \le a $ for a constant $a\in(0,1/2)$,
then the following holds with probability at least $1-7\delta$ uniformly over contamination distribution $Q$,
\begin{align*}
 \| \hat\mu - \mu^* \|_\infty & {\le} S_{4,a}  \err_{h1} (n,p, \delta, \epsilon), \\
 \| \hat\Sigma - \Sigma^*\|_{\max} & {\le} S_{8,a}  \err_{h1} (n,p, \delta, \epsilon) ,
\end{align*}
where $S_{4,a} =  (1 +\sqrt{2 M_1\log\frac{2}{1-2a}})  /a $
and $S_{8,a} =  2 M_1^{1/2} S_{6,a} + S_{7} ( 1+ S_{4,a} + S_{6,a}) $ with
$S_{6,a}= S_{5} ( 1 + S_{4,a}/2)$, $S_{5} = 2\sqrt{2\pi} ( 1- \me^{-2/M_1})^{-1} $, and
$S_{7} =8 \pi M_1  \me^{1/(4M_1)}$
$S_{7} =4 \{ (\frac{1}{\sqrt{2\pi M_1} } \me^{- 1/(8M_1)} ) \vee (1 - 2 \me^{- 1/(8M_1)} ) \}^{-2} $.
\end{pro}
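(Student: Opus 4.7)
The proof plan is to adapt the sandwich argument used in Proposition~\ref{pro:pop-robust}(ii) to the sample setting with a linear spline class and an $L_1$ penalty. The centerpiece is the triangle inequality
\[
d(\hat\theta, \theta^*) \;\le\; \TV(P_{\hat\theta}, P_\epsilon) + \TV(P_\epsilon, P_{\theta^*}),
\]
whose second term is at most $\epsilon$. I would use the penalized hinge GAN value, maximized over the spline discriminator class, to dominate $2\TV(P_{\hat\theta}, P_\epsilon)$ along selected testing directions, and then appeal to Proposition~\ref{pro:local-linear-pop} to convert TV-type control into the advertised bounds on $\|\hat\mu - \mu^*\|_\infty$ and $\|\hat\Sigma - \Sigma^*\|_{\max}$.

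First I would establish the upper half. By optimality of $\hat\theta$ in (\ref{eq:hinge-gan-L1}),
\[
\sup_\gamma\{K_\HG(P_n, P_{\hat\theta}; h_{\gamma,\hat\mu}) - \lambda_1 \pen_1(\gamma)\} \;\le\; \sup_\gamma\{K_\HG(P_n, P_{\theta^*}; h_{\gamma,\mu^*}) - \lambda_1 \pen_1(\gamma)\}.
\]
I decompose $K_\HG(P_n, P_{\theta^*}; h) = K_\HG(P_\epsilon, P_{\theta^*}; h) + (P_n - P_\epsilon)[\min(1,h)]$. The first summand is at most $2\TV(P_\epsilon, P_{\theta^*}) \le 2\epsilon$ by (\ref{eq:hinge-gan-lbd}). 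For the second, linearity of $h_{\sp,\gamma}$ in $\gamma$ and the $1$-Lipschitzness of $\min(1,\cdot)$ allow me to dualize, so the fluctuation is controlled by $\pen_1(\gamma)$ times $\max_{l,j,j'}\{|(P_n-P_\epsilon)\varphi_l(x)_j|, |(P_n-P_\epsilon)\varphi_l(x)_j\varphi_{l'}(x)_{j'}|\}$. Sub-exponential concentration on these bounded/ReLU-transformed Gaussian statistics yields the scales $\lambda_{11}$ and $\lambda_{12}$, while the $\sqrt{\epsilon/(n\delta)}$ term in $\err_{h1}$ arises from Chebyshev applied to the variance of $K_\HG(P_n, \cdot; h) - K_\HG(P_\epsilon, \cdot; h)$ at the maximizer, with an extra $\epsilon$ factor coming from the contamination admixture. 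The hypothesis $\lambda_1 \ge C_{\mathrm{sp13}} M_{11}\lambda_{11}$ guarantees $\lambda_1 \pen_1(\gamma)$ dominates the linear-class sampling noise uniformly.

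Next I would establish the lower half by plugging in hand-crafted spline test discriminators $h_{\tilde\gamma,\hat\mu}$. For each coordinate $j$, a $\sign$-weighted combination of $\varphi_l(\cdot-\hat\mu)_j$ with bounded $\pen_1$ produces a test whose expectation difference under $P_\epsilon$ and $P_{\hat\theta}$ equals, by first-order Taylor expansion in the location shift, a positive constant times $|\hat\mu_j - \mu^*_j|$ up to $O(\epsilon)$ contamination bias. Similarly, pairwise products $\varphi_l(\cdot-\hat\mu)_j \varphi_{l'}(\cdot-\hat\mu)_{j'}$ isolate $|\hat\Sigma_{jj'} - \Sigma^*_{jj'}|$ modulo a quadratic correction in $\hat\mu-\mu^*$ that is absorbed once the location bound is in hand. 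Combining these directional lower bounds with the Step-2 upper bound delivers maximum-norm errors of order $\err_{h1}(n,p,\delta,\epsilon)$; the explicit constants $S_{4,a}, S_{6,a}, S_{7}, S_{8,a}$ reflect the marginal Gaussian density values near the spline knots (all controlled by $M_1$), in direct analogy with the proof of Proposition~\ref{pro:pop-robust}.

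The hard part is the lower-bound step: the linear spline class has only $O(p^2)$ degrees of freedom and must simultaneously probe all $p + p(p+1)/2$ directions of parameter error while keeping each probing test $\pen_1$-bounded and maintaining a tight Taylor expansion around $\mu^*$. Two further subtleties require care. First, the knots of $h_{\gamma,\hat\mu}$ depend on the random $\hat\mu$, so concentration bounds must hold uniformly in $\mu\in\bbR^p$; the adaptive-knot design enables a clean change of variables that reduces this to concentration of ReLU statistics of the mean-centered data. Second, the proof must use only $\|\Sigma^*\|_{\max}$ rather than $\|\Sigma^*\|_\op$ in the scaling constants, forcing the Taylor analysis to proceed one coordinate or pair at a time rather than via operator-norm inequalities.
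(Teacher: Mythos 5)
Your lower-bound plan is broadly aligned with the paper: restricting to $L_1$-normalized ramp-class discriminators so that $h_{\gamma,\hat\mu}\in[-1,1]$, letting the hinge objective degenerate to a moment-matching functional, and appealing to a local-linearity argument to extract $\|\hat\mu-\mu^*\|_\infty$ and $\|\hat\Sigma-\Sigma^*\|_{\max}$ coordinate by coordinate and pair by pair. But there are two genuine gaps in the rest of the plan.

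First, the upper-bound decomposition does not work as stated. You write $K_\HG(P_n,P_{\theta^*};h)=K_\HG(P_\epsilon,P_{\theta^*};h)+(P_n-P_\epsilon)[\min(1,h)]$ and propose to control the fluctuation term by ``sub-exponential concentration on bounded/ReLU-transformed Gaussian statistics.'' However $(P_n-P_\epsilon)[\min(1,h)]$ is an average over \emph{all} $n$ observations, including the contaminated ones drawn from the arbitrary $Q$; the spline discriminators $h_{\sp,\gamma}$ are unbounded, so $\min(1,h(X_i))$ at contaminated points $X_i\sim Q$ admits no moment or tail bound whatsoever. Sub-exponential concentration simply does not apply to this empirical process. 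The paper avoids this by never going through $P_\epsilon$: in Lemma~\ref{lem:hinge-upr-bound}(ii) the empirical sum is split via the latent contamination indicators $U_i$ into a contaminated part (each term bounded above by $1$, contributing $2\hat\epsilon$) and a clean Gaussian part, and then Jensen's inequality via the concavity of $u\mapsto\min(1,\pm u)$ reduces the clean part to the moment-matching term $|\E_{P_{\theta^*,n}}h-\E_{P_{\theta^*}}h|$, which concentrates because only Gaussian observations appear. The term $\sqrt{\epsilon/(n\delta)}$ in $\err_{h1}$ comes from Chebyshev on the binomial fraction $\hat\epsilon$, not, as you suggest, from a variance bound on $K_\HG$ itself. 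Your plan contains no analogue of this conditioning-plus-Jensen step, and without it the upper bound fails against worst-case $Q$.

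Second, the sandwich you set up via $d(\hat\theta,\theta^*)\le\TV(P_{\hat\theta},P_\epsilon)+\TV(P_\epsilon,P_{\theta^*})$ and Proposition~\ref{pro:local-linear-pop} does not carry over to the sample analysis, because the spline class cannot attain the variational representation of $\TV$: the penalized hinge value over $\{h_{\sp,\gamma}\}$ is not an upper bound for $2\TV(P_{\hat\theta},P_\epsilon)$, and therefore you cannot invoke Lemma~\ref{lem:invert-pop}/Proposition~\ref{pro:local-linear-pop}, which are calibrated against a genuine $\TV$ control. In the actual proof $d(\hat\theta,\theta^*)$ is redefined as the moment-matching discrepancy $\max_{\gamma\in\Gamma_\rp,\,\pen_1(\gamma)=1}\{\E_{P_{\theta^*}}h_{\gamma,\hat\mu}-\E_{P_{\hat\theta}}h_{\gamma,\hat\mu}\}$, and the translation into parameter errors uses the ramp-specific local-linearity lemmas (Lemmas~\ref{lem:local-linear1}--\ref{lem:local-linear2}, Proposition~\ref{pro:logit-L1-combine}), whose constants $S_{4,a},\dots,S_{8,a}$ are distinct from the population constants $S_{1,a},S_{2,a},S_{3,a}$ of Proposition~\ref{pro:local-linear-pop}. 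Your last paragraph correctly identifies the coordinate-wise / pair-wise reduction and the importance of working with $\|\Sigma^*\|_{\max}$ rather than $\|\Sigma^*\|_\op$, so the spirit of Step~3 is right, but the route through $\TV$ and Proposition~\ref{pro:local-linear-pop} should be replaced by the ramp moment-matching lemmas.
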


\begin{proof}[Proof of Proposition~\ref{pro:hinge-L1-detailed}]
The main strategy of our proof is to show that the following inequalities hold with high probabilities,
\begin{align}
& d(\hat\theta, \theta^*) - \Delta_{12} \le
\max_{\gamma\in\Gamma}  \; \left\{ K_\HG (P_n, P_{\hat\theta}; h_{\gamma,\hat\mu}) - \lambda_1 \;\pen_1(\gamma) \right\}
\le \Delta_{11} , \label{eq:main-frame}
\end{align}
where $\Delta_{11}$ and $\Delta_{12}$ are error terms,
and $d(\theta^*, \hat\theta)$ is a moment matching term, which under certain conditions delivers upper bounds, up to scaling constants,
on the estimation errors to be controlled, $\| \hat\mu- \mu^* \|_\infty$ and $\|\hat \Sigma - \Sigma^* \|_{\max}$.

(Step 1) For upper bound in (\ref{eq:main-frame}), we show that with probability at least $1-5\delta$,
\begin{align}
& \quad \max_{\gamma\in\Gamma}  \; \left\{ K_\HG (P_n, P_{\hat\theta}; h_{\gamma,\hat\mu}) - \lambda_1 \;\pen_1(\gamma) \right\} \nonumber \\
& \le \max_{\gamma\in\Gamma}  \; \left\{ K_\HG (P_n, P_{\theta^*}; h_{\gamma,\mu^*}) - \lambda_1 \;\pen_1(\gamma) \right\} \label{eq:hinge-L1-upper-1} \\
& \leq \max_{\gamma\in\Gamma}  \; \left\{ \Delta_{11} + \pen_1( \gamma) \tilde\Delta_{11} - \lambda_1 \;\pen_1(\gamma) \right\} .  \label{eq:hinge-L1-upper-2}
\end{align}
Inequality (\ref{eq:hinge-L1-upper-1}) follows from the definition of $\hat\theta$.
Inequality (\ref{eq:hinge-L1-upper-2}) follows from Proposition~\ref{pro:hinge-L1-upper}: it holds with probability at least $1-{7}\delta$ that for any $\gamma \in\Gamma$,
\begin{align*}
K_\HG (P_n, P_{\theta^*}; h_{\gamma,\mu^*}) \le \Delta_{11} + \pen_1( \gamma) \tilde\Delta_{11} ,
\end{align*}
where $\Delta_{11} = 2(\epsilon + \sqrt{\epsilon/(n\delta)} ) $, $\tilde\Delta_{11} = C_{\mathrm{sp13}} M_{11} \lambda_{11}$, and
$$
\lambda_{11} = \sqrt{\frac{2\log(5 p) + \log(\delta^{-1})}{n}} + \frac{2\log(5  p)+ \log(\delta^{-1})}{n} .
$$
From (\ref{eq:hinge-L1-upper-1})--(\ref{eq:hinge-L1-upper-2}), the upper bound in (\ref{eq:main-frame}) holds with probability at least $1-5\delta$,
provided that  the tuning parameter $\lambda_1$ is chosen such that $\lambda_1 \ge \tilde\Delta_{11} $.

(Step 2) For the lower bound in (\ref{eq:main-frame}), we show that with probability at least $1- 2\delta$,
\begin{align}
& \quad \max_{\gamma\in\Gamma}  \; \left\{ K_\HG (P_n, P_{\hat\theta}; h_{\gamma,\hat\mu}) - \lambda_1 \;\pen_1(\gamma) \right\} \nonumber \\
&\geq \max_{\gamma\in\Gamma_0}  \; \left\{ K_\HG (P_n, P_{\hat\theta}; h_{\gamma,\hat\mu}) - \lambda_1 \;\pen_1(\gamma) \right\} \label{eq:hinge-L1-lower-1}\\
&\ge \max_{\gamma\in\Gamma_0}  \; \left\{\E_{P_{\theta^*}} h_{\gamma,\hat\mu}(x) - \E_{P_{\hat\theta}}h_{\gamma,\hat\mu}(x) \right\} - \tilde\Delta_{12} - \lambda_1   . \label{eq:hinge-L1-lower-2}
\end{align}
Inequality (\ref{eq:hinge-L1-lower-1}) holds provided that $\Gamma_0$ is a subset of $\Gamma$.

Take $\Gamma_0 = \{ \gamma \in \Gamma_\rp: \gamma_0=0, \pen_1(\gamma) = 1 \}$,
{where $\Gamma_\rp$ is the subset of $\Gamma$ associated with pairwise ramp functions as in the proof of Theorem \ref{thm:logit-L1}. }
Inequality (\ref{eq:hinge-L1-lower-2}) follows from Proposition \ref{pro:hinge-L1-lower} {because} $h_{\gamma,\hat\mu}(x) \in [-1,1]$ for {$\gamma \in \Gamma_0$, {and hence} the hinge loss reduces to} a moment {matching term}: it holds
with probability at least $1-2 \delta$ that for any $\gamma \in \Gamma_0$,
\begin{align*}
 K_\HG(P_{n}, P_{\hat\theta}; h_{\gamma, \hat\mu}) \ge \left\{\E_{P_{\theta^*}} h_{\gamma, \hat\mu}(x) - \E_{P_{\hat\theta}} h_{\gamma, \hat\mu}(x) \right\} - \tilde{\Delta}_{12}
\end{align*}
where $\tilde{\Delta}_{12} = \epsilon + \lambda_{12}$ and
$$
\lambda_{12} = C_{\mathrm{rad4}} \sqrt{\frac{4 \log(2p(p+1))}{n}} + \sqrt{\frac{2 \log(\delta^{-1})}{n}}.
$$
From (\ref{eq:hinge-L1-lower-1})--(\ref{eq:hinge-L1-lower-2}), the lower bound in (\ref{eq:main-frame}) holds with probability at least $1-2\delta$,
where $\Delta_{12} = \tilde\Delta_{12} + \lambda_1 $ and $d(\hat\theta, \theta^*)
= \max_{\gamma\in\Gamma_0} \{\E_{P_{\theta^*}} h_{\gamma,\hat\mu} (x)- \E_{P_{\hat\theta}}h_{\gamma,\hat\mu}(x)\}$.

(Step 3) We complete the proof by relating the moment matching term $d(\hat\theta,\theta^*)$
to the estimation error between $\hat\theta$ and $\theta^*$.
First, 
combining the lower and upper bounds in (\ref{eq:main-frame}) shows
that with probability at least $1-{9}\delta$,
\begin{align}
\max_{\gamma\in\Gamma_\rp, \pen_1(\gamma)=1 } \left\{\E_{P_{\theta^*}} h_{\gamma,\hat\mu}(x) - \E_{P_{\hat\theta}}h_{\gamma,\hat\mu}(x) \right\} \le  \err_{h1} (n,p, \delta, \epsilon).\label{eq:hinge-L1-combine}
\end{align}
where
\begin{align*}
    \err_{h1} (n,p, \delta, \epsilon)
& =  3\epsilon + 2\sqrt{\epsilon/(n\delta)} +  \lambda_{12} + \lambda_1 .
\end{align*}
The desired result then follows from Proposition~\ref{pro:logit-L1-combine}: provided ${\err_{h1}} (n,p, \delta, \epsilon) \le a $, inequality (\ref{eq:hinge-L1-combine}) implies that
\begin{align*}
& \| \hat\mu - \mu^* \|_\infty \le S_{4,a}  {\err_{h1}} (n,p, \delta, \epsilon), \quad \| \hat\Sigma - \Sigma^* \|_{\max}\le S_{8,a} {\err_{h1}} (n,p, \delta, \epsilon) .
\end{align*}
\end{proof}
\begin{supplement}
    \textbf{Supplement to ``Tractable and Near-Optimal Adversarial Algorithms for Robust Estimation in Contaminated Gaussian Models''}. The Supplement provides
    additional information for simulation studies, and main proofs and technical details. Auxiliary lemmas and technical tools are also provided.
\end{supplement}

\bibliographystyle{imsart-nameyear} 
\bibliography{final}       

\clearpage

\setcounter{page}{1}

\setcounter{section}{0}
\setcounter{equation}{0}

\setcounter{figure}{0}
\setcounter{table}{0}

\renewcommand{\theequation}{S\arabic{equation}}
\renewcommand{\thesection}{\Roman{section}}

\renewcommand\thefigure{S\arabic{figure}}
\renewcommand\thetable{S\arabic{table}}

\setcounter{lem}{0}
\renewcommand{\thelem}{S\arabic{lem}}

\begin{center}
{\Large Supplementary Material for}\\[.05in]
{\Large ``Tractable and Near-Optimal Adversarial Algorithms for}\\
{\Large Robust Estimation in Contaminated Gaussian Models''}

\vspace{.1in} {\large Ziyue Wang and Zhiqiang Tan}
\end{center}

\vspace{.2in}
The Supplementary Material contains Appendices I-V on additional information for simulation studies, main proofs and details, auxiliary lemmas, and technical tools.

\section{Additional information for simulation studies} \label{sec:additional-simulation}

\subsection{Implementation of proposed methods}

The detailed algorithm to implement our logit $f$-GANs and hinge GAN is shown as Algorithm~\hyperref[alg:2]{S1}. 
In our experiments, the learning rate $\alpha_t$ is set with $\alpha_0=1$,
and the iteration number is set to $100$. This choice leads to stable convergence while keeping the running time relatively short.

For rKL logit $f$-GAN, we modify the un-penalized objective $K_{\mathrm{rKL}}$ as $\min(K_{\mathrm{rKL}},10)$.
This modification helps stabilize the initial steps of training where the fake data and real data can be almost separable depending on the initial value of $\theta$.

\setcounter{algocf}{0} 
\renewcommand{\thealgocf}{S\arabic{algocf}} 

\begin{algorithm} \label{alg:2}
    \caption{Penalized logit $f$-GAN or hinge GAN (in detail)}
    \KwSty{Require} A penalized GAN objective function $K(\theta,\gamma; \lambda)$ as in (\ref{eq:logit-fgan-transf}) for logit $f$-GAN or with $K_f$ replaced by $K_{\HG}$ for hinge GAN, a decaying learning rate $\alpha_t=\alpha_0\exp(-0.05t)$, a base penalty level $\lambda_0$ so that $\lambda_1 = \lambda_0 \sqrt{\log(p)/n}$, $\lambda_2 = \lambda_0 \sqrt{p/n}$, and $\lambda_3 = \lambda_0 \sqrt{p^2/n}$. \\
    \KwSty{Initialization}{ Initialize $\mu_0$ by the median of real data $x$. Initialize $\Sigma_0$ by $\hat{S} \hat{K} \hat{S} $ where $\hat{K}$ is Kendall's $\tau$ correlation coefficient matrix and $\hat{S}$ is a diagonal matrix with the MAD scale estimates on the diagonal.
    }
    \Repeat{}{
        \KwSty{Sampling:} Draw $(Z_1,\ldots,Z_n)$ from $\N(0,I)$ and approximate $P_{\theta_{t-1}}$ by the empirical distribution on the fake data $\xi_i = \mu_{t-1} + \Sigma_{t-1}^{1/2} z_i$, $i=1,\ldots,n$. \\
        \KwSty{Updating:} Compute $\gamma_{t} =\argmax_{\gamma} K(\theta_{t-1}, \gamma; \lambda)$ by the optimizer \texttt{MOSEK}, and compute $\theta_t = \theta_{t-1} - \alpha_t \nabla_{\theta} K (\theta, \gamma_t; \lambda) |_{\theta_{t-1}}$ by gradient descent where the gradient $\nabla_{\theta} K (\theta, \gamma_t; \lambda) |_{\theta_{t-1}}$ is clipped at $0.1$ by $L_\infty$ norm.
    }(\KwSty{until} convergence or the maximum iterations reached.)

\end{algorithm}

\vspace{-.1in}

    \subsection{Tuning penalty levels}
     We conducted tuning experiments to identify base penalty levels $\lambda_0$ which are expected to work reasonably well in various settings
     for our logit $f$-GANs and hinge GAN, where the dependency on $(p,n)$ is already absorbed in the penalty parameters $\lambda_1, \lambda_2, \lambda_3$. In the tuning experiments, we tried two contamination proportions $\epsilon$ and two choices of contamination distributions $Q$ as described in Section \ref{sec:simulation-setting}. Results are collected from 20 repeated experiments on a grid of penalty levels for each method.

     As shown in Figure~\ref{fig:Penalty}, although the average estimation error varies as the contamination setting changes, there is a consistent and stable range of
     the penalty level $\lambda_0$ which leads to approximately the best performance for each method, with either $L_1$ or $L_2$ penalty used.
     We manually pick $\lambda_0 = 0.1$ for the hinge GAN, $\lambda_0=0.025$ for the JS logit $f$-GAN, and $\lambda_0=0.3$ for the rKL logit $f$-GAN,
     which are then fixed in all subsequent simulations.
      {It is desirable in future work to design and study automatic tuning of penalty levels for GANs.}

    \begin{figure}[htbp]
        \includegraphics[scale=0.11]{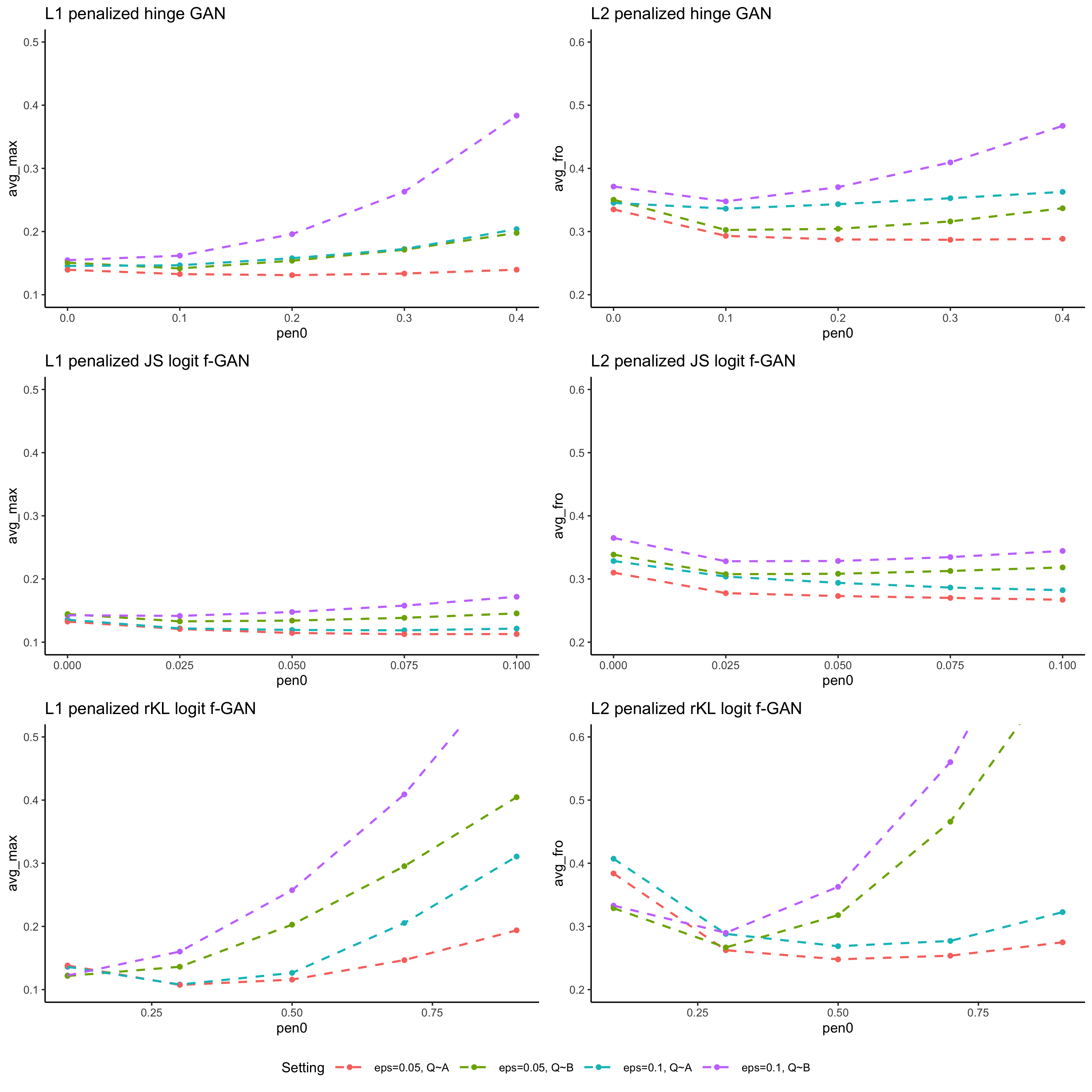}
        \caption{Tuning the penalty level. In this setting $p=5$, $n=500$, $\epsilon \in \{0.05, 0.1\}$, and contamination distribution $Q$ is either (A) $\N(2.25 c, (1/3)I_p)$ or (B) $\N(5 1_p, 5I_p)$. Penalty levels for the hinge GAN are $\{0, 0.1, 0.2, 0.3, 0.4\}$, penalty levels for the JS logit $f$-GAN are $\{0, 0.025, 0.05, 0.075, 0.1\}$, and penalty levels for the rKL logit $f$-GAN are $\{0.1, 0.3, 0.5, 0.7, 0.9\}$.}
        \label{fig:Penalty}
    \end{figure}

    \subsection{Error dependency on $n$ and $p$}
     Tables~\ref{tab:n-l1}--\ref{tab:n-l2} show the performance of various methods depending on sample size $n$ for the two choices of contamination in Section \ref{sec:simulation-setting}. We fix the dimension $p=10$ and $\epsilon =0.1$ and increase $n$ from $500$ to $4000$. Tables~\ref{tab:p-l1}--\ref{tab:p-l2} show how the performance of methods depending on sample size $p$ for the two choices of contamination. We fix $\epsilon=0.1$ and $n=2000$ and increase $p$ from $5$ to $15$. Estimation errors are measured in the maximum norm and the Frobenius norm.

For all methods considered, the estimation errors decrease as $n$ increases. As $p$ increases, however, the estimation errors seem to be affected to a lesser extent when measured in the maximum norm. This is expected because an error rate $\sqrt{\log(p)/n}$ ($\epsilon$ term aside) has been established for our three $L_1$ penalized methods as well as Kendall's $\tau$ and Spearman's $\rho$ (\citeappend{LT18}). When measured in the Frobenius norm, the estimation errors go up as $p$ increases, which is also expected. In Table \ref{tab:p-l1} with $Q\sim \N(5 1_p, 5I_p)$, the $\gamma$-Lasso outperforms our methods in the setting of $p=15$.
     and even has better performance than in the setting of smaller $p$. A possible explanation is that the default choice $\gamma=0.05$ may be near optimal for this particular setting of $p=15$, but much less suitable in other settings.

\begin{table}[htbp]
    \caption{Comparison of existing methods and proposed $L_1$ penalized GAN methods ($p=10, \epsilon=0.1$).
    Estimation error of the variance matrix is reported in the maximum norm $\|\cdot\|_{\max}$.}
    \label{tab:n-l1}
    \resizebox{\textwidth}{!}{%
    \begin{tabular}{llllllll}
    \hline
    \multicolumn{1}{l|}{$n$} & hinge GAN          & JS logit $f$-GAN              & rKL logit $f$-GAN          & GYZ JS-GAN   & Tyler\_M              &   Kendall\_MAD        & Spearman\_Qn         \\ \hline
    \multicolumn{8}{c}{$Q \sim \N\left(2.25 c, \frac{1}{3}I_p\right)$}   \\
500  & 0.1740 (0.0529) & 0.1775 (0.0496) & \textbf{0.1418 (0.0335)} & 0.3357 (0.1771) & 0.4211 (0.0570) & 0.5304 (0.0824) & 0.5074 (0.0555) \\
1000 & 0.1238 (0.0325) & 0.1134 (0.0292) & \textbf{0.1027 (0.0212)} & 0.1982 (0.0917) & 0.3830 (0.0320) & 0.4846 (0.0735) & 0.4715 (0.0431)  \\
2000 & 0.0980 (0.0222) & 0.0737 (0.0207) & \textbf{0.0720 (0.0151)} & 0.1029 (0.0467) & 0.3526 (0.0226) & 0.4133 (0.0264) & 0.4362 (0.0256) \\
4000 & 0.0848 (0.0184) & 0.0528 (0.0124) & \textbf{0.0502 (0.0105)} & 0.0770 (0.0241) & 0.3351 (0.0166) & 0.3948 (0.0259) & 0.4117 (0.0186) \\
\multicolumn{8}{c}{$Q \sim \N(5 1_p, 5 I_p)$} \\

    500  & 0.1855 (0.0536) & 0.1970 (0.0498) & \textbf{0.1548 (0.0353)} & 0.2792 (0.1706) & 0.6061 (0.0579) & 0.5200 (0.0846) & 0.6393 (0.0807)  \\
    1000 & 0.1382 (0.0331) & 0.1231 (0.0324) & \textbf{0.1230 (0.0266)} & 0.1618 (0.0615) & 0.5734 (0.0429) & 0.4724 (0.0732) & 0.6187 (0.0578) \\
    2000 & 0.1050 (0.0237) & \textbf{0.0802 (0.0225)} & 0.0935 (0.0219) & 0.1275 (0.0453) & 0.5295 (0.0332) & 0.4074 (0.0278) & 0.5693 (0.0358)  \\
    4000 & 0.0903 (0.0180) & \textbf{0.0590 (0.0141)} & 0.0739 (0.0167) & 0.0881 (0.0285) & 0.5154 (0.0225) & 0.3863 (0.0251) & 0.5481 (0.0284) \\
    \hline
    \multicolumn{1}{l|}{$n$} & $\gamma$-LASSO & MVE & MCD & Sest & Mest  &MMest   \\ \hline
     \multicolumn{8}{c}{$Q \sim \N\left(2.25 c, \frac{1}{3}I_p\right)$}   \\
   500  & 0.5786 (0.0288) & 0.2270 (0.0518) & 0.2175 (0.0516) & 0.2819 (0.0597) & 0.1900 (0.0461) & 0.2805 (0.0573) \\
1000 & 0.5426 (0.0261) & 0.1877 (0.0362) & 0.1883 (0.0397) & 0.2650 (0.0422) & 0.1551 (0.0394) & 0.2630 (0.0413) \\
2000 & 0.5277 (0.0154) & 0.1637 (0.0269) & 0.1618 (0.0257) & 0.2358 (0.0292) & 0.1292 (0.0260) & 0.2349 (0.0292) \\
4000 & 0.5150 (0.0139) & 0.1457 (0.0211) & 0.1471 (0.0222) & 0.2206 (0.0221) & 0.1127 (0.0218) & 0.2198 (0.0219) \\
 \multicolumn{8}{c}{$Q \sim \N\left(2.25 c, \frac{1}{3}I_p\right)$}   \\
   500  & 0.3472 (0.1021) & 0.2179 (0.0542) & 0.2190 (0.0508) & 0.2819 (0.0597) & 0.1853 (0.0481) & 0.2805 (0.0573) \\
1000 & 0.3118 (0.0843) & 0.1953 (0.0435) & 0.1887 (0.0397) & 0.2650 (0.0422) & 0.1547 (0.0399) & 0.2630 (0.0412) \\
2000 & 0.2935 (0.0619) & 0.1685 (0.0303) & 0.1617 (0.0251) & 0.2358 (0.0292) & 0.1284 (0.0255) & 0.2349 (0.0291) \\
4000 & 0.2627 (0.0422) & 0.1511 (0.0216) & 0.1470 (0.0222) & 0.2206 (0.0221) & 0.1132 (0.0217) & 0.2198 (0.0219)\\
\hline
\end{tabular}%
}
\end{table}

\begin{table}[htbp]
    \caption{Comparison of existing methods and proposed $L_2$ penalized GAN methods ($p=10, \epsilon=0.1$).
    Estimation error of the variance matrix is reported in the Frobenius norm $\|\cdot\|_{\fro}$.}
    \label{tab:n-l2}
    \resizebox{\textwidth}{!}{%
    \begin{tabular}{llllllll}
    \hline
    \multicolumn{1}{l|}{$n$} & hinge GAN          & JS logit $f$-GAN              & rKL logit $f$-GAN          & GYZ JS-GAN   & Tyler\_M  & Kendall\_MAD         &    Spearman\_Qn     \\
    \hline
    \multicolumn{8}{c}{$Q \sim \N\left(2.25 c, \frac{1}{3}I_p\right)$}       \\
    500  & \textbf{0.6855 (0.1138)} & 0.7519 (0.1238) & 0.8294 (0.1381)  & 0.9512 (0.3576) & 2.4034 (0.1264) & 3.3007 (0.1748) & 3.5111 (0.1224)  \\
1000 & 0.4455 (0.0645)   & 0.4304 (0.0683) & \textbf{0.4197 (0.0734)} & 0.6198 (0.2318) & 2.3129 (0.0668) & 3.2204 (0.0862) & 3.4565 (0.0645) \\
2000 & 0.3222 (0.0547)  & 0.2898 (0.0565) & \textbf{0.2723 (0.0477)} & 0.3642 (0.0885) & 2.3363 (0.0557) & 3.2143 (0.0489) & 3.4669 (0.0398)  \\
4000 & 0.2431 (0.0438)  & 0.1948 (0.0380) & \textbf{0.1877 (0.0346)} & 0.2797 (0.0738) & 2.3165 (0.0405) & 3.2030 (0.0417) & 3.4472 (0.0219) \\
\multicolumn{8}{c}{$Q \sim \N(5 1_p, 5 I_p)$} \\
    500  & \textbf{0.7157 (0.1151)} & 0.8054 (0.1400) & 0.7190 (0.0970)  & 0.8889 (0.3253) & 3.3388 (0.2880) & 3.3704 (0.2784) & 4.1824 (0.2852)  \\
    1000 & 0.4589 (0.0695)  & 0.4589 (0.0850) & \textbf{0.3942 (0.0606)} & 0.5367 (0.1282) & 3.2529 (0.2480) & 3.3263 (0.1921) & 4.1522 (0.2150) \\
    2000 & 0.3301 (0.0583) & 0.3049 (0.0635) & \textbf{0.2744 (0.0505)} & 0.3829 (0.0852) & 3.2420 (0.2083) & 3.3202 (0.1462) & 4.1527 (0.1602)  \\
    4000 & 0.2472 (0.0458)& 0.2076 (0.0419) & \textbf{0.1990 (0.0334)} & 0.2986 (0.0536) & 3.1988 (0.1288) & 3.2955 (0.1269) & 4.1117 (0.1182) \\
    \hline
\multicolumn{1}{l|}{$n$} & $\gamma$-LASSO & MVE & MCD & Sest & Mest  &MMest   \\
 \hline
\multicolumn{8}{c}{$Q \sim \N\left(2.25 c, \frac{1}{3}I_p\right)$}   \\
500  & 4.4395 (0.0997) & 0.7574 (0.1373) & 0.7448 (0.1249) & 0.9269 (0.1562) & 0.6615 (0.1164) & 0.9190 (0.1562) \\
1000 & 4.4150 (0.0618) & 0.6005 (0.1132) & 0.5843 (0.0960) & 0.8262 (0.1130) & 0.4894 (0.0948) & 0.8206 (0.1137) \\
2000 & 4.4364 (0.0688) & 0.5416 (0.0816) & 0.5170 (0.0754) & 0.7732 (0.0839) & 0.4078 (0.0670) & 0.7707 (0.0831) \\
4000 & 4.4204 (0.0444) & 0.4663 (0.0613) & 0.4542 (0.0623) & 0.7232 (0.0714) & 0.3453 (0.0577) & 0.7231 (0.0711) \\
\multicolumn{8}{c}{$Q \sim \N(5 1_p, 5 I_p)$} \\
500  & 2.0989 (0.7667) & 0.7577 (0.1291) & 0.7497 (0.1272) & 0.9269 (0.1562) & 0.6581 (0.1176) & 0.9190 (0.1562) \\
1000 & 1.9989 (0.6520) & 0.6021 (0.1016) & 0.5819 (0.0959) & 0.8263 (0.1131) & 0.4889 (0.0988) & 0.8211 (0.1140) \\
2000 & 2.1356 (0.4419) & 0.5314 (0.0835) & 0.5163 (0.0754) & 0.7732 (0.0839) & 0.4083 (0.0675) & 0.7708 (0.0831) \\
4000 & 1.9796 (0.3431) & 0.4740 (0.0628) & 0.4541 (0.0621) & 0.7232 (0.0714) & 0.3460 (0.0574) & 0.7232 (0.0711)\\
\hline

    \end{tabular}%
    }
\end{table}

\begin{table}[htbp]
    \caption{Comparison of existing methods and proposed $L_1$ penalized GAN methods ($n=2000, \epsilon=0.1$).
    Estimation error of the variance matrix is reported in the maximum norm $\|\cdot\|_{\max}$.}
    \label{tab:p-l1}
    \resizebox{\textwidth}{!}{%
    \begin{tabular}{llllllll}
    \hline
    \multicolumn{1}{l|}{$p$} & hinge GAN          & JS logit $f$-GAN              & rKL logit $f$-GAN        & GYZ JS-GAN     & Tyler\_M        & Kendall\_MAD         & Spearman\_Qn  \\
    \hline
    \multicolumn{8}{c}{$Q \sim \N\left(2.25 c, \frac{1}{3}I_p\right)$}  \\
5  & 0.1179 (0.0220) & \textbf{0.0561 (0.0147)} & 0.0576 (0.0138)  & 0.1315 (0.0845) & 0.2153 (0.0231) & 0.3880 (0.0342) & 0.4050 (0.0260)\\
10 & 0.0980 (0.0222) & 0.0737 (0.0207)  & \textbf{0.0720 (0.0151)} & 0.1029 (0.0467) & 0.3526 (0.0226) & 0.4133 (0.0264) & 0.4362 (0.0256) \\
15 & 0.0929 (0.0219) & 0.0905 (0.0222)  & \textbf{0.0761 (0.0130)} & 0.1586 (0.0741) & 0.5856 (0.0194) & 0.4265 (0.0337) & 0.4377 (0.0227)  \\
    \multicolumn{8}{c}{$Q \sim \N(5 1_p, 5 I_p)$}\\
    5  & 0.1344 (0.0230) & \textbf{0.0674 (0.0189)} & 0.0991 (0.0236) & 0.6339 (1.7289) & 0.4503 (0.0286) & 0.3803 (0.0225) & 0.5381 (0.0371)     \\
    10 & 0.1050 (0.0237) & \textbf{0.0802 (0.0225)} & 0.0935 (0.0219) & 0.1275 (0.0453) & 0.5295 (0.0332) & 0.4074 (0.0278) & 0.5693 (0.0358)  \\
    15 & 0.1021 (0.0252) & 0.0957 (0.0232)    & 0.0925 (0.0180) & 0.1294 (0.0432) & 0.5769 (0.0251) & 0.4207 (0.0322) & 0.5782 (0.0301) \\ \hline
    \multicolumn{1}{l|}{$p$} & $\gamma$-LASSO & MVE & MCD & Sest & Mest  &MMest   \\
 \hline
\multicolumn{8}{c}{$Q \sim \N\left(2.25 c, \frac{1}{3}I_p\right)$}   \\
5  & 0.4866 (0.0197) & 0.2107 (0.0270) & 0.1973 (0.0246) & 0.2305 (0.0267) & 0.1398 (0.0296) & 0.2226 (0.0243) \\
10 & 0.5277 (0.0154) & 0.1662 (0.0253) & 0.1608 (0.0241) & 0.2358 (0.0292) & 0.1286 (0.0269) & 0.2349 (0.0292) \\
15 & 0.5556 (0.0204) & 0.1473 (0.0272) & 0.1430 (0.0246) & 0.3918 (0.1740) & 0.1190 (0.0241) & 0.3918 (0.1740) \\
\multicolumn{8}{c}{$Q \sim \N(5 1_p, 5 I_p)$} \\
5  & 1.2973 (0.0860) & 0.2123 (0.0261) & 0.1971 (0.0241) & 0.2306 (0.0266) & 0.1398 (0.0296) & 0.2311 (0.0270) \\
10 & 0.2935 (0.0619) & 0.1698 (0.0258) & 0.1616 (0.0256) & 0.2358 (0.0292) & 0.1292 (0.0264) & 0.2349 (0.0291) \\
15 & \textbf{0.0885 (0.0191)} & 0.1462 (0.0263) & 0.1432 (0.0262) & 0.2362 (0.0287) & 0.1191 (0.0241) & 0.2362 (0.0287)\\
\hline
    \end{tabular}%
    }
\end{table}

\begin{table}[htbp]
    \caption{Comparison of existing methods and proposed $L_2$ penalized GAN methods ($n=2000, \epsilon=0.1$).
    Estimation error of the variance matrix is reported in the Frobenius norm $\|\cdot\|_{\fro}$.}
    \label{tab:p-l2}
    \resizebox{\textwidth}{!}{%
    \begin{tabular}{llllllll}
    \hline
    \multicolumn{1}{l|}{$p$} & hinge GAN          & JS logit $f$-GAN              & rKL logit $f$-GAN         & GYZ JS-GAN    & Tyler\_M        & Kendall\_MAD         & Spearman\_Qn  \\
    \hline
    \multicolumn{8}{c}{$Q \sim \N\left(2.25 c, \frac{1}{3}I_p\right)$}   \\
    5  & 0.2257 (0.0550) & 0.1345 (0.0372) & \textbf{0.1296 (0.0322)} & 0.2774 (0.1523) & 0.6149 (0.0447) & 1.5525 (0.0462) & 1.7124 (0.0383) \\
    10 & 0.3222 (0.0547) & 0.2898 (0.0565) & \textbf{0.2723 (0.0477)} & 0.3642 (0.0885) & 2.3363 (0.0557) & 3.2143 (0.0489) & 3.4669 (0.0398)  \\
    15 & 0.4578 (0.0691) & 0.4582 (0.0754) & \textbf{0.4328 (0.0460)} & 0.6387 (0.2095) & 6.8919 (0.1709) & 4.8584 (0.0640) & 5.1907 (0.0477)\\
    \multicolumn{8}{c}{$Q \sim \N(5 1_p, 5 I_p)$}\\
    5  & 0.2378 (0.0594) & \textbf{0.1514 (0.0477)} & 0.1641 (0.0589) & 2.4720 (7.0730) & 1.5732 (0.1017) & 1.5974 (0.0809) & 2.1323 (0.0963) \\
10 & 0.3301 (0.0583) & 0.3049 (0.0635)  & \textbf{0.2744 (0.0505)} & 0.3829 (0.0852) & 3.2420 (0.2083) & 3.3202 (0.1462) & 4.1527 (0.1602)  \\
15 & 0.4683 (0.0766) & 0.4834 (0.0953)    & \textbf{0.4183 (0.0486)} & 0.5459 (0.0998) & 4.9017 (0.2202) & 4.9803 (0.1710) & 6.0796 (0.1809)\\
\hline
\multicolumn{1}{l|}{$p$} & $\gamma$-LASSO & MVE & MCD & Sest & Mest  &MMest   \\
 \hline
\multicolumn{8}{c}{$Q \sim \N\left(2.25 c, \frac{1}{3}I_p\right)$}   \\
5  & 2.1000 (0.0427) & 0.4682 (0.0742) & 0.4494 (0.0608) & 0.5283 (0.0666) & 0.3014 (0.0649) & 0.5079 (0.0635) \\
10 & 4.4364 (0.0688) & 0.5338 (0.0846) & 0.5175 (0.0752) & 0.7732 (0.0839) & 0.4078 (0.0684) & 0.7707 (0.0831) \\
15 & 6.9967 (0.0857) & 0.6057 (0.0894) & 0.5921 (0.0777) & 3.7412 (3.1416) & 0.5143 (0.0672) & 3.7413 (3.1417) \\
\multicolumn{8}{c}{$Q \sim \N(5 1_p, 5 I_p)$} \\
5  & 5.3996 (0.3406) & 0.4685 (0.0754) & 0.4496 (0.0617) & 0.5284 (0.0669) & 0.3014 (0.0649) & 0.5450 (0.0701) \\
10 & 2.1356 (0.4419) & 0.5419 (0.0812) & 0.5175 (0.0755) & 0.7732 (0.0839) & 0.4075 (0.0689) & 0.7708 (0.0831) \\
15 & 0.4703 (0.1192) & 0.6017 (0.0789) & 0.5938 (0.0791) & 0.9585 (0.1027) & 0.5142 (0.0678) & 0.9585 (0.1027)\\
\hline
    \end{tabular}%
    }
\end{table}

\subsection{Illustration with the second contamination}
\begin{figure}[htbp]
    \includegraphics[scale=0.17]{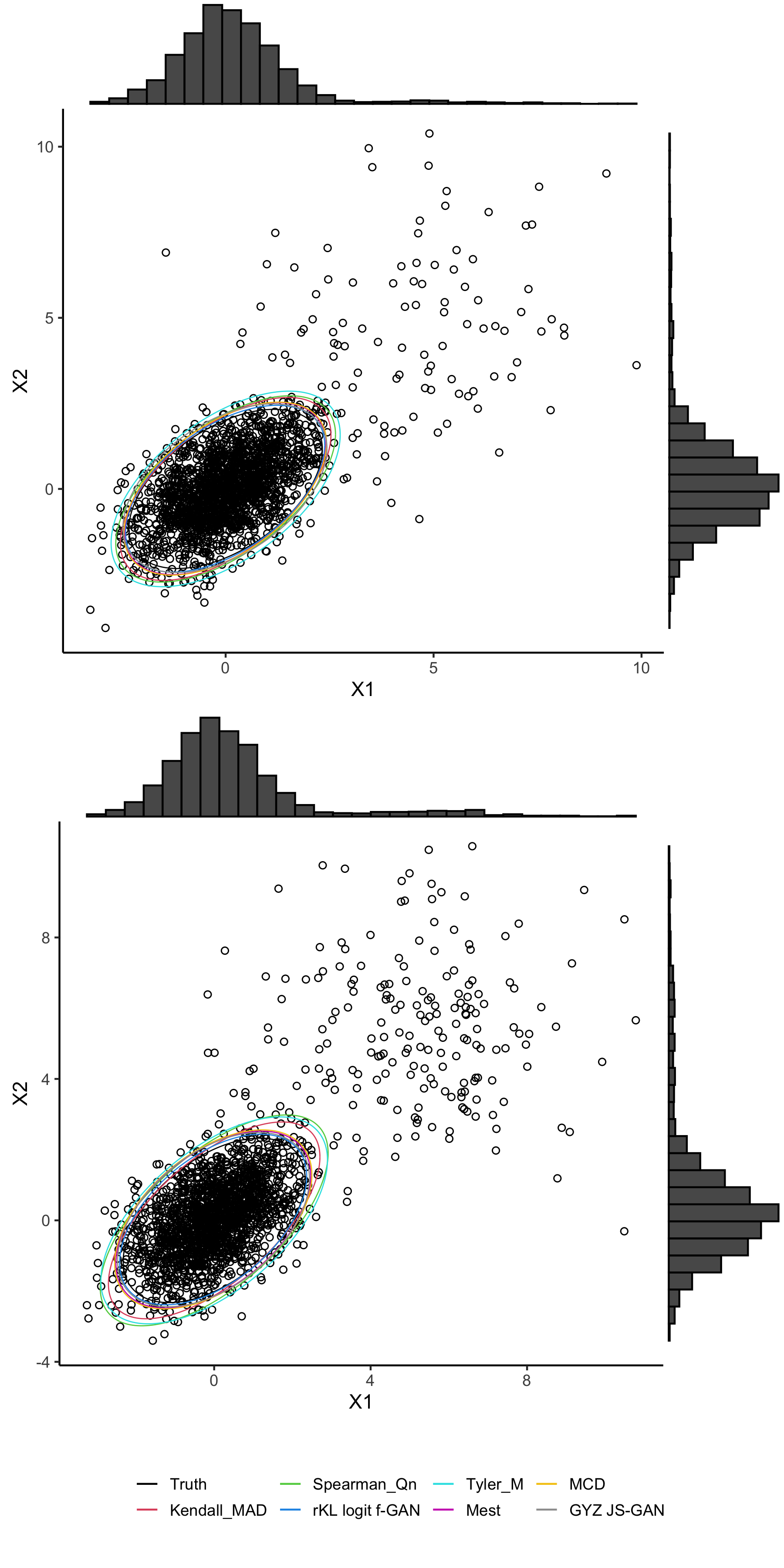}
    \caption{The $95\%$ Gaussian ellipsoids estimated for the first two coordinates and observed marginal histograms,
     from contaminated data based on the second contamination in Section \ref{sec:simulation-setting} with $\epsilon=5\%$ (top) or $10\%$ (bottom).}
    \label{fig:Ellipsoid_TypeB}
\end{figure}

    For completeness, Figure \ref{fig:Ellipsoid_TypeB} shows the 95\% Gaussian ellipsoids estimated for the first two coordinates, similarly as in Figure \ref{fig:Ellipsoid_TypeA} but
    with two samples of size $2000$ from a $10$-dimensional Huber's contaminated Gaussian distributions based on
    the second contamination $Q$ in Section \ref{sec:simulation-setting}.
    Comparison of the methods studied is qualitatively similar to that found in Figure \ref{fig:Ellipsoid_TypeA}.

\section{Main proofs of results}

\subsection{Proof of Theorem~\ref{thm:logit-L1}}\label{sec:logit-L1}
We state and prove the following result which implies Theorem~\ref{thm:logit-L1}. For $b>0$, define two factors $R_{2,b} =\sup_{ |u| \le b} \frac{\dif}{\dif u} f^\prime (\me^u)$
and $R_{3,b} = R_{31,b} + R_{32,b}$ with
 $R_{31,b} = \sup_{|u| \le b} \frac{\dif^2}{ \dif u^2} \{-f^\prime(\me^u) \}$
and $R_{32,b} = \sup_{ |u| \le b} \frac{\dif^2}{ \dif u^2} f^\# (\me^u)$.
For $\delta\in (0,1)$, define
\begin{align*}
& \lambda_{11} = \sqrt{\frac{2\log({5 p}) + \log(\delta^{-1})}{n}} + \frac{2\log({5 p}) + \log(\delta^{-1})}{n} , \\
& \lambda_{12} =  C_{\mathrm{rad4}} \sqrt{\frac{4 \log(2p(p+1))}{n}}  + \sqrt{\frac{2 \log(\delta^{-1})}{n}} ,
\end{align*}
where $C_{\mathrm{rad4}}=C_{\mathrm{sg6}} C_{\mathrm{rad3}}$, depending on universal constants $C_{\mathrm{sg6}}$ and $C_{\mathrm{rad3}}$ in
Lemmas~\ref{lem:subg-max} and Corollary \ref{cor:entropy-sg2} in the Supplement.
Denote
\begin{align*}
 \err_{f1} (n,p, \delta, \epsilon) & =  ( f^{\dprime} (1))^{-1}
 \Big\{-f^\prime(3/5)  (\sqrt{\epsilon} + \sqrt{1/(n\delta)} ) \\
 & \quad - f^{\prime}(\me^{- b_1 }) \sqrt{\epsilon} +
 \frac{1}{2} R_{3, b_1 } (\sqrt{\epsilon} +\sqrt{1/n} ) +  R_{2, b_1 } \lambda_{12} + \lambda_1  \Big\} ,
\end{align*}
where $ b_1  = \sqrt{\epsilon} +\sqrt{1/n} $.
Note that $R_{2, b}$, $R_{3, b}$ are bounded provided that $b$ is bounded, because $f$ is three-times continuously differentiable as required in Assumption~\ref{ass:f-condition2}.

\begin{manualpro}{S1} \label{pro:logit-L1-detailed}
Assume that $\| \Sigma^* \|_{\max}\le M_1$, and $f$ satisfies Assumptions~\ref{ass:f-condition}--\ref{ass:f-condition2}.
Let $\hat \theta=(\hat\mu,\hat\Sigma)$ be a solution to (\ref{eq:logit-fgan-L1}) with
$\lambda _1 \ge C_{\mathrm{sp13}} R_1 M_{11} \lambda_{11}$, where
$M_{11} =  M_1^{1/2} ( M_1^{1/2} +  {2} \sqrt{2\pi})$ and $C_{\mathrm{sp13}} = (5/3) ( C_{\mathrm{sp11}} \vee C_{\mathrm{sp12}})$,
depending on universal constants $ C_{\mathrm{sp11}}$ and $ C_{\mathrm{sp12}}$ in Lemma~\ref{lem:spline-L1-upper} in the Supplement.
If {$\epsilon\le 1/5$}, $\sqrt{\epsilon(1-\epsilon)/(n \delta) }  \leq 1/5$, and $\err_{f1} (n,p, \delta, \epsilon) \le a $ for a constant $a\in(0,1/2)$,
then we have that with probability at least $1-7\delta$,
\begin{align*}
 \| \hat\mu - \mu^* \|_\infty & \le S_{4,a}  \err_{f1} (n,p, \delta, \epsilon), \\
 \| \hat\Sigma - \Sigma^* \|_{\max} & \le S_{8,a}  \err_{f1} (n,p, \delta, \epsilon) ,
\end{align*}
where $S_{4,a} =  (1 +\sqrt{2 M_1\log\frac{2}{1-2a}})  /a $
and $S_{8,a} =  2 M_1^{1/2} S_{6,a} + S_{7} ( 1+ S_{4,a} + S_{6,a}) $ with
$S_{6,a}= S_{5} ( 1 + S_{4,a}/2)$, $S_{5} = 2\sqrt{2\pi} ( 1- \me^{-2/M_1})^{-1} $, and
$S_{7} =4 \{ (\frac{1}{\sqrt{2\pi M_1} } \me^{- 1/(8M_1)} ) \vee (1 - 2 \me^{- 1/(8M_1)} ) \}^{-2} $.
\end{manualpro}
\begin{proof}[Proof of Proposition~\ref{pro:logit-L1-detailed}]

The main strategy of our proof is to show that the following inequalities hold with high probabilities,
\begin{align}
& d(\hat\theta, \theta^*) - \Delta_{12} \le
\max_{\gamma\in\Gamma}  \; \left\{ K_f (P_n, P_{\hat\theta}; h_{\gamma,\hat\mu}) - \lambda_1 \;\pen_1(\gamma) \right\}
\le \Delta_{11} , \label{eq:logit-L1-main-frame}
\end{align}
where $\Delta_{11}$ and $\Delta_{12}$ are error terms,
and $d(\theta^*, \hat\theta)$ is a moment matching term, which under certain conditions delivers upper bounds, up to scaling constants,
on the estimation errors to be controlled, $\| \hat\mu- \mu^* \|_\infty$ and $\|\hat\Sigma - \Sigma^*\|_{\max}$.

(Step 1) For the upper bound in (\ref{eq:logit-L1-main-frame}), we show that with probability at least $1-5\delta$,
\begin{align}
& \quad \max_{\gamma\in\Gamma}  \; \left\{ K_f (P_n, P_{\hat\theta}; h_{\gamma,\hat\mu}) - \lambda_1 \;\pen_1(\gamma) \right\} \nonumber \\
& \le \max_{\gamma\in\Gamma}  \; \left\{ K_f (P_n, P_{\theta^*}; h_{\gamma,\mu^*}) - \lambda_1 \;\pen_1(\gamma) \right\} \label{eq:logit-L1-upper-1} \\
& \leq \max_{\gamma\in\Gamma}  \; \left\{ \Delta_{11} + \pen_1( \gamma) \tilde\Delta_{11} - \lambda_1 \;\pen_1(\gamma) \right\} .  \label{eq:logit-L1-upper-2}
\end{align}
Inequality (\ref{eq:logit-L1-upper-1}) follows from the definition of $\hat\theta$.
Inequality (\ref{eq:logit-L1-upper-2}) follows from Proposition~\ref{pro:logit-L1-upper}: it holds with probability at least $1-{5}\delta$ that for any $\gamma \in\Gamma$,
\begin{align*}
K_f (P_n, P_{\theta^*}; h_{\gamma,\mu^*}) \le \Delta_{11} + \pen_1( \gamma) \tilde\Delta_{11} ,
\end{align*}
where $\Delta_{11} = -f^\prime(3/5)  (\epsilon + \sqrt{\epsilon/(n\delta)} ) $, $\tilde\Delta_{11} = C_{\mathrm{sp13}} R_1 M_{11} \lambda_{11}$, and
$$
\lambda_{11} = \sqrt{\frac{2\log({5 p}) + \log(\delta^{-1})}{n}} + \frac{2\log({5 p})+ \log(\delta^{-1})}{n} .
$$
Note that $\lambda_{11}$ is the same as in the proof of Theorem~\ref{thm:hinge-L1}, and the above $\tilde{\Delta}_{11}$ differs from $\tilde{\Delta}_{11}$ in the proof of Theorem~\ref{thm:hinge-L1} only in the factor $R_1$. From (\ref{eq:logit-L1-upper-1})--(\ref{eq:logit-L1-upper-2}), the upper bound in (\ref{eq:logit-L1-main-frame}) holds with probability at least $1-5\delta$,
provided that the tuning parameter $\lambda_1$ is chosen such that $\lambda_1 \ge \tilde\Delta_{11} $.

(Step 2) For the lower bound in (\ref{eq:logit-L1-main-frame}), we show that with probability at least $1- 2\delta$,
\begin{align}
& \quad \max_{\gamma\in\Gamma}  \; \left\{ K_f (P_n, P_{\hat\theta}; h_{\gamma,\hat\mu}) - \lambda_1 \;\pen_1(\gamma) \right\} \nonumber \\
&\geq \max_{\gamma\in\Gamma_0}  \; \left\{ K_f (P_n, P_{\hat\theta}; h_{\gamma,\hat\mu}) - \lambda_1 \;\pen_1(\gamma) \right\} \label{eq:logit-L1-lower-1}\\
&\geq \max_{\gamma\in\Gamma_0}  \; f^{\dprime} (1) \left\{\E_{P_{\theta^*}} h_{\gamma,\hat\mu}(x) - \E_{P_{\hat\theta}}h_{\gamma,\hat\mu}(x) \right\} - \tilde\Delta_{12} - \lambda_1  b_1  . \label{eq:logit-L1-lower-2}
\end{align}
Inequality (\ref{eq:logit-L1-lower-1}) holds provided that $\Gamma_0$ is a subset of $\Gamma$.
As a subset of the pairwise spline class $\mathcal{H}_{\sp}$, define a class of pairwise ramp functions, $\mathcal{H}_{\rp}$, such that each function in $\mathcal{H}_{\rp}$
can be expressed as, for $x= (x_1 , \ldots, x_p)^\T \in \bbR^p$,
\begin{align*}
h_{\rp, \beta,  c }(x) = \beta_0 + \sum_{j=1}^p \beta_{1j} \, \ramp ( x_j -  c _j) + \sum_{1\le i\not= j \le p} \beta_{2,ij} \, \ramp( x_i) \ramp(x_j) ,
\end{align*}
where $\ramp(t) = \frac{1}{2} (t+1)_+ - \frac{1}{2} (t-1)_+$ for $t\in \bbR$, $ c =( c _1,\ldots, c _p)^\T$ with $ c _j \in \{0,1\}$, and $\beta= (\beta_0, \beta_1^\T, \beta_2^\T)^\T$ with
$\beta_1 =(\beta_{1j}: j=1,\ldots,p)^\T$ and $\beta_2 = (\beta_{2,ij}: 1\le i\not= j\le p)^\T$.
For symmetry as in $\gamma_2$, assume that the coefficients in $\beta_2$ are symmetric, $\beta_{2,ij}=\beta_{2,ji}$ for any $i\not=j$.
By the definition of $\ramp(\cdot)$, each function $ h_{\rp,\beta,c} (x)$ can be represented as $h_\gamma(x)$ in the spline class $\mathcal{H}_{\sp}$, where $\beta$ and $\gamma$ satisfy $\beta_0=\gamma_0$,
$ \| \beta_1 \|_1 = \| \gamma_1 \|_1$, and $\| \beta_2 \|_1 = \| \gamma_2 \|_1$.
Incidentally, this relationship also holds when symmetry is not imposed in the coefficients in $\gamma_2$ or in $\beta_2$.
Denote as $\Gamma_\rp$ the subset of $\Gamma$ such that $\mathcal{H}_{\rp} = \{ h_\gamma(x): \gamma \in \Gamma_{\rp} \}$.

Take $\Gamma_0 = \{ \gamma \in \Gamma_\rp: \gamma_0=0, \pen_1(\gamma) = b_1 \}$ for some fixed $ b_1  >0$.
Inequality (\ref{eq:logit-L1-lower-2}) follows from Proposition \ref{pro:logit-L1-lower}: it holds
with probability at least $1-2 \delta$ that for any $\gamma \in \Gamma_0$,
\begin{align*}
& \quad K_f (P_n, P_{\hat\theta}; h_{\gamma,\hat\mu})
\ge f^{\dprime} (1) \left\{\E_{P_{\theta^*}} h_{\gamma,\hat\mu} (x) - \E_{P_{\hat\theta}}h_{\gamma,\hat\mu}(x) \right\} - \tilde\Delta_{12} ,
\end{align*}
where
$\tilde\Delta_{12} = - f^{\prime}(\me^{- b_1 })\epsilon + \frac{1}{2}  b_1 ^2 R_{3, b_1 } +  b_1  R_{2, b_1 } \lambda_{12}$, and
$$
\lambda_{12} =  C_{\mathrm{rad4}} \sqrt{\frac{4 \log(2p(p+1))}{n}}  + \sqrt{\frac{2 \log(\delta^{-1})}{n}} .
$$
Note that $\lambda_{12}$ is the same as in the proof of Theorem~\ref{thm:hinge-L1}. From (\ref{eq:logit-L1-lower-1})--(\ref{eq:logit-L1-lower-2}), the lower bound in (\ref{eq:logit-L1-main-frame}) holds with probability at least $1-2\delta$,
where $\Delta_{12} = \tilde\Delta_{12} + \lambda_1  b_1 $ and $d(\hat\theta, \theta^*)
= f^{\dprime} (1) \max_{\gamma\in\Gamma_0} \{\E_{P_{\theta^*}} h_{\gamma,\hat\mu} (x)- \E_{P_{\hat\theta}}h_{\gamma,\hat\mu}(x)\}$.

(Step 3) We complete the proof by choosing appropriate $ b_1 $ and relating the moment matching term $d(\hat\theta,\theta^*)$
to the estimation error between $\hat\theta$ and $\theta^*$.
First, due to the linearity of $h_{\gamma, \hat\mu}$ in $\gamma$,
combining the lower and upper bounds in (\ref{eq:logit-L1-main-frame}) shows
that with probability at least $1-{7}\delta$,
\begin{align*}
& \quad f^{\dprime} (1)  b_1  \, \max_{\gamma\in\Gamma_\rp, \pen_1(\gamma)=1 } \left\{\E_{P_{\theta^*}} h_{\gamma,\hat\mu}(x) - \E_{P_{\hat\theta}}h_{\gamma,\hat\mu}(x) \right\} \\
& \le -f^\prime(3/5)  (\epsilon + \sqrt{\epsilon/(n\delta)} )  - f^{\prime}(\me^{- b_1 })\epsilon + \frac{1}{2}  b_1 ^2 R_{3, b_1 } +  b_1  R_{2, b_1 } \lambda_{12} + \lambda_1  b_1  .
\end{align*}
Taking $ b_1  = \sqrt{\epsilon} + 1/\sqrt{n}$ in the preceding display and rearranging yields
\begin{align}
& \quad \max_{\gamma\in\Gamma_\rp, \pen_1(\gamma)=1 } \left\{\E_{P_{\theta^*}} h_{\gamma,\hat\mu}(x) - \E_{P_{\hat\theta}}h_{\gamma,\hat\mu}(x) \right\}
\le \err_{f1} (n,p, \delta, \epsilon) , \label{eq:logit-L1-combine}
\end{align}
where
\begin{align*}
\err_{f1} (n,p, \delta, \epsilon)
& =  ( f^{\dprime} (1))^{-1} \Big\{-f^\prime(3/5)  (\sqrt{\epsilon} + \sqrt{1/(n\delta)} ) \\
& \quad - f^{\prime}(\me^{- b_1 }) \sqrt{\epsilon} +
\frac{1}{2} R_{3, b_1 } (\sqrt{\epsilon} +1/\sqrt{n} ) +  R_{2, b_1 } \lambda_{12} + \lambda_1  \Big\}.
\end{align*}
The desired result then follows from Proposition~\ref{pro:logit-L1-combine}: provided $\err_{f1} (n,p, \delta, \epsilon) \le a $, inequality (\ref{eq:logit-L1-combine}) implies that
\begin{align*}
& \| \hat\mu - \mu^* \|_\infty \le S_{4,a}  \err_{f1} (n,p, \delta, \epsilon), \quad  \| \hat\Sigma - \Sigma^* \|_{\max} \le S_{8,a}  \err_{f1} (n,p, \delta, \epsilon) .
\end{align*}
\end{proof}


\subsection{Proof of Theorem~\ref{thm:logit-L2}} \label{sec:logit-L2}

We state and prove the following result which implies Theorem~\ref{thm:logit-L2}. For $b>0$, define $R_{4,b} = \inf_{ |u| \le b} \frac{\dif}{ \dif u} f^\# (\me^u)$, in addition to $R_{2,b}$ and $R_{3,b}$ as in Proposition~\ref{pro:logit-L1-detailed}.
For $\delta\in (0,1)$, define
\begin{align*}
& \lambda_{21} = \sqrt{\frac{{5 p} + \log(\delta^{-1})}{n}} , \quad \lambda_{22} = C_{\mathrm{rad5}}  \sqrt{\frac{16 p}{n}} + \sqrt{\frac{2p \log(\delta^{-1})}{n}} ,\\
&  \lambda_{31} = \lambda_{21} + \frac{{5 p} + \log(\delta^{-1})}{n},  \lambda_{32} = C_{\mathrm{rad5}} \sqrt{\frac{6(p-1)}{n}} + \sqrt{\frac{(p-1)\log(\delta^{-1})}{n}} .
\end{align*}
where $C_{\mathrm{rad5}}=C_{\mathrm{sg,12}} C_{\mathrm{rad3}}$, depending on universal constants $C_{\mathrm{sg,12}}$ and $C_{\mathrm{rad3}}$ in
Lemmas~\ref{lem:subg} and Corollary \ref{cor:entropy-sg2} in the Supplement. Denote
\begin{align*}
 \err_{f2} (n,p, \delta, \epsilon)
 & =  ( {\sqrt{2}} R_{4,b_2^\dag} )^{-1} \Big\{-f^\prime(3/5)  (\sqrt{\epsilon} + \sqrt{1/(n\delta)} )- f^{\prime}(\me^{- b_2^\dag }) \sqrt{\epsilon}  \\
 & \quad + 4 C_{\mathrm{sg,12}}^2 M_2  R_{3, b_2^\dag} (\sqrt{\epsilon} + \sqrt{1/(np)} ) +  R_{2, b_2^\dag} \lambda_{22} + \lambda_2  \Big\} ,\\
 \err_{f3} (n,p, \delta, \epsilon) & =  ( {2} R_{4, 2b_3^\dag} )^{-1} \Big\{-f^\prime(3/5)  (\sqrt{\epsilon} + \sqrt{1/(n\delta)} ) - f^{\prime}(\me^{- 2 b_3^\dag }) \sqrt{\epsilon} \\
 & \quad + (80 C_{\mathrm{sg,12}}^2  M_2) R_{3, 2 b_3^\dag} (\sqrt{\epsilon} + \sqrt{1/(np)}) +  R_{2, b_3^\dag} \lambda_{32}
 + \lambda_3/\sqrt{p} \Big\},
\end{align*}
where $ b_2  = \sqrt{\epsilon} + \sqrt{1/(np)} $, $b_2^\dag = b_2 \sqrt{2p}$, $ b_3  = \sqrt{\epsilon/p} + \sqrt{1/(np^2)} $, and  $b_3^\dag = b_2 \sqrt{p(p-1)}$. Note that by the strict convexity and monotonicity of $f$ as required in Assumption~\ref{ass:f-condition}--\ref{ass:f-condition2}, we have that $$R_{4, b}=  \inf_{ |u| \le  b} \frac{\dif}{ \dif u} f^\# (\me^u) = \inf_{ |u| \le  b} \frac{\dif}{ \dif u} \left\{-f^{*} (f^{\prime}(\me^u))\right\}$$ is bounded away from zero provided that $b$ is bounded.
\begin{manualpro}{S2} \label{pro:logit-L2-detailed}
Assume that $\| \Sigma^* \|_\op \le M_2$, and $f$ satisfies Assumptions~\ref{ass:f-condition}--\ref{ass:f-condition2}.
Let $\hat \theta=(\hat\mu,\hat\Sigma)$ be a solution to (\ref{eq:logit-fgan-L2}) with

$$\lambda_2 \ge (5/3) C_{\mathrm{sp21}} M_2^{1/2} R_1 \lambda_{21}
, \quad \lambda_3 /\sqrt{p} \ge {(25 \sqrt{5}/3)} C_{\mathrm{sp22}} M_{21} R_1  \lambda_{31},$$ where
$M_{21} = M_2^{1/2} (M_2^{1/2} +  2\sqrt{2\pi})$,
$C_{\mathrm{sp21}} = \sqrt{2} C_{\mathrm{sg7}}  C_{\mathrm{sg5}}$, and
$C_{\mathrm{sp22}} = \sqrt{2/\pi} C_{\mathrm{sp21}} + C_{\mathrm{sg8}}$,
depending on universal constants $C_{\mathrm{sg5}}$, $C_{\mathrm{sg7}}$, and $C_{\mathrm{sg8}}$ in
Lemmas \ref{lem:subg-concentration}, \ref{lem:subg-vec-norm}, and \ref{lem:subg-vec-var} in the Supplement.
If {$\epsilon\le 1/5$}, $\sqrt{\epsilon(1-\epsilon)/(n \delta) }  \leq 1/5$, and $\err_{f2} (n,p, \delta, \epsilon) \le a $ for a constant $a\in(0,1/2)$,
then we have that with probability at least $1- {8} \delta$,
\begin{align*}
 \| \hat\mu - \mu^* \|_2 & \le S_{4,a}  \err_{f2} (n,p, \delta, \epsilon), \\
 p^{-1/2}\| \hat\Sigma - \Sigma^* \|_\fro
& \le S_{9,a} \err_{f2} (n,p, \delta, \epsilon)  +  S_{7} \err_{f3} (n,p, \delta, \epsilon) ,
\end{align*}
where $S_{9,a} = 2 M_2^{1/2} S_{6,a} + \sqrt{2} S_{7} ( S_{4,a} + S_{6,a}) $ and $(S_{4,a}, S_{6,a}, S_{7})$ are defined as in Proposition~\ref{pro:logit-L1-detailed} except with $M_1$ replaced by $M_2$ throughout.
\end{manualpro}
\begin{proof}[Proof of Proposition~\ref{pro:logit-L2-detailed}]
The main strategy of our proof is to show that the following inequalities hold with high probabilities,
\begin{align}
& d(\hat\theta, \theta^*) - \Delta_{22} \le
\max_{\gamma\in\Gamma}  \; \left\{ K_f (P_n, P_{\hat\theta}; h_{\gamma,\hat\mu}) - \lambda_2 \;\pen_2(\gamma_1) - \lambda_3 \;\pen_2(\gamma_2) \right\}
\le \Delta_{21}, \label{eq:logit-L2-main-frame}
\end{align}
where $\Delta_{21}$ and $\Delta_{22}$ are error terms, and
$d(\hat\theta, \theta^*)$ is a moment matching term, similarly as in the proof of Theorem~\ref{thm:logit-L1}.
However, additional considerations are involved.

We split the proof into several steps. In Step 1, we derive the upper bound in (\ref{eq:logit-L2-main-frame})
by exploiting two tuning parameters $\lambda_2$ and $\lambda_3$ associated with $\gamma_1$ and $\gamma_2$ respectively.
In Steps 2 and 3, we derive the first version of the lower bound in (\ref{eq:logit-L2-main-frame}) and then deduce upper bounds on $\| \hat\mu- \mu^* \|_2$ and $\| \hat \sigma - \sigma^* \|_2$,
where $\hat\sigma$ or $\sigma^*$ is the vector of standard deviations from $\hat\Sigma$ or $\Sigma^*$ respectively.
In Steps 4 and 5, we derive the second version of the lower bound in (\ref{eq:logit-L2-main-frame}) and then deduce an upper bound on $\| \hat \Sigma - \Sigma ^* \|_\fro$.

(Step 1) For the upper bound in (\ref{eq:logit-L2-main-frame}), we show that with probability at least $1-4\delta$,
\begin{align}
& \quad \max_{\gamma\in\Gamma}  \; \left\{ K_f (P_n, P_{\hat\theta}; h_{\gamma,\hat\mu}) - \lambda_2 \;\pen_2(\gamma_1) - \lambda_3 \;\pen_2(\gamma_2) \right\} \nonumber \\
& \le \max_{\gamma\in\Gamma}  \; \left\{ K_f (P_n, P_{\theta^*}; h_{\gamma,\mu^*}) - \lambda_2 \;\pen_2(\gamma_1) - \lambda_3 \;\pen_2(\gamma_2)  \right\} \label{eq:logit-L2-upper-1} \\
& \leq \max_{\gamma\in\Gamma}  \; \left\{ \Delta_{21} + \pen_2( \gamma_1) \tilde\Delta_{21} + \pen_2( \gamma_2) \tilde\Delta_{31} - \lambda_2 \;\pen_2(\gamma_1) - \lambda_3 \;\pen_2(\gamma_2)  \right\} .  \label{eq:logit-L2-upper-2}
\end{align}
Inequality (\ref{eq:logit-L2-upper-1}) follows from the definition of $\hat\theta$.
Inequality (\ref{eq:logit-L2-upper-2}) follows from Proposition~\ref{pro:logit-L2-upper}: it holds with probability at least $1-4\delta$ that for any $\gamma \in\Gamma_1$,
\begin{align*}
K_f (P_n, P_{\theta^*}; h_{\gamma,\mu^*}) \le \Delta_{21} + \pen_2( \gamma_1) \tilde\Delta_{21} + \pen_2( \gamma_2) \sqrt{p} \tilde\Delta_{31} ,
\end{align*}
where
\begin{align*}
    \Delta_{21} &= -f^\prime(3/5)  (\epsilon + \sqrt{\epsilon/(n\delta)} ),\quad
    \tilde\Delta_{21} =  (5/3) C_{\mathrm{sp21}} M_2^{1/2} R_1 \lambda_{21},\\
\tilde\Delta_{31} &= {(25\sqrt{5}/3)} C_{\mathrm{sp22}} M_{21} R_1  \lambda_{31},
\end{align*}
and
$$
\lambda_{21} = \sqrt{\frac{{5 p} + \log(\delta^{-1})}{n}} , \quad \lambda_{31} = \lambda_{21} + \frac{{5 p} + \log(\delta^{-1})}{n} .
$$
From (\ref{eq:logit-L2-upper-1})--(\ref{eq:logit-L2-upper-2}), the upper bound in (\ref{eq:logit-L2-main-frame}) holds with probability at least $1-4\delta$,
provided that the tuning parameters {$\lambda_2$ and $\lambda_3$} are chosen such that $\lambda_2 \ge \tilde\Delta_{21}$ and $\lambda_3 \ge \sqrt{p} \tilde\Delta_{31}$.

(Step 2) For the first version of the lower bound in (\ref{eq:logit-L2-main-frame}),
we show that with probability at least $1- 2\delta$,
\begin{align}
& \quad \max_{\gamma\in\Gamma}  \; \left\{ K_f (P_n, P_{\hat\theta}; h_{\gamma,\hat\mu}) - \lambda_2 \;\pen_2(\gamma_1) - \lambda_3 \;\pen_2(\gamma_2) \right\} \nonumber \\
&\geq \max_{\gamma\in\Gamma_1}  \; \left\{ K_f (P_n, P_{\hat\theta}; h_{\gamma,\hat\mu}) - \lambda_2 \;\pen_2(\gamma_1) \right\} \label{eq:logit-L2-lower-0} \\
&\geq \max_{\gamma\in\Gamma_{10}}  \; \left\{ K_f (P_n, P_{\hat\theta}; h_{\gamma,\hat\mu}) - \lambda_2 \;\pen_2(\gamma_1) \right\} \label{eq:logit-L2-lower-1}\\
&\geq \max_{\gamma\in\Gamma_{10}}  \; R_{4,b_2^\dag}  \left\{\E_{P_{\theta^*}} h_{\gamma,\hat\mu}(x) - \E_{P_{\hat\theta}}h_{\gamma,\hat\mu}(x) \right\} - \tilde\Delta_{22} - \lambda_2 b_2  , \label{eq:logit-L2-lower-2}
\end{align}
{where $\Gamma_1 = \{ (\gamma_0,\gamma_1^\T, \gamma_2^\T)^\T: \gamma_2=0\}$.}
Inequality (\ref{eq:logit-L2-lower-0}) follows because $\Gamma_{1}$ is a subset of $\Gamma$ such that $\gamma_2=0$ and hence $\pen_2(\gamma_2)=0$ for $\gamma\in\Gamma_1$.
Inequality (\ref{eq:logit-L2-lower-1}) holds provided that $\Gamma_{10}$ is a subset of $\Gamma_1$.
As a subset of the main-effect spline class $\mathcal{H}_\spL$, define a main-effect ramp class, $\mathcal{H}_\rpL$, such that each function in $\mathcal{H}_\rpL$
can be expressed as, for $x= (x_1 , \ldots, x_p)^\T \in \bbR^p$,
\begin{align*}
h_{\rpL, \beta,  c }(x) & = \beta_0 + \sum_{j=1}^p \beta_{1j} \ramp ( x_j -  c_j) ,
\end{align*}
where $\ramp(t) = \frac{1}{2} (t+1)_+ - \frac{1}{2} (t-1)_+$ for $t\in \bbR$, $ c =( c _1, \ldots, c _p )^\T$ with $ c _j \in \{0,1\}$,
and $\beta= (\beta_0, \beta_1^\T)^\T$ with
$\beta_1 =( \beta_{11}, \ldots, \beta_{1p} )^\T$.
Only the main-effect ramp functions are included, while the interaction ramp functions are excluded, in $h_{\rpL, \beta, c }(x)$.
By the definition of $\ramp(\cdot)$, each function $ h_{\rpL,\beta,c} (x)$ can be represented as $h_\gamma(x) \in \mathcal{H}_{\spL}$ with $\gamma=(\gamma_0,\gamma_1^\T)^\T \in \Gamma_\rpL$,
such that $\beta$ and $\gamma$ satisfy
$\beta_0=\gamma_0$ and
$ \| \beta_1 \|_2 = \sqrt{2} \| \gamma_1 \|_2$.
For example, for $\ramp(x_1)$, the associated norms are $\|\beta_1\|_2= 1$ and $\| \gamma_1 \|_2 = \sqrt{1/2}$.
Denote as $\Gamma_\rpL$ the subset of $\Gamma_1$ such that $\mathcal{H}_{\rpL} = \{ h_\gamma(x): \gamma \in \Gamma_\rpL \}$.

Take $\Gamma_{10} = \{ \gamma \in \Gamma_\rpL: \pen_2(\gamma) = b_2, \E_{P_{\theta^*}} h_{\gamma, \hat\mu}(x)=0, \E_{P_{\hat \theta}} h_{\gamma, \hat\mu}(x)\le 0 \}$
for some fixed $b_2 >0$.
Inequality (\ref{eq:logit-L2-lower-2}) follows from Proposition \ref{pro:logit-L2-lower}: it holds
with probability at least $1-2 \delta$ that for any $\gamma \in \Gamma_{10}$,
\begin{align*}
& \quad K_f (P_n, P_{\hat\theta}; h_{\gamma,\hat\mu})
\ge R_{4,b_2^\dag} \left\{\E_{P_{\theta^*}} h_{\gamma,\hat\mu} (x) - \E_{P_{\hat\theta}}h_{\gamma,\hat\mu} (x) \right\} - \tilde\Delta_{22} ,
\end{align*}
where $b_2^\dag = b_2 \sqrt{2p}$,
$\tilde\Delta_{22} = -f^{\prime}(\me^{- b_2^\dag})\epsilon
+ 4 C_{\mathrm{sg,12}}^2 M_2  b_2^2 R_{3, b_2^\dag}
+  b_2 R_{2, b_2^\dag} \lambda_{22}$, and
$$
\lambda_{22} = C_{\mathrm{rad5}}  \sqrt{\frac{16 p}{n}} + \sqrt{\frac{2p \log(\delta^{-1})}{n}} .
$$
From (\ref{eq:logit-L2-lower-1})--(\ref{eq:logit-L2-lower-2}), the lower bound in (\ref{eq:logit-L2-main-frame}) holds with probability at least $1-2\delta$,
where $\Delta_{22} = \tilde\Delta_{22} + \lambda_2 b_2 $ and $d(\hat\theta, \theta^*) =
R_{4,b_2^\dag} \{\E_{P_{\theta^*}} h_{\gamma,\hat\mu} (x) - \E_{P_{\hat\theta}}h_{\gamma,\hat\mu} (x) \} $.

(Step 3) We deduce upper bounds on $\| \hat\mu- \mu^* \|_2$ and $\| \hat \sigma - \sigma^* \|_2$, by choosing appropriate $ b_2 $ and relating the moment matching term $d(\hat\theta,\theta^*)$
to the estimation errors.
First, combining the upper bound in (\ref{eq:logit-L2-main-frame}) from Step 1 and the lower bound from Step 2 shows that with probability at least {$1- 6 \delta$,}
\begin{align*}
& \quad R_{4,b_2^\dag}  b_2  \, \max_{\gamma\in\Gamma_\rpL, \pen_2(\gamma)=1 } \left\{\E_{P_{\theta^*}} h_{\gamma,\hat\mu}(x) - \E_{P_{\hat\theta}}h_{\gamma,\hat\mu}(x) \right\} \\
& \le -f^\prime(3/5)  (\epsilon + \sqrt{\epsilon/(n\delta)} ) - f^{\prime}(\me^{- b_2^\dag })\epsilon + 4 C_{\mathrm{sg,12}}^2 M_2  b_2^2 R_{3, b_2^\dag}
+  b_2 R_{2, b_2^\dag} \lambda_{22} + \lambda_2  b_2  .
\end{align*}
Taking $ b_2  = \sqrt{\epsilon} + \sqrt{1/(np)}$ in the preceding display and rearranging yields
\begin{align}
& \quad \max_{\gamma\in\Gamma_\rpL, \pen_2(\gamma)= {\sqrt{1/2}} } \left\{\E_{P_{\theta^*}} h_{\gamma,\hat\mu}(x) - \E_{P_{\hat\theta}}h_{\gamma,\hat\mu}(x) \right\}
\le \err_{f2} (n,p, \delta, \epsilon) , \label{eq:logit-L2-combine}
\end{align}
where \begin{align*}
\err_{f2} (n,p, \delta, \epsilon)
& =  ( {\sqrt{2}} R_{4,b_2^\dag} )^{-1} \Big\{-f^\prime(3/5)  (\sqrt{\epsilon} + \sqrt{1/(n\delta)} ) \\
& \quad - f^{\prime}(\me^{- b_2^\dag }) \sqrt{\epsilon} +
4 C_{\mathrm{sg,12}}^2 M_2  R_{3, b_2^\dag} (\sqrt{\epsilon} + \sqrt{1/(np)} ) +  R_{2, b_2^\dag} \lambda_{22} + \lambda_2  \Big\}.
\end{align*}
The error bounds for $(\hat\mu,\hat\sigma)$ then follows from Proposition~\ref{pro:logit-L2-combine}: provided $\err_{f2} (n,p, \delta, \epsilon) \le a $, inequality (\ref{eq:logit-L2-combine}) implies that
\begin{align}
& \| \hat\mu - \mu^* \|_2\le S_{4,a}  \err_{f2} (n,p, \delta, \epsilon),  \label{eq:logit-L2-mu-bd}\\
&  \| \hat\sigma - \sigma^* \|_2 \le S_{6,a}  \err_{f2} (n,p, \delta, \epsilon) . \label{eq:logit-L2-sigma-bd}
\end{align}

(Step 4) For the second version of the lower bound in (\ref{eq:logit-L2-main-frame}), we show that with probability at least $1- 2\delta$,
\begin{align}
& \quad \max_{\gamma\in\Gamma}  \; \left\{ K_f (P_n, P_{\hat\theta}; h_{\gamma,\hat\mu}) - \lambda_2 \;\pen_2(\gamma_1) - \lambda_3 \;\pen_2(\gamma_2) \right\} \nonumber \\
&\geq \max_{\gamma\in\Gamma_2}  \; \left\{ K_f (P_n, P_{\hat\theta}; h_{\gamma,\hat\mu}) - \lambda_3 \;\pen_2(\gamma_2) \right\} \label{eq:logit-L2v-lower-0} \\
&\geq \max_{\gamma\in\Gamma_{20}}  \; \left\{ K_f (P_n, P_{\hat\theta}; h_{\gamma,\hat\mu}) - \lambda_3 \;\pen_2(\gamma_2) \right\} \label{eq:logit-L2v-lower-1}\\
&\geq \max_{\gamma\in\Gamma_{20}}  \; f^{\dprime} (1) \left\{\E_{P_{\theta^*}} h_{\gamma,\hat\mu}(x) - \E_{P_{\hat\theta}}h_{\gamma,\hat\mu}(x) \right\} - \tilde\Delta_{32} - \lambda_3 b_3  , \label{eq:logit-L2v-lower-2}
\end{align}
{where $\Gamma_2 = \{ (\gamma_0,\gamma_1^\T, \gamma_2^\T)^\T: \gamma_1=0\}$.}
Inequality (\ref{eq:logit-L2v-lower-0}) follows because $\Gamma_{2}$ is a subset of $\Gamma$ such that $\gamma_1=0$ and hence $\pen_2(\gamma_1)=0$ for $\gamma\in\Gamma_2$.
Inequality (\ref{eq:logit-L2v-lower-1}) holds provided that $\Gamma_{20}$ is a subset of $\Gamma_2$.
As a subset of the interaction spline class $\mathcal{H}_\spQ$, define an interaction ramp class, $\mathcal{H}_\rpQ$, such that each function in $\mathcal{H}_\rpQ$
can be expressed as, for $x= (x_1 , \ldots, x_p)^\T \in \bbR^p$,
\begin{align*}
h_{\rpQ, \beta }(x) & = \beta_0 + \sum_{1\le i\not= j \le p} \beta_{2,ij} \ramp( x_i) \ramp(x_j) ,
\end{align*}
where $\ramp(t) = \frac{1}{2} (t+1)_+ - \frac{1}{2} (t-1)_+$ for $t\in \bbR$,
and $\beta= (\beta_0, \beta_2^\T)^\T$ with
$\beta_2 = (\beta_{2,ij}: 1\le i\not= j\le p)^\T$.
In contrast with the function $h_{\rpL, \beta, c}(x)$ in $\mathcal{H}_\spL$, only the interaction ramp functions are included, while the main-effect ramp functions are excluded, in $h_{\rpQ, \beta}(x)$.
For symmetry as in $\gamma_2$, assume that the coefficients in $\beta_2$ are symmetric, $\beta_{2,ij}=\beta_{2,ji}$ for any $i\not=j$.
By the definition of $\ramp(\cdot)$, each function $ h_{\rpQ,\beta} (x)$ can be represented as $h_\gamma(x) \in \mathcal{H}_{\spQ}$ with $\gamma=(\gamma_0,\gamma_2^\T)^\T \in \Gamma_\rpQ$,
such that $\beta$ and $\gamma$ satisfy
$\beta_0=\gamma_0$ and
$ \| \beta_2 \|_2 = 2\| \gamma_2 \|_2$.
For example, for $\ramp(x_1)\ramp(x_2)$, the associated norms are $\|\beta_2\|_2= 1$ and $\| \gamma_2 \|_2 =1/2$.
Denote as $\Gamma_\rpQ$ the subset of {$\Gamma_2$} such that $\mathcal{H}_{\rpQ} = \{ h_\gamma(x): \gamma \in \Gamma_\rpQ \}$.

Take $\Gamma_{20} = \{ \gamma \in \Gamma_\rpQ: \pen_2(\gamma)=  b_3 , \E_{P_{\theta^*}} h_{\gamma, \hat\mu}(x)=0, \E_{P_{\hat \theta}} h_{\gamma, \hat\mu}(x)\le 0 \}$ for some fixed $ b_3  >0$.
Inequality (\ref{eq:logit-L2v-lower-2}) follows from Proposition \ref{pro:logit-L2v-lower}: it holds
with probability at least $1-2 \delta$ that for any $\gamma \in \Gamma_{20}$,
\begin{align*}
& \quad K_f (P_n, P_{\hat\theta}; h_{\gamma,\hat\mu})
\ge R_{4, 2b_3^\dag} \left\{ \E_{P_{\theta^*}} h(x) - \E_{P_{\hat\theta}} h(x) \right\} - \tilde\Delta_{32} ,
\end{align*}
where $b_3^\dag =  b_3 \sqrt{p(p-1)}$,
$\tilde \Delta_{32} = - f^{\prime}(\me^{- 2 b_3^\dag })\epsilon
+ (80 C_{\mathrm{sg,12}}^2  M_2) p b_3^2 R_{3, 2 b_3^\dag}
+ \sqrt{p} b_3  R_{2, b_3^\dag} \lambda_{32}$, and
$$
\lambda_{32} = C_{\mathrm{rad4}} \sqrt{\frac{6(p-1)}{n}} + \sqrt{\frac{(p-1)\log(\delta^{-1})}{n}} .
$$
From (\ref{eq:logit-L2v-lower-1})--(\ref{eq:logit-L2v-lower-2}), the lower bound in (\ref{eq:logit-L2-main-frame}) holds with probability at least $1-2\delta$,
where $\Delta_{22} = \tilde \Delta_{32} + \lambda_3 b_3 $ and $d(\hat\theta, \theta^*) =
R_{4, 2b_3^\dag} \max_{\gamma\in\Gamma_{20}} \{\E_{P_{\theta^*}} h_{\gamma,\hat\mu} (x)- \E_{P_{\hat\theta}}h_{\gamma,\hat\mu}(x)\}$.

(Step 5) We deduce an upper bound on $\| \hat \Sigma - \Sigma^* \|_\fro$, by choosing appropriate $ b_3 $ and relating the moment matching term $d(\hat\theta,\theta^*)$
to the estimation error.
First, combining the upper bound in (\ref{eq:logit-L2-main-frame}) from Step 1 and the lower bound from Step 4
shows that with probability {$1-6\delta$,}
\begin{align*}
& \quad R_{4, 2b_3^\dag} b_3 \, \max_{\gamma\in\Gamma_\rpQ, \pen_2(\gamma)=1 } \left\{\E_{P_{\theta^*}} h_{\gamma,\hat\mu}(x) - \E_{P_{\hat\theta}}h_{\gamma,\hat\mu}(x) \right\} \\
& \le  -f^\prime(3/5)  (\epsilon + \sqrt{\epsilon/(n\delta)} ) - f^{\prime}(\me^{- 2 b_3^\dag })\epsilon
+ (80 C_{\mathrm{sg,12}}^2  M_2) p b_3^2 R_{3, 2 b_3^\dag}
+ \sqrt{p} b_3  R_{2, b_3^\dag} \lambda_{32}
+ \lambda_3  b_3  .
\end{align*}
Taking $ b_3  = \sqrt{\epsilon /p} + \sqrt{1/(np^2)} $ in the preceding display and rearranging yields
\begin{align}
& \quad \max_{\gamma\in\Gamma_\rpQ, \pen_2(\gamma)= {1/2} } \left\{\E_{P_{\theta^*}} h_{\gamma,\hat\mu}(x) - \E_{P_{\hat\theta}}h_{\gamma,\hat\mu}(x) \right\}
\le \sqrt{p} \; \err_{f3} (n,p, \delta, \epsilon) , \label{eq:logit-L2v-combine}
\end{align}
where
\begin{align*}
& \err_{f3} (n,p, \delta, \epsilon) =  ( {2} R_{4, 2b_3^\dag} )^{-1} \Big\{-f^\prime(3/5)  (\sqrt{\epsilon} + \sqrt{1/(n\delta)} ) \\
& \quad - f^{\prime}(\me^{- 2 b_3^\dag }) \sqrt{\epsilon}
+ (80 C_{\mathrm{sg,12}}^2  M_2) R_{3, 2 b_3^\dag} (\sqrt{\epsilon} + \sqrt{1/(np)})
+  R_{2, b_3^\dag} \lambda_{32}
+ \lambda_3/\sqrt{p} \Big\}.
\end{align*}
The error bound for $\hat\Sigma$ then follows from Proposition~\ref{pro:logit-L2v-combine}: inequality (\ref{eq:logit-L2v-combine})
{together with the error bounds (\ref{eq:logit-L2-mu-bd})--(\ref{eq:logit-L2-sigma-bd}) implies that}
{
\begin{align*}
\frac{1}{\sqrt{p}} \| \hat \Sigma - \Sigma^* \|_\fro
& \le  2 M_2^{1/2}  \| \hat \sigma - \sigma^*\|_2 + S_{7} \left\{ \sqrt{2} \Delta_{\hat\mu,\hat\sigma} + \err_{f3} (n,p, \delta, \epsilon) \right\} \\
& \le  S_{9,a} \err_{f2} (n,p, \delta, \epsilon) + S_{7} \err_{f3} (n,p, \delta, \epsilon) ,
\end{align*}
where $\Delta_{\hat\mu,\hat\sigma} = ( \| \hat\mu-\mu^*\|_2^2 + \| \hat\sigma - \sigma^*\|_2^2 )^{1/2}$
and $S_{9,a} = 2 M_2^{1/2} S_{6,a} + \sqrt{2} S_{7} (S_{4,a} + S_{6,a})$. }
\end{proof}

\subsection{Proof of Theorem~\ref{thm:hinge-L2}} \label{sec:hinge-L2}
We state and prove the following result which implies Theorem~\ref{thm:hinge-L2}. For $\delta \in (0,1)$, define $(\lambda_{21}, \lambda_{31}, \lambda_{22}, \lambda_{32})$ the same as in Sections~\ref{sec:logit-L1} and \ref{sec:logit-L2}. Denote
\begin{align*}
& \err_{h2} (n,p, \delta, \epsilon)
=  3\epsilon(2p)^{1/2} + 2\sqrt{2p\epsilon/(n\delta)} + \lambda_2 + \lambda_{22},\\
&  \err_{{h3}} (n,p, \delta, \epsilon) =  3\epsilon\sqrt{p-1}+2\sqrt{\epsilon(p-1)/(n\delta)}+\lambda_{32}/2+(25 \sqrt{5}/6) C_{\mathrm{sp22}} M_{21}  \lambda_{31}.
\end{align*}

\begin{manualpro}{S3} \label{pro:hinge-L2-detailed}
Assume that $\| \Sigma^* \|_{\op} \le M_2$.
Let $\hat \theta=(\hat\mu,\hat\Sigma)$ be a solution to (\ref{eq:hinge-gan-L2}) with $\lambda_3 /\sqrt{p} \ge (25 \sqrt{5}/3) C_{\mathrm{sp22}} M_{21} \lambda_{31}$ and $\lambda_2 \ge (5/3) C_{\mathrm{sp21}} M_2^{1/2} \lambda_{21}$, where
$M_{21}$, $C_{\mathrm{sp21}}$, and $C_{\mathrm{sp22}}$ are defined as in Proposition~\ref{pro:logit-L2-detailed}.
If {$\epsilon\le 1/5$}, $\sqrt{\epsilon(1-\epsilon)/(n \delta) }  \leq 1/5$ and $\err_{h2} (n,p, \delta, \epsilon) \le a $ for a constant $a\in(0,1/2)$,
then we have that with probability at least $1-8\delta$,
\begin{align*}
\| \hat\mu - \mu^* \|_2 &\le S_{4,a}  \err_{h2} (n,p, \delta, \epsilon), \\
p^{-1/2} \| \hat\Sigma - \Sigma^*\|_\fro  &\le S_{9,a}\err_{h2}(n,p,\delta,\epsilon) + S_{7} \err_{h3}(n,p,\delta,\epsilon),
\end{align*}
where $(S_{4,a}, S_{6,a}, S_{7}, S_{9,a})$ are defined as in Proposition~\ref{pro:logit-L2-detailed}.
\end{manualpro}
\begin{proof}[Proof of Proposition~\ref{pro:hinge-L2-detailed}]

The main strategy of our proof is to show that the following inequalities hold with high probabilities,
\begin{align}
& d(\hat\theta, \theta^*) - \Delta_{22} \le
\max_{\gamma\in\Gamma}  \; \left\{ K_\HG (P_n, P_{\hat\theta}; h_{\gamma,\hat\mu}) - \lambda_2 \;\pen_2(\gamma_1) - \lambda_3 \;\pen_2(\gamma_2) \right\}
\le \Delta_{21}, \label{eq:hinge-L2-main-frame}
\end{align}
where $\Delta_{21}$ and $\Delta_{22}$ are error terms, and
$d(\hat\theta, \theta^*)$ is a moment matching term, similarly as in the proof of {Theorem~\ref{thm:hinge-L1}.}
However, additional considerations are involved.

(Step 1) For the upper bound in (\ref{eq:hinge-L2-main-frame}), we show that with probability at least $1-4\delta$,
\begin{align}
& \quad \max_{\gamma\in\Gamma}  \; \left\{ K_\HG (P_n, P_{\hat\theta}; h_{\gamma,\hat\mu}) - \lambda_2 \;\pen_2(\gamma_1) - \lambda_3 \;\pen_2(\gamma_2) \right\} \nonumber \\
& \le \max_{\gamma\in\Gamma}  \; \left\{ K_\HG (P_n, P_{\theta^*}; h_{\gamma,\mu^*}) - \lambda_2 \;\pen_2(\gamma_1) - \lambda_3 \;\pen_2(\gamma_2)  \right\} \label{eq:hinge-L2-upper-1} \\
& \leq \max_{\gamma\in\Gamma}  \; \left\{ \Delta_{21} + \pen_2( \gamma_1) \tilde\Delta_{21} + \pen_2( \gamma_2) \tilde\Delta_{31} - \lambda_2 \;\pen_2(\gamma_1) - \lambda_3 \;\pen_2(\gamma_2)  \right\} .  \label{eq:hinge-L2-upper-2}
\end{align}
Inequality (\ref{eq:hinge-L2-upper-1}) follows from the definition of $\hat\theta$.
Inequality (\ref{eq:hinge-L2-upper-2}) follows from Proposition~\ref{pro:hinge-L2-upper}: it holds with probability at least $1-4\delta$ that for any $\gamma \in\Gamma_1$,
\begin{align*}
K_\HG (P_n, P_{\theta^*}; h_{\gamma,\mu^*}) \le \Delta_{21} + \pen_2( \gamma_1) \tilde\Delta_{21} + \pen_2( \gamma_2) \sqrt{p} \tilde\Delta_{31} ,
\end{align*}
where
\begin{align*}
    \Delta_{21} = 2(\epsilon + \sqrt{\epsilon/(n\delta)} ),\quad \tilde\Delta_{21} =  (5/3) C_{\mathrm{sp21}} M_2^{1/2} \lambda_{21},\quad
\tilde\Delta_{31} =(25\sqrt{5}/3) C_{\mathrm{sp22}} M_{21}  \lambda_{31},
\end{align*}
and $\lambda_{21}$ and $\lambda_{31}$ are the same as in the proof of Theorem~\ref{thm:logit-L2}. Note that $\tilde{\Delta}_{21}$ and $\tilde{\Delta}_{31}$ differ from those in the proof of Theorem~\ref{thm:logit-L2} only in that $R_1$ is removed. From (\ref{eq:hinge-L2-upper-1})--(\ref{eq:hinge-L2-upper-2}), the upper bound in (\ref{eq:hinge-L2-main-frame}) holds with probability at least $1-4\delta$,
provided that the tuning parameters $\lambda_2$ and $\lambda_3$ are chosen such that $\lambda_2 \ge \tilde\Delta_{21}$ and $\lambda_3 \ge \sqrt{p} \tilde\Delta_{31}$.

(Step 2) For the first version of the lower bound in (\ref{eq:hinge-L2-main-frame}),
we show that with probability at least $1- 2\delta$,
\begin{align}
& \quad \max_{\gamma\in\Gamma}  \; \left\{ K_\HG (P_n, P_{\hat\theta}; h_{\gamma,\hat\mu}) - \lambda_2 \;\pen_2(\gamma_1) - \lambda_3 \;\pen_2(\gamma_2) \right\} \nonumber \\
&\ge \max_{\gamma\in\Gamma_1}  \; \left\{ K_\HG (P_n, P_{\hat\theta}; h_{\gamma,\hat\mu}) - \lambda_2 \;\pen_2(\gamma_1) \right\}    \label{eq:hinge-L2-lower-1} \\
&\ge \max_{\gamma\in\Gamma_{10}}  \; \left\{ K_\HG (P_n, P_{\hat\theta}; h_{\gamma,\hat\mu})\right\} - \lambda_2 \;  (2p)^{-1/2} \label{eq:hinge-L2-lower-2} \\
&\ge \max_{\gamma\in\Gamma_{10}}  \; \left\{\E_{P_{\theta^*}} h_{\gamma,\hat\mu}(x) - \E_{P_{\hat\theta}}h_{\gamma,\hat\mu}(x) \right\} - \tilde\Delta_{22} - \lambda_2 (2p)^{-1/2}  , \label{eq:hinge-L2-lower-3}
\end{align}
where $\Gamma_1=\{(\gamma_0, \gamma_1^\T, \gamma_2^\T)^\T:\gamma_2 = 0\}.$ Inequality (\ref{eq:hinge-L2-lower-1}) follows because $\Gamma_{1}$ is defined as a subset of $\Gamma$ such that {$\gamma_2=0$ and} hence $\pen_2(\gamma_2)=0$ for $\gamma\in\Gamma_1$.

Take $\Gamma_{10} = \{ \gamma \in \Gamma_\rpL: \gamma_0 = 0, \pen_2(\gamma) = (2p)^{-1/2}\}$,
{where $\Gamma_\rpL$ is the subset of $\Gamma_1$ associated with main-effect ramp functions as in the proof of Theorem \ref{thm:logit-L2}.}
Inequality (\ref{eq:hinge-L2-lower-2}) holds {because $\Gamma_{10} \subset \Gamma_1$ by definition.}
\ Inequality (\ref{eq:hinge-L2-lower-3}) follows from Proposition \ref{pro:hinge-L2-lower}: it holds
with probability at least $1-2 \delta$ that for any $\gamma \in \Gamma_{10}$,
\begin{align*}
& \quad K_\HG (P_n, P_{\hat\theta}; h_{\gamma,\hat\mu})
\ge  \E_{P_{\theta^*}} h_{\gamma,\hat\mu} (x) - \E_{P_{\hat\theta}}h_{\gamma,\hat\mu} (x) - \tilde\Delta_{22} ,
\end{align*}
where $\tilde\Delta_{22} = \epsilon + \lambda_{22}(2p)^{-1/2}$,  and
$$
\lambda_{22} = C_{\mathrm{rad5}}  \sqrt{\frac{16 p}{n}} + \sqrt{\frac{2p \log(\delta^{-1})}{n}} .
$$
Note that $\lambda_{22}$ is the same as in the proof of Theorem~\ref{thm:logit-L2}. From (\ref{eq:hinge-L2-lower-1})--(\ref{eq:hinge-L2-lower-3}), the lower bound in (\ref{eq:hinge-L2-main-frame}) holds with probability at least $1-2\delta$,
where $\Delta_{22} = \tilde\Delta_{22} + \lambda_2 (2p)^{-1/2} $ and $d(\hat\theta, \theta^*) =
\E_{P_{\theta^*}} h_{\gamma,\hat\mu} (x) - \E_{P_{\hat\theta}}h_{\gamma,\hat\mu} (x)  $.

(Step 3) We deduce upper bounds on $\| \hat\mu- \mu^* \|_2$ and $\| \hat \sigma - \sigma^* \|_2$, by choosing appropriate $ b_2 $ and relating the moment matching term $d(\hat\theta,\theta^*)$
to the estimation errors.
First, combining the upper bound in (\ref{eq:hinge-L2-main-frame}) from Step 1 and the lower bound from Step 2 shows that with probability at least $1- {6}\delta$,
\begin{align*}
& \quad (2p)^{-1/2} \, \max_{\gamma\in\Gamma_\rpL, \pen_2(\gamma)=1 } \left\{\E_{P_{\theta^*}} h_{\gamma,\hat\mu}(x) - \E_{P_{\hat\theta}}h_{\gamma,\hat\mu}(x) \right\} \\
& \le 3\epsilon + 2\sqrt{\epsilon/(n\delta)} +  (\lambda_{2} + \lambda_{22})(2p)^{-1/2},
\end{align*}
which gives
\begin{align}
& \quad \max_{\gamma\in\Gamma_\rpL, \pen_2(\gamma)=\sqrt{1/2} } \left\{\E_{P_{\theta^*}} h_{\gamma,\hat\mu}(x) - \E_{P_{\hat\theta}}h_{\gamma,\hat\mu}(x) \right\}
\le \err_{h2} (n,p, \delta, \epsilon) , \label{eq:hinge-L2-combine}
\end{align}
where \begin{align*}
    \err_{h2} (n,p, \delta, \epsilon) =  3\epsilon\sqrt{p} + 2\sqrt{p\epsilon/(n\delta)} + (\lambda_2 + \lambda_{22})/\sqrt{2}.
\end{align*}
The desired result then follows from Proposition~\ref{pro:logit-L2-combine}: provided $\err_{h2} (n,p, \delta, \epsilon) \le a $, inequality (\ref{eq:hinge-L2-combine}) implies that
\begin{align*}
& \| \hat\mu - \mu^* \|_2\le S_{4,a}  \err_{h2} (n,p, \delta, \epsilon), \\
&  \| \hat\sigma - \sigma^* \|_2 \le S_{6,a}  \err_{h2} (n,p, \delta, \epsilon) .
\end{align*}

(Step 4) For the second version of the lower bound in (\ref{eq:logit-L2-main-frame}), we show that with probability at least $1- 2\delta$,
\begin{align}
& \quad \max_{\gamma\in\Gamma}  \; \left\{ K_\HG (P_n, P_{\hat\theta}; h_{\gamma,\hat\mu}) - \lambda_2 \;\pen_2(\gamma_1) - \lambda_3 \;\pen_2(\gamma_2) \right\} \nonumber \\
&\ge \max_{\gamma\in\Gamma_2}  \; \left\{ K_\HG (P_n, P_{\hat\theta}; h_{\gamma,\hat\mu}) - \lambda_3 \;\pen_2(\gamma_2) \right\}    \label{eq:hinge-L2v-lower-1} \\
&\ge \max_{\gamma\in\Gamma_{20}}  \; \left\{ K_\HG (P_n, P_{\hat\theta}; h_{\gamma,\hat\mu})\right\} - \lambda_3 \;  (4q)^{-1/2}\label{eq:hinge-L2v-lower-2} \\
&\ge \max_{\gamma\in\Gamma_{20}}  \; \left\{\E_{P_{\theta^*}} h_{\gamma,\hat\mu}(x) - \E_{P_{\hat\theta}}h_{\gamma,\hat\mu}(x) \right\} - \tilde\Delta_{32} - \lambda_3 (4q)^{-1/2} , \label{eq:hinge-L2v-lower-3}
\end{align}
where $\Gamma_2 = \{(\gamma_0, \gamma_1^\T, \gamma_2^\T)^\T: \gamma_1 = 0\}$. Inequality (\ref{eq:hinge-L2v-lower-1}) follows because $\Gamma_{2}$ is a subset of $\Gamma$ such that $\gamma_1=0$ {and hence} $\pen_2(\gamma_1)=0$ for $\gamma\in\Gamma_2$.

Take $\Gamma_{20} = \{ \gamma \in \Gamma_\rpQ: \Gamma_0=0, \pen_2(\gamma)=  (4q)^{-1/2} \}$ for $q=p(1-p)$,
{where  $\Gamma_\rpQ$ is the subset of $\Gamma_2$ associated with interaction ramp functions as in the proof of Theorem \ref{thm:logit-L2}.}
Inequality (\ref{eq:hinge-L2v-lower-2}) {holds because $\Gamma_{20} \subset \Gamma_2$ by definition.} Inequality (\ref{eq:hinge-L2v-lower-3}) follows from Proposition \ref{pro:hinge-L2v-lower}: it holds
with probability at least $1-2 \delta$ that for any $\gamma \in \Gamma_{20}$,
\begin{align*}
& \quad K_\HG (P_n, P_{\hat\theta}; h_{\gamma,\hat\mu})
\ge  \E_{P_{\theta^*}} h(x) - \E_{P_{\hat\theta}} h(x)  - \tilde\Delta_{32} ,
\end{align*}
where $\tilde \Delta_{32} = \epsilon
+ \sqrt{p}\lambda_{32}(4q)^{-1/2}$ and
$$
\lambda_{32} = C_{\mathrm{rad4}} \sqrt{\frac{6(p-1)}{n}} + \sqrt{\frac{(p-1)\log(\delta^{-1})}{n}} .
$$
Note that $\lambda_{32}$ is the same as in the proof of Theorem~\ref{thm:logit-L2}. From (\ref{eq:hinge-L2v-lower-1})--(\ref{eq:hinge-L2v-lower-3}), the lower bound in (\ref{eq:hinge-L2-main-frame}) holds with probability at least $1-2\delta$,
where $\Delta_{22} = \tilde \Delta_{32} + \lambda_3 (4q)^{-1/2} $ and $d(\hat\theta, \theta^*) =
\max_{\gamma\in\Gamma_{20}} \{\E_{P_{\theta^*}} h_{\gamma,\hat\mu} (x)- \E_{P_{\hat\theta}}h_{\gamma,\hat\mu}(x)\}$.

(Step 5) We deduce an upper bound on $\| \hat \Sigma - \Sigma^* \|_\fro$, by relating the moment matching term $d(\hat\theta,\theta^*)$
to the estimation error.
First, combining the upper bound in (\ref{eq:hinge-L2-main-frame}) from Step 1 and the lower bound from Step 4
shows that with probability $1- {6}\delta$,
\begin{align*}
& \quad (4q)^{-1/2}\max_{\gamma\in\Gamma_\rpQ, \pen_2(\gamma)=1 } \left\{\E_{P_{\theta^*}} h_{\gamma,\hat\mu}(x) - \E_{P_{\hat\theta}}h_{\gamma,\hat\mu}(x) \right\} \\
& \le  3\epsilon + 2\sqrt{\epsilon/(n\delta)} - \sqrt{p}\lambda_{32}(4q)^{-1/2} -(25/3)\sqrt{5p} C_{\mathrm{sp22}} M_{21}  \lambda_{31}(4q)^{-1/2},
\end{align*}
which gives
\begin{align}
& \quad \max_{\gamma\in\Gamma_\rpQ, \pen_2(\gamma)=1/2 } \left\{\E_{P_{\theta^*}} h_{\gamma,\hat\mu}(x) - \E_{P_{\hat\theta}}h_{\gamma,\hat\mu}(x) \right\}
\le \sqrt{p} \; \err_{h3} (n,p, \delta, \epsilon) , \label{eq:hinge-L2v-combine}
\end{align}
where
\begin{align*}
& \err_{h3} (n,p, \delta, \epsilon) =  3\epsilon\sqrt{p-1}+2\sqrt{\epsilon(p-1)/(n\delta)}+\lambda_{32}/2+(25\sqrt{5}/6) C_{\mathrm{sp22}} M_{21}  \lambda_{31}
\end{align*}
The desired result then follows from Proposition~\ref{pro:logit-L2v-combine}: inequality (\ref{eq:hinge-L2v-combine}) implies that
\begin{align*}
\frac{1}{\sqrt{p}} \| \hat \Sigma - \Sigma^* \|_\fro &\le 2 M_2^{1/2}  \| \hat \sigma - \sigma^*\|_2  + S_{7} (\sqrt{2}\Delta_{\hat\mu,\hat\sigma} + \err_{h3} (n,p, \delta, \epsilon)  )\\
&\le S_{9,a}\err_{h2}(n,p,\delta,\epsilon) + S_{7} \err_{h3}(n,p,\delta,\epsilon),
\end{align*}
where $\Delta_{\hat\mu, \hat\sigma}=(\|\hat\mu - \mu^*\|_2^2 + \|\hat\sigma - \sigma^*\|_2^2)^{1/2}$ and $S_{9,a}=2M_2^{1/2}S_{6,a}+\sqrt{2}S_{7}(S_{4,a}+S_{6,a})$.

\end{proof}
\color{black}
\subsection{Proof of Corollary~\ref{cor:two-obj}}
    (i)
    In the {proofs of Theorems \ref{thm:logit-L1} and \ref{thm:logit-L2}, we used the main frame,}
    \begin{align}
    d(\hat\theta, \theta^*) - \Delta_1 \le \max_{\gamma \in \Gamma} \{K_f(P_n, P_{\hat\theta};h_{\gamma, \hat\mu}) -{\pen(\gamma;\lambda)} \} \le \Delta_2, \label{eq:two-obj-main-frame}
    \end{align}
    {where $\pen(\gamma; \lambda)$ is $\lambda_1 ( \| \gamma_1 \|_1 + \|\gamma_2\|_1) $ or
     $\lambda_2 \| \gamma_1\|_2 + \lambda_3 \| \gamma_2\|_2$.}
    For Theorems \ref{thm:logit-L1} and \ref{thm:logit-L2}, we showed the upper bound in (\ref{eq:two-obj-main-frame})
    using the fact that $\hat\theta$ is the minimizer of
    $$\max_{\gamma \in \Gamma} \{K_f(P_n, P_{\theta};h_{\gamma, \mu}) - \lambda \pen(\gamma)\},$$
    which is a function of $\theta$ {by the definition of (\ref{eq:logit-fgan-L1}) and (\ref{eq:logit-fgan-L2}) as nested optimization (see Remark \ref{rem:nash}).}
    {Now $\hat\theta$ is not defined as a minimizer of {the} above function, but a solution to an alternating optimization problem (\ref{eq:two-obj-gan-centered})
    with two objectives. We need to develop new arguments.
    On the other hand, we showed the lower bound in (\ref{eq:two-obj-main-frame}) for Theorems \ref{thm:logit-L1} and \ref{thm:logit-L2},
    through choosing different subsets of $\Gamma$. The previous arguments are still applicable here.}

    (Step 1) For the upper bound in (\ref{eq:two-obj-main-frame}), we show that the following holds with probability at least $1-\delta$,
    \begin{align}
    &\quad \max_{\gamma \in \Gamma} \{K_f(P_n, P_{\hat\theta};h_{\gamma, \hat\mu}) - {\pen(\gamma;\lambda)}\} \nonumber\\
    &\le  \max_{\gamma \in \Gamma} \{-f^{\prime}(1 - \hat\epsilon)\hat{\epsilon} + R_1\left|\E_{P_{\theta^{*},n}} h_{\gamma, \hat\mu}(x) - \E_{P_{\theta^{*}}} h_{\gamma, \hat\mu}(x)\right|- {\pen(\gamma;\lambda)}\} \label{eq:two-obj-upper-prf-1}\\
    &\le \Delta_1 + \max_{\gamma \in \Gamma} \{ R_1\left|\E_{P_{\theta^{*},n}} h_{\gamma, \hat\mu}(x) - \E_{P_{\theta^{*}}} h_{\gamma, \hat\mu}(x)\right|-{\pen(\gamma;\lambda)} \} \label{eq:two-obj-upper-prf-2},
    \end{align}
    where $\hat\epsilon$ is the (unobserved) fraction of contamination in $(X_1,\ldots,X_n)$.
    Inequality (\ref{eq:two-obj-upper-prf-1}) follows from Supplement Lemma \ref{lem:two-obj-centered-upr-bound}, and
    is the most important step for connecting two-objective GAN with logit $f$-GAN.
    Inequality (\ref{eq:two-obj-upper-prf-2}) follows from an upper bound on $\hat\epsilon$ as proved in Supplement Proposition \ref{pro:logit-L1-upper},
    where $\Delta_1= -f^{\prime}(3/5)(\epsilon + \sqrt{\epsilon/(n\delta)})$,
    the same as {$\Delta_{11}$ and $\Delta_{21}$} in the proofs of Theorems \ref{thm:logit-L1} and \ref{thm:logit-L2}.

    Similarly as in Supplement Proposition \ref{pro:logit-L1-upper} or \ref{pro:logit-L2-upper},
    the term $|\E_{P_{\theta^{*},n}} h_{\gamma, \hat\mu}(x) - \E_{P_{\theta^{*}}} h_{\gamma, \hat\mu}(x)|$ can be controlled in terms of
    the $L_1$ or $L_2$ norms of $(\gamma_1, \gamma_2)$, using Supplement Lemma \ref{lem:spline-L1-upper} or \ref{lem:spline-L2-upper}.
    Then for $\pen(\gamma; \lambda)$ defined as an $L_1$ or $L_2$ penalty, it can be shown that
    the following holds with probability at least $1-4\delta$ or $1-6 \delta$,
    \begin{align}
    R_1\left|\E_{P_{\theta^{*},n}} h_{\gamma, \hat\mu}(x) - \E_{P_{\theta^{*}}} h_{\gamma, \hat\mu}(x)\right|- {\pen(\gamma;\lambda)} \le 0. \label{eq:two-obj-upper-prf-3}
    \end{align}
    provided that the tuning parameters $\lambda_1$ or $(\lambda_2,\lambda_3)$ are chosen as in Theorem \ref{thm:logit-L1} or \ref{thm:logit-L2} respectively.
    From (\ref{eq:two-obj-upper-prf-1})--(\ref{eq:two-obj-upper-prf-3}),
    the upper bound holds in (\ref{eq:two-obj-main-frame}) with probability $1-5 \delta$ or $1-7 \delta$.

    (Steps 2,3) The lower bound step and the estimation error step for $L_1$ or $L_2$ penalized two-objective GAN are
    the same as in the proofs of Theorems \ref{thm:logit-L1} and \ref{thm:logit-L2} respectively.

    (ii) For the two-objective hinge GAN hinge (\ref{eq:two-obj-hinge-centered}), the result follows similarly using Lemma~\ref{lem:hinge-two-obj-upr-bound} with $\Delta_1 = 2(\epsilon+\sqrt{\epsilon/(n\delta)})$ and $R_1=1$.

\section{Technical details}

\subsection{Details in main proof of Theorem~\ref{thm:pop-robust}}
\begin{lem} \label{lem:sqr-tv-lwr-bound}
Suppose that $f: (0,\infty) \to \bbR$ is convex with $f(1)=0$ and satisfies Assumption \ref{ass:f-condition}(i).
Denote $C_f = \inf_{t \in (0,1]}{f^{\dprime}(t)}$. Then
\begin{equation*}
D_f(P||Q) \ge \frac{C_{f}}{2} \TV(P,Q)^2.
\end{equation*}
If further Assumption \ref{ass:f-condition}(iii) holds, then $D_f(P||Q) \ge \frac{f^\dprime(1)}{2} \TV(P,Q)^2$.
\end{lem}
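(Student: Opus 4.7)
The plan is to establish a Pinsker-type inequality by combining a Taylor expansion of $f$ around $t=1$ with a Cauchy–Schwarz bound, localized to the region where the likelihood ratio is $\le 1$.

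First I would perform a standard normalization: replacing $f(t)$ by $\tilde f(t) = f(t) - f^\prime(1)(t-1)$ does not change the divergence since $\int (r-1)\,\dif Q = 0$ where $r = \dif P/\dif Q$. The new function satisfies $\tilde f(1) = 0$, $\tilde f^\prime(1) = 0$, and $\tilde f^\dprime = f^\dprime$, so $\tilde f$ is still convex and nonnegative. Next, I would apply Taylor's theorem with integral remainder on $(0,1]$:
\begin{align*}
\tilde f(t) = \int_t^1 (s-t)\, f^\dprime(s)\,\dif s \ge \frac{C_f}{2}(1-t)^2,
\end{align*}
using $C_f = \inf_{s\in(0,1]} f^\dprime(s)$, which is strictly positive by Assumption~\ref{ass:f-condition}(i).

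The next step is to localize to the region $\{r \le 1\}$. Since $\tilde f \ge 0$ everywhere,
\begin{align*}
D_f(P\|Q) = \int \tilde f(r)\,\dif Q \ge \int_{\{r\le 1\}} \tilde f(r)\,\dif Q \ge \frac{C_f}{2}\int_{\{r\le 1\}} (1-r)^2\,\dif Q.
\end{align*}
Then I would invoke Cauchy–Schwarz together with the identity $\TV(P,Q) = \int_{\{r\le 1\}} (1-r)\,\dif Q$:
\begin{align*}
\TV(P,Q)^2 \le \Bigl(\int_{\{r\le 1\}} (1-r)^2\,\dif Q\Bigr)\Bigl(\int_{\{r\le 1\}} \dif Q\Bigr) \le \int_{\{r\le 1\}} (1-r)^2\,\dif Q,
\end{align*}
since $\int_{\{r\le 1\}} \dif Q \le 1$. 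Chaining these two inequalities yields $D_f(P\|Q) \ge (C_f/2)\TV(P,Q)^2$.

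For the refinement under Assumption~\ref{ass:f-condition}(iii), note that if $f^\dprime$ is non-increasing on $(0,\infty)$, then $f^\dprime(s) \ge f^\dprime(1)$ for all $s\in(0,1]$, so the infimum is attained at the endpoint, giving $C_f = f^\dprime(1)$ and hence the sharper bound. The main conceptual point (rather than a genuine obstacle) is the choice to localize to $\{r\le 1\}$ rather than working globally: this avoids having to control $f^\dprime$ on $[1,\infty)$ (where it may be smaller than $C_f$ or where $r$ may be unbounded) and exploits the symmetry of $\TV$ under the swap $P \leftrightarrow Q$ up to restriction of the integration domain.
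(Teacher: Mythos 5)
Your proof is correct and follows essentially the same route as the paper's: subtract the linear term to get $\tilde f$, bound $\tilde f(t)$ from below by $(C_f/2)(1-t)^2$ via a second-order Taylor expansion, and then square the total-variation integral using Jensen/Cauchy--Schwarz; your localization to $\{r\le 1\}$ together with $\tilde f\ge 0$ is the same device as the paper's use of $(1-t)_+$. One small inaccuracy to flag: Assumption~\ref{ass:f-condition}(i) only guarantees $f^\dprime(1)>0$, not $C_f=\inf_{t\in(0,1]}f^\dprime(t)>0$ (the infimum may vanish as $t\to 0^+$); this does not affect the validity of the proof, since the conclusion is vacuously true if $C_f=0$, but the parenthetical claim is stronger than the hypothesis supports.
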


\begin{proof}
Because $x^2$ is convex in $x$, by Jensen's inequality, we have
\begin{equation*}
\TV(P,Q)^2 \le \int f^2_{\TV}(p/q) \dif Q,
\end{equation*}
where $f_{\TV}(t) = (1-t)_+$ and $p/q$ is the density ratio $\dif P / \dif Q$.
Note that $D_f ( P\|Q )$ can be equivalently obtained as $D_{\tilde f} (P \| Q)$,
where $\tilde f(t) = f(t) - f^{\prime}(1)(t-1)$.
Therefore, it suffices to show that $\tilde f(t) \ge \frac{C_{f}}{2}f^2_{TV}(t)$ for $t \in (0, \infty)$.

By a Taylor expansion of $f$, we have
\begin{align}
f(t) &= f(1) + f^{\prime}(1)(t-1) + \frac{f^{\prime\prime}(\tilde t)}{2}(t-1)^2  \nonumber \\
& \ge f^{\prime}(1)(t-1) + \frac{C_{f}}{2}(t-1)^2,  \label{eq:sqr-tv-lwr-bound-prf}
\end{align}
where $\tilde t$ lies between $t$ and 1. If $t \in (0,1]$, then (\ref{eq:sqr-tv-lwr-bound-prf}) gives
\begin{equation*}
\tilde f(t) \ge \frac{C_{f}}{2}(t-1)^2 = \frac{C_{f}}{2}(1-t)_+^2 = \frac{C_{f}}{2} f^2_{\TV}(t).
\end{equation*}
If $t \in (1,\infty)$, then because $C_f \ge 0$ by convexity of $f$, (\ref{eq:sqr-tv-lwr-bound-prf}) gives
\begin{equation*}
\tilde f(t) \ge  \frac{C_f}{2}(t-1)^2 \ge 0 = \frac{C_{f}}{2}(1-t)^{2}_+ =\frac{C_{f}}{2}f^2_{TV}(t).
\end{equation*}
Combining the two cases completes the proof.
\end{proof}

Denote as $\Phi(\cdot)$ the cumulative distribution function of $\N(0,1)$, and {$\erf(x)$ the probability of $[-\sqrt{2}x, \sqrt{2}x]$ under $\N(0,1)$ for $x \ge 0$}.

\begin{lem} \label{lem:invert-pop}
{Let $a \in [0, 1/2)$ be arbitrarily fixed.}

(i) If $\Phi(x) \le 1/2+ a$ for $x \ge 0$ , then
$$ x \le S_{1, a} \left\{ \Phi(x) - 1/2 \right\} , $$
where $S_{1,a}=\{ \Phi^\prime(\Phi^{-1}(1/2+a)) \}^{-1}$.

(ii) If $|\erf ( x\sqrt{z_0/2}) - 1/2| \le  a$ for {$x \ge 0$}, then
$$ |x - 1| \le  S_{2,a} \left| \erf (x\sqrt{z_0/2}) - 1/2 \right|, $$
where
$S_{2,a}= \{ \sqrt{z_0/2}\erf^\prime(\sqrt{2/z_0}\erf^{-1}(1/2+a)) \}^{-1}$ and $z_0$ is an universal constant such that $\erf(\sqrt{z_0/2}) = 1/2$.
\end{lem}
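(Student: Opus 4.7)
My plan for both parts is to reduce the claimed inequality to a one-dimensional monotonicity argument using the fact that $\Phi'$ and $\erf'$ are monotonically decreasing on $[0,\infty)$. Concretely, since $\Phi'(t)=\phi(t)$ is strictly decreasing for $t\ge 0$, $\Phi$ is concave on $[0,\infty)$; similarly $g(x) := \erf(x\sqrt{z_0/2}) = 2\Phi(x\sqrt{z_0})-1$ has $g'(x) = 2\sqrt{z_0}\,\phi(x\sqrt{z_0})$ which is decreasing on $[0,\infty)$, so $g$ is concave there. This concavity, together with the well-chosen reference points $x_0 = \Phi^{-1}(1/2+a)$ and $y_0 = \sqrt{2/z_0}\,\erf^{-1}(1/2+a)$, will provide the required linear lower bounds.

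For part (i), the hypothesis $\Phi(x)\le 1/2+a$ with $x\ge 0$ implies $x\le x_0$. Writing
\begin{align*}
\Phi(x) - 1/2 \;=\; \int_0^x \Phi'(t)\,\dif t \;\ge\; \int_0^x \Phi'(x_0)\,\dif t \;=\; x\,\Phi'(x_0),
\end{align*}
where the inequality uses that $\Phi'$ is decreasing on $[0,\infty)$ and $t\le x\le x_0$, gives $x\le (\Phi(x)-1/2)/\Phi'(x_0) = S_{1,a}(\Phi(x)-1/2)$ directly.

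For part (ii), I plan to split into the two cases $x\ge 1$ and $0\le x<1$ around the center $g(1)=1/2$. Note that $y_0$ defined above satisfies $g(y_0)=1/2+a$ and is the reference point where $g'$ is evaluated in $S_{2,a}$. In the first case, the hypothesis $|g(x)-1/2|\le a$ forces $1\le x\le y_0$, so monotonicity of $g'$ yields $g'(t)\ge g'(y_0)$ on $[1,x]$ and hence $g(x)-1/2=\int_1^x g'(t)\,\dif t\ge (x-1)g'(y_0)$. In the second case, $g(1)-g(x)=\int_x^1 g'(t)\,\dif t \ge (1-x)g'(1)\ge (1-x)g'(y_0)$, again by monotonicity of $g'$ together with $y_0\ge 1$ (which follows from $a\ge 0$). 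Combining both cases gives $|x-1|\le S_{2,a}|g(x)-1/2|$.

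I do not anticipate a serious obstacle here; the only mild subtlety is being consistent about which version of $\erf$ is used (the paper defines $\erf(x)$ as the probability of $[-\sqrt{2}x,\sqrt{2}x]$ under $\N(0,1)$), and correctly identifying $g'(y_0)=\sqrt{z_0/2}\,\erf'(\sqrt{2/z_0}\,\erf^{-1}(1/2+a))$ so that the scaling constant $S_{2,a}=1/g'(y_0)$ matches the one in the statement. Once the chain-rule calculation for $g$ and the placement of $y_0\ge 1$ are verified, the monotone-integrand estimates above finish the proof.
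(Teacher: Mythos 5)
Your strategy for both parts — use the mean value theorem (or equivalently an integral of the derivative) together with the fact that $\Phi'$ and $\erf'$ are decreasing on $[0,\infty)$ — is exactly what the paper does; the paper's own proof is a terse two-sentence appeal to the MVT, and your write-up fleshes out the same idea. Part (i) is correct as written.

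The chain-rule identification you flagged as the "mild subtlety" in part (ii), however, is done incorrectly. With $g(x) = \erf(x\sqrt{z_0/2})$, the chain rule gives $g'(x) = \sqrt{z_0/2}\,\erf'(x\sqrt{z_0/2})$, so at $y_0 = \sqrt{2/z_0}\,\erf^{-1}(1/2+a)$ you get
\begin{align*}
g'(y_0) = \sqrt{z_0/2}\,\erf'\bigl(y_0\sqrt{z_0/2}\bigr) = \sqrt{z_0/2}\,\erf'\bigl(\erf^{-1}(1/2+a)\bigr),
\end{align*}
not $\sqrt{z_0/2}\,\erf'(\sqrt{2/z_0}\,\erf^{-1}(1/2+a))$ as you wrote — the argument of $\erf'$ should be $y_0\sqrt{z_0/2}$, which collapses to $\erf^{-1}(1/2+a)$, not to $y_0$ itself. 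Consequently your integral estimate delivers $|x-1| \le \{g'(y_0)\}^{-1}|g(x)-1/2|$ with $\{g'(y_0)\}^{-1} = \{\sqrt{z_0/2}\,\erf'(\erf^{-1}(1/2+a))\}^{-1}$, which does \emph{not} literally equal the paper's $S_{2,a}$.

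The good news is that this is a benign slip: since $z_0 < 2$ (indeed $z_0 = \{\Phi^{-1}(3/4)\}^2 \approx 0.45$), we have $\sqrt{2/z_0} > 1$, hence $\sqrt{2/z_0}\,\erf^{-1}(1/2+a) > \erf^{-1}(1/2+a)$, and because $\erf'$ is decreasing this gives $\erf'(\erf^{-1}(1/2+a)) \ge \erf'(\sqrt{2/z_0}\,\erf^{-1}(1/2+a))$, i.e., $\{g'(y_0)\}^{-1} \le S_{2,a}$. So the sharper bound you actually derive implies the lemma's conclusion with the paper's (looser) constant $S_{2,a}$. You should correct the chain-rule line and then add this one inequality to bridge to the paper's stated $S_{2,a}$; as written the proof claims an equality that fails.
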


\begin{proof}
(i) By the mean value theorem, we have $\Phi(x) \ge \frac{1}{2} + S^{-1}_{1,a}x$,
because $\Phi^\prime (\cdot)$ is decreasing on $[0, +\infty)$.

(ii) By the mean value theorem, we have
$|\text{erf}(x\sqrt{z_0/2}) - 1/2| \ge  S^{-1}_{2,a} {|x-1|}$,
because $\erf^\prime (\cdot)$ is decreasing on $[0, +\infty)$.
\end{proof}

\begin{manualpro}{S4} \label{pro:local-linear-pop}
For two multivariate Gaussian distributions, $P_{\bar\theta}$ and $P_{\theta^*}$, with $\bar\theta=(\bar\mu, \bar\Sigma)$ and $\theta^*=(\mu^*, \Sigma^*)$,
denote $d(\bar\theta, \theta^*) = \TV(P_{\bar\theta}, P_{\theta^*})$.
\begin{itemize}
\item[(i)]
If $ d(\bar\theta, \theta^*) \le a$ for a constant $a \in [0,1/2)$, then
\begin{align*}
\|\bar\mu - \mu^*\|_2 &\le S_{1,a}\|\Sigma^*\|_{\op}^{1/2} d(\bar\theta, \theta^*),\\
\|\bar\mu - \mu^*\|_{\infty} &\le S_{1,a}\|\Sigma^*\|_{\max}^{1/2} d(\bar\theta, \theta^*) ,
\end{align*}
where $S_{1,a}=\{ \Phi^\prime(\Phi^{-1}(1/2+a)) \}^{-1}$ as in Lemma~\ref{lem:invert-pop}.

\item[(ii)]
If further $ d(\bar\theta, \theta^*) \le a/(1+S_{1,a})$, then
\begin{align*}
\|\bar\Sigma - \Sigma^*\|_{\op} & \le 2 S_{3,a}\|\Sigma^*\|_{\op} d(\bar\theta, \theta^*) + S^2_{3,a} \|\Sigma^*\|_{\op}( d(\bar\theta, \theta^*))^2, \\
\|\bar\Sigma - \Sigma^*\|_{\max} & \le 4S_{3,a} \|\Sigma^*\|_{\max} d(\bar\theta, \theta^*) + 2S^2_{3,a}\|\Sigma^*\|_{\max}( d(\bar\theta, \theta^*))^2,
\end{align*}
\end{itemize}
where $S_{3,a}= S_{2,a}(1+S_{1,a})$, $S_{2,a}= \{ \sqrt{z_0/2}\, \erf^\prime(\sqrt{2/z_0}\, \erf^{-1}(1/2+a)) \}^{-1}$, and the constant $z_0$ is defined
such that $\erf(\sqrt{z_0/2}) = 1/2$, as in Lemma~\ref{lem:invert-pop}.
\end{manualpro}

\begin{proof}
The TV distance, $D_\TV(P_1 \| P_2)$, can be equivalently defined as
\begin{align*}
\TV (P_1, P_2) = \sup_{A \in \mathcal{A}}| P_1(A)-P_2(A)|,
\end{align*}
for $P_1$ and $P_2$ defined in a probability space $(\mathcal{X}, \mathcal{A})$.
This definition is applicable to multivariate Gaussian distributions with singular variance matrices.
To derive the desired results, we choose specific events $A$ and show that the differences in the means
and variance matrices can be upper bounded by $ |P_{\bar\theta} (A) - P_{\theta^*} (A)|$.

We first show results (i) and (ii), when $\Sigma^*$ and $\bar \Sigma$ are nonsingular. Then
we show that the results remain valid when $\Sigma^*$ or $\bar \Sigma$ is singular.

(i) Assume that both $\Sigma^*$ and $\bar \Sigma$ are nonsingular.
For any $u \in \bbR^p $, we have by the definition of TV,
$$
 P_{\mu^*, \Sigma^*}(u^\T X \le u^\T \bar\mu) - P_{\bar{\mu}, \bar{\Sigma}}(u^\T X \le u^\T \bar\mu) \le d(\bar\theta, \theta^*) .
$$
For nonzero $u \in \bbR^p$, because $u^\T \bar \Sigma u \not= 0$ and $u^\T \Sigma^* u \not= 0$, we have
\begin{align*}
P_{\bar{\mu}, \bar{\Sigma}}(u^\T X \le u^\T \bar\mu) & = \frac{1}{2},\\
P_{\mu^*, \Sigma^*}(u^\T X \le u^\T \bar\mu) & = \Phi\left(\frac{u^\T(\bar \mu- \mu^*)}{\sqrt{u^\T \Sigma^* u}}\right).
\end{align*}
Combining the preceding three displays shows that for nonzero $u \in \bbR^p$,
\begin{align*}
\Phi\left(\frac{u^\T(\bar \mu- \mu^*) }{\sqrt{u^\T \Sigma^* u}}\right) \le  \frac{1}{2} + d (\bar\theta, \theta^*).
\end{align*}
By Lemma \ref{lem:invert-pop} (i), if $d(\bar\theta,\theta^*) \le a$ {for a constant $a \in [0, 1/2)$}, then for any $u\in \bbR^p$ satisfying
{$u^\T(\bar\mu - \mu^*) \ge 0$},
\begin{align}
{ 0 \le } u^\T(\bar \mu- \mu^*) & \le \sqrt{u^\T \Sigma^* u} S_{1,a} d (\bar\theta, \theta^*) . \label{eq:local-linear-pop-0}
\end{align}
Let $\mathcal{U}_2 = \{u \in \bbR^p: \|u\|_2=1\}$.
By (\ref{eq:local-linear-pop-0}) with $u$ restricted such that $u \in \mathcal{U}_2 $
and {$u^\T(\bar\mu - \mu^*) \ge 0$}, we have
\begin{align*}
\|\bar{\mu} - \mu^* \|_2 &=\sup_{u\in\mathcal{U}_2} u^\T(\bar \mu- \mu^*)\\
& \le S_{1,a}\|\Sigma^*\|_{\op}^{1/2} d(\bar\theta, \theta^*).
\end{align*}
Similarly, let $\mathcal{U}_{\infty} = \{\pm \me_j: j =1,\dots, p\}$,
where $e_j$ is a vector with $j$th coordinate being one and others being zero.
By (\ref{eq:local-linear-pop-0}) with $u$ restricted such that $u \in \mathcal{U}_\infty $
and {$u^\T(\bar\mu - \mu^*) \ge 0$}, we have
\begin{align*}
\|\bar{\mu} - \mu^* \|_\infty &=\sup_{u\in\mathcal{U}_{\infty}} u^\T(\bar{\mu} - \mu^*) \\
& \le S_{1,a}\|\Sigma^*\|_{\max}^{1/2} d(\bar\theta, \theta^*).
\end{align*}
The last line uses the fact that $\sup_{u\in\mathcal{U}_{\infty}} u^\T \Sigma^* u = \| \diag(\Sigma^*) \|_{\infty} = \| \Sigma^* \|_{\max}$ by the nature of variance matrices.

(ii) Assume that $\Sigma^*$ and $\bar \Sigma$ are nonsingular.
We first separate the bias caused by the location difference between $P_{\bar\theta}$ and $P_{\theta^*}$. By the triangle inequality, we have
\begin{align}
\TV(P_{\bar\mu, \Sigma^*}, P_{\bar\mu, \bar\Sigma}) \le \TV(P_{\bar\mu, \Sigma^*}, P_{\mu^*, \Sigma^*}) + \TV( P_{\mu^*, \Sigma^*},  P_{\bar\mu, \bar\Sigma}).  \label{eq:local-linear-pop-triangle}
\end{align}
By Lemma~\ref{lem:sqr-tv-lwr-bound}, we know that $TV(P,Q) \le \sqrt{2 D_{\mathrm{KL}}(P||Q)}$. Then we have
\begin{align}
\TV(P_{\bar\mu, \Sigma^*}, P_{\mu^*, \Sigma^*}) &\le \sqrt{2 D_{\mathrm{KL}}(P_{\bar\mu, \Sigma^*} || P_{\mu^*, \Sigma^*})}\nonumber\\
&\le S_{1,a} d (\bar\theta, \theta^*) .\label{eq:KL-Gaussian}
\end{align}
provided that $d (\bar\theta, \theta^*) \le a $. Inequality (\ref{eq:KL-Gaussian}) follows because by standard calculation
\begin{align*}
D_{\text{KL}}(N(\bar \mu, \Sigma^*)|| N(\mu^*, \Sigma^*)) & = \frac{1}{2}(\bar\mu - \mu^*)^\T \Sigma^{*-1}(\bar\mu - \mu^*),
\end{align*}
and taking $u = \Sigma^{*-1}(\bar\mu - \mu^*)$ in (\ref{eq:local-linear-pop-0}) gives
\begin{align*}
\sqrt{ (\bar\mu - \mu^*)^\T \Sigma^{*-1}(\bar\mu - \mu^*) }
& \le S_{1,a} d (\bar\theta, \theta^*) .
\end{align*}
Combining (\ref{eq:local-linear-pop-triangle}) and (\ref{eq:KL-Gaussian}) yields
\begin{align}
\TV(P_{\bar\mu, \Sigma^*}, P_{\bar\mu, \bar\Sigma})
&\le d(\bar\theta, \theta^*) +  {S_{1,a}}d(\bar\theta, \theta^*). \label{eq:local-linear-pop-main}
\end{align}

For any $u \in \bbR^p$ {such that $ u^\T (\bar\Sigma- \Sigma^*) u \ge 0$}, (\ref{eq:local-linear-pop-main}) implies
\begin{align*}
0 &\le P_{\bar\mu, \Sigma^*}\left\{ (u^\T X - u^\T \bar\mu)^2 \le  z_0 u^\T \bar\Sigma u \right\}- P_{\bar{\mu}, \bar{\Sigma}}\left\{ (u^\T X - u^\T \bar\mu)^2 \le z_0 u^\T \bar\Sigma u \right\}\\
&= P_{0, \Sigma^*}\left\{ (u^\T X)^2 \le   z_0 u^\T \bar\Sigma u \right\}- P_{0, \bar{\Sigma}}\left\{ (u^\T X)^2 \le  z_0 u^\T \bar\Sigma u \right\}\\
&\le d(\bar\theta, \theta^*) +  S_{1,a}d(\bar\theta, \theta^*),
\end{align*}
where $z_0$ is an universal constant such that $\erf(\sqrt{z_0/2}) = 1/2$.  Similarly, for any $u \in \bbR^p$ such that $ u^\T (\bar\Sigma- \Sigma^*) u \le 0$, (\ref{eq:local-linear-pop-main}) implies
\begin{align*}
0 &\le P_{\bar\mu, \Sigma^*}\left\{ (u^\T X - u^\T \bar\mu)^2 \ge  z_0 u^\T \bar\Sigma u \right\}- P_{\bar{\mu}, \bar{\Sigma}}\left\{ (u^\T X - u^\T \bar\mu)^2 \ge z_0 u^\T \bar\Sigma u \right\}\\
&= P_{0, \Sigma^*}\left\{ (u^\T X)^2 \ge   z_0 u^\T \bar\Sigma u \right\}- P_{0, \bar{\Sigma}}\left\{ (u^\T X)^2 \ge  z_0 u^\T \bar\Sigma u \right\}\\
&\le d(\bar\theta, \theta^*) +  S_{1,a}d(\bar\theta, \theta^*).
\end{align*}
Notice that the choice of $z_0$ ensures that for $Z \in \N(0,1)$,
\begin{align*}
P_{0, \bar{\Sigma}}\left\{ (u^\T X)^2 \le z_0 u^\T \bar\Sigma u \right\} & = \pr(Z^2 \le z_0) = \frac{1}{2} \\
& =\pr(Z^2 \ge z_0)= P_{0, \bar{\Sigma}}\left\{ (u^\T X)^2 \ge z_0 u^\T \bar\Sigma u \right\} .
\end{align*}
{Moreover}, for any {nonzero} $u \in \bbR^p$, we have by the definition of $\erf$,
\begin{align*}
& P_{0, \Sigma^*}\left\{ (u^\T X)^2 \le  z_0 u^\T \bar\Sigma u \right\}  = \erf \left(\sqrt{ \frac{z_0 u^\T \bar\Sigma u}{2u^\T \Sigma^* u}}\right).
\end{align*}
Combining the preceding {four} displays shows that for any {nonzero} $u \in \bbR^p$,
\begin{align*}
\left|\erf\left(\sqrt{ \frac{z_0 u^\T \bar\Sigma u}{2u^\T \Sigma^* u}}\right)-\frac{1}{2}\right| \le   (1 +  S_{1,a})d(\bar\theta, \theta^*). 
\end{align*}
By Lemma~\ref{lem:invert-pop} (ii), {if $d(\bar\theta,\theta^*) \le \min(a, a/(1+S_{1,a})) = a/(1+S_{1,a})$,} then for any nonzero $u\in \bbR^p$,
\begin{align*}
\left|\sqrt{ \frac{u^\T \bar\Sigma u}{u^\T \Sigma^* u}} -1\right|  &\le  S_{2,a}(1+S_{1,a})d(\bar\theta, \theta^*), 
\end{align*}
or equivalently {for any $u\in \bbR^p$,}
\begin{align}
\left|\sqrt{u^\T \bar\Sigma u} -\sqrt{u^\T \Sigma^* u}\right|  &\le S_{2,a}(1+S_{1,a})\sqrt{u^\T \Sigma^* u} d(\bar\theta, \theta^*). \label{eq:local-linear-pop-2}
\end{align}
Notice that for any $a, b, c {\ge} 0$, if $|\sqrt{a}-\sqrt{b} |\le c$ then $|a-b|\le 2\sqrt{b}c+c^2$. Thus, inequality (\ref{eq:local-linear-pop-2}) implies
\begin{align}
&\quad \|\bar\Sigma - \Sigma^*\|_{\op} = \sup_{u \in \mathcal{U}_2} \left|u^\T( \bar\Sigma - \Sigma^*) u\right| \nonumber\\
&\le 2S_{3,a}\|\Sigma^*\|_{\op}d(\bar\theta, \theta^*) + S^2_{3,a}\|\Sigma^*\|_{\op}(d(\bar\theta, \theta^*))^2, \label{eq:local-linear-pop-3}
\end{align}
where $S_{3,a} = S_{2,a}(1+S_{1,a})$.

To handle $\|\bar\Sigma - \Sigma^*\|_{\max}$, let $\mathcal{U}_{2, \infty} = \{\pm e_{ij}: i,j=1,\dots, p, i \neq j\}$, where $e_{ij}$ is a vector in $\mathcal{U}_2$ with only $i$th and $j$th coordinates possibly being nonzero.
For $ u \in \mathcal{U}_{2, \infty}$, we have
$u^\T \Sigma^* u = u_{ij}^\T \Sigma^*_{ij} u_{ij}$ and $u^\T \bar\Sigma u = u_{ij}^\T \bar\Sigma_{ij} u_{ij}$,
where $u_{ij} \in \bbR^2$ is formed by $i$th and $j$th coordinates of $u$,
and $\Sigma^*_{ij}$ and $\bar\Sigma_{ij}$ are $2\times 2$ matrices, formed by selecting $i$th and $j$th rows and columns from $\Sigma^*$ and $\bar\Sigma$ respectively.
Similarly as in the deviation of (\ref{eq:local-linear-pop-3}), applying inequality (\ref{eq:local-linear-pop-2}) with $u \in \mathcal{U}_{2, \infty}$, we have
\begin{align*}
\|\bar\Sigma_{ij} - \Sigma_{ij}^*\|_{\op}  &= \sup_{u \in U_{2, \infty}} \left| u^\T(\bar\Sigma - \Sigma^*)u  \right|\\
&\le  2S_{3,a}\|\Sigma_{ij}^*\|_{\op}d(\bar\theta, \theta^*) + S^2_{3,a}\|\Sigma_{ij}^*\|_{\op}(d(\bar\theta, \theta^*))^2.
\end{align*}
Because for a matrix $A \in \bbR^{m_1 \times m_2 }$, $\|A\|_{\max}\le \|A\|_{\op} \le \sqrt{m_1 m_2}\|A\|_{\max}$, the above inequality implies
that for any $i\not= j \in \{1,\ldots,p\}$,
\begin{align}
&\quad \|\bar\Sigma_{ij} - \Sigma_{ij}^*\|_{\max} \nonumber \\
&\le  4S_{3,a}\|\Sigma_{ij}^*\|_{\max}d(\bar\theta, \theta^*) +2S^2_{3,a}\|\Sigma_{ij}^*\|_{\max}(d(\bar\theta, \theta^*))^2. \label{eq:local-linear-pop-4}
\end{align}
Taking the maximum on both sides of (\ref{eq:local-linear-pop-4}) over $i\not=j$ gives the desired result:
\begin{align*}
&\quad \|\bar\Sigma - \Sigma^*\|_{\max} {=}  \max_{i\neq j \in \{1,\dots, p\}}\|\bar\Sigma_{ij} - \Sigma_{ij}^*\|_{\max} \nonumber \\
&\le    4S_{3,a}\|\Sigma^*\|_{\max}d(\bar\theta, \theta^*) + 2S^2_{3,a}\|\Sigma^*\|_{\max}(d(\bar\theta, \theta^*))^2 . 
\end{align*}

(iii) Consider the case where $\Sigma^*$ or $\bar\Sigma$ is singular. As the following argument is symmetric in $\Sigma^*$ and $\bar\Sigma$,
we assume without loss of generality that $\Sigma^*$ is singular.
Fix any nonzero $u$ such that $u^\T \Sigma^* u = 0$.

First, we show that for $\bar\theta = (\bar\mu, \bar\Sigma)$ such that $\TV(P_{\bar\theta}, P_{\theta^*}) < 1$, we also have $u^\T \bar\Sigma u = 0$.
In fact, {$\TV(P_{\bar\theta}, P_{\theta^*}) < 1$ implies}
\begin{align}
    \left| P_{\mu^*, \Sigma^*}(u^\T X = u^\T \mu^*) - P_{\bar{\mu}, \bar{\Sigma}}(u^\T X = u^\T \mu^*) \right| \le d(\bar\theta, \theta^*) < 1. \label{eq:pd-contradict-1}
\end{align}
Note that
$P_{\mu^*, \Sigma^*}(u^\T X = u^\T \mu^*) =1$ because $u^\T \Sigma^* u =0$.
If $u^\T \bar\Sigma u > 0$, then $P_{\bar{\mu}, \bar{\Sigma}}(u^\T X = u^\T \mu^*) =0$, and hence (\ref{eq:pd-contradict-1}) gives
$$
|1 - 0| \le d(\bar\theta, \theta^*) < 1,
$$
which is a contradiction. Thus $u^\T \bar\Sigma u = 0$.

Next we show that for $\bar\theta = (\bar\mu, \bar\Sigma)$ such that $\TV(\bar\theta, \theta^*) < 1$, we also have $u^\T(\mu^* - \bar\mu) = 0$. In fact, with $u^\T \bar\Sigma u =0$ as shown above, we have that
$P_{\bar\mu, \bar\Sigma}(u^\T X = u^\T \mu^*)=1$ if $u^\T\mu^* = u^\T\bar\mu$ and $P_{\bar\mu, \bar\Sigma}(u^\T X = u^\T \mu^*)=0$ otherwise. If $u^\T\mu^* \neq u^\T\bar\mu$, then inequality (\ref{eq:pd-contradict-1}) gives
$$
|1 - 0| \le d(\bar\theta, \theta^*) < 1,
$$
which is a contradiction. Thus $u^\T\mu^* = u^\T\bar\mu$.

From the two preceding results, we see that
the upper bounds (\ref{eq:local-linear-pop-0}) and (\ref{eq:local-linear-pop-2}) derived in (i) and (ii) remain valid
for any $u\in\bbR^p$ satisfying $u^\T \Sigma^* u = 0$.
Hence the desired results hold by the remaining proofs in (i) and (ii).
\end{proof}

\subsection{Details in main proof of Theorem \ref{thm:logit-L1}}

\begin{lem} \label{lem:spline-L1-upper}
Suppose that $X_1, \ldots, X_n$ are independent and identically distributed as
$X \sim N_p(0, \Sigma)$ with $ \|\Sigma\|_{\max} \leq M_1$.
For $k$ fixed knots $\xi_1,\ldots,\xi_k$ in $\bbR$,
denote $\varphi(x) = (\varphi^\T_1(x), \ldots, \varphi^\T_k(x))^\T$,
where $\varphi_l(x) \in \bbR^p$ is obtained by applying $t \mapsto (t-\xi_l)_+$ componentwise to $x \in \bbR^p$ for {$l=1,\ldots,k$.}
Then the following results hold.

(i) Each component of the random vector $\varphi(X) - \E \varphi(X)$ is a sub-gaussian random variable with tail parameter $M_1^{1/2}$.

(ii) For any $\delta >0$, we have that with probability at least $1-2\delta$,
\begin{align*}
&\quad \sup_{\| w \|_{1}=1}\left| w^\T \left\{\frac{1}{n}\sum_{i=1}^{n}\varphi(X_i)  - \E \varphi(X) \right\}\right| \\
&\le C_{\mathrm{sp11}} M_1^{1/2} \sqrt{\frac{2\log (kp) + \log(\delta^{-1})}{n}} ,
\end{align*}
where $C_{\mathrm{sp11}} = \sqrt{2} C_{\mathrm{sg5}}$, depending on the universal constant $C_{\mathrm{sg5}}$ in Lemma~\ref{lem:subg-concentration}.

(iii) Let $\mathcal{A}_1=\{A \in \bbR^{kp \times kp}: \| A \|_{1,1} =1\}$. For any $\delta >0$, we have that with probability at least $1-4\delta$,
both the inequality in (ii) and
\begin{align*}
& \quad  \sup_{A \in \mathcal{A}_1} \left| \frac{1}{n} \sum_{i=1}^{n} \varphi^\T(X_{i}) A \varphi(X_{i}) - \E \varphi^\T(X) A \varphi(X) \right| \\
& \le  C_{\mathrm{sp12}} M_{11} \left\{ \sqrt{\frac{2\log(kp) +\log(\delta^{-1})}{n}} + \frac{2\log(kp)+ \log(\delta^{-1})}{n}\right\},
\end{align*}
where $M_{11} =  M_1^{1/2} ( M_1^{1/2} +  \sqrt{2\pi}  \|\xi \|_\infty)$,
$\|\xi \|_\infty = \max_{{l=1,\ldots,k}} |\xi_l|$, and $C_{\mathrm{sp12}} = \sqrt{2/\pi}C_{\mathrm{sp11}} + C_{\mathrm{sx7}} C_{\mathrm{sx6}} C_{\mathrm{sx5}}$. Constants $(C_{\mathrm{sx5}}, C_{\mathrm{sx6}}, C_{\mathrm{sx7}})$ are the universal constants in Lemmas~\ref{lem:subexp-prod}, \ref{lem:subexp-centering}, and \ref{lem:subexp-concentration}.
\end{lem}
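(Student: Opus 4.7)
The three parts share a common structure: reduce each supremum to a maximum over a finite index set via norm duality, apply pointwise sub-gaussian or sub-exponential concentration from the stated auxiliary lemmas, and conclude by a union bound. The plan proceeds part by part.

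For part (i), the key observation is that each coordinate $\varphi_{l,j}(X) = (X_j - \xi_l)_+$ is a $1$-Lipschitz function of $X_j \sim \N(0,\Sigma_{jj})$, and $\Sigma_{jj} \le \|\Sigma\|_{\max} \le M_1$. Hence by Gaussian concentration for Lipschitz functions (which is presumably recorded as one of the sub-gaussian lemmas in Appendix V), $\varphi_{l,j}(X) - \E\varphi_{l,j}(X)$ is sub-gaussian with tail parameter $\sqrt{\Sigma_{jj}} \le M_1^{1/2}$. For part (ii), I would use the duality $\sup_{\|w\|_1=1}|w^\T v| = \|v\|_\infty$ to rewrite the target as $\|\bar\varphi - \E\varphi\|_\infty$, where $\bar\varphi = n^{-1}\sum_i \varphi(X_i)$. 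By part (i), each of the $kp$ coordinates is an average of i.i.d.\ sub-gaussians with parameter $M_1^{1/2}$, so applying Lemma~\ref{lem:subg-concentration} coordinatewise and taking a union bound over the $kp$ coordinates yields the stated bound with $C_{\mathrm{sp11}} = \sqrt{2}\,C_{\mathrm{sg5}}$.

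For part (iii), the same $\|\cdot\|_{1,1}$/$\|\cdot\|_{\max}$ duality gives
\begin{align*}
\sup_{A \in \mathcal{A}_1}\Bigl|\tfrac{1}{n}\textstyle\sum_i \varphi^\T(X_i)A\varphi(X_i) - \E\varphi^\T(X)A\varphi(X)\Bigr| = \Bigl\|\tfrac{1}{n}\textstyle\sum_i \varphi(X_i)\varphi^\T(X_i) - \E\varphi(X)\varphi^\T(X)\Bigr\|_{\max}.
\end{align*}
The key algebraic step is the decomposition, writing $\tilde\varphi_l = \varphi_l - \E\varphi_l$,
\begin{align*}
\varphi_l\varphi_m - \E\varphi_l\varphi_m = \bigl(\tilde\varphi_l\tilde\varphi_m - \E[\tilde\varphi_l\tilde\varphi_m]\bigr) + (\E\varphi_m)\,\tilde\varphi_l + (\E\varphi_l)\,\tilde\varphi_m.
\end{align*}
The two linear terms, after averaging and taking $\max_{l,m}$, are bounded by $2\max_l |\E\varphi_l|$ times the quantity already controlled in part (ii); a crude estimate $|\E\varphi_l(X_j)| \le \E|X_j| + |\xi_l| \le M_1^{1/2}\sqrt{2/\pi} + \|\xi\|_\infty$ delivers a contribution of order $M_1^{1/2}(M_1^{1/2} + \|\xi\|_\infty)$ times the $\sqrt{\log(kp)/n}$ rate, producing the first constant $\sqrt{2/\pi}\,C_{\mathrm{sp11}}$ in $C_{\mathrm{sp12}}$. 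For the quadratic remainder, Lemma~\ref{lem:subexp-prod} upgrades each $\tilde\varphi_l\tilde\varphi_m$ to a sub-exponential variable with parameter of order $M_1$, Lemma~\ref{lem:subexp-centering} handles the centering, and Lemma~\ref{lem:subexp-concentration} gives a Bernstein-type tail that contributes both a $\sqrt{\log/n}$ and a $\log/n$ term, which after a union bound over the $(kp)^2$ pairs produces the factor $C_{\mathrm{sx7}}C_{\mathrm{sx6}}C_{\mathrm{sx5}}$ in $C_{\mathrm{sp12}}$ multiplied by $M_1$. Intersecting with the event from part (ii) gives total probability at least $1-4\delta$.

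The only real obstacle is bookkeeping: three sources (the sub-gaussian rate from the linear terms, the sub-exponential rate from the centered quadratic term, and the deterministic bound on $\E\varphi_l$) must be combined so that the aggregate factor telescopes into the single expression $M_{11} = M_1^{1/2}(M_1^{1/2} + \sqrt{2\pi}\|\xi\|_\infty)$ and the additive constants align with the stated $C_{\mathrm{sp12}}$. Beyond tracking these constants, every ingredient is a routine application of Gaussian Lipschitz concentration, the Bernstein inequality for sub-exponentials, and finite union bounds, so no new probabilistic tool is needed.
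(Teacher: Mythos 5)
Your approach is correct and essentially identical to the paper's: part (i) via Gaussian Lipschitz concentration, part (ii) via the $\ell_1$/$\ell_\infty$ duality, coordinatewise sub-gaussian concentration, and a union bound, and part (iii) via exactly the decomposition $\varphi_l\varphi_m - \E\varphi_l\varphi_m = (\tilde\varphi_l\tilde\varphi_m - \E\tilde\varphi_l\tilde\varphi_m) + (\E\varphi_m)\tilde\varphi_l + (\E\varphi_l)\tilde\varphi_m$, with the cross terms controlled by $\|\E\varphi\|_\infty$ times the part (ii) deviation and the centered quadratic term handled by Lemmas~\ref{lem:subexp-prod}--\ref{lem:subexp-concentration}.

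One small but real discrepancy in your bookkeeping: to obtain the stated $M_{11} = M_1^{1/2}(M_1^{1/2} + \sqrt{2\pi}\|\xi\|_\infty)$, you need $\|\E\varphi_l\|_\infty \le M_1^{1/2}/\sqrt{2\pi} + |\xi_l|$, which the paper gets from the one-sided identity $\E(X-\xi)_+ = \tfrac{\sigma}{\sqrt{2\pi}}\,\me^{-\xi^2/(2\sigma^2)} - \xi\,\pr(X>\xi)$ (Lemma~\ref{lem:truncated-mean}). Your crude bound $\E|(X-\xi)_+| \le \E|X| + |\xi| = \sqrt{2/\pi}\,\sigma + |\xi|$ is off by a factor of $2$ on the Gaussian term and would give $M_1^{1/2}(2M_1^{1/2}+\sqrt{2\pi}\|\xi\|_\infty)$ in place of $M_{11}$; replacing $\E|X|$ by the sharper one-sided expectation fixes this and recovers the stated constant exactly.
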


\begin{proof}
(i) This can be obtained as the univariate case of Lemma~\ref{lem:spline-L2-upper} (i). It is only required that the marginal variance of each component of $X$ is upper bounded by $M_1$.

(ii) Notice that
\begin{align*}
\sup_{\| w \|_{1} = 1}\left| w^\T \left\{\frac{1}{n}\sum_{i=1}^{n}\varphi(X_i)  - \E \varphi(X) \right\}\right|
= \left\| \frac{1}{n}\sum_{i=1}^{n}\varphi(X_i)  - \E \varphi(X) \right\|_\infty.
\end{align*}
By (i) and sub-gaussian concentration (Lemma~\ref{lem:subg-concentration}),
each component of $n^{-1} \sum_{i=1}^n \varphi(X_i) - \E \varphi(X)$ is sub-gaussian with tail parameter $C_{\mathrm{sg5}} (M_1/n)^{1/2}$.
Then for any $t>0$, by the union bound, we have that with probability at $1-2 k^2p^2 \me^{-t}$,
\begin{align*}
\left\| \frac{1}{n}\sum_{i=1}^{n}\varphi(X_i)  - \E \varphi(X) \right\|_\infty \le \sqrt{2} C_{\mathrm{sg5}} (M_1/n)^{1/2} t^{1/2}.
\end{align*}
Taking $t =2 \log (kp) + \log (\delta^{-1})$ gives the desired result.

(iii)
The difference of interest can be expressed in terms of the centered variables as
\begin{align}
& \quad \frac{1}{n} \sum_{i=1}^{n} \varphi^\T_i A \varphi_i - \E \varphi^\T A \varphi \nonumber \\
& = \frac{1}{n} \sum_{i=1}^{n} (\varphi_i - \E \varphi)^\T A (\varphi_i -\E\varphi) - \E \{ (\varphi - \E \varphi)^\T A (\varphi-\E \varphi)\}  \label{eq:lem-spline-L1-upper-prf1} \\
& \quad + \frac{1}{n} \sum_{i=1}^n 2 (\E \varphi)^\T A (\varphi_i - \E \varphi) .  \label{eq:lem-spline-L1-upper-prf2}
\end{align}
We handle the concentration of the two terms separately.
Denote $\varphi_i = \varphi(X_i)$, $\varphi=\varphi(X)$, $\tilde\varphi_i = \varphi_i - \E \varphi$, and $\tilde\varphi = \varphi - \E \varphi$.

First, for $A \in \mathcal{A}_1$, the term in (\ref{eq:lem-spline-L1-upper-prf2}) can be bounded as follows:
\begin{align*}
& \quad \left| 2 (\E \varphi)^\T A \frac{1}{n} \sum_{i=1}^n \tilde\varphi_i \right|
\le 2 \| \E \varphi \|_\infty \|A\|_{1,1} \left\| \frac{1}{n} \sum_{i=1}^n \tilde\varphi_i \right\|_\infty
= 2 \| \E \varphi \|_\infty \left\| \frac{1}{n} \sum_{i=1}^n \tilde\varphi_i \right\|_\infty \\
& \le 2 \left( \frac{M_1^{1/2}}{\sqrt{2\pi}} + \| \xi \|_\infty \right) \left\| \frac{1}{n} \sum_{i=1}^n \tilde\varphi_i \right\|_\infty,
\end{align*}
where $ \| \xi \|_\infty = \max_{l=1,\ldots,k} |\xi_l|$.
The second step holds because $\|A \|_{1,1} =1$ for $A \in \mathcal{A}_1$.
The third step holds because $\| \E \varphi_l \|_\infty \le M_1^{1/2} / \sqrt{2\pi} + | \xi_l|$ for $l=1,\dots,5$ by Lemma~\ref{lem:truncated-mean}.
By (ii), for any $\delta>0$, we have that with probability at least $1-2\delta$,
\begin{align*}
&\quad \left\| \frac{1}{n} \sum_{i=1}^n \tilde\varphi_i \right\|_\infty
\le  C_{\mathrm{sp11}} M_1^{1/2} \sqrt{\frac{2\log(kp) + \log(\delta^{-1})}{n} } .
\end{align*}
From the preceding two displays, we obtain that with probability at least $1-2\delta$,
\begin{align}
& \quad \sup_{A \in \mathcal{A}_1 }  \left| 2 (\E \varphi)^\T A \frac{1}{n} \sum_{i=1}^n \tilde\varphi_i \right| \nonumber \\
& \le \sqrt{\frac{2}{\pi}} C_{\mathrm{sp11}} M_1^{1/2} \left( M_1^{1/2} +  \sqrt{2\pi}  \|\xi \|_\infty \right) \sqrt{\frac{2\log(kp) + \log(\delta^{-1})}{n} }. \label{eq:lem-spline-L1-upper-prf3}
\end{align}

Next, notice that
\begin{align*}
& \quad \sup_{A \in \mathcal{A}_1} \left| \frac{1}{n} \sum_{i=1}^{n} \tilde\varphi^\T_i A \tilde\varphi_i - \E \tilde\varphi^\T A \tilde\varphi \right|
= \left\| \frac{1}{n}\sum_{i=1}^{n} \tilde\varphi_i \otimes \tilde\varphi_i - \E \tilde\varphi \otimes\tilde\varphi \right\|_{\max}.
\end{align*}
From (i), each component of $\tilde\varphi_i$ is sub-gaussian with tail parameter $M_1^{1/2}$.
By Lemma~\ref{lem:subexp-prod},
each element of $\tilde\varphi_i \otimes \tilde\varphi_i$ is sub-exponential with tail parameter $C_{\mathrm{sx5}} M_1$.
By Lemma \ref{lem:subexp-centering}, each element of the centered version, $\tilde\varphi_i \otimes \tilde\varphi_i - \E \tilde\varphi \otimes\tilde\varphi$,
is sub-exponential with tail parameter $C_{\mathrm{sx6}} C_{\mathrm{sx5}} M_1$. Then for any $t>0$, by Lemma~\ref{lem:subexp-concentration} and the union bound, we have that
with probability at least $ 1 - 2 k^2 p^2 \me^{-t}$,
\begin{align*}
& \quad \left\| \frac{1}{n}\sum_{i=1}^{n} \tilde\varphi_i \otimes \tilde\varphi_i - \E\tilde\varphi \otimes\tilde\varphi \right\|_{\max}
\le C_{\mathrm{sx7}} C_{\mathrm{sx6}} C_{\mathrm{sx5}} M_1 \left( \sqrt{\frac{t}{n}} \vee \frac{t}{n} \right) .
\end{align*}
Taking $t =2 \log (kp) + \log (\delta^{-1})$, we obtain that with probability at least $1-2\delta$,
\begin{align}
& \quad \left\| \frac{1}{n}\sum_{i=1}^{n} \tilde\varphi_i \otimes \tilde\varphi_i - \E \tilde\varphi \otimes\tilde\varphi \right\|_{\max} \nonumber \\
& \le C_{\mathrm{sx7}} C_{\mathrm{sx6}} C_{\mathrm{sx5}} M_1 \left\{ \sqrt{\frac{2 \log (kp)+ \log (\delta^{-1})}{n}} \vee \frac{2 \log (kp) + \log (\delta^{-1})}{n} \right\} .\label{eq:lem-spline-L1-upper-prf4}
\end{align}
Combining the two bounds (\ref{eq:lem-spline-L1-upper-prf3}) and (\ref{eq:lem-spline-L1-upper-prf4}) gives the desired result.
\end{proof}

\begin{lem} \label{lem:logit-upper}
Suppose that $f: (0,\infty) \to \bbR$ is convex, non-increasing, and differentiable, and $f(1)=0$.
Denote $f^\#(t) = t f^\prime (t) - f(t)$.

(i) For any $t>0$ and $\epsilon \in [0,1)$, we have
\begin{align}
& (1-\epsilon) f^\prime (t) - f^\# (t) \le - f^{\prime}(1-\epsilon) \epsilon . \label{eq:lem-logit-upper-eps}
\end{align}

(ii) Let $\epsilon_0\in (0,1)$ be fixed. For any $\epsilon \in [0, \epsilon_0]$ and any function $h : \bbR^p \to \bbR$, we have
\begin{align*}
K_f(P_{\epsilon}, P_{\theta^*}; h) \leq -f^\prime(1-\epsilon_0) \epsilon .
\end{align*}

(iii) Suppose, in addition, that {$f^\prime(\me^u)$ is concave in $u$.} Let $\epsilon_1\in (0,1)$ be fixed. If $\hat{\epsilon} = n^{-1} \sum_{i=1}^{n} U_i \in [0, \epsilon_1]$, then
for any function $h: \bbR^p \to \bbR$,
\begin{align}
K_f(P_n, P_{\theta^* } ; h) \le -f^\prime(1-\epsilon_1) \hat\epsilon + R_1 | \E_{P_{\theta^* ,n}} h(x) - \E_{P_{\theta^* }} h(x) | ,\label{eq:lem-logit-upper-emp}
\end{align}
where $P_{\theta^*,n}$ denotes the empirical distribution of $\{X_i: U_i=0,i=1,\ldots,n\}$ in the latent representation of Huber's contamination model.
\end{lem}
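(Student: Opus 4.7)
The plan is to establish (i), (ii), (iii) in order, with (i) serving as the pointwise engine that feeds into the integral bounds. For (i), I would start from the algebraic identity $(1-\epsilon) f'(t) - f^\#(t) = f(t) + (1-\epsilon - t) f'(t)$, immediate from $f^\#(t) = tf'(t) - f(t)$. Then I apply the convexity supporting-line inequality $f(s) \ge f(t) + f'(t)(s-t)$ at $s = 1-\epsilon$, obtaining $f(t) + (1-\epsilon - t) f'(t) \le f(1-\epsilon)$. A second use of the same inequality at anchor $1-\epsilon$ with $s=1$, combined with $f(1)=0$, gives $f(1-\epsilon) \le -\epsilon\, f'(1-\epsilon)$. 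Chaining the two yields (i).

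For (ii), I would decompose $K_f(P_\epsilon, P_{\theta^*}; h) = (1-\epsilon)\E_{P_{\theta^*}} f'(\me^{h(x)}) + \epsilon\, \E_Q f'(\me^{h(x)}) - \E_{P_{\theta^*}} f^\#(\me^{h(x)})$ using the mixture form of $P_\epsilon$. Non-increasingness of $f$ gives $f'\le 0$ pointwise, so the $Q$-term is nonpositive for any $h$ and any $Q$, which is exactly where robustness enters. Merging the two remaining $P_{\theta^*}$-expectations and applying (i) inside the expectation with $t = \me^{h(x)}$ yields $K_f \le -f'(1-\epsilon)\epsilon$. Passing to $-f'(1-\epsilon_0)\epsilon$ uses $\epsilon \le \epsilon_0$, $f' \le 0$, and monotonicity of $f'$ (non-decreasing from convexity of $f$).

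Part (iii) is the main obstacle and requires an asymmetric centering trick on top of (ii). I would begin by splitting $P_n = (1-\hat\epsilon) P_{\theta^*,n} + \hat\epsilon Q_n$ with $Q_n$ the empirical on contaminated indices, and discard the $Q_n$-contribution via $f'\le 0$. Then I apply Jensen's inequality twice: (a) concavity of $f'(\me^u)$ (the extra hypothesis) yields $\E_{P_{\theta^*,n}} f'(\me^{h(x)}) \le f'(\me^{u_n})$ with $u_n = \E_{P_{\theta^*,n}} h(x)$; (b) concavity of $-f^\#(\me^u)$, which follows from non-increasingness of $f$ together with concavity of $f'(\me^u)$ as in Remark~\ref{rem:f-condition2-a}, yields $-\E_{P_{\theta^*}} f^\#(\me^{h(x)}) \le -f^\#(\me^{u_*})$ with $u_* = \E_{P_{\theta^*}} h(x)$. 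This delivers $K_f(P_n, P_{\theta^*}; h) \le (1-\hat\epsilon) f'(\me^{u_n}) - f^\#(\me^{u_*})$.

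The crucial and delicate step is to center the pair $(f', f^\#)$ at the \emph{sample} mean $u_n$ rather than the population mean $u_*$: I rewrite
\begin{align*}
(1-\hat\epsilon) f'(\me^{u_n}) - f^\#(\me^{u_*})
= \bigl[(1-\hat\epsilon) f'(\me^{u_n}) - f^\#(\me^{u_n})\bigr] + \bigl[f^\#(\me^{u_n}) - f^\#(\me^{u_*})\bigr].
\end{align*}
The first bracket is controlled by applying (i) at $t = \me^{u_n}$ with $\epsilon = \hat\epsilon \le \epsilon_1$, giving $-f'(1-\hat\epsilon)\hat\epsilon \le -f'(1-\epsilon_1)\hat\epsilon$ by monotonicity of $f'$; the second bracket is controlled by $R_1 |u_n - u_*|$ via the $R_1$-Lipschitz property of $u \mapsto f^\#(\me^u)$ from Assumption~\ref{ass:f-condition2}(ii). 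Combining the two yields (iii). The asymmetry is essential: centering at $u_*$ instead would have required Lipschitzness of $f'(\me^u)$, which can fail in the running examples (e.g., $f'(\me^u) = -\me^{-u}$ for the reverse KL has unbounded derivative), whereas $f^\#(\me^u)$ Lipschitzness is exactly the content of Assumption~\ref{ass:f-condition2}(ii).
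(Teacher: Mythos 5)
Your proposal matches the paper's proof essentially step for step: the algebraic identity plus two applications of the supporting-line inequality for (i), dropping the $Q$-term via $f'\le 0$ and applying (i) pointwise for (ii), and for (iii) the same splitting of $P_n$, the same two Jensen applications via concavity of $f'(\me^u)$ and $-f^\#(\me^u)$, the same bracket-decomposition centered at the sample mean $u_n$, and the same invocations of (i) and the Lipschitz property of $f^\#(\me^u)$. Your closing remark on why the centering must be at $u_n$ rather than $u_*$ — because $f'(\me^u)$ need not be Lipschitz while $f^\#(\me^u)$ is — is a correct and useful observation that goes slightly beyond what the paper spells out.
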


\begin{proof}
(i) Notice that by definition,
\begin{align*}
& \quad (1-\epsilon) f^\prime (t) - f^\# (t) = (1-\epsilon) f^\prime (t) - t f^\prime(t) + f(t) \\
& = f(t) + f^{\prime}(t)((1-\epsilon)-t) .
\end{align*}
By the convexity of $f$, we have that for any $t>0$ and $\epsilon \in [0,1)$,
$$
f(t) + f^{\prime}(t)((1-\epsilon)-t) \le f(1-\epsilon).
$$
Moreover, by the convexity of $f$ and $f(1)=0$, we have
$$
f(1-\epsilon) \le f(1) + f^{\prime}(1-\epsilon)((1-\epsilon)-1) =  - f^{\prime}(1-\epsilon) \epsilon .
$$
Combining the preceding three displays yields the desired result.

(ii) For any function $h$, we have
\begin{align}
& \quad  K_f(P_{\epsilon}, P_{\theta^* }; h)
= \epsilon  \, \E_Q f^{\prime}(\me^{h(x)}) +  \E_{P_{\theta^* }} \left\{
(1-\epsilon)f^{\prime}(\me^{h(x)})- f^\# (\me^{h(x)}) \right\} \nonumber \\
& \le \E_{P_{\theta^* }} \left\{
(1-\epsilon)f^{\prime}(\me^{h(x)})- f^\# (\me^{h(x)}) \right\}, \label{eq:lem-logit-upper-prf1}
\end{align}
using the fact that $f$ is non-increasing and hence $f^{\prime}(t) \le 0$ for $t>0$.
Setting $t = \me^{h(x)}$ in (\ref{eq:lem-logit-upper-eps}) shows that for $\epsilon \le \epsilon_0$,
\begin{align}
(1-\epsilon)f^{\prime}(\me^{h(x)})-f^\# (\me^{h(x)}) \le  - f^{\prime}(1-\epsilon) \epsilon \le  - f^{\prime}(1-\epsilon_0) \epsilon .\label{eq:lem-logit-upper-prf2}
\end{align}
where $f^\prime(1-\epsilon) \ge f^\prime(1-\epsilon_0)$ for $\epsilon \le \epsilon_0$ by the convexity of $f$.
Combining (\ref{eq:lem-logit-upper-prf1}) and (\ref{eq:lem-logit-upper-prf2}) leads to the desired result.

(iii) For any function $h$, $K_f(P_{n}, P_{\theta^* ,n}; h)$ can be bounded as follows:
\begin{align}
& \quad K_f(P_n, P_{\theta^* } ; h) \nonumber \\
& = \frac{1}{n}\sum_{i=1}^{n}U_i f^{\prime}(\me^{h(X_i)}) + \frac{1}{n}\sum_{i=1}^{n}(1-U_i) f^{\prime}(\me^{h(X_i)}) - \E_{P_{\theta^* }}  f^\# (\me^{h(x)}) \nonumber\\
&\le (1-\hat{\epsilon})\E_{P_{\theta^* , n}}f^{\prime}(\me^{h(x)}) - \E_{P_{\theta^* }}  f^\# (\me^{h(x)}) \label{eq:lem-logit-upper-prf3} \\
&\le (1-\hat{\epsilon})f^{\prime}(\me^{\E_{P_{\theta^* , n}}h(x)}) - f^\# (\me^{\E_{P_{\theta^* }} h(x)} )  \label{eq:lem-logit-upper-prf4} \\
&\le -f^{\prime}(1-\epsilon_1) \hat{\epsilon} + |f^\# (\me^{\E_{P_{\theta^* ,n}} h(x) }) - f^\# (\me^{ \E_{P_{\theta^* }} h(x)} ) | \label{eq:lem-logit-upper-prf5}\\
&\le -f^{\prime}(1-\epsilon_1) \hat{\epsilon} + R_1  | \E_{P_{\theta^* ,n}} h(x) - \E_{P_{\theta^* }} h(x) |  .\label{eq:lem-logit-upper-prf6}
\end{align}
Line (\ref{eq:lem-logit-upper-prf3}) follows because $f^{\prime}(t) \le 0$ for $t>0$.
Line (\ref{eq:lem-logit-upper-prf4}) follows from Jensen's inequality by the concavity of $f^\prime(\me^u)$ and $-f^\# (\me^u)$ in $u$.
Line (\ref{eq:lem-logit-upper-prf5}) follows because
\begin{align*}
(1-\hat{\epsilon})f^{\prime}(\me^{\E_{P_{\theta^* , n}}h(x)}) - f^\# (\me^{\E_{P_{\theta^* ,n}} h(x)} ) \le -f^{\prime}(1-\epsilon_1) \hat{\epsilon} ,
\end{align*}
obtained by taking $\epsilon=\hat\epsilon$ and $t=\me^{ \E_{P_{\theta^* , n}}h(x) }$ in (\ref{eq:lem-logit-upper-eps}) and using
$f^\prime(1-\hat \epsilon) \ge f^\prime(1-\epsilon_1)$ for $\hat\epsilon \le \epsilon_1$.
Finally, line (\ref{eq:lem-logit-upper-prf6}) follows because $f^\# (\me^u)$ is $R_1$-Lipschitz in $u$.
\end{proof}

\begin{manualrem}{S5} \label{rem:logit-upper}
Compared with (\ref{eq:lem-logit-upper-emp}), $K_f(P_{n}, P_{\theta^*}; h)$ can also be bounded as
\begin{align}
K_f(P_{n}, P_{\theta^*}; h) \leq -f^\prime(1-\epsilon_1) \hat{\epsilon} + | \E_{P_{\theta^* ,n}} f^\# (\me^{h(x)}) - \E_{P_{\theta^* }} f^\# (\me^{h(x)}) |. \label{eq:rem-logit-upper-bound-emp}
\end{align}
In fact, this follows directly from (\ref{eq:lem-logit-upper-prf3}), because for $\hat\epsilon \le \epsilon_1$,
\begin{align*}
& \quad (1-\hat \epsilon)f^{\prime}(\me^{h(x)}) - f^\# (\me^{h(x)} )  \le  - f^{\prime}(1-\epsilon_1) \hat \epsilon ,
\end{align*}
which can be obtained by taking $\epsilon=\hat\epsilon$ and $t = \me^{h(x)}$ in (\ref{eq:lem-logit-upper-eps}), similarly as (\ref{eq:lem-logit-upper-prf2}).
However, the bound (\ref{eq:rem-logit-upper-bound-emp}) involves the moment difference of $f^\# (\me^{h(x)})$ between $P_{\theta^* }$ and $P_{\theta^* ,n}$,
which is difficult to control for $h$ in our spline class, even with $f^\# (\me^u)$ Lipschitz in $u$.
In contrast, by exploiting the concavity of $f^\prime(\me^u)$ and $-f^\#(\me^u)$ in $u$, the bound (\ref{eq:lem-logit-upper-emp}) is derived such that
it involves the moment difference of $h(x)$, which can be controlled by Lemma~\ref{lem:spline-L1-upper} in Proposition~\ref{pro:logit-L1-upper}
or by Lemma~\ref{lem:spline-L2-upper} in Proposition~\ref{pro:logit-L2-upper}.
\end{manualrem}

\begin{manualpro}{S5} \label{pro:logit-L1-upper}
In the setting of Proposition~\ref{pro:logit-L1-detailed}, it holds with probability at least $1-{5} \delta$ that for any $\gamma\in\Gamma$,
\begin{align*}
& K_f(P_{n}, P_{\theta^* }; h_{\gamma, \mu^*})
\le -f^\prime(3/5)  (\epsilon + \sqrt{\epsilon/(n\delta)} ) + \pen_1(\gamma) C_{\mathrm{sp13}}  R_1  M_{11}  \lambda_{11} ,
\end{align*}
where $C_{\mathrm{sp13}} = (5/3) ( C_{\mathrm{sp11}} \vee C_{\mathrm{sp12}})$ {with $C_{\mathrm{sp11}}$ and $C_{\mathrm{sp12}}$ as in Lemma~\ref{lem:spline-L1-upper}, }
$M_{11} =  M_1^{1/2} ( M_1^{1/2} + {2} \sqrt{2\pi})$, and
$$
\lambda_{11} = \sqrt{\frac{2\log({5 p} ) + \log(\delta^{-1})}{n}} + \frac{2\log({5 p})+ \log(\delta^{-1})}{n} .
$$
\end{manualpro}

\begin{proof}
Consider the event $\Omega_1 = \{ | \hat\epsilon-\epsilon | \le \sqrt{\epsilon(1-\epsilon)/(n\delta)}\}$.
By Chebyshev's inequality, we have $\pr (\Omega_1) \ge 1-\delta$.
In the event $\Omega_1$, we have $ |\hat\epsilon -\epsilon | \le 1/5$ by the assumption $\sqrt{\epsilon(1-\epsilon)/(n\delta)} \le 1/5$
and hence $\hat\epsilon \le 2/5$ by the assumption $\epsilon \le 1/5$.
By Lemma~\ref{lem:logit-upper} with $\epsilon_1=2/5$, it holds in the event $\Omega_1$ that for any $\gamma\in\Gamma$,
\begin{align}
& \quad K_f(P_n, P_{\theta^* } ; h_{\gamma, \mu^*}) \nonumber \\
& \le -f^\prime(3/5) \hat\epsilon + R_1 \left| \E_{P_{\theta^* ,n}} h_{\gamma, \mu^*}(x) - \E_{P_{\theta^* }} h_{\gamma, \mu^*}(x) \right| \nonumber \\
& \le -f^\prime(3/5) (\epsilon + \sqrt{\epsilon/(n\delta)} ) + R_1 \left| \E_{P_{\theta^*,n}} h_{\gamma}(x-\mu^*) - \E_{P_{(0,\Sigma^*)}} h_{\gamma}(x) \right| . \label{eq:pro-logit-L1-upper-prf1}
\end{align}
The last step uses the fact that
$ \E_{P_{\theta^* }} h_{\gamma, \mu^*}(x) = \E_{P_{(0,\Sigma^*)}} h_{\gamma}(x)$ and $\E_{P_{\theta^* ,n}} h_{\gamma, \mu^*}(x) =  \E_{P_{\theta^*,n}} h_{\gamma}(x-\mu^*)$,
by the definition $h_{\gamma, \mu^*}(x) = h_{\gamma} ( x- \mu^*)$.

Next, conditionally on the contamination indicators $(U_1,\ldots,U_n)$ such that the event $\Omega_1$ holds,
we have that $\{X_i: U_i=1, i=1,\ldots,n\}$ are $n_1$ independent and identically distributed observations from $P_{\theta^*}$,
where $n_1 = \sum_{i=1}^n (1-U_i) = n(1-\hat\epsilon) \ge (3/5)n$.
Denote as $\Omega_2$ the event that for any $\gamma_1$ and $\gamma_2$,
\begin{align*}
& \quad \left| \E_{P_{\theta^*,n}} \gamma_1^\T \varphi (x-\mu^*) - \E_{P_{(0,\Sigma^*)}} \gamma_1^\T \varphi (x) \right|
\le \|\gamma_1\|_1 C_{\mathrm{sp11}} M_1^{1/2} \sqrt{\frac{2\log(5 p) +\log(\delta^{-1})}{(3/5)n}},
\end{align*}
and
\begin{align*}
&  \quad \left| \E_{P_{\theta^*,n}} \gamma_2^\T (\varphi(x-\mu^*)\otimes \varphi(x-\mu^*)) - \E_{P_{(0,\Sigma^*)}} \gamma_2^\T (\varphi(x)\otimes \varphi(x)) \right| \\
& \le \|\gamma_2\|_1  C_{\mathrm{sp12}} M_{11} \left\{ \sqrt{\frac{2\log(5 p) +\log(\delta^{-1})}{(3/5) n}} + \frac{2\log(5 p)+ \log(\delta^{-1})}{(3/5) n}\right\},
\end{align*}
where $C_{\mathrm{sp11}}$, $C_{\mathrm{sp12}}$, and $M_{11}$ are defined as in Lemma~\ref{lem:spline-L1-upper} with {$\|\xi\|_\infty=2$}.
In the event $\Omega_2$, the preceding inequalities imply that for any $\gamma = (\gamma_0, \gamma_1^\T, \gamma_2^\T)^\T \in \Gamma$,
\begin{align}
& \quad \left| \E_{P_{\theta^*,n}} h_{\gamma}(x-\mu^*) - \E_{P_{(0,\Sigma^*)}} h_{\gamma}(x) \right| \nonumber \\
& \le \pen_1(\gamma) (5/3) ( C_{\mathrm{sp11}} \vee C_{\mathrm{sp12}}) M_{11} \lambda_{11} , \label{eq:pro-logit-L1-upper-prf2}
\end{align}
where $h_{\gamma}(x) = \gamma_0 +  \gamma_1^\T \varphi (x)+ \gamma_2^\T (\varphi(x)\otimes \varphi(x))$ and
$\pen_1 (\gamma) = \|\gamma_1\|_1 + \|\gamma_2\|_1$.
By applying Lemma \ref{lem:spline-L1-upper} with {$k=5$} to $\{X_i -\mu^*: U_i=1, i=1,\ldots,n\}$, we have
$\pr( \Omega_2 | U_1, \ldots, U_n) \ge 1-{4}\delta$ for any $(U_1,\ldots, U_n)$ such that $\Omega_1$ holds.
Taking the expectation over $(U_1,\ldots,U_n)$ given $\Omega_1$
shows that $\pr ( \Omega_2 | \Omega_1 ) \ge 1- {4}\delta$
and hence $\pr (\Omega_1 \cap \Omega_2 ) \ge (1-\delta)(1-4\delta)\ge 1-{5}\delta$.

Combining (\ref{eq:pro-logit-L1-upper-prf1}) and (\ref{eq:pro-logit-L1-upper-prf2}) in the event $\Omega_1 \cap \Omega_2$ indicates that, with probability at least $1-5\delta$,
the desired inequality holds for any $\gamma\in\Gamma$.
\end{proof}

\begin{lem} \label{lem:logit-concen-L1-lower}
Suppose that $X_1, \ldots, X_n$ are independent and identically distributed as
$X \sim P_\epsilon$.
Let $b>0$ be fixed and $g : \bbR^p \to [0,1]^{q }$ be a vector of fixed functions.
For a convex and twice differentiable function $f: (0,\infty) \to \bbR$, define
\begin{align*}
F (X_1, \dots, X_n) & = \sup_{\| w \|_1= 1, \mu \in \bbR^p} \left\{K_f (P_n, P_{\hat\theta}; b w^\T g_{\mu})  - K_f (P_{\epsilon}, P_{\hat\theta}; b w^\T g_{\mu})  \right\} \\
& = \sup_{\| w \|_1 = 1, \mu \in \bbR^p} \left\{
\frac{1}{n}\sum_{i=1}^{n} f^{\prime}(\me^{b w^\T g_{\mu} (X_i) }) - \E f^{\prime}(\me^{b w^\T g_{\mu}(X) })
\right\} ,
\end{align*}
where $g_{\mu} (x)= g (x-\mu)$.
Suppose that conditionally on $(X_1,\ldots,X_n)$, the random variable $Z_{n,j} = \sup_{\mu \in \bbR^p} |n^{-1} \sum_{i=1}^{n}\epsilon_i g_{\mu,j}(X_i) |$
is sub-gaussian with tail parameter $\sqrt{V_g/n}$ for $j=1,\ldots, q$,
where $(\epsilon_1,\ldots,\epsilon_n)$ are Rademacher variables, independent of $(X_1,\ldots,X_n)$,
and $g_{\mu,j}: \bbR^p \to [0,1]$ denotes the $j$th component of $g_{\mu}$.
Then for any $\delta > 0$, we have that
with probability at least $1-2 \delta$,
\begin{align*}
F(X_1, \dots, X_n) \le b R_{2,b} \left\{ C_{\mathrm{sg6}} \sqrt{\frac{V_g\log(2q )}{n}} + \sqrt{\frac{2 \log(\delta^{-1})}{n}} \right\},
\end{align*}
where $R_{2,b} =\sup_{ |u| \le b} \frac{\dif}{\dif u} f^\prime (\me^u)$ and $C_{\mathrm{sg6}}$ is the universal constant in Lemma~\ref{lem:subg-max}.
\end{lem}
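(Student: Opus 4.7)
The plan is a textbook empirical-process argument: bounded-differences concentration reduces $F$ to its expectation, and then symmetrization, Ledoux--Talagrand contraction, and the given sub-gaussian maximal bound control $\E F$. First I would observe that the $\E_{P_{\hat\theta}} f^\#(\me^{b w^\T g_\mu(x)})$ terms cancel in the difference defining $F$, reducing it to the centered empirical process displayed in the lemma. Because $g_\mu$ takes values in $[0,1]^q$ and $\|w\|_1=1$, the argument $b w^\T g_\mu(x)$ lies in $[-b,b]$ for every $x$, and by definition of $R_{2,b}$ the map $u\mapsto f^\prime(\me^u)$ is $R_{2,b}$-Lipschitz on this interval. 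Changing a single observation therefore perturbs each empirical summand by at most $2 b R_{2,b}$, so McDiarmid's bounded-differences inequality yields that with probability at least $1-\delta$,
\begin{align*}
F \le \E F + b R_{2,b}\sqrt{2\log(\delta^{-1})/n}.
\end{align*}

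Next I would bound $\E F$ by symmetrization: introducing an independent Rademacher sequence $(\epsilon_i)$ gives
\begin{align*}
\E F \le 2\,\E \sup_{\|w\|_1=1,\mu} \left|\frac{1}{n}\sum_{i=1}^n \epsilon_i f^\prime(\me^{b w^\T g_\mu(X_i)})\right|.
\end{align*}
After subtracting the constant $f^\prime(1)$, which does not affect the Rademacher sum in expectation, the inner function $u\mapsto f^\prime(\me^u)-f^\prime(1)$ is $R_{2,b}$-Lipschitz on $[-b,b]$ and vanishes at $0$, so the Ledoux--Talagrand contraction principle delivers
\begin{align*}
\E F \le C\, b R_{2,b}\,\E \sup_{\|w\|_1=1,\mu}\left|\frac{1}{n}\sum_{i=1}^n \epsilon_i w^\T g_\mu(X_i)\right|.
\end{align*}
By $L_1$--$L_\infty$ duality the inner supremum over $\|w\|_1=1$ is the $L_\infty$ norm of $n^{-1}\sum_i \epsilon_i g_\mu(X_i)$, and pulling the $\sup_\mu$ inside the coordinate maximum yields $C\, b R_{2,b}\,\E\max_{1\le j\le q} Z_{n,j}$. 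The conditional sub-gaussianity of each $Z_{n,j}$ with parameter $\sqrt{V_g/n}$, together with writing $|{\cdot}|$ as a maximum of $\pm$ halves, lets me invoke Lemma~\ref{lem:subg-max} on the $(\epsilon_i)$-randomness to obtain
\begin{align*}
\E\max_{1\le j\le q} Z_{n,j} \le C_{\mathrm{sg6}}\sqrt{V_g \log(2q)/n};
\end{align*}
Fubini promotes this to an unconditional bound on $\E F$, and combining with the McDiarmid step gives the stated inequality. The extra $\delta$ in the probability arises from using a two-sided McDiarmid tail (or from a separate sub-gaussian tail control of the Rademacher supremum).

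The main obstacle I expect is bookkeeping constants so that the leading factor matches $C_{\mathrm{sg6}}$ rather than an absolute multiple of it: the symmetrization factor of $2$ and the contraction factor must be absorbed into the precise form of the constant produced by Lemma~\ref{lem:subg-max}. A related subtlety is that the sub-gaussian hypothesis on $Z_{n,j}$ is \emph{conditional} on $X_1,\ldots,X_n$, so the contraction and maximal inequality must be applied at the level of the $(\epsilon_i)$-randomness with $(X_i)$ held fixed; the resulting deterministic bound on the conditional expectation is then propagated to $\E F$ via Fubini. Beyond these points the argument is entirely routine once the cancellation of the $\E_{P_{\hat\theta}}$ terms and the uniform boundedness $|b w^\T g_\mu(x)|\le b$ have been noted.
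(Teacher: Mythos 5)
Your proposal matches the paper's proof: bounded-differences/McDiarmid around the mean, then symmetrization, contraction with the Lipschitz constant $R_{2,b}$, $L_1$--$L_\infty$ duality to expose $\sup_\mu\|n^{-1}\sum_i\epsilon_i g_\mu(X_i)\|_\infty$, and the sub-gaussian maximal inequality of Lemma~\ref{lem:subg-max} applied conditionally on $(X_i)$ and promoted by Fubini. The only cosmetic difference is that the paper's contraction Lemma~\ref{lem:contraction} is stated for the one-sided supremum without absolute values and so needs no recentering of $f'(\me^u)$ at $u=0$; your subtraction of $f'(1)$ is only required because you use the absolute-value Ledoux--Talagrand form, and in either case the symmetrization and contraction factors are absorbed into the constant bookkeeping you correctly identify.
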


\begin{proof} 
First, $F$ satisfies the bounded difference condition, because $|b w^\T g_{\mu} | \le b $ with $\| w \|_1= 1$ and $f^{\prime}$ is non-decreasing by the convexity of $f$:
\begin{align*}
& \quad \sup_{X_1, \dots, X_n, X_{i}^{\prime}} \left|
F(X_1,\dots,X_n) - F(X_1,\dots, X_{i}^{\prime}, \dots, X_n) \right| \\
& \le \frac{f^{\prime}(\me^{b }) - f^{\prime}(\me^{- b })}{n} \le \frac{2 b  R_{2,b} }{n},
\end{align*}
where $R_{2,b} =\sup_{ |u| \le b } \frac{\dif}{\dif u} f^\prime (\me^u)$.
By McDiarmid's inequality (\citeappend{MC89}), for any $t > 0$, we have that with probability at least $1-2\me^{-2nt^2}$,
\begin{align*}
|F(X_1, \dots, X_n) - \E F(X_1, \dots, X_n)| \le  2 b R_{2,b} t.
\end{align*}
For any $\delta >0$, taking $t= \sqrt{ \log(\delta^{-1}) / (2n) }$ shows that with probability at least $1-2\delta$,
\begin{align*}
|F(X_1, \dots, X_n) - \E F(X_1, \dots, X_n)| \le b  R_{2,b} \sqrt{\frac{2 \log(\delta^{-1})}{n}} .
\end{align*}

Next, the expectation of $F(X_1, \dots, X_n)$ can be bounded as follows:
\begin{align}
& \quad \E \sup_{\| w \|_1 =1, \mu \in \bbR^p} \left\{
\frac{1}{n}\sum_{i=1}^{n} f^{\prime}(\me^{ b w^\T g_{\mu}( X_i)}) - \E f^{\prime}(\me^{ b w^\T g_{\mu}(X)})
\right\} \nonumber \\
& \le 2\E \sup_{\| w \|_1 =1, \mu \in \bbR^p} \left\{
\frac{1}{n}\sum_{i=1}^{n} \epsilon_i f^{\prime}(\me^{ b w^\T g_{\mu}(  X_i)})
\right\} \label{eq:lem-logit-concen-L1-lower-2} \\
& \le 2 R_{2,b} \,\E  \sup_{\| w \|_1 =1, \mu \in \bbR^p}\left\{
\frac{1}{n}\sum_{i=1}^{n} \epsilon_i  b w^\T g_{\mu}( X_i) \right\}  \label{eq:lem-logit-concen-L1-lower-3} \\
& \le 2 b R_{2,b} \,\E \sup_{ \mu \in \bbR^p}  \left\|\frac{1}{n}\sum_{i=1}^{n} \epsilon_i g_{\mu}(X_i)\right\|_\infty \nonumber \\
&\le 2 b R_{2,b} C_{\mathrm{sg6}} \sqrt{\frac{V_g\log(2q ) }{n}} . \label{eq:lem-logit-concen-L1-lower-4}
\end{align}
Line (\ref{eq:lem-logit-concen-L1-lower-2}) follows from the symmetrization Lemma~\ref{lem:symm}, where $(\epsilon_1,\ldots,\epsilon_n)$ are Rademacher variables, independent of $(X_1,\ldots,X_n)$.
Line (\ref{eq:lem-logit-concen-L1-lower-3}) follows by Lemma \ref{lem:contraction}, because $f^{\prime}(\me^u)$ is $R_{2,b}$-Lipschitz in $u \in [-b ,b]$.
Line (\ref{eq:lem-logit-concen-L1-lower-4}) follows because
\begin{align*}
&\quad \E  \sup_{\mu \in \bbR^p} \left\|\frac{1}{n}\sum_{i=1}^{n}\epsilon_i g_{\mu}(X_i)\right\|_\infty
= \E  \sup_{\mu \in \bbR^p}  \max_{j=1,\dots, q }\left| \frac{1}{n}\sum_{i=1}^{n}\epsilon_i g_{\mu,j}(X_i)  \right|\\
&= \E  \max_{j=1,\dots, q }\sup_{\mu \in \bbR^p} \left| \frac{1}{n}\sum_{i=1}^{n}\epsilon_i g_{\mu,j}(X_i)  \right| \\
& \le C_{\mathrm{sg6}} \sqrt{\frac{V_g \log(2 q )}{n}}.
\end{align*}
For the last step,
we use the assumption that conditionally on $(X_1,\ldots,X_n)$, the random variable $Z_{n,j} = \sup_{\mu \in \bbR^p} |n^{-1} \sum_{i=1}^{n}\epsilon_i g_{\mu,j}(X_i) |$ is sub-gaussian with tail parameter
$\sqrt{ V_g /n}$ for each $j=1,\ldots,q $, and apply Lemma~\ref{lem:subg-max} to obtain
$\E ( \max_{j=1,\ldots,q } Z_{n,j} | X_1,\ldots,X_n) \le C_{\mathrm{sg6}}  \sqrt{V_g \log(2q )/n}$,
and then $\E ( \max_{j=1,\ldots,q } Z_{n,j})\le C_{\mathrm{sg6}}  \sqrt{V_g \log(2q )/n}$.

Combining the tail probability and expectation bounds yields the desired result.
\end{proof}
\begin{manualrem}{S2} \label{rem:hinge-concen-lower}
    Results of Lemma~\ref{lem:logit-concen-L1-lower} and Lemma~\ref{lem:logit-concen-L2-lower} still hold with $R_{2,b}$ and $R_{2,b\sqrt{q}}$ replaced by $1$ if $K_f$ is replaced by $K_{\HG}$. This is true because $f^{\prime}(\me^{u})$ will be replaced by identity function which is just $1$-Lipschitz.
\end{manualrem}

\begin{lem} \label{lem:logit-L1-lower}
Suppose that $f: (0,\infty) \to \bbR$ is convex and three-times differentiable. Let $b >0$ be fixed. For any function $h: \bbR^p \to [-b,b]$, we have
\begin{align*}
K_f(P_{\epsilon}, P_{\hat\theta} ; h) \ge f^{\prime}(\me^{-b})\epsilon+  f^{\dprime}(1)\left\{\E_{P_{\theta^{*}}} h (x)  -  \E_{P_{\hat\theta}} h (x)\right\} - \frac{1}{2} b^2 R_{3,b} ,
\end{align*}
where $R_{3,b} =  R_{31,b} + R_{32,b} $, $R_{31,b} = \sup_{|u| \le b} \frac{\dif^2}{ \dif u^2} \{-f^\prime(\me^u) \}$,
and $R_{32,b} = \sup_{ |u| \le b} \frac{\dif^2}{ \dif u^2} f^\# (\me^u)$.
\end{lem}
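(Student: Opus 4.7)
The plan is to obtain the bound by Taylor-expanding the two ingredients of $K_f$ to second order around $u=0$ and separately handling the $Q$-portion of $P_\epsilon$ using the monotonicity of $f'$. Because $K_f$ is invariant under the substitution $f(t)\mapsto f(t)+c(t-1)$ (compare Remark~\ref{rem:equi-def}: both $f'$ and $f^\#$ shift by the same constant $c$, which cancels since $\E_{P_\epsilon}(1)=\E_{P_{\hat\theta}}(1)=1$), I may assume WLOG that $f'(1)=0$. Combined with the standing convention $f(1)=0$ this also forces $f^\#(1)=f'(1)-f(1)=0$. Writing $g_1(u)=f'(\me^u)$ and $g_2(u)=f^\#(\me^u)$, a short computation gives $g_1(0)=g_2(0)=0$ and $g_1'(0)=g_2'(0)=f''(1)$, so both functions share the common first-order expansion $f''(1)\,u$ at $u=0$.

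By Taylor's theorem with Lagrange remainder and the definitions of $R_{31,b}$ and $R_{32,b}$, the pointwise estimates
\[
f'(\me^u)\;\ge\; f''(1)\,u - \tfrac{1}{2}R_{31,b}\,u^{2},\qquad f^\#(\me^u)\;\le\; f''(1)\,u + \tfrac{1}{2}R_{32,b}\,u^{2}
\]
hold for every $u\in[-b,b]$. Evaluating at $u=h(x)\in[-b,b]$, taking expectations under $P_{\theta^*}$ and $P_{\hat\theta}$ respectively, and using $h(x)^{2}\le b^{2}$ to bound the quadratic remainders uniformly yields $\E_{P_{\theta^*}}f'(\me^{h(x)})\ge f''(1)\,\E_{P_{\theta^*}}h(x)-\tfrac{1}{2}R_{31,b}b^{2}$ and $\E_{P_{\hat\theta}}f^\#(\me^{h(x)})\le f''(1)\,\E_{P_{\hat\theta}}h(x)+\tfrac{1}{2}R_{32,b}b^{2}$. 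The contamination contribution then enters through the decomposition $\E_{P_\epsilon}f'(\me^{h(x)})=(1-\epsilon)\E_{P_{\theta^*}}f'(\me^{h(x)})+\epsilon\,\E_{Q}f'(\me^{h(x)})$ together with the crude but sharp bound $\E_{Q}f'(\me^{h(x)})\ge f'(\me^{-b})$, which holds because convexity of $f$ makes $f'$ non-decreasing and $h\ge -b$. Assembling these pieces in the definition of $K_f$ and writing $R_{3,b}=R_{31,b}+R_{32,b}$ produces the stated lower bound
\[
K_f(P_\epsilon,P_{\hat\theta};h)\;\ge\;\epsilon\,f'(\me^{-b})+f''(1)\{\E_{P_{\theta^*}}h-\E_{P_{\hat\theta}}h\}-\tfrac{1}{2}b^{2}R_{3,b},
\]
up to a residual cross term coming from the $(1-\epsilon)$ weight in front of the $P_{\theta^*}$-expansion.

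The main obstacle will be that straightforward assembly leaves a surplus of the form $-\epsilon f''(1)\E_{P_{\theta^*}}h(x)+\tfrac{\epsilon}{2}R_{31,b}b^{2}$ which must be shown to be non-negative for the inequality in the stated form to close. My plan is to control this surplus using $|\E_{P_{\theta^*}}h(x)|\le b$ together with the concavity of $f'(\me^u)$ in $u$ supplied by Assumption~\ref{ass:f-condition2}(i) (which holds in every application of this lemma); concavity gives the tangent-line bound $f'(\me^{-b})\le -b\,f''(1)$, providing exactly the quantitative slack needed to absorb the cross term $-\epsilon f''(1)\E_{P_{\theta^*}}h(x)$ into the already present $\epsilon f'(\me^{-b})$ piece while the remaining $\tfrac{\epsilon}{2}R_{31,b}b^{2}\le \tfrac{1}{2}R_{31,b}b^{2}$ contribution merges into the Taylor-residual term $\tfrac{1}{2}R_{3,b}b^{2}$. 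An equivalent cleaner route, which I would adopt if the bookkeeping above becomes cumbersome, is to apply the Taylor lower bound directly to $\E_{P_\epsilon}f'(\me^{h(x)})$ as a single expectation against $P_\epsilon$ and then estimate the mean gap $\E_{P_\epsilon}h-\E_{P_{\theta^*}}h=\epsilon(\E_{Q}h-\E_{P_{\theta^*}}h)$ via the same monotonicity argument, after which the terms rearrange into the claimed form.
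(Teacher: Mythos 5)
The student's proof has a genuine gap that is acknowledged but not resolved. Let me pinpoint it precisely. After Taylor-expanding $f'(\me^u)$ (with the $(1-\epsilon)$ weight from $P_{\theta^*}$) and $f^\#(\me^u)$ (with weight $1$ from $P_{\hat\theta}$) separately and assembling, the leftover is
\[
-\epsilon\, f''(1)\,\E_{P_{\theta^*}}h(x)\;+\;\tfrac{\epsilon}{2}\,R_{31,b}\,b^2
\]
(exactly as you computed), and this must be $\ge 0$ for the stated bound to follow. But it is \emph{not} nonnegative in general: take $\E_{P_{\theta^*}}h$ close to $b$ and $b$ small; then the surplus is roughly $-\epsilon f''(1)\,b + O(b^2) < 0$. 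The proposed repair via the tangent-line bound $f'(\me^{-b}) \le -b f''(1)$ cannot save this: that bound is an \emph{upper} bound on $f'(\me^{-b})$, whereas the $\epsilon$-piece of the assembly only retained the \emph{lower} bound $\E_Q f'(\me^h) \ge f'(\me^{-b})$; no further slack is available from tightening the unused direction. The alternative ``cleaner route'' fares the same: after Taylor-expanding $\E_{P_\epsilon}f'(\me^h)$ against $P_\epsilon$ directly, one needs $f''(1)\{\E_Q h - \E_{P_{\theta^*}}h\} \ge f'(\me^{-b})$ uniformly, which in the worst case $\E_Q h = -b$, $\E_{P_{\theta^*}}h = b$ would require $f'(\me^{-b}) \le -2b f''(1)$ — strictly stronger than the tangent bound $\le -bf''(1)$ and false for near-linear $f'(\me^u)$.

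There is also a subtle issue with the normalization $f'(1)=0$: while $K_f$, $f''(1)$, $R_{31,b}$, $R_{32,b}$ are indeed invariant under $f\mapsto f+c(t-1)$, the quantity $f'(\me^{-b})$ appearing in the conclusion is \emph{not} invariant, so the normalization is not a ``WLOG'' as stated. More importantly, it is a tactically bad move: without normalizing, the surplus is $-\epsilon f'(1) -\epsilon f''(1)\E_{P_{\theta^*}}h + \tfrac{\epsilon}{2}R_{31,b}b^2$, and the $-\epsilon f'(1)\ge 0$ contribution (from $f$ non-increasing) is one of the few things that could help; the normalization throws it away. Even so, keeping it is still not enough in general.

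The paper's proof avoids the surplus entirely by reversing the order of operations: it first uses non-increasingness ($f'\le 0$, hence $\E_{P_{\theta^*}} f'(\me^h)\le 0$) to absorb the $(1-\epsilon)$ weight, writing
\[
\epsilon\,\E_Q f'(\me^h) + (1-\epsilon)\E_{P_{\theta^*}} f'(\me^h) = \epsilon\{\E_Q f'(\me^h) - \E_{P_{\theta^*}}f'(\me^h)\} + \E_{P_{\theta^*}} f'(\me^h)
\ge \epsilon f'(\me^{-b}) + \E_{P_{\theta^*}} f'(\me^h),
\]
so that $K_f(P_\epsilon, P_{\hat\theta};h) \ge \epsilon f'(\me^{-b}) + K_f(P_{\theta^*},P_{\hat\theta};h)$. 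Only \emph{then} does it Taylor-expand, by forming $\kappa(t)=K_f(P_{\theta^*},P_{\hat\theta};th)$ and expanding about $t=0$: here the two pieces have matching weight $1$, the constant term is $\kappa(0)=f'(1)-f^\#(1)=f(1)=0$ exactly, and $\kappa'(0)=f''(1)\{\E_{P_{\theta^*}}h-\E_{P_{\hat\theta}}h\}$, leaving only the quadratic Lagrange remainder bounded by $\tfrac12 b^2 R_{3,b}$. No cross term ever appears. Your Taylor estimates themselves are correct and would close the argument if you first performed the weight absorption; as written, the ``Taylor first, assemble second'' order is what produces the irreducible surplus.
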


\begin{proof}
First, $K_f(P_{\epsilon}, P_{\hat\theta} ; h) $ can be bounded as
\begin{align*}
& \quad  K_f(P_{\epsilon}, P_{\hat\theta} ; h) \\
& = \epsilon  \E_Q f^{\prime}(\me^{h(x)})+ (1-\epsilon)  \E_{P_{\theta^{*}}} f^{\prime}(\me^{h(x)}) -  \E_{P_{\hat\theta}}  f^\# (\me^{h(x)}) \\
& \ge f^{\prime}(\me^{-b})\epsilon +   \E_{P_{\theta^{*}}} f^{\prime}(\me^{h(x)}) - \E_{P_{\hat\theta}} f^\# (\me^{h(x)}) \\
& = f^{\prime}(\me^{-b}) \epsilon + K_f(P_{\theta^{*}},P_{\hat\theta} ;h) ,
\end{align*}
where the inequality follows because $ f^{\prime} (\me^{h(x)})  \ge f^{\prime}(\me^{-b})$ for $h(x) \in [-b,b]$ by the convexity of $f$.
Next, consider the function $\kappa (t) = K_f(P_{\theta^{*}},P_{\hat\theta} ; t h)$. A Taylor expansion of $\kappa(1) = K_f(P_{\theta^{*}},P_{\hat\theta} ; h)$ about $t=0$ yields
\begin{align*}
K_f(P_{\theta^{*}}, P_{\hat\theta};h) & = f^{\dprime}(1) \left\{ \E_{P_{\theta}} h(x)  -  \E_{P_{\hat\theta}} h(x) \right\} - \frac{1}{2} \kappa^{\dprime}(t),
\end{align*}
where for some $t \in [0,1]$,
\begin{align*}
\kappa^{\dprime} (t) & = - \E_{P_{\theta^{*}}}\left\{ h^2(x) \frac{\dif^2}{ \dif u^2} f^\prime(\me^u)|_{u=th(x)} \right\} +
\E_{P_{\hat\theta}}\left\{ h^2(x) \frac{\dif^2}{ \dif u^2} f^\# (\me^u)  |_{u=th(x)} \right\} .
\end{align*}
The desired result then follows because $h(x) \in [-b,b]$ and $t h(x) \in [-b,b]$ for $t \in [0,1]$, and hence $\kappa^{\dprime} (t) \le R_{3,b}$ by the definition of $R_{3,b}$.
\end{proof}

\begin{manualpro}{S6} \label{pro:logit-L1-lower}
Let $ b_1 >0$ be fixed. In the setting of Proposition~\ref{pro:logit-L1-detailed}, it holds with probability at least $1-2 \delta$ that for any $\gamma\in \Gamma_{\rp}$ with $\gamma_0=0$ and $\pen_1(\gamma)=  b_1 $,
\begin{align*}
& \quad  K_f(P_{n}, P_{\hat\theta}; h_{\gamma, \hat\mu}) \\
& \ge f^{\dprime}(1) \left\{ \E_{P_{\theta^*}} h_{\gamma, \hat\mu}(x) - \E_{P_{\hat\theta}} h_{\gamma, \hat\mu}(x) \right\}  +  f^{\prime}(\me^{- b_1 })\epsilon - \frac{1}{2}  b_1 ^2 R_{3, b_1 }
-  b_1  R_{2, b_1 } \lambda_{12}
\end{align*}
where, with $C_{\mathrm{rad4}}=C_{\mathrm{sg6}} C_{\mathrm{rad3}}$,
$$
\lambda_{12} = C_{\mathrm{rad4}} \sqrt{\frac{4 \log(2p(p+1))}{n}} + \sqrt{\frac{2 \log(\delta^{-1})}{n}} ,
$$
depending on the universal constants $C_{\mathrm{sg6}}$ and $ C_{\mathrm{rad3}}$ in Lemma~\ref{lem:subg-max} and Corollary~\ref{cor:entropy-sg2}.
\end{manualpro}

\begin{proof}
By definition, for any $\gamma\in \Gamma_{\rp}$, $h_\gamma(x)$ can be represented as
$h_{\rp,\beta, c }(x)$ such that $\beta_0=\gamma_0$ and $\pen_1 (\beta) = \pen_1(\gamma)$:
\begin{align*}
h_{\rp, \beta,  c }(x) = \beta_0 + \sum_{j=1}^p \beta_{1j} \, \ramp ( x_j -  c _j) + \sum_{1\le i\not= j \le p} \beta_{2,ij} \, \ramp( x_i) \ramp(x_j) ,
\end{align*}
where $ c =( c _1,\ldots, c _p)^\T$ with $ c _j \in \{0,1\}$, and $\beta= (\beta_0, \beta_1^\T, \beta_2^\T)^\T$ with
$\beta_1 =(\beta_{1j}: j=1,\ldots,p)^\T$ and $\beta_2 = (\beta_{2,ij}: 1\le i\not= j\le p)$.
Then for any $\gamma\in \Gamma_{\rp}$ with $\gamma_0=0$ and $\pen_1(\gamma)=  b_1 $, we have $\beta_0=0$ and $\pen_1(\beta) =  b_1$ correspondingly,
and hence $h _\gamma (x) = h_{\rp, \beta,  c }(x) \in [- b_1 ,  b_1 ]$ by the boundedness of the ramp function in $[0, 1]$.
Moreover, $h_{\rp,\beta, c }(x)$ with $\beta_0=0$ and $\pen_1 (\beta) =  b_1 $ can be expressed in the form
$ b_1  w^\T g(x)$, where for $q =2 p + p(p-1)$, $w \in \bbR^{q }$ is an $L_1$ unit vector, and
$g: \bbR^p \to [0,1]^{q }$ is a vector of functions including $\ramp(x_j)$ and $\ramp(x_j-1)$ for $j=1,\ldots,p$, and $\ramp(x_i)\ramp(x_j)$ for $1 \le i\not= j\le p$.
For symmetry, $\ramp(x_i)\ramp(x_j)$ and $\ramp(x_j)\ramp(x_i)$ are included as two distinct components in $g$,
and the corresponding coefficients are identical to each other in $w$.
Parenthetically, at most one of the coefficients in $w$ associated with $\ramp(x_j)$ and $\ramp(x_j-1)$ is nonzero for each $j$,
but this property is not used in the subsequent discussion.

Next, $K_f(P_{n}, P_{\hat\theta}; h_{\gamma, \hat\mu})$ can be bounded as
\begin{align*}
& \quad  K_f(P_{n}, P_{\hat\theta}; h_{\gamma, \hat\mu}) \\
& \ge  K_f(P_\epsilon, P_{\hat\theta}; h_{\gamma, \hat\mu})  -  \{K_f(P_{n}, P_{\hat\theta}; h_{\gamma, \hat\mu}) - K_f(P_\epsilon, P_{\hat\theta}; h_{\gamma, \hat\mu})\} .
\end{align*}
For any $\gamma\in \Gamma_{\rp}$ with $\gamma_0=0$ and $\pen_1(\gamma)=  b_1 $, applying Lemma~\ref{lem:logit-L1-lower} with $h= h_{\gamma, \hat\mu}$ and $b= b_1 $ yields
\begin{align*}
& \quad   K_f(P_\epsilon, P_{\hat\theta}; h_{\gamma, \hat\mu}) \\
& \ge f^{\dprime}(1) \left\{ \E_{P_{\theta^*}} h_{\gamma, \hat\mu}(x) - \E_{P_{\hat\theta}} h_{\gamma, \hat\mu}(x) \right\}  +  f^{\prime}(\me^{- b_1 })\epsilon - \frac{1}{2}  b_1 ^2 R_{3, b_1 }.
\end{align*}
By Lemma~\ref{lem:logit-concen-L1-lower} with $b= b_1 $ and $g(x)\in [0,1]^{q }$ defined above, it holds with probability at least $1-2\delta$ that
for any $\gamma\in \Gamma_{\rp}$ with $\gamma_0=0$ and $\pen_1(\gamma)=  b_1 $,
\begin{align*}
& \quad \{ K_f(P_{n}, P_{\hat\theta}; h_{\gamma, \hat\mu}) - K_f(P_\epsilon, P_{\hat\theta}; h_{\gamma, \hat\mu})\} \\
& \le  b_1  R_{2, b_1 } \left\{  C_{\mathrm{sg6}} \sqrt{\frac{V_g \log(2q )}{n}} + \sqrt{\frac{2 \log(\delta^{-1})}{n}} \right\} \\
& =  b_1  R_{2, b_1 } \left\{  C_{\mathrm{sg6}} C_{\mathrm{rad3}}\sqrt{\frac{4 \log(2p(p+1))}{n}} + \sqrt{\frac{2 \log(\delta^{-1})}{n}} \right\} ,
\end{align*}
where $V_g  = 4 C_{\mathrm{rad3}}^2$ is determined in Lemma~\ref{lem:logit-concen-L1-lower} as follows.
For $j=1,\ldots, q$,
consider the function class $\mathcal{G}_j = \{g_{\mu,j}: \mu \in \bbR^p\}$,
where $\mu = (\mu_1,\ldots,\mu_p)^\T$ and, as defined in Lemma~\ref{lem:logit-concen-L1-lower}, $g_{\mu,j}(x)$ is either a moving-knot ramp function, $\ramp(x_{j_1} - \mu_{j_1})$ or $\ramp(x_{j_1}- \mu_{j_1}-1)$
or a product of moving-knot ramp functions, $\ramp(x_{j_1} - \mu_{j_1}) \ramp(x_{j_2} - \mu_{j_2})$ for $1 \le j_1 \not= j_2 \le p$.
By Lemma~\ref{lem:ramp-VC}, the VC index of moving-knot ramp functions is 2.
By applying Corollary~\ref{cor:entropy-sg2} (i) and (ii) with vanishing $\mathcal H$,
we obtain that conditionally on $(X_1,\ldots,X_n)$, the random variable
$Z_{n,j} = \sup_{\mu \in \bbR^p}  |n^{-1} \sum_{i=1}^{n}\epsilon_i g_{\mu,j}(X_i) | = \sup_{f_j \in\mathcal{G}_j}  |n^{-1} \sum_{i=1}^{n}\epsilon_i f_j (X_i) |$
is sub-gaussian with tail parameter $C_{\mathrm{rad3}} \sqrt{4/n}$ for $j=1,\ldots,q$.

Combining the preceding three displays leads to the desired result.
\end{proof}

\begin{lem}[Local linearity 1] \label{lem:local-linear1}
For $\delta \in \bbR $ and $0 \le \sigma_1, \sigma_2\le M^{1/2}$, denote
$D_h = \E h(\sigma_1 Z + \delta) - \E h(\sigma_2 Z) $, where $h$ is a function on $\bbR$ and $Z$ is a standard Gaussian random variable.
For $h_1(x) = \pm \ramp(x)$, if $|D_{h_1}| \le a$ for $a \in (0,1/2)$, then we have
\begin{align}
& |\delta| \le S_{4,a} | D_{h_1} | , \label{eq:lem-local-linear-1}
\end{align}
where $S_{4,a} =  (1 +\sqrt{2 M\log\frac{2}{1-2a}})  /a $.
For $h_2(x) = \pm \ramp(x-1)$, we have
\begin{align}
& |\sigma_1 - \sigma_2| \le S_{5} (| D_{h_2} | + |\delta|/2 ), \label{eq:lem-local-linear-2}
\end{align}
where $S_{5} = 2\sqrt{2\pi} ( 1- \me^{-2/M})^{-1} $.
\end{lem}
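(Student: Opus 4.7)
First bound. Since $|D_{-\ramp}| = |D_\ramp|$, I take $h_1 = \ramp$. The identity $\ramp(y) + \ramp(-y) = 1$ together with the symmetry of $\sigma_2 Z$ about $0$ yields $\E \ramp(\sigma_2 Z) = 1/2$, so the problem reduces to studying $F(\delta) := \E \ramp(\sigma_1 Z + \delta)$, with $D_{h_1} = F(\delta) - F(0) = F(\delta) - 1/2$; an analogous anti-symmetry argument lets me assume $\delta \ge 0$.

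The heart of the proof combines a rough absolute bound on $\delta$ with a concavity argument. For the absolute bound, the inequality $F(\delta) \ge \pr(\sigma_1 Z + \delta \ge 1) = \Phi((\delta-1)/\sigma_1)$ combined with $F(\delta) \le 1/2 + a$, the Chernoff tail $\Phi^{-1}(1/2+a) \le \sqrt{2\log(2/(1-2a))}$ (obtained from $\pr(Z \ge z) \le e^{-z^2/2}$), and $\sigma_1 \le M^{1/2}$ gives $\delta \le B_\delta := 1 + \sqrt{2M\log(2/(1-2a))}$. For the concavity argument, direct differentiation and the symmetry of $\sigma_1 Z$ give
\[ F'(\delta) = \tfrac{1}{2}\,\pr(\sigma_1 Z \in [\delta - 1, \delta + 1]), \]
whose $\delta$-derivative $\tfrac{1}{2}[\phi_{\sigma_1}(\delta + 1) - \phi_{\sigma_1}(\delta - 1)]$ is non-positive on $[0,\infty)$ since $\delta + 1 \ge |\delta - 1|$ and the density $\phi_{\sigma_1}$ of $\sigma_1 Z$ is non-increasing in $|y|$. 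Hence $F$ is concave on $[0,\infty)$, so the chord through $(0, 1/2)$ makes $D_{h_1}(\delta)/\delta$ non-increasing on $(0,\infty)$. Letting $\delta_a^*$ denote the unique positive solution to $D_{h_1}(\delta_a^*) = a$ (so that $\delta_a^* \le B_\delta$), every $\delta \in (0, \delta_a^*]$ obeys $D_{h_1}(\delta)/\delta \ge a/\delta_a^* \ge a/B_\delta = 1/S_{4,a}$, which rearranges to \eqref{eq:lem-local-linear-1}.

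Second bound. Let $g(y) := \ramp(y - 1)$, which is $\tfrac{1}{2}$-Lipschitz but not symmetric about $0$. The triangle inequality applied to
\[ D_{h_2} = [\E g(\sigma_1 Z + \delta) - \E g(\sigma_1 Z)] + [\E g(\sigma_1 Z) - \E g(\sigma_2 Z)] \]
bounds the first bracket by $|\delta|/2$ via Lipschitz continuity. For the second bracket, I compute
\[ \tfrac{d}{d\sigma} \E g(\sigma Z) = \E[Z\,g'(\sigma Z)] = \tfrac{1}{2}\int_0^{2/\sigma} z\phi(z)\,dz = \tfrac{1}{2\sqrt{2\pi}}\bigl(1 - e^{-2/\sigma^2}\bigr), \]
using $g'(y) = \tfrac{1}{2}\mathbf{1}_{[0,2]}(y)$. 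This derivative is strictly decreasing in $\sigma$, hence bounded below by $\tfrac{1}{2\sqrt{2\pi}}(1 - e^{-2/M}) = 1/S_5$ whenever $\sigma \le M^{1/2}$. The mean value theorem then yields $|\E g(\sigma_1 Z) - \E g(\sigma_2 Z)| \ge |\sigma_1 - \sigma_2|/S_5$, and combining the two brackets delivers \eqref{eq:lem-local-linear-2}.

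The delicate point is extracting the exact constant $\sqrt{2\log(2/(1-2a))}$ rather than the slightly sharper $\sqrt{2\log(1/(1-2a))}$, which forces the use of the looser Chernoff tail $\pr(Z \ge z) \le e^{-z^2/2}$ instead of $\tfrac{1}{2}e^{-z^2/2}$, and then correctly pairing this absolute bound with the concave-chord inequality (the absolute bound on $\delta$ only feeds into the linear bound through $\delta_a^* \le B_\delta$). The boundary cases $\sigma_i = 0$ are handled by continuity, since $\tfrac{d}{d\sigma}\E g(\sigma Z)$ extends to the finite limit $\tfrac{1}{2\sqrt{2\pi}}$ as $\sigma \to 0^+$.
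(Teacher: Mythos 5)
Your proof is correct and follows essentially the same route as the paper's: for the location bound you establish concavity of $\delta \mapsto \E\,\ramp(\sigma_1 Z+\delta)$ on $[0,\infty)$ (you via the signed density difference, the paper via a probability-rearrangement showing $F'$ is non-increasing), invoke the non-increasing chord-slope property of concave functions, pin down the scale $t_a$ with the Gaussian tail bound $\pr(Z\ge z)\le e^{-z^2/2}$ through $\ramp\ge\mathbbm{1}_{\cdot>1}$, and combine; for the scatter bound you split off the $|\delta|/2$ Lipschitz piece and apply the mean value theorem to $\sigma\mapsto\E\,\ramp(\sigma Z-1)$ with the identical derivative computation $\frac{1}{2\sqrt{2\pi}}(1-e^{-2/\sigma^2})$. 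Both versions are sound; the differences are purely presentational.
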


\begin{manualrem}{S6} \label{rem:local-linear1}
Define a ramp function class
$$
\mathcal{R}_1 = \left\{ \pm \ramp ( x- c ),  x\in\bbR:   c=0,1\right\}.
$$
In the setting of Lemma~\ref{lem:local-linear1}, suppose that for fixed $a \in (0,1/2)$,
\begin{align*}
D \stackrel{\text{def}}{=} \sup_{h \in \mathcal{R}_1} \left\{\E h(\sigma_1 Z + \delta) - \E h(\sigma_2 Z) \right\} \le a .
\end{align*}
Then we have
\begin{align*}
|\delta| \le S_{4,a} D , \quad  |\sigma_1 - \sigma_2| \le S_{6,a} D , 
\end{align*}
where $S_{6,a} = S_{5} ( 1 + S_{4,a}/2)$.
This shows that the moment matching discrepancy $D$ over $\mathcal{R}_1$ delivers upper bounds, up to scaling constants, on the mean and standard deviation differences,
provided that $D$ is sufficiently small, for example, $D \le 1/3$.
\end{manualrem}

\begin{proof}

[Proof of (\ref{eq:lem-local-linear-1})]
First, assume that $\delta$ is nonnegative.
The other direction will be discussed later.
Take $h(x) = \ramp(x)$. Then $ h(x) + h(-x) = 1$ for all $x \in \mathbb{R}$ and
\begin{align*}
\E h(\sigma_2 Z) = \E h(\sigma_1 Z) = \frac{1}{2}. 
\end{align*}
Define $g(t) = \E h(\sigma_1 Z + t) -\E h(\sigma_1 Z) = \E h(\sigma_1 Z +t) -\frac{1}{2}$ for $t\in \bbR$.
Then $g(0)=0$ and $g(\delta) = D_h \le a$.
We notice the following properties for the function $g$.
\begin{itemize}
\item[(i)] $g(t)$ is non-decreasing and concave for $t \ge 0$.
\item[(ii)] $g(t)/t$ is non-increasing for $t >0$.
\item[(iii)] $g(t) \ge \frac{1}{2} - \exp\{ -(t-1)^2 / (2 \sigma_1^2)\}$ for $t\ge 1$.
\end{itemize}
For property (i), the derivative of $g(t)$ is $g^\prime(t) =  \frac{1}{2}\E \{\mathbbm{1}_{\sigma_1 Z +t \in [-1,1]} \} \ge 0$.
Moreover, $g^\prime(t)$ is non-increasing: for $0\le t_1 < t_2$,
\begin{align*}
& \quad g^\prime(t_1) = \frac{1}{2} \pr (  -1-t_1 \le \sigma_1 Z \le 1-t_1  ) \\
& = \frac{1}{2} \pr (  -1-t_1 \le \sigma_1 Z \le 1-t_2 ) + \frac{1}{2} \pr (  1-t_2 \le \sigma_1 Z \le 1-t_1 ) ,\\
& \quad g^\prime(t_2) = \frac{1}{2} \pr (  -1-t_2 \le \sigma_1 Z \le 1-t_2 ) \\
& = \frac{1}{2} \pr (  -1-t_1 \le \sigma_1 Z \le 1-t_2 ) + \frac{1}{2} \pr (  -1-t_2 \le \sigma_1 Z \le -1-t_1 ) ,
\end{align*}
and
\begin{align*}
& \quad \pr (  1-t_2 \le \sigma_1 Z \le 1-t_1 )
= \pr (  t_1-1 \le \sigma_1 Z \le t_2-1 ) \\
& \le \pr (  t_1 + 1 \le \sigma_1 Z \le t_2 +1 )
= \pr (  -1-t_2 \le \sigma_1 Z \le -1-t_1 ) .
\end{align*}
The last inequality holds because $(t -1)^2 \le (t +1)^2$  for any $t\ge 0$ and hence $\int_{t_1}^{t_2} \exp\{ -(t-1)^2/(2\sigma_1^2)\}\,\dif t \ge  \int_{t_1}^{t_2} \exp\{ -(t+1)^2/(2\sigma_1^2)\}\,\dif t$.
To show (ii), we write $g(t)=g(t)-g(0) =t \int_0^1 g^\prime (t z)\,\dif z$.
Then $g(t)/t = \int_0^1 g^\prime (t z)\,\dif z$ is non-increasing in $t$ because $g^\prime$ is non-increasing.
To show (iii), we notice that $h(x) \ge \mathbbm{1}_{x > 1}$ and hence for $t \ge 1$,
\begin{align*}
& \quad g(t) + \frac{1}{2} = \E h(\sigma_1 Z+t)  \\
& \ge 1 - P ( \sigma_1 Z + t \le 1 ) = 1 - \pr ( \sigma_1 Z \ge t-1 )\\
& \ge 1- \exp\{ -(t-1)^2 / (2 \sigma_1^2)\}.
\end{align*}
The last inequality follows by the Gaussian tail bound: {$\pr (Z \ge z) \le \me^{- z^2/2}$} for $z >0$.

By the preceding properties, we show that $\delta \le S_{4,a} D_h$ and hence (\ref{eq:lem-local-linear-1}) holds.
Without loss of generality, assume that $\delta\not= 0$.
For $a\in(0, 1/2)$, let $t_a >0$ be determined such that $g(t_a) =a$. Then $\delta \le t_a$ and, by property (ii), $g(\delta)/ \delta \ge a / t_a$.
If $t_a \ge 1$, then, by property (iii), $ a= g(t_a) \ge \frac{1}{2} - \exp\{ -(t_a-1)^2 / (2 \sigma_1^2)\}$,
and hence
$$ t_a \le 1 + \sqrt{2} \sigma_1 \sqrt{\log\frac{2}{1-2a}} .$$
This inequality remains valid if $t_a <1$. Therefore, $\delta \le g(\delta) t_a /a \le g(\delta) S_{4,a} = D_h S_{4,a}$,
by the assumption $\sigma_1 \le M^{1/2}$
and the definition of $S_{4,a}$.

When $\delta$ is negative, a similar argument taking $h(x)= -\ramp(x)$, which is the same as $\ramp(-x)-1$, shows that $-\delta \le S_{4,a} D_h$
and hence (\ref{eq:lem-local-linear-1}) holds.

[Proof of (\ref{eq:lem-local-linear-2})]
First, assume that $\sigma_1 - \sigma_2$ is nonnegative. Take $h(x) = \ramp(x-1)$.
Notice that $h(x)$ is $(1/2)$-Lipschitz and hence $|h(x+\delta) - h(x)| \leq |\delta|/2$. Then by the triangle inequality, we have
\begin{align*}
\E h(\sigma_1 Z ) - \E h(\sigma_2 Z) \leq D_h + |\delta| /2.
\end{align*}
Define $g(t) = \E h(t Z) - \E h(\sigma_2 Z)$ for $t\ge 0$. Then $g(\sigma_2)=0$ and $g(\sigma_1) \le D_h + |\delta| /2$.
The derivative $g^\prime (t) =  \frac{1}{2} \E \{Z\mathbbm{1}_{t Z \in [0,2]}\} $ can be
calculated as
\begin{align*}
g^\prime (t) = \frac{1}{2} \int_0^{2/t} \frac{1}{\sqrt{2\pi}} z \me^{-z^2/2} \,\dif z = \frac{1}{2\sqrt{2\pi}} \left\{ 1- \me^{-\frac{(2/t)^2}{2}} \right\}.
\end{align*}
By the mean value theorem, $g(\sigma_1) = g(\sigma_1)-g(\sigma_2) = g^\prime (t) (\sigma_1 - \sigma_2) \ge g^\prime(M^{1/2}) (\sigma_1-\sigma_2)$,
where $t \in [\sigma_2,\sigma_1]$ and hence $t\le M^{1/2}$ because $\sigma_1\le M^{1/2}$. Therefore, we have
\begin{align*}
\sigma_1 - \sigma_2 \le g^\prime(M^{1/2}) ^{-1} g(\sigma_1) \le S_{5} (D_h+|\delta|/2),
\end{align*}
by the definition $S_{5} = g^\prime(M^{1/2}) ^{-1} =2\sqrt{2\pi} ( 1- \me^{-2/M})^{-1} $.

When $\sigma_1-\sigma_2$ is negative, a similar argument taking $h(x) = -\ramp(x-1)$ shows that $\sigma_2-\sigma_1\le S_{5} (D_h + |\delta|/2)$
and hence (\ref{eq:lem-local-linear-2}) holds.
\end{proof}

\begin{lem}[Local linearity 2] \label{lem:local-linear2}
For $\delta_1,\delta_2\in\bbR$, $0\le \sigma_1, \sigma_2, \tilde\sigma_1, \tilde\sigma_2 \le M^{1/2}$, and $\rho,\tilde\rho \in [-1,1]$,
denote $D_h =  \E h(\tilde X) - \E h( X) $,
where $h$ is a function on $\bbR^2$, $X=(X_1, X_2)^\T$ is a Gaussian random vector in $\bbR^2$ with mean 0 and variance matrix
$ \begin{pmatrix}
\sigma_1^2 & \sigma_1\sigma_2 \rho \\
\sigma_1\sigma_2\rho & \sigma_2^2
\end{pmatrix}$,
and $\tilde X = (\tilde X_1, \tilde X_2)^\T$ is a Gaussian random vector in $\bbR^2$ with mean $\delta=(\delta_1,\delta_2)^\T$ and variance matrix
$ \begin{pmatrix}
\tilde\sigma_1^2 & \tilde\sigma_1\tilde\sigma_2 \tilde\rho \\
\tilde\sigma_1\tilde\sigma_2\tilde\rho & \tilde\sigma_2^2
\end{pmatrix}$.
For $h(x) = \pm \ramp(x_1) \ramp(x_2) $, we have
\begin{align*}
|\tilde\rho \tilde\sigma_1 \tilde\sigma_2 -\rho \sigma_1\sigma_2 | \le  M^{1/2} \| \tilde\sigma-\sigma\|_1 +  S_{7} ( |D_h| + \Delta /2)  ,
\end{align*}

where $S_{7} =4 \{ (\frac{1}{\sqrt{2\pi M}} \me^{- 1/(8M)} ) \vee (1 - 2 \me^{- 1/(8M)} ) \}^{-2} $,
which behaves like to $4 (1 - 2 \me^{- 1/(8M)} )^{-2}$ as $M \to 0$
or $8\pi M \me^{1/(4M)}$ as $M \to \infty$,
and $\Delta = \|\delta\|_1 + \|\tilde\sigma - \sigma \|_1$,
with $\sigma=(\sigma_1,\sigma_2)^\T$ and $\tilde\sigma=(\tilde\sigma_1,\tilde\sigma_2)^\T$.
\end{lem}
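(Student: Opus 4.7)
The plan is to decompose the covariance difference $\tilde\rho\tilde\sigma_1\tilde\sigma_2-\rho\sigma_1\sigma_2$ by introducing intermediate Gaussians and then control the residual ``correlation-only'' difference via Price's theorem. Writing
$\tilde\rho\tilde\sigma_1\tilde\sigma_2-\rho\sigma_1\sigma_2
=\tilde\rho(\tilde\sigma_1\tilde\sigma_2-\sigma_1\sigma_2)+(\tilde\rho-\rho)\sigma_1\sigma_2$,
the first term is bounded by $|\tilde\sigma_1\tilde\sigma_2-\sigma_1\sigma_2|\le M^{1/2}\|\tilde\sigma-\sigma\|_1$ using $|\tilde\rho|\le 1$ and $\sigma_i,\tilde\sigma_i\le M^{1/2}$, which delivers the first term of the stated bound. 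It remains to show that $|\tilde s-s|:=|(\tilde\rho-\rho)\sigma_1\sigma_2|\le S_{7}(|D_h|+\Delta/2)$, where $s=\rho\sigma_1\sigma_2$ and $\tilde s=\tilde\rho\sigma_1\sigma_2$.

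Next I reduce $D_h$ to a moment-matching discrepancy between two centered Gaussians that share the marginals $(\sigma_1,\sigma_2)$. First, set $\bar X=\tilde X-\delta\sim\N(0,\tilde\Sigma)$. Because each factor of $h=\pm\ramp(x_1)\ramp(x_2)$ lies in $[0,1]$ and is $(1/2)$-Lipschitz, the product satisfies $|h(x+\delta)-h(x)|\le\|\delta\|_1/2$, hence $|\E h(\tilde X)-\E h(\bar X)|\le\|\delta\|_1/2$. Second, couple $\bar X$ with an auxiliary $X'$ via a common standardized bivariate Gaussian $(\tilde Z_1,\tilde Z_2)$ of correlation $\tilde\rho$: let $\bar X_i=\tilde\sigma_i\tilde Z_i$ and $X'_i=\sigma_i\tilde Z_i$, so that $X'\sim\N(0,\Sigma')$ with marginals $(\sigma_1,\sigma_2)$ and off-diagonal $\tilde s=\tilde\rho\sigma_1\sigma_2$. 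Using the Lipschitz estimate once more and $\E|\tilde Z_i|=\sqrt{2/\pi}$, $|\E h(\bar X)-\E h(X')|\le\tfrac{\sqrt{2/\pi}}{2}\|\tilde\sigma-\sigma\|_1\le\|\tilde\sigma-\sigma\|_1/2$. Chaining these by the triangle inequality gives $|\E h(X')-\E h(X)|\le |D_h|+\Delta/2$.

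For fixed marginals, define $\Psi(s)=\E h(X^{(s)})$ with $X^{(s)}\sim\N(0,\Sigma^{(s)})$ of off-diagonal $s\in[-\sigma_1\sigma_2,\sigma_1\sigma_2]$; note that $\Psi(\tilde s)-\Psi(s)=\E h(X')-\E h(X)$. After regularizing $\ramp$ by a Gaussian mollifier and passing to the limit (equivalently, writing $\ramp(x)=\tfrac{1}{2}\int_{-1}^{1}\mathbbm{1}_{x\ge t}\,\dif t$ and invoking Plackett's identity $\partial_s\pr(X_1^{(s)}\ge t_1,X_2^{(s)}\ge t_2)=p_{\Sigma^{(s)}}(t_1,t_2)$), Price's theorem gives
\begin{align*}
\Psi'(s)=\E\{\ramp'(X_1^{(s)})\ramp'(X_2^{(s)})\}=\tfrac{1}{4}\pr(|X_1^{(s)}|<1,\,|X_2^{(s)}|<1).
\end{align*}
Shrinking the event to $\{|X_i^{(s)}|<1/2\}$ and applying Khatri--Sid\'ak's inequality to the two orthogonal symmetric slabs produces the product lower bound $\Psi'(s)\ge\tfrac{1}{4}\pr(|X_1^{(s)}|<1/2)\pr(|X_2^{(s)}|<1/2)$. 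Since $\sigma\mapsto\pr(|\N(0,\sigma^2)|<1/2)$ is non-increasing, each factor is at least $\pr(|\N(0,M)|<1/2)$, which in turn admits two simultaneous lower bounds: the Gaussian tail estimate $1-2\me^{-1/(8M)}$, and the ``min-density times length'' estimate $\frac{1}{\sqrt{2\pi M}}\me^{-1/(8M)}$. Taking the maximum of these yields $\Psi'(s)\ge(c_1\vee c_2)^2/4=1/S_{7}$ uniformly in $s$, and the mean value theorem gives $|\tilde s-s|\le S_{7}|\Psi(\tilde s)-\Psi(s)|$.

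Chaining the last inequality with the reduction of the second paragraph gives $|\tilde s-s|\le S_{7}(|D_h|+\Delta/2)$, and substituting into the decomposition of the first paragraph yields the stated bound. The main obstacle is the uniform lower bound on $\Psi'(s)$: justifying Price's identity for the non-smooth ramp is routine by mollification, but producing a positive constant that works for \emph{all} $M>0$ requires the two-regime constant $c_1\vee c_2$, since the tail estimate $c_2$ becomes vacuous (indeed negative) for large $M$ while the density estimate $c_1$ degenerates as $M\to 0$; neither alone suffices. The use of Khatri--Sid\'ak in its slab form is what makes the probability lower bound a product of univariate probabilities, and hence independent of the correlation $s$, which is exactly what is needed to conclude by the mean value theorem.
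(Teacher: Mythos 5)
Your proposal is correct and reaches the same constant $S_{7}$ as the paper, but it uses a different set of Gaussian tools at the key step. The reduction to a centered, correlation-only discrepancy is essentially the same: the paper also introduces an intermediate Gaussian $\tilde Y$ with marginals $(\sigma_1,\sigma_2)$ and correlation $\tilde\rho$, bounds $|\E h(\tilde Y)-\E h(X)|\le |D_h|+\Delta/2$ via the $1/2$-Lipschitz property of the product of ramps, and then splits $\tilde\rho\tilde\sigma_1\tilde\sigma_2-\rho\sigma_1\sigma_2=\tilde\rho(\tilde\sigma_1\tilde\sigma_2-\sigma_1\sigma_2)+(\tilde\rho-\rho)\sigma_1\sigma_2$. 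The derivative bound is where you diverge. The paper computes $\frac{\dif}{\dif t}\E\{\ramp(Y_1)\ramp(Y_2)\}=\frac{1}{4}\sigma_1\sigma_2\,\pr(Y_1\in[-1,1],Y_2\in[-1,1])$ by Stein's lemma applied to the conditional law of $Y_2$ given $Y_1$, whereas you obtain the equivalent covariance-parameterized identity from Plackett/Price (the two are consistent after the change of variables $s=t\sigma_1\sigma_2$). To lower-bound the joint probability by a product of univariate probabilities, the paper writes $Y_1=t\frac{\sigma_1}{\sigma_2}Y_2+\sqrt{1-t^2}\,\sigma_1 Z_1$ with $Z_1\perp Y_2$, uses $\sigma_1\le\sigma_2$ to see $\{\sigma_1|Z_1|\le 1/2\}\cap\{|Y_2|\le 1/2\}\subseteq\{|Y_1|\le 1\}\cap\{|Y_2|\le 1\}$, and concludes by independence; you instead shrink both coordinates to $1/2$ and invoke Khatri--Sid\'ak. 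Both routes yield $\Psi'(s)\ge (c_1\vee c_2)^2/4$ with $c_1=\frac{1}{\sqrt{2\pi M}}\me^{-1/(8M)}$ and $c_2=1-2\me^{-1/(8M)}$. The paper's conditioning trick is elementary and self-contained (and is also what cleanly produces the factor $\sigma_1\sigma_2$ in the derivative), while your Price/Khatri--Sid\'ak combination is more modular and avoids the without-loss-of-generality ordering of $\sigma_1,\sigma_2$; either is a fine way to finish.
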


\begin{manualrem}{S4} \label{rem:local-linear2}
Define a ramp main-effect and interaction class
\begin{align*}
\mathcal{R}_2 & = \left\{ \pm \ramp ( x_j- c ),  (x_1,x_2)\in\bbR^2 : j=1,2,  \, c =0,1\right\} \\
& \quad \bigcup \left\{ \pm \ramp(x_1)\ramp(x_2),  (x_1,x_2)\in\bbR^2 \right\}.
\end{align*}
In the setting of Lemma~\ref{lem:local-linear2}, suppose that for fixed $a \in (0,1/2)$,
\begin{align*}
D \stackrel{\text{def}}{=} \sup_{h \in \mathcal{R}_2} \left\{ \E h(\tilde X) - \E h( X)  \right\} \le a .
\end{align*}
Then combining Lemma~\ref{lem:local-linear1}--\ref{lem:local-linear2} yields
\begin{align*}
& \max( |\delta_1|, |\delta_2| ) \le S_{4,a} D , \quad
\max( |\tilde\sigma_1 - \sigma_1|,|\tilde\sigma_2 - \sigma_2|)  \le S_{6,a} D ,  \\
& \max( |\tilde\sigma_1^2 - \sigma_1^2|,|\tilde\sigma_2^2 - \sigma_2^2|, |\tilde\rho \tilde\sigma_1 \tilde\sigma_2 -\rho \sigma_1\sigma_2 | )
\le S_{8,a} D
\end{align*}
where $S_{8,a} =S_{7} ( 1+ S_{4,a} + S_{6,a}) + 2 M^{1/2} S_{6,a} $.
This shows that the moment matching discrepancy $D$ over $\mathcal{R}_2$ delivers upper bounds, up to scaling constants, on the mean, variance, and covariance differences,
provided that $D$ is sufficiently small.
\end{manualrem}

\begin{proof}
First, we handle the effect of different means and standard deviations between $\tilde X$ and $X$ in $D_h$.
Denote $D^\dag_h =  \E h(\tilde Y) - \E h( X)$,
where $\tilde Y = (\tilde Y_1,\tilde Y_2)^\T$ is a Gaussian random vector
with mean 0 and variance matrix
$  \begin{pmatrix}
\sigma_1^2 & \sigma_1\sigma_2 \tilde \rho \\
\sigma_1\sigma_2\tilde\rho & \sigma_2^2
\end{pmatrix}$.
Then we have
\begin{align}
|D^\dag_h| \le |D_h| + \Delta /2 . \label{eq:matching-rho-Ddag}
\end{align}
In fact, assume that $\tilde Y = ( \sigma_1 Z_1, \sigma_2 Z_2)^\T$ and
$\tilde X =\delta+ ( \tilde\sigma_1 Z_1, \tilde\sigma_2 Z_2)^\T$, where $(Z_1,Z_2)^\T$ is a Gaussian random vector
with mean 0 and variance matrix
$\begin{pmatrix}
1 & \tilde\rho \\
\tilde \rho & 1
\end{pmatrix}$.
For $h(x) = \pm \ramp(x_1) \ramp(x_2) $, we have
\begin{align*}
&\quad | \E h(\tilde Y) - \E h( X)| \\
& \le | \E h (\tilde X) - \E h(X)| + \sum_{j=1,2} \E \, |\ramp(\tilde Y_j) -\ramp (\tilde X_j)| \\
& \le | \E h (\tilde X) - \E h(X)| + \sum_{j=1,2} \{ \delta_j^2 + (\tilde\sigma_j-\sigma_j)^2 \}^{1/2} /2 \\
& \le | \E h (\tilde X) - \E h(X)| + \Delta /2.
\end{align*}
The first inequality follows by the triangle inequality and the fact that $\ramp(\cdot)$ is bounded in $[0,1]$.
{The second step uses $\E\, |\ramp(\tilde Y_j) -\ramp (\tilde X_j)|
\le (1/2) \E | \tilde Y_j - \tilde X_j|
\le  (1/2) \E^{1/2} [ \{ \delta_j + (\tilde\sigma_j-\sigma_j) Z_j \}^2 ]
= \{ \delta_j^2 + (\tilde\sigma_j-\sigma_j)^2 \}^{1/2} /2$,}
by the fact that $\ramp(\cdot)$ is $(1/2)$-Lipschitz, $\E Z_j =0$, and $\E (Z_j^2)=1$.
The third inequality follows because $\sqrt{u_1+u_2} \le \sqrt{u_1} + \sqrt{u_2}$.

Next, we show that
\begin{align}
|\tilde\rho-\rho| \sigma_1\sigma_2  \le 8\pi M  \me^{1/(4M)} |D^\dag_h| . \label{eq:matching-rho-bd}
\end{align}
Assume that $\tilde \rho - \rho$ is nonnegative. The other direction will be discussed later. Now take $h(x) = \ramp(x_1) \ramp(x_2) $, and define
$g(t) = \E h(Y) - \E h( X) $ for $t \in[-1,1]$, where $Y = (Y_1,Y_2)^\T$ is a Gaussian random vector with mean 0 and variance matrix
$\begin{pmatrix}
\sigma_1^2 & \sigma_1\sigma_2 t \\
\sigma_1\sigma_2 t & \sigma_2^2
\end{pmatrix}$.
Then $g(\rho)=0$ and $g( \tilde \rho) = D^\dag_h$. The derivative $g^\prime(t)$ can be calculated as
\begin{align}
g^\prime(t) = \frac{1}{4} \sigma_1\sigma_2  \E \mathbbm{1}_{ Y_1 \in [-1,1]} \mathbbm{1}_{Y_2 \in [-1,1]} . \label{eq:ramp-prod-derivative}
\end{align}
In fact, $Y_1$ can be represented as $Y_1= t \frac{\sigma_1}{\sigma_2} Y_2 + \sqrt{1-t^2} \sigma_1 Z_1$, where $Z_1$ is a standard Gaussian variable independent of $Y_2$.
Then direct calculation yields
\begin{align*}
g^\prime(t)& = \frac{1}{2} \E \mathbbm{1}_{ Y_1 \in [-1,1]} \left( \frac{\sigma_1}{\sigma_2} Y_2 -\frac{t}{ \sqrt{1-t^2}} \sigma_1 Z_1 \right) \ramp(Y_2)  \\
& = \frac{1}{2} \E \mathbbm{1}_{ Y_1 \in [-1,1]} \left\{ \frac{\sigma_1}{\sigma_2} Y_2 -\frac{t}{1-t^2} ( Y_1- t \frac{\sigma_1}{\sigma_2} Y_2 ) \right\} \ramp(Y_2)  \\
& = \frac{1}{2} \E \mathbbm{1}_{ Y_1 \in [-1,1]} \left( \frac{1}{1-t^2} \frac{\sigma_1}{\sigma_2} Y_2 -\frac{t}{1-t^2} Y_1  \right) \ramp(Y_2) \\
& = \frac{1}{2} \frac{1}{1-t^2} \frac{\sigma_1}{\sigma_2}   \E \mathbbm{1}_{ Y_1 \in [-1,1]} \left( Y_2 - t \frac{\sigma_2}{\sigma_1} Y_1  \right) \ramp(Y_2).
\end{align*}
By Stein's lemma using the fact that $Y_2$ given $Y_1$ is Gaussian with mean $t \frac{\sigma_2}{\sigma_1} Y_1$ and variance $(1-t^2) \sigma_2^2$,  we have
\begin{align*}
\E \left\{ \left( Y_2 - t \frac{\sigma_2}{\sigma_1} Y_1  \right) \ramp(Y_2) \Big| Y_1 \right\}
= \E \left\{ (1-t^2) \sigma_2^2 \frac{1}{2} \mathbbm{1}_{Y_2 \in [-1,1]} \Big| Y_1 \right\} .
\end{align*}
Substituting this into the expression for $g^\prime(t)$ gives the formula (\ref{eq:ramp-prod-derivative}).

To show (\ref{eq:matching-rho-bd}), we derive a lower bound on $g^\prime(t)$.
Assume without loss of generality that $\sigma_1 \le \sigma_2$, because formula (\ref{eq:ramp-prod-derivative}) is symmetric in $Y_1$ and $Y_2$.
By the previous representation of $Y_1$, we have $|Y_1| \le \sqrt{1-t^2} \sigma_1 |Z_1|  + \frac{\sigma_1}{\sigma_2} |Y_2| \le  \sigma_1 |Z_1| + |Y_2|$ and hence
\begin{align*}
g ^\prime(t) 
& \ge \frac{1}{4} \sigma_1\sigma_2  \E \mathbbm{1}_{ Y_1 \in [-1,1]} \mathbbm{1}_{Y_2 \in [-1/2,1/2]} \\
& \ge \frac{1}{4} \sigma_1\sigma_2  \E \mathbbm{1}_{ \sigma_1 |Z_1| \in [-1/2,1/2]} \mathbbm{1}_{Y_2 \in [-1/2,1/2]} \\
& \ge S_{7}^{-1} \sigma_1\sigma_2 .
\end{align*}

where $S_{7} = 4 \{ (\frac{1}{\sqrt{2\pi} M^{1/2}} \me^{- 1/(8M)} ) \vee (1 - 2 \me^{- 1/(8M)} ) \}^{-2} $.
The last inequality follows by the independence of $Z_1$ and $Y_2$, $\sigma_1\le M^{1/2}$, $\sigma_2 \le M^{1/2}$, and
the two probability bounds: $\pr (M^{1/2} |Z_1| \le 1/2) \ge \frac{1}{\sqrt{2\pi M} } \me^{- (1/2)^2/(2M)}$
and $\pr (M^{1/2} |Z_1| \le 1/2) \ge 1 - 2 \me^{- (1/2)^2/(2M) }$.
Hence by the mean value theorem, we have
\begin{align*}
g(\tilde\rho) = g(\tilde\rho)-g(\rho) \ge (\tilde\rho-\rho) \sigma_1\sigma_2 S_{7}^{-1} ,
\end{align*}
which gives the desired bound (\ref{eq:matching-rho-bd}) in the case of $\tilde\rho \ge \rho$:
\begin{align*}
(\tilde\rho-\rho) \sigma_1\sigma_2   \le S_{7} g(\tilde \rho) = S_{7} D^\dag_h.
\end{align*}

When $\tilde\rho-\rho$ is negative, a similar argument taking $h(x) = -\ramp(x_1) \ramp(x_2) $
and interchanging the roles of $\tilde Y$ and $X$
leads to (\ref{eq:matching-rho-bd}).

Finally, by the triangle inequality, we find
\begin{align*}
| \tilde \rho \tilde\sigma_1 \tilde\sigma_2 - \rho \sigma_1 \sigma_2 |
& \le |\tilde \sigma_1 \tilde \sigma_2 - \sigma_1 \sigma_2 | + |\tilde \rho-\rho| \sigma_1 \sigma_2 \\
& \le M^{1/2} ( |\tilde\sigma_1 -\sigma_1| + |\tilde \sigma_2 - \sigma_2 | ) +  |\tilde \rho-\rho| \sigma_1 \sigma_2 .
\end{align*}
Combining this with (\ref{eq:matching-rho-Ddag}) and (\ref{eq:matching-rho-bd}) leads to the desired result.
\end{proof}

\begin{manualpro}{S7} \label{pro:logit-L1-combine}
In the setting of Proposition~\ref{pro:logit-L1-detailed} or Proposition~\ref{pro:hinge-L1-detailed}, suppose that for $a \in (0,1/2)$,
\begin{align}
D \stackrel{\text{def}}{=} \sup_{\gamma \in \Gamma_{\rp}, \pen_1(\gamma)=1} \left\{  \E_{P_{\theta^*}} h_{\gamma, \hat\mu}(x) - \E_{P_{\hat\theta}} h_{\gamma, \hat\mu}(x) \right\} \le a. \label{eq:pro-logit-L1-combine-cond}
\end{align}
Then we have
\begin{align*}
& \| \hat \mu- \mu^* \|_\infty \le S_{4,a} D,\\
& \| \hat \Sigma - \Sigma^* \|_{\max} \le S_{8,a} D ,
\end{align*}
where $S_{4,a}$ and $S_{8,a}$ are defined as in Lemma~\ref{lem:local-linear1} and Remark \ref{rem:local-linear2} {with $M = M_1$}.
\end{manualpro}

\begin{proof}
For any $\gamma \in \Gamma_{\rp}$ with $\pen_1(\gamma) =1$, the function $h_{\gamma,\hat\mu}(x) = h_\gamma ( x- \hat\mu) $ can be expressed as $h_{\rp, \beta,  c }(x - \hat\mu)$ with $\pen_1 (\beta) = 1$.
For $j=1,\ldots,p$, by restricting $h_{\gamma,\hat\mu}(x)$ in (\ref{eq:pro-logit-L1-combine-cond}) to those with $h_\gamma(x)$ defined as
ramp functions of $x_j$ and using Remark~\ref{rem:local-linear1}, we obtain
\begin{align*}
| \hat \mu_j - \mu^*_j | \le S_{4,a} D, \quad  | \hat \sigma_j - \sigma^*_j | \le S_{6,a} D.
\end{align*}
For $1 \le i \not= j \le p$, by restricting $h_{\gamma,\hat\mu}(x)$ in (\ref{eq:pro-logit-L1-combine-cond}) to those with $h_\gamma(x)$ defined as
ramp interaction functions of $(x_i,x_j)$ and using Remark~\ref{rem:local-linear2}, we obtain
\begin{align*}
\max \left( |\hat\Sigma_{ii} - \Sigma_{ii}^* |, |\hat \Sigma_{jj} - \Sigma_{jj}^*|, | \hat \Sigma_{ij} - \Sigma_{ij}^* | \right) \le S_{8,a} D ,
\end{align*}
where $\hat \Sigma_{ij}$ and $\Sigma_{ij}^*$ are the $(i,j)$th elements of $\hat\Sigma$ and $\Sigma^*$ respectively. Combining the preceding two displays leads to the desired result.
\end{proof}


\subsection{Details in main proof of Theorem \ref{thm:logit-L2}}

\begin{lem} \label{lem:spline-L2-upper}
Suppose that $X_1, \ldots, X_n$ are independent and identically distributed as
$X \sim N_p(0, \Sigma)$ with $\|\Sigma\|_{\op} \leq M_2$.
For $k$ fixed knots $\xi_1,\ldots,\xi_k$ in $\bbR$,
denote $\varphi(x) = (\varphi^\T_1(x), \ldots, \varphi^\T_k(x))^\T$,
where $\varphi_l(x) \in \bbR^p$ is obtained by applying $t \mapsto (t-\xi_l)_+$ componentwise to $x \in \bbR^p$ for {$l=1,\ldots,k$.}
Then the following results hold.

(i) $\varphi(X)-\E\varphi(X)$ is a sub-gaussian random vector with tail parameter $(k M_2)^{1/2}$.

(ii) For any $\delta >0$, we have that with probability at least $1-\delta$,
\begin{align*}
& \quad \sup_{\| w \|_2 = 1} \left| \frac{1}{n} \sum_{i=1}^{n} w^\T \varphi(X_{i}) - \E w^\T \varphi(X) \right| \\
& \le  C_{\mathrm{sp21}} (k M_2)^{1/2} \sqrt{\frac{kp +\log(\delta^{-1})}{n}} ,
\end{align*}
where $C_{\mathrm{sp21}} = \sqrt{2} C_{\mathrm{sg7}}  C_{\mathrm{sg5}}$ and $(C_{\mathrm{sg5}}, C_{\mathrm{sg7}})$ are the universal constants in Lemmas~\ref{lem:subg-concentration} and \ref{lem:subg-vec-norm}.

(iii) Let $\mathcal{A}_2=\{A \in \bbR^{kp \times kp}: \|A\|_{\fro}= 1, A^\T=A\}$. For any $\delta >0$, we have that with probability at least $1-3\delta$,
both the inequality in (ii) and
\begin{align*}
& \quad \frac{1}{\sqrt{kp}}\sup_{A \in \mathcal{A}_2} \left| \frac{1}{n} \sum_{i=1}^{n} \varphi^\T(X_{i}) A \varphi(X_{i}) - \E \varphi^\T(X) A \varphi(X) \right| \\
& \le  C_{\mathrm{sp22}} k M_{21} \left\{ \sqrt{\frac{kp + \log(\delta^{-1})}{n}} + \frac{kp+ \log(\delta^{-1})}{n}\right\},
\end{align*}
where $M_{21} = M_2^{1/2} (M_2^{1/2} +  \sqrt{2\pi}  \|\xi \|_\infty)$, $ \| \xi \|_\infty = \max_{{l=1,\ldots,k}} |\xi_l|$,
$C_{\mathrm{sp22}} = \sqrt{2/\pi} C_{\mathrm{sp21}} + C_{\mathrm{sg8}}$, and $C_{\mathrm{sg8}}$ is the universal constant in Lemma~\ref{lem:subg-vec-var}.
\end{lem}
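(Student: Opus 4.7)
The proof will treat the three parts sequentially, with each part exploiting the previous. The unifying idea is that the basis vector $\varphi(x)$ is a Lipschitz transform of $x$, so $\varphi(X)-\E\varphi(X)$ inherits sub-gaussian behavior from the Gaussian $X$, and the $L_2$-type suprema reduce to $\ell_2$-norms of vectors or Frobenius-type norms of matrices that can be controlled by generic sub-gaussian concentration tools. Throughout I write $\tilde\varphi_i = \varphi(X_i)-\E\varphi(X)$, $\bar Y = n^{-1}\sum_i \tilde\varphi_i$, and $d=kp$.

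For part (i), the plan is to invoke Gaussian Lipschitz concentration after a whitening. Write $X=\Sigma^{1/2} Z$ with $Z\sim \N(0,I_p)$. For any unit $u=(u_1^{\T},\dots,u_k^{\T})^{\T}\in\bbR^{kp}$, the map $x\mapsto u^{\T}\varphi(x)=\sum_{l,j} u_{lj}(x_j-\xi_l)_+$ is almost everywhere differentiable with
$\|\nabla_x u^{\T}\varphi(x)\|_2^2 = \sum_j\bigl(\sum_l u_{lj}\mathbbm{1}\{x_j>\xi_l\}\bigr)^2 \le k\|u\|_2^2 = k$
by Cauchy--Schwarz in $l$. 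Composing with $\Sigma^{1/2}$ yields Lipschitz constant at most $\sqrt{k}\,\|\Sigma^{1/2}\|_{\op}\le\sqrt{kM_2}$ in $z$, and Gaussian Lipschitz concentration (Lemma~\ref{lem:subg-concentration} applied to Lipschitz functions of the standard Gaussian) gives that $u^{\T}(\varphi(X)-\E\varphi(X))$ is sub-gaussian with tail parameter $\sqrt{kM_2}$, which is exactly the definition of $\varphi(X)-\E\varphi(X)$ being a sub-gaussian random vector with that parameter.

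For part (ii), observe $\sup_{\|w\|_2=1}|w^{\T}\bar Y|=\|\bar Y\|_2$. By part (i) together with sub-gaussian scaling for i.i.d.\ sums (Lemma~\ref{lem:subg-concentration}), $\bar Y$ is a sub-gaussian vector in $\bbR^{d}$ with tail parameter $C_{\mathrm{sg5}}\sqrt{kM_2/n}$. Applying Lemma~\ref{lem:subg-vec-norm}, which yields $\|Z\|_2\le C_{\mathrm{sg7}}\sigma\sqrt{2(d+\log\delta^{-1})}$ for $Z$ sub-gaussian in $\bbR^d$ with tail $\sigma$, delivers the stated bound with $C_{\mathrm{sp21}}=\sqrt{2}\,C_{\mathrm{sg7}}C_{\mathrm{sg5}}$ on an event of probability at least $1-\delta$.

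For part (iii), the plan is to decompose
\begin{align*}
\tfrac{1}{n}\sum_i \varphi_i^{\T}A\varphi_i - \E\varphi^{\T}A\varphi = \underbrace{\tfrac{1}{n}\sum_i \tilde\varphi_i^{\T} A\tilde\varphi_i - \E\tilde\varphi^{\T}A\tilde\varphi}_{\mathrm{quadratic}} + \underbrace{2(\E\varphi)^{\T}A\bar Y}_{\mathrm{cross}}.
\end{align*}
The cross term is controlled by the trace Cauchy--Schwarz inequality $|(\E\varphi)^{\T}A\bar Y|\le\|A\|_F\|\E\varphi\|_2\|\bar Y\|_2$, together with Lemma~\ref{lem:truncated-mean} giving $\|\E\varphi\|_2\le\sqrt{kp}\,(M_2^{1/2}/\sqrt{2\pi}+\|\xi\|_\infty)$ (using $\Sigma_{jj}\le M_2$) and the part (ii) bound on $\|\bar Y\|_2$; after dividing by $\sqrt{kp}$ and simplifying, this contributes at most $\sqrt{2/\pi}\,C_{\mathrm{sp21}}\sqrt{k}\,M_{21}\sqrt{(d+\log\delta^{-1})/n}$. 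For the quadratic term, write $\hat S - S = n^{-1}\sum_i \tilde\varphi_i\tilde\varphi_i^{\T} - \E\tilde\varphi\tilde\varphi^{\T}$; since this matrix is symmetric, duality gives $\sup_{A\in\mathcal{A}_2}|\tr(A(\hat S-S))|=\|\hat S-S\|_F$, which equals the Frobenius error of covariance estimation from sub-gaussian samples with tail $\sqrt{kM_2}$. Invoking Lemma~\ref{lem:subg-vec-var} then yields $\|\hat S-S\|_F\le C_{\mathrm{sg8}}\,\sqrt{d}\cdot kM_2\bigl(\sqrt{(d+\log\delta^{-1})/n} + (d+\log\delta^{-1})/n\bigr)$ on an event of probability at least $1-2\delta$. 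Scaling by $1/\sqrt{kp}$, using $\sqrt{k}\le k$ and $M_2\le M_{21}$ to absorb both contributions into the prefactor $k M_{21}$, and summing yields the stated rate with $C_{\mathrm{sp22}}=\sqrt{2/\pi}\,C_{\mathrm{sp21}}+C_{\mathrm{sg8}}$. A union bound over the event from part (ii) and the Hanson--Wright-type event for the quadratic gives total probability at least $1-3\delta$.

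The main obstacle will be part (iii): one must verify that Lemma~\ref{lem:subg-vec-var} supplies the precise Bernstein-type form with both $\sqrt{(d+\log\delta^{-1})/n}$ and $(d+\log\delta^{-1})/n$ terms, and then carefully track constants so that the cross-term contribution (which naturally scales as $\sqrt{k}$) can be absorbed into the $k$-prefactor of the stated bound via the relation $M_{21}=M_2+\sqrt{2\pi}M_2^{1/2}\|\xi\|_\infty$ and the definition of $C_{\mathrm{sp22}}$. The probability accounting also requires noting that the cross term reuses the event from part (ii), so only the Hanson--Wright event introduces the additional $2\delta$.
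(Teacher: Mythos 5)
Your proposal is correct and follows essentially the same approach as the paper: a Lipschitz transform of a Gaussian vector for (i), sub-gaussian averaging plus the sub-gaussian vector $\ell_2$-norm tail bound for (ii), and the centered cross-plus-quadratic decomposition with a $\sqrt{kp}\,\|\cdot\|_{\op}$ reduction and the operator-norm covariance concentration for (iii). One small slip: the Gaussian Lipschitz concentration you need in (i) is not supplied by Lemma~\ref{lem:subg-concentration} (which is an i.i.d.\ sub-gaussian sum bound), but by the Borell--TIS-type inequality the paper invokes externally (Theorem 5.6 of Boucheron, Lugosi, and Massart).
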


\begin{proof}
(i) First, we show that $w^\T \varphi(x)$ is a $k^{1/2}$-Lipschitz function for any $L_2$ unit vector $w \in \bbR^{kp}$.
For any $x_1, x_2 \in \bbR^p$, we have
$ | w^\T ( \varphi(x_1) - \varphi(x_2)) | \le \sum_{l=1}^k \|w_l\|_2 \|\varphi_l(x_1) - \varphi_l(x_2)\|_2
\le \sum_{l=1}^k \|w_l\|_2 \|x_1 - x_2\| \le k^{1/2} \|x_1-x_2\| $,
where $w$ is partitioned as $w = (w_1^\T,\ldots, w_k^\T)^\T $,
and {$\|\varphi_l(x_1) - \varphi_l(x_2)\|_2 \le \|x_1-x_2\|_2 $} because each component of $\varphi_l(x)$ is 1-Lipschitz, as a function of only the corresponding component of $x$.
Next, $X$ can be represented as $\Sigma^{1/2}Z$, where $Z$ is a standard Gaussian random vector.
For any $z_1, z_2 \in \bbR^p$ and $L_2$ unit vector $w \in \bbR^{kp}$, we have
\begin{align*}
&\quad | w^\T\varphi (\Sigma^{1/2}z_1) - w^\T\varphi (\Sigma^{1/2}z_2) |\\
&\le k^{1/2} \|\Sigma^{1/2} (z_1 -z_2) \|_2
\le k^{1/2} \|\Sigma^{1/2}\|_{\op} \|z_1 - z_2\|_2
\le (k M_2)^{1/2}\|z_1 -z_2\|_2 .
\end{align*}
Hence $w^\T\varphi(X)$ is a $(k M_2)^{1/2}$-Lipschitz function of the standard Gaussian vector $Z$.
By Theorem 5.6 in \citeappend{BLM13}, the centered version satisfies that for any $t>0$,
\begin{align*}
\pr \left( | w^\T (\varphi(X)-\E\varphi(X)) | > t \right) \le 2 \me^{- t^2/(2k M_2)}.
\end{align*}
That is, $w^\T (\varphi(X)-\E\varphi(X))$ is sub-gaussian with tail parameter $(k M_2)^{1/2}$.
The desired result follows by the definition of sub-gaussian random vectors.

(ii) As shown above, $w^\T (\varphi(X)-\E\varphi(X))$ is sub-gaussian with tail parameter $(k M_2)^{1/2}$ for any $L_2$ unit vector $w$. Then
$w^\T \{ n^{-1} \sum_{i=1}^n \varphi(X_i) -\E \varphi(X) \}$ is sub-gaussian with tail parameter $C_{\mathrm{sg5}} (k M_2/n)^{1/2}$ by sub-gaussian concentration (Lemma~\ref{lem:subg-concentration}).
Hence by definition, we have that $n^{-1} \sum_{i=1}^n \varphi(X_i) -\E \varphi(X) $ is a sub-gaussian random vector with tail parameter $C_{\mathrm{sg5}} (k M_2/n)^{1/2}$.
Notice that
\begin{align*}
\sup_{\| w \|_2 = 1} \left| \frac{1}{n} \sum_{i=1}^{n} w^\T \varphi(X_{i}) - \E w^\T \varphi(X) \right|  =
\left\| \frac{1}{n} \sum_{i=1}^n \varphi(X_i) -\E \varphi(X)  \right\|_2.
\end{align*}
The desired result follows from Lemma~\ref{lem:subg-vec-norm}: with probability at least $1-\delta$, we have
\begin{align*}
\quad \left\| \frac{1}{n} \sum_{i=1}^n \varphi(X_i) - \E \varphi(X) \right\|_2
& \le C_{\mathrm{sg7}}  C_{\mathrm{sg5}} (k M_2)^{1/2} \left\{\sqrt{\frac{kp}{n}} + \sqrt{\frac{ \log(\delta^{-1})}{n}} \right\} \\
& \le \sqrt{2} C_{\mathrm{sg7}}  C_{\mathrm{sg5}} (k M_2)^{1/2} \sqrt{\frac{kp + \log(\delta^{-1})}{n} } .
\end{align*}

(iii)
The difference of interest can be expressed in terms of the centered variables as
\begin{align}
& \quad \frac{1}{n} \sum_{i=1}^{n} \varphi^\T_i A \varphi_i - \E \varphi^\T A \varphi \nonumber \\
& = \frac{1}{n} \sum_{i=1}^{n} (\varphi_i - \E \varphi)^\T A (\varphi_i -\E\varphi) - \E \{ (\varphi - \E \varphi)^\T A (\varphi-\E \varphi)\}  \label{eq:lem-spline-L2-upper-prf1} \\
& \quad + \frac{1}{n} \sum_{i=1}^n 2 (\E \varphi)^\T A (\varphi_i - \E \varphi) .  \label{eq:lem-spline-L2-upper-prf2}
\end{align}
We handle the concentration of the two terms separately.
Denote $\varphi_i = \varphi(X_i)$, $\varphi=\varphi(X)$, $\tilde\varphi_i = \varphi_i - \E \varphi$, and $\tilde\varphi = \varphi - \E \varphi$.

First, for $A \in \mathcal{A}_2$ the term in (\ref{eq:lem-spline-L2-upper-prf2}) can be bounded as follows:
\begin{align*}
& \quad \left| 2 (\E \varphi)^\T A \frac{1}{n} \sum_{i=1}^n \tilde\varphi_i \right|
\le 2 \| \E \varphi \|_2 \|A\|_{\op} \left\| \frac{1}{n} \sum_{i=1}^n \tilde\varphi_i \right\|_2
\le 2 \| \E \varphi \|_2 \left\| \frac{1}{n} \sum_{i=1}^n \tilde\varphi_i \right\|_2 \\
& \le 2 \sqrt{kp} \left( \frac{M_2^{1/2}}{\sqrt{2\pi}} + \| \xi \|_\infty \right) \left\| \frac{1}{n} \sum_{i=1}^n \tilde\varphi_i \right\|_2,
\end{align*}
where $ \| \xi \|_\infty = \max_{l=1,\ldots,k} |\xi_l|$.
The second inequality holds because $\|A \|_{\op} \le \|A \|_{\fro} =1$.
The third inequality holds because $ \| \E \varphi_l \|_2 \le \sqrt{p} ( M_2^{1/2} / \sqrt{2\pi} + | \xi_l|)$ by Lemma~\ref{lem:truncated-mean}.
By (ii), for any $\delta>0$, we have that with probability at least $1-\delta$,
\begin{align*}
&\quad \left\| \frac{1}{n} \sum_{i=1}^n \tilde\varphi_i \right\|_2
\le C_{\mathrm{sp21}} (k M_2)^{1/2} \sqrt{\frac{kp + \log(\delta^{-1})}{n} } .
\end{align*}
From the preceding two displays, we obtain that with probability at least $1-\delta$,
\begin{align}
& \quad \frac{1}{\sqrt{kp}} \sup_{A \in \mathcal{A}_2 }  \left| 2 (\E \varphi)^\T A \frac{1}{n} \sum_{i=1}^n \tilde\varphi_i \right| \nonumber \\
& \le \sqrt{\frac{2}{\pi}} C_{\mathrm{sp21}} (k M_2)^{1/2} \left( M_2^{1/2} +  \sqrt{2\pi}  \|\xi \|_\infty \right) \sqrt{\frac{kp + \log(\delta^{-1})}{n}} . \label{eq:lem-spline-L2-upper-prf3}
\end{align}

Next, consider an eigen-decomposition $A = \sum_{l=1}^{kp} \lambda_l w_l w_l^{\T} $,
where $\lambda_l$'s are eigenvalues and $w_l$'s are the eigenvectors with $\|w_l\|_2 =1$.
The concentration of the term in (\ref{eq:lem-spline-L2-upper-prf1}) can be controlled as follows:
\begin{align*}
&\quad \sup_{A \in \mathcal{A}_2 }  \left| \frac{1}{n}  \sum_{i=1}^{n} \tilde\varphi_{i}^\T A \tilde\varphi_{i} - \E \tilde\varphi^\T A \tilde\varphi \right|
=  \sup_{A \in \mathcal{A}_2 }  \left| \sum_{l=1}^{kp} \lambda_l w_l^\T \left( \frac{1}{n} \sum_{i=1}^{n} \tilde\varphi_{i}\tilde\varphi_{i}^\T  - \E \tilde\varphi \tilde\varphi^\T \right) w_l \right| \\
&\le \sup_{A \in \mathcal{A}_2 }  \left( \sum_{l=1}^{kp} |\lambda_l | \right) \left\| \frac{1}{n} \sum_{i=1}^{n} \tilde\varphi_{i}\tilde\varphi_{i}^\T  - \E \tilde\varphi \tilde\varphi^\T \right\|_{\op}
\le  \sqrt{kp} \left\| \frac{1}{n} \sum_{i=1}^{n} \tilde\varphi_{i}\tilde\varphi_{i}^\T  - \E \tilde\varphi \tilde\varphi^\T \right\|_{\op} .
\end{align*}
The last inequality uses the fact that $\|A\|_\fro = (\sum_{l=1}^{kp} \lambda_l^2)^{1/2} =1$ and hence $ \sum_{l=1}^{kp} |\lambda_l |  \le \sqrt{kp}$ for $A \in \mathcal{A}_2$.
From (i), $\tilde\varphi_i$ is a sub-gaussian random vector with tail parameter $(k M_2)^{1/2}$.
By Lemma \ref{lem:subg-vec-var}, for any $\delta >0$, we have that with probability at least $1- 2 \delta$,
\begin{align*}
\left\| \frac{1}{n} \sum_{i=1}^{n} \tilde\varphi_{i}\tilde\varphi_{i}^\T  - \E \tilde\varphi \tilde\varphi^\T \right\|_{\op}
\le C_{\mathrm{sg8}} k M_2 \left\{\sqrt{\frac{kp + \log(\delta^{-1})}{n}} + \frac{kp+ \log(\delta^{-1})}{n}\right\} ,
\end{align*}
From the preceding two displays, we obtain that with probability at least $1-2\delta$,
\begin{align}
& \quad \frac{1}{\sqrt{kp}} \sup_{A \in \mathcal{A}_2 }  \left| \frac{1}{n}  \sum_{i=1}^{n} \tilde\varphi_{i}^\T A \tilde\varphi_{i} - \E \tilde\varphi^\T A \tilde\varphi \right| \nonumber \\
& \le C_{\mathrm{sg8}} k M_2 \left\{\sqrt{\frac{kp + \log(\delta^{-1})}{n}} + \frac{kp+ \log(\delta^{-1})}{n}\right\}. \label{eq:lem-spline-L2-upper-prf4}
\end{align}
Combining the two bounds (\ref{eq:lem-spline-L2-upper-prf3}) and (\ref{eq:lem-spline-L2-upper-prf4}) gives the desired result.
\end{proof}

\begin{manualpro}{S8} \label{pro:logit-L2-upper}
In the setting of Proposition~\ref{pro:logit-L2-detailed}, it holds with probability at least $1-{4}\delta$ that for any $\gamma = (\gamma_0,\gamma_1, \gamma_2)^\T \in {\Gamma}$,
\begin{align*}
K_f(P_{n}, P_{\theta^* }; h_{\gamma, \mu^*})
& \le -f^\prime(3/5)  (\epsilon + \sqrt{\epsilon/(n\delta)} )  \\
& \quad + \pen_2(\gamma_1) (5/3) C_{\mathrm{sp21}} M_2^{1/2} R_1  \lambda_{21}\\
& \quad +  \pen_2 (\gamma_2) {(25\sqrt{5}/3)} C_{\mathrm{sp22}} M_{21} R_1 \sqrt{p} \lambda_{31} ,
\end{align*}
where {$C_{\mathrm{sp21}}$ and $C_{\mathrm{sp22}}$ are defined as in Lemma \ref{lem:spline-L2-upper}, $M_{21} = M_2^{1/2} (M_2^{1/2} + 2 \sqrt{2\pi}) $, and}
\begin{align*}
\lambda_{21} = \sqrt{\frac{{5 p} + \log(\delta^{-1})}{n}}, \quad \lambda_{31} = \lambda_{21} + \frac{{5 p}+ \log(\delta^{-1})}{n} .
\end{align*}
\end{manualpro}

\begin{proof}
Consider the event $\Omega_1 = \{ | \hat\epsilon-\epsilon | \le \sqrt{\epsilon(1-\epsilon)/(n\delta)}\}$.
By Chebyshev's inequality, we have $\pr (\Omega_1) \ge 1-\delta$.
In the event $\Omega_1$, we have $ |\hat\epsilon -\epsilon | \le 1/5$ by the assumption $\sqrt{\epsilon(1-\epsilon)/(n\delta)} \le 1/5$
and hence $\hat\epsilon \le 2/5$ by the assumption $\epsilon \le 1/5$.
By Lemma~\ref{lem:logit-upper} with $\epsilon_1=2/5$, it holds in the event $\Omega_1$ that for any $\gamma\in\Gamma$,
\begin{align}
& \quad K_f(P_n, P_{\theta^* } ; h_{\gamma, \mu^*}) \nonumber \\
& \le -f^\prime(3/5) \hat\epsilon + R_1 \left| \E_{P_{\theta^* ,n}} h_{\gamma, \mu^*}(x) - \E_{P_{\theta^* }} h_{\gamma, \mu^*}(x) \right| \nonumber \\
& \le -f^\prime(3/5) (\epsilon + \sqrt{\epsilon/(n\delta)} ) + R_1 \left| \E_{P_{\theta^*,n}} h_{\gamma}(x-\mu^*) - \E_{P_{(0,\Sigma^*)}} h_{\gamma}(x) \right| . \label{eq:pro-logit-L2-upper-prf1}
\end{align}
The last step also uses the fact that
$ \E_{P_{\theta^* }} h_{\gamma, \mu^*}(x) = \E_{P_{(0,\Sigma^*)}} h_{\gamma}(x)$ and also $\E_{P_{\theta^* ,n}} h_{\gamma, \mu^*}(x) =  \E_{P_{\theta^*,n}} h_{\gamma}(x-\mu^*)$,
by the definition $h_{\gamma, \mu^*}(x) = h_{\gamma} ( x- \mu^*)$.

Next, conditionally on the contamination indicators $(U_1,\ldots,U_n)$ such that the event $\Omega_1$ holds,
we have that $\{X_i: U_i=1, i=1,\ldots,n\}$ are $n_1$ independent and identically distributed observations from $P_{\theta^*}$,
where $n_1 = \sum_{i=1}^n (1-U_i) = n(1-\hat\epsilon) \ge (3/5)n$.
Denote as $\Omega_2$ the event that for any $\gamma_1$ and $\gamma_2$,
\begin{align*}
& \quad \left| \E_{P_{\theta^*,n}} \gamma_1^\T \varphi (x-\mu^*) - \E_{P_{(0,\Sigma^*)}} \gamma_1^\T \varphi (x) \right|
\le \|\gamma_1\|_2 C_{\mathrm{sp21}} M_2^{1/2} \sqrt{\frac{{5 p} +\log(\delta^{-1})}{(3/5)n}} ,
\end{align*}
and
\begin{align*}
&  \quad \left| \E_{P_{\theta^*,n}} \gamma_2^\T (\varphi(x-\mu^*)\otimes \varphi(x-\mu^*)) - \E_{P_{(0,\Sigma^*)}} \gamma_2^\T (\varphi(x)\otimes \varphi(x)) \right| \nonumber \\
& \le \|\gamma_2\|_2  C_{\mathrm{sp22}} {5} M_{21} \sqrt{{5 p}} \left\{ \sqrt{\frac{{5 p} +\log(\delta^{-1})}{(3/5) n}} + \frac{{5 p}+ \log(\delta^{-1})}{(3/5) n}\right\},
\end{align*}
where $C_{\mathrm{sp21}}$, $C_{\mathrm{sp22}}$, and $M_{21}$ are defined as in Lemma~\ref{lem:spline-L2-upper} with $ \|\xi\|_\infty=1$.
In the event $\Omega_2$, the preceding inequalities imply that for any $\gamma = (\gamma_0, \gamma_1^\T, \gamma_2^\T)^\T \in \Gamma$,
\begin{align}
& \quad \left| \E_{P_{\theta^*,n}} h_{\gamma}(x-\mu^*) - \E_{P_{(0,\Sigma^*)}} h_{\gamma}(x) \right| \nonumber \\
& \le \pen_2 (\gamma_1) (5/3) C_{\mathrm{sp21}}  M_2^{1/2} \lambda_{21} +
\pen_2 (\gamma_2) (5/3) C_{\mathrm{sp22}}  {5} M_{21} \sqrt{{5 p}} \lambda_{31}, \label{eq:pro-logit-L2-upper-prf2}
\end{align}
where $h_{\gamma}(x) = \gamma_0 +  \gamma_1^\T \varphi (x) + \gamma_2^\T (\varphi(x) \otimes \varphi(x))$,
$\pen_2 (\gamma_1) = \|\gamma_1\|_2$, and $\pen_2 (\gamma_2) = \|\gamma_2\|_2$.
By applying Lemma \ref{lem:spline-L2-upper} with {$k=5$} to $\{X_i -\mu^*: U_i=1, i=1,\ldots,n\}$, we have
$\pr( \Omega_2 | U_1, \ldots, U_n) \ge 1-3\delta$ for any $(U_1,\ldots, U_n)$ such that $\Omega_1$ holds.
Taking the expectation over $(U_1,\ldots,U_n)$ given $\Omega_1$
shows that $\pr ( \Omega_2 | \Omega_1 ) \ge 1- {3}\delta$
and hence $\pr (\Omega_1 \cap \Omega_2 ) \ge (1-\delta)(1-{3}\delta)\ge 1-{4}\delta$.

Combining (\ref{eq:pro-logit-L2-upper-prf1}) and (\ref{eq:pro-logit-L2-upper-prf2}) in the event $\Omega_1 \cap \Omega_2$ indicates that, with probability at least $1-4\delta$,
the desired inequality holds for any $\gamma\in\Gamma$.
\end{proof}

\begin{lem} \label{lem:logit-concen-L2-lower}
Suppose that $X_1, \ldots, X_n$ are independent and identically distributed as
$X \sim P_\epsilon$.
Let $b>0$ be fixed and $g : \bbR^p \to [0,1]^{q }$ be a vector of fixed functions.
For a convex and twice differentiable function $f: (0,\infty) \to \bbR$, define
\begin{align*}
& \quad F (X_1, \dots, X_n)  \\
& = \sup_{\| w \|_2 = 1, \mu \in \bbR^p, \eta_0\in[0,1]^{q }} \left\{K_f (P_n, P_{\hat\theta}; b w^\T g_{\mu,\eta_0} ) - K_f (P_{\epsilon}, P_{\hat\theta}; b w^\T g_{\mu,\eta_0} ) \right\} \\
& = \sup_{\| w \|_2 = 1, \mu \in \bbR^p, \eta_0\in[0,1]^{q }} \left\{
\frac{1}{n}\sum_{i=1}^{n} f^{\prime}(\me^{b w^\T g_{\mu,\eta_0} (X_i) }) - \E f^{\prime}(\me^{b w^\T g_{\mu,\eta_0}(X) })
\right\} ,
\end{align*}
where $g_{\mu,\eta_0} (x)= g (x-\mu) - \eta_0$.
Suppose that conditionally on $(X_1,\ldots,X_n)$, the random variable $Z_{n,j} = \sup_{\mu \in \bbR^p, \eta_0\in [0,1]^q }  |n^{-1} \sum_{i=1}^{n}\epsilon_i g_{\mu,\eta_0,j}(X_i) |$
is sub-gaussian with tail parameter $\sqrt{V_g/n}$ for $j=1,\ldots, q$,
where $(\epsilon_1,\ldots,\epsilon_n)$ are Rademacher variables, independent of $(X_1,\ldots,X_n)$,
and $g_{\mu, \eta_0, j}: \bbR^p \to [-1,1]$ denotes the $j$th component of $g_{\mu,\eta_0}$.
Then for any $\delta > 0$, we have that
with probability at least $1-2 \delta$,
\begin{align*}
F(X_1, \dots, X_n) \le b R_{2,b \sqrt{q}} \left\{ C_{\mathrm{sg,12}} \sqrt{\frac{2q  V_g}{n}} + \sqrt{\frac{2 q  \log(\delta^{-1})}{n}} \right\},
\end{align*}
where $R_{2,b\sqrt{q}} =\sup_{ |u| \le b\sqrt{q }} \frac{\dif}{\dif u} f^{\prime}(\me^u)$
and $C_{\mathrm{sg,12}}$ is the universal constant in Lemma~\ref{lem:subg}.
\end{lem}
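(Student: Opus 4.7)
The plan is to mirror the structure of the proof of Lemma \ref{lem:logit-concen-L1-lower}, replacing the $L_1$-based arguments with $L_2$-based ones and carefully tracking the dependence on the ambient dimension $q$. Two ingredients are needed: a concentration inequality for $F(X_1,\ldots,X_n)$ around its mean, and a bound on the mean itself.

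First I verify the bounded differences condition. Because $\|w\|_2=1$ and $g_{\mu,\eta_0}(x)\in[-1,1]^q$, Cauchy--Schwarz gives $|bw^\T g_{\mu,\eta_0}(x)|\le b\sqrt{q}$. Using the convexity of $f$ (so $f'$ is non-decreasing) together with the fact that $u\mapsto f'(\me^u)$ is $R_{2,b\sqrt{q}}$-Lipschitz on $[-b\sqrt{q},b\sqrt{q}]$, replacing a single $X_i$ changes $F$ by at most $2bR_{2,b\sqrt{q}}\sqrt{q}/n$. McDiarmid's inequality then yields, with probability at least $1-2\delta$, $|F-\E F|\le bR_{2,b\sqrt{q}}\sqrt{2q\log(\delta^{-1})/n}$, which accounts for the second term of the target bound.

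Next I bound $\E F$. By symmetrization (Lemma \ref{lem:symm}) and the contraction principle (Lemma \ref{lem:contraction}) applied to the $R_{2,b\sqrt{q}}$-Lipschitz map $u\mapsto f'(\me^u)$, I obtain
$$\E F \le 2bR_{2,b\sqrt{q}}\, \E \sup_{\|w\|_2=1,\mu,\eta_0} \frac{1}{n}\sum_i \epsilon_i w^\T g_{\mu,\eta_0}(X_i) = 2bR_{2,b\sqrt{q}}\, \E \sup_{\mu,\eta_0}\Big\|\frac{1}{n}\sum_i \epsilon_i g_{\mu,\eta_0}(X_i)\Big\|_2,$$
where the equality uses Cauchy--Schwarz to evaluate the inner supremum over $w$. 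For the remaining term, the componentwise inequality $\sup_{\mu,\eta_0}\|v(\mu,\eta_0)\|_2 \le (\sum_{j=1}^q Z_{n,j}^2)^{1/2}$ holds by the definition of $Z_{n,j}$. Jensen's inequality combined with the assumed conditional sub-gaussian bound on $Z_{n,j}$ gives $\E Z_{n,j}^2 \le C_{\mathrm{sg,12}}^2 V_g/n$ via Lemma \ref{lem:subg}; summing over $j$ and taking a square root yields $\E\sup_{\mu,\eta_0}\|\cdot\|_2 \le C_{\mathrm{sg,12}}\sqrt{qV_g/n}$. Adding this to the McDiarmid deviation gives the stated inequality, with the factor of $2$ from symmetrization absorbed into the constant so as to match the form $C_{\mathrm{sg,12}}\sqrt{2qV_g/n}$.

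The main obstacle will be the dimension dependence in the expectation step. A naive route using $\|v\|_2 \le \sqrt{q}\|v\|_\infty$ together with a union bound over the $q$ coordinates would introduce a spurious $\sqrt{\log q}$ factor; avoiding it requires the second-moment argument above, which in turn relies on the sub-gaussian second-moment estimate encoded in $C_{\mathrm{sg,12}}$ rather than a tail-based maximal inequality. Compared with Lemma \ref{lem:logit-concen-L1-lower}, where the $L_\infty$ norm of the empirical process was controlled by the maximum-of-sub-gaussians bound in Lemma \ref{lem:subg-max}, here the passage from componentwise control to an $L_2$ norm must be handled via the componentwise squared bound, and this is the key technical modification relative to the $L_1$-penalized case.
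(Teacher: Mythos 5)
Your proposal follows the paper's proof step for step: bounded differences plus McDiarmid for the deviation around the mean, symmetrization (Lemma \ref{lem:symm}) and contraction (Lemma \ref{lem:contraction}) for the expectation, the dual-norm identity $\sup_{\|w\|_2=1} w^\T v = \|v\|_2$ to collapse the supremum over $w$, and finally the passage $\E\sup\|\cdot\|_2 \le \bigl(\sum_j \E Z_{n,j}^2\bigr)^{1/2}$ using the second-moment characterization of sub-gaussianity from Lemma \ref{lem:subg}. You also correctly identify the key point distinguishing this from the $L_1$ case (Lemma \ref{lem:logit-concen-L1-lower}), namely that a max-of-sub-gaussians bound would produce an extraneous $\sqrt{\log q}$, whereas the componentwise second-moment argument avoids it. The only caveat is numerical: Lemma \ref{lem:subg} actually gives $\E Z_{n,j}^2 \le 2 C_{\mathrm{sg,12}}^2 V_g/n$ (with a factor of $2$), not $C_{\mathrm{sg,12}}^2 V_g/n$, and the factor of $2$ from symmetrization does not literally cancel into the $\sqrt{2}$ in the stated bound; the paper's own proof carries the same looseness, absorbing it into the universal constant, so this is not a substantive defect.
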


\begin{proof} 
First, $F$ satisfies the bounded difference condition, because $|b w^\T g_{\mu,\eta_0}| \le b \sqrt{q }$ with $\| w \|_2\le 1$ and $f^{\prime}$ is non-decreasing by the convexity of $f$:
\begin{align*}
& \quad \sup_{X_1, \dots, X_n, X_{i}^{\prime}} \left|
F(X_1,\dots,X_n) - F(X_1,\dots, X_{i}^{\prime}, \dots, X_n) \right| \\
& \le \frac{f^{\prime}(\me^{b \sqrt{q }}) - f^{\prime}(\me^{- b\sqrt{q }})}{n} \le \frac{2 b\sqrt{q } R_{2,b} }{n},
\end{align*}
where $R_{2,b\sqrt{q}} =\sup_{ |u| \le b\sqrt{q }} \frac{\dif}{\dif u} f^{\prime}(\me^u)$.
By McDiarmid's inequality (\citeappend{MC89}), for any $t > 0$, we have that with probability at least $1-2\me^{-2nt^2}$,
\begin{align*}
|F(X_1, \dots, X_n) - \E F(X_1, \dots, X_n)| \le  2 b \sqrt{q } R_{2,b} t.
\end{align*}
For any $\delta >0$, taking $t= \sqrt{ \log(\delta^{-1}) / (2n) }$ shows that with probability at least $1-2\delta$,
\begin{align*}
|F(X_1, \dots, X_n) - \E F(X_1, \dots, X_n)| \le b  R_{2,b} \sqrt{\frac{2 q  \log(\delta^{-1})}{n}} .
\end{align*}

Next, the expectation of $F(X_1, \dots, X_n)$ can be bounded as follows:
\begin{align}
& \quad \E \sup_{\| w \|_2 = 1, \mu \in \bbR^p, \eta_0\in[0,1]^{q }} \left\{
\frac{1}{n}\sum_{i=1}^{n} f^{\prime}(\me^{ b w^\T g_{\mu,\eta_0}( X_i)}) - \E f^{\prime}(\me^{ b w^\T g_{\mu,\eta_0}(X)})
\right\} \nonumber \\
& \le 2\E \sup_{\| w \|_2 = 1, \mu \in \bbR^p, \eta_0\in[0,1]^{q }} \left\{
\frac{1}{n}\sum_{i=1}^{n} \epsilon_i f^{\prime}(\me^{ b w^\T g_{\mu,\eta_0}(  X_i)})
\right\} \label{eq:lem-logit-concen-L2-lower-2} \\
& \le 2 R_{2,b\sqrt{q}} \,\E  \sup_{\| w \|_2 = 1, \mu \in \bbR^p, \eta_0\in[0,1]^{q }}\left\{
\frac{1}{n}\sum_{i=1}^{n} \epsilon_i  b w^\T g_{\mu,\eta_0}( X_i) \right\}  \label{eq:lem-logit-concen-L2-lower-3} \\
& \le 2 b R_{2,b\sqrt{q}} \,\E \sup_{ \mu \in \bbR^p, \eta_0\in[0,1]^{q }}  \left\|\frac{1}{n}\sum_{i=1}^{n} \epsilon_i g_{\mu,\eta_0}(X_i)\right\|_2 \nonumber \\
&\le 2 b R_{2,b \sqrt{q}} C_{\mathrm{sg,12}} \sqrt{\frac{2 q  V_g}{n}} . \label{eq:lem-logit-concen-L2-lower-4}
\end{align}
Line (\ref{eq:lem-logit-concen-L2-lower-2}) follows from the symmetrization Lemma~\ref{lem:symm}, where $(\epsilon_1,\ldots,\epsilon_n)$ are Rademacher variables, independent of $(X_1,\ldots,X_n)$.
Line (\ref{eq:lem-logit-concen-L2-lower-3}) follows by Lemma \ref{lem:contraction}, because $f^{\prime}(\me^{t})$ is $R_{2,b\sqrt{q}}$-Lipschitz in $u \in [-b\sqrt{q },b\sqrt{q }]$.
Line (\ref{eq:lem-logit-concen-L2-lower-4}) follows because
\begin{align*}
&\quad \E  \sup_{\mu \in \bbR^p, \eta_0\in[0,1]^{q }} \left\|\frac{1}{n}\sum_{i=1}^{n}\epsilon_i g_{\mu,\eta_0}(X_i)\right\|_2
\le \left\{\E \sup_{\mu \in \bbR^p, \eta_0\in[0,1]^{q }} \left\|\frac{1}{n}\sum_{i=1}^{n}\epsilon_ig_{\mu,\eta_0}(X_i)\right\|_2^2 \right\}^{1/2} \\
&\le \left\{ \sum_{j=1}^{q } \E \sup_{\mu \in \bbR^p, \eta_0\in[0,1]^{q }} \left| \frac{1}{n}\sum_{i=1}^{n}\epsilon_i g_{\mu,\eta_0, j}(X_i)  \right|^2 \right\}^{1/2}\\
& \le C_{\mathrm{sg,12}} \sqrt{\frac{2 q V_g}{n}}.
\end{align*}
For the last step,
we use the assumption that conditionally on $(X_1,\ldots,X_n)$, the random variable $Z_{n,j} = \sup_{\mu \in \bbR^p, \eta_0\in [0,1]^q}  |n^{-1} \sum_{i=1}^{n}\epsilon_i g_{\mu,\eta_0,j}(X_i) |$
is sub-gaussian with tail parameter
$\sqrt{ V_g /n}$ for each $j=1,\ldots,q $, and apply Lemma~\ref{lem:subg} to obtain
$\E ( Z^2_{n,j} | X_1,\ldots,X_n) \le C_{\mathrm{sg,12}}^2 (2 V_g /n)$,
and then $\E ( Z^2_{n,j})\le C_{\mathrm{sg,12}}^2 (2 V_g /n)$.

Combining the tail probability and expectation bounds yields the desired result.
\end{proof}

\begin{lem} \label{lem:logit-L2-lower}
Suppose that $f: (0,\infty) \to \bbR$ is convex and three-times differentiable. Let $b >0$ be fixed.

(i) For any function $h: \bbR^p \to [-b,b]$, we have
\begin{align*}
K_f(P_{\epsilon}, P_{\hat\theta} ; h) \ge f^{\prime}(\me^{-b})\epsilon+  f^{\prime}(\me^{\E_{P_{\theta^*}} h(x) }) -  f^\# (\me^{\E_{P_{\hat\theta}} h(x)} ) - \frac{1}{2} R_{33,b} ,
\end{align*}
where $R_{33,b} = R_{31,b} \var_{P_{\theta^* }} h (x) + R_{32,b} \var_{P_{\hat\theta}}  h (x)$, $R_{31,b} = \sup_{|u|\le b} \frac{\dif^2}{ \dif u^2} \{-f^\prime(\me^u) \}$,
and $R_{32,b} = \sup_{ |u| \le b} \frac{\dif^2}{ \dif u^2} f^\# (\me^u)$.

(ii) If, in addition, $\E_{P_{\theta^*}} h(x) =0$ and $\E_{P_{\hat\theta}} h(x) \le 0$, then
\begin{align*}
K_f(P_{\epsilon}, P_{\hat\theta} ; h) \ge f^{\prime}(\me^{-b})\epsilon + R_{4,b} \left\{ \E_{P_{\theta^*}} h(x) - \E_{P_{\hat\theta}} h(x) \right\} - \frac{1}{2} R_{33,b} ,
\end{align*}
where $R_{4,b} = \inf_{ |u| \le b} \frac{\dif}{ \dif u} f^\# (\me^u)$.
\end{lem}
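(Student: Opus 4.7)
The plan is to refine the Taylor-expansion step of Lemma~\ref{lem:logit-L1-lower} by centering the expansions of $f^\prime(\me^{h(x)})$ and $f^\#(\me^{h(x)})$ at the measure-specific means $\bar h_* = \E_{P_{\theta^*}} h(x)$ and $\bar h_{\hat\theta} = \E_{P_{\hat\theta}} h(x)$, rather than at $0$. Because these are the natural centers, the linear Taylor terms vanish under the respective expectations and the quadratic remainders collapse to variances, giving the sharper $R_{33,b}$ bound in place of $b^2 R_{3,b}$.

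First, as in the proof of Lemma~\ref{lem:logit-L1-lower}, I would split off the contamination term by writing
\begin{align*}
K_f(P_\epsilon, P_{\hat\theta}; h)
&= \epsilon\, \E_Q f^\prime(\me^{h(x)}) + (1-\epsilon) \E_{P_{\theta^*}} f^\prime(\me^{h(x)}) - \E_{P_{\hat\theta}} f^\# (\me^{h(x)}) \\
&\ge f^\prime(\me^{-b})\epsilon + K_f(P_{\theta^*},P_{\hat\theta}; h),
\end{align*}
using $f^\prime(\me^{h(x)}) \ge f^\prime(\me^{-b})$ on the $\E_Q$ term (convexity of $f$) and the non-positivity of $f^\prime$ (implicit in context, as in Lemma~\ref{lem:logit-L1-lower}) to drop the $(1-\epsilon)$ factor. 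This reduces part~(i) to showing $K_f(P_{\theta^*}, P_{\hat\theta}; h) \ge f^\prime(\me^{\bar h_*}) - f^\#(\me^{\bar h_{\hat\theta}}) - \frac{1}{2} R_{33,b}$.

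Setting $g(u) = f^\prime(\me^u)$ and $\tilde g(u) = f^\#(\me^u)$, I would apply a second-order Taylor expansion of $g$ about $\bar h_*$ and of $\tilde g$ about $\bar h_{\hat\theta}$. Taking $\E_{P_{\theta^*}}$ and $\E_{P_{\hat\theta}}$ annihilates the linear terms, and the remainders involve $g^{\prime\prime}(\xi_1(x))$ and $\tilde g^{\prime\prime}(\xi_2(x))$ for intermediate $\xi_i(x) \in [-b,b]$, since all of $h(x)$, $\bar h_*$, $\bar h_{\hat\theta}$ lie in $[-b,b]$. By the definitions of $R_{31,b}$ and $R_{32,b}$, the two remainders are bounded by $-\frac{1}{2} R_{31,b} \var_{P_{\theta^*}} h(x)$ from below and $+\frac{1}{2} R_{32,b} \var_{P_{\hat\theta}} h(x)$ from above, yielding~(i). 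Note that Assumption~\ref{ass:f-condition2}(i) forces $g^{\prime\prime} \le 0$, but only the magnitude control $|g^{\prime\prime}| \le R_{31,b}$ is used.

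For part~(ii), the identities $f(1) = 0$ and $\bar h_* = 0$ give $f^\prime(\me^{\bar h_*}) = f^\prime(1) = 1\cdot f^\prime(1) - f(1) = f^\#(1)$, so the leading pair in~(i) becomes $\tilde g(0) - \tilde g(\bar h_{\hat\theta})$. Applying the mean value theorem on $[\bar h_{\hat\theta}, 0] \subset [-b, 0]$ and using $R_{4,b} = \inf_{|u|\le b} \tilde g^\prime(u)$ (positive by strict convexity of $f$) gives $\tilde g(0) - \tilde g(\bar h_{\hat\theta}) \ge R_{4,b}(-\bar h_{\hat\theta}) = R_{4,b}(\bar h_* - \bar h_{\hat\theta})$, where the hypothesis $\bar h_{\hat\theta} \le 0$ is essential to keep the factor $-\bar h_{\hat\theta}$ non-negative before invoking the infimum. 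The main obstacle is conceptual rather than computational: one must recognise that centering at the measure-specific means, rather than at a common point, is what converts the coarse $b^2$ term into a variance, and in~(ii) that the algebraic identity $f^\prime(1) = f^\#(1)$ turns the apparent constant offset in~(i) into the targeted moment-matching expression $R_{4,b}(\bar h_* - \bar h_{\hat\theta})$.
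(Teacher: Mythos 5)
Your proof is correct and follows essentially the same route as the paper's: split off the $\E_Q$ term via convexity and non-positivity of $f'$, then a second-order Taylor expansion centered at the two measure-specific means so the linear terms cancel and the quadratic remainders yield the variance bound $R_{33,b}$; part (ii) then follows from $f'(1)=f^\#(1)$ (since $f(1)=0$) and the mean value theorem on $f^\#(\me^{u})$. The only cosmetic difference is that you expand pointwise in $h(x)$ with Lagrange remainders $\xi_i(x)$, whereas the paper parametrizes a single path $\kappa(t)=\E_{P_{\theta^*}}f'(\me^{E_1+t\tilde h_1})-\E_{P_{\hat\theta}}f^\#(\me^{E_2+t\tilde h_2})$ and Taylor-expands in $t$; both give the same second-order remainder bound.
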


\begin{proof}
(i) First, $K_f(P_{\epsilon}, P_{\hat\theta} ; h) $ can be bounded as follows:
\begin{align*}
& \quad  K_f(P_{\epsilon}, P_{\hat\theta} ; h) \\
& =     \epsilon  \E_Q f^{\prime}(\me^{h(x)})+ (1-\epsilon)  \E_{P_{\theta^* }} f^{\prime}(\me^{h(x)}) -  \E_{P_{\hat\theta}}  f^\# ( \me^{h(x)} ) \\
& \ge f^{\prime}( \me^{-b} )\epsilon + \E_{P_{\theta^* }} f^{\prime}(\me^{h(x)}) - \E_{P_{\hat\theta}} f^\# (\me^{h(x)} ) , \\
& = f^{\prime}( \me^{-b} )\epsilon + K_f(P_{\theta^* },P_{\hat\theta} ;h) ,
\end{align*}
where the inequality follows because $ f^{\prime} (\me^{h(x)})  \ge f^{\prime}( \me^{-b} )$ for $h(x)\in [-b,b]$ by the convexity of $f$.
Next, consider the function
\begin{align*}
\kappa (t) = \E_{P_{\theta^* }} f^{\prime}(\me^{E_1 + t \tilde h_1 (x) }) - \E_{P_{\hat\theta}} f^\# ( \me^{E_2 + t \tilde h_2 (x)} ),
\end{align*}
where $E_1 = \E_{P_{\theta^*}} h(x)$, $E_2 = \E_{P_{\hat\theta}} h(x)$, $\tilde h_1(x) = h(x) - E_1$, and $\tilde h_2(x) = h(x) - E_2$.
A Taylor expansion of $\kappa(1) = K_f(P_{\theta^* },P_{\hat\theta} ; h)$ about $t=0$ yields
\begin{align*}
K_f(P_{\theta^* }, P_{\hat\theta} ;h)
& = f^{\prime}(\me^{E_1 }) - f^\# (\me^{ E_2} ) - \frac{1}{2} \kappa^{\dprime}(t),
\end{align*}
where for some $t \in [0,1]$,
\begin{align*}
\kappa^{\dprime} (t) & = - \E_{P_{\theta^* }}\left\{ \tilde h_1^2(x) \frac{\dif^2}{ \dif u^2} f^\prime(\me^{E_1+u} )|_{u=t \tilde h_1(x)} \right\} +
\E_{P_{\hat\theta}}\left\{ \tilde h_2^2(x) \frac{\dif^2}{ \dif u^2} f^\# (\me^{E_2 + u}) |_{u=t \tilde h_2(x)} \right\} .
\end{align*}
The desired result then follows because $E_1 + t \tilde h_1(x) \in [-b,b]$ and $E_2 + t \tilde h_2(x) \in [-b,b]$ for $t\in [0,1]$ and hence $\kappa^{\dprime} (t) \le R_{33,b}$ by the definition of $R_{33,b}$.

(ii) The inequality from (i) can be rewritten as
\begin{align*}
& \quad K_f(P_{\epsilon}, P_{\hat\theta} ; h) \\
& \ge f^{\prime}(\me^{-b})\epsilon + \left\{ f^\prime (\me^{\E_{P_{\theta^*}} h(x) })  -  f^\# (\me^{\E_{P_{\theta^*}} h(x) }) \right\}
+ f^\#(\me^{\E_{P_{\theta^*}} h(x) }) -  f^\# (\me^{\E_{P_{\hat\theta}} h(x)} ) - \frac{1}{2} R_{33,b} .
\end{align*}
If $\E_{P_{\theta^*}} h(x) =0$, then
\begin{align*}
& \quad K_f(P_{\epsilon}, P_{\hat\theta} ; h)
\ge f^{\prime}(\me^{-b})\epsilon + f^\#(\me^{\E_{P_{\theta^*}} h(x) }) -  f^\# (\me^{\E_{P_{\hat\theta}} h(x)} ) - \frac{1}{2} R_{33,b} .
\end{align*}
Moreover, if $\E_{P_{\theta^*}} h(x) - \E_{P_{\hat\theta}} h(x) \ge 0$, then
\begin{align*}
f^\#(\me^{\E_{P_{\theta^*}} h(x) }) -  f^\# (\me^{\E_{P_{\hat\theta}} h(x)} ) \ge R_{4,b} \left\{ \E_{P_{\theta^*}} h(x) - \E_{P_{\hat\theta}} h(x) \right\} .
\end{align*}
by the mean value theorem and the definition of $R_{4,b}$. Combining the preceding two displays gives the desired result.
\end{proof}

\begin{manualpro}{S9} \label{pro:logit-L2-lower}
Let $ b_2 >0$ be fixed and $b_2^\dag = b_2 \sqrt{2p}$. In the setting of Proposition~\ref{pro:logit-L2-detailed}, it holds with probability at least $1-2 \delta$ that for any $\gamma\in \Gamma_{\rpL}$ with
$\pen_2(\gamma)=  b_2 $, $\E_{P_{\theta^*}} h_{\gamma, \hat\mu}(x)=0$, and $\E_{P_{\hat \theta}} h_{\gamma, \hat\mu}(x)\le 0$,
\begin{align*}
& \quad  K_f(P_{n}, P_{\hat\theta}; h_{\gamma, \hat\mu}) \\
& \ge R_{4,b_2^\dag} \left\{ \E_{P_{\theta^*}} h_{\gamma, \hat\mu}(x) - \E_{P_{\hat\theta}} h_{\gamma, \hat\mu}(x) \right\}  +  f^{\prime}(\me^{- b_2^\dag})\epsilon
- 4 C_{\mathrm{sg,12}}^2 M_2  b_2^2 R_{3, b_2^\dag}
-  b_2 R_{2, b_2^\dag} \lambda_{22}
\end{align*}
where $R_{3,b} = R_{31,b} + R_{32,b}$ as in {Lemma~\ref{lem:logit-L1-lower}} and, with $C_{\mathrm{rad5}}  = C_{\mathrm{sg,12}} C_{\mathrm{rad3}} $,
$$
\lambda_{22} = C_{\mathrm{rad5}}  \sqrt{\frac{16 p}{n}} + \sqrt{\frac{2p \log(\delta^{-1})}{n}} ,
$$
depending on the universal constants $C_{\mathrm{sg, 12}}$ and $ C_{\mathrm{rad3}}$ in Lemma~\ref{lem:subg} and Corollary~\ref{cor:entropy-sg2}.
\end{manualpro}

\begin{proof}
By definition, for any $\gamma\in \Gamma_{\rpL}$, $h_\gamma(x)$ can be represented as $h_{\rpL,\beta, c }(x)$ such that $\beta_0=\gamma_0$ and $\pen_2 (\beta) =\sqrt{2} \pen_2(\gamma)$:
\begin{align*}
h_{\rpL, \beta,  c }(x) & = \beta_0 + \sum_{j=1}^p \beta_{1j} \ramp ( x_j -  c_j) \\
&  = \beta_0 + \beta_1^\T \varphi_{\rp,c}(x)  ,
\end{align*}
where $ c =( c _1, \ldots, c _p )^\T$ with $ c _j \in \{0,1\}$,
$\beta= (\beta_0, \beta_1^\T)^\T$ with
$\beta_1 =( \beta_{11}, \ldots, \beta_{1p} )^\T$, and
$\varphi_{\rp,c} (x): \bbR^p \to [0,1]^p$ denotes the vector of functions with the $j$th component $\ramp(x_j -c_j)$ for $j=1,\ldots,p$.
Then for any $\gamma\in \Gamma_{\rpL}$ with
$\pen_2(\gamma)=  b_2 $, we have $\beta_0=\gamma_0$ and $\pen_2(\beta)=  \sqrt{2} b_2$ correspondingly, and hence
$h_\gamma(x) -\gamma_0 = h_{\rpL, \beta,  c }(x) - \beta_0\in [- b_2  \sqrt{2p},  b_2  \sqrt{2p}]$
by the Cauchy--Schwartz inequality and the boundedness of the ramp function in $[0,1]$.
Moreover, $h_{\rpL,\beta, c }(x)$ can be expressed in the form
$\beta_0 + \pen_2(\beta) w^\T g(x)$, where for $q =2p $, $w \in \bbR^{q }$ is an $L_2$ unit vector,
$g: \bbR^p \to [0,1]^{q }$ is a vector of functions, including $\ramp(x_j)$ and $\ramp(x_j-1)$ for $j=1,\ldots,p$.
Parenthetically, at most one of the coefficients in $w$ associated with $\ramp(x_j)$ and $\ramp(x_j-1)$ is nonzero for each $j$,
although this property is not used in the subsequent discussion.

For any $\gamma\in \Gamma_{\rpL}$ with $\pen_2(\gamma)=  b_2 $ and $\E_{P_{\theta^*}} h_{\gamma, \hat\mu}(x)=0$, the function
$ h_{\gamma, \hat\mu}(x) $ can be expressed as
\begin{align*}
h_{\gamma, \hat\mu}(x)= \beta_1^\T \left\{ \varphi_{\rp,c}(x-\hat\mu) - \beta_{01} \right\} ,
\end{align*}
where $\beta_{01} = \E_{P_{\theta^*}} \varphi_{\rp,c}(x-\hat\mu)$.
The mean-centered ramp functions in $\varphi_{\rp,c}(x-\hat\mu) - \beta_{01} $ are bounded between $[-1,1]$, and hence
$ h_{\gamma, \hat\mu}(x) \in [- b_2  \sqrt{2p},  b_2  \sqrt{2p}]$ similarly as above.
Moreover, such $ h_{\gamma, \hat\mu}(x) $ can be expressed in the form
$ \pen_2(\beta) w^\T \{g(x -\hat\mu) - \eta_0\} $, where $w \in \bbR^{q }$ is an $L_2$ unit vector,
$g(x): \bbR^p \to [0,1]^{q }$ is defined as above, and
$\eta_0 = \E_{P_{\theta^*}} g(x - \hat\mu) \in [0,1]^{q }$ by the boundedness of the ramp function in $[0,1]$.

Next, $K_f(P_{n}, P_{\hat\theta}; h_{\gamma, \hat\mu})$ can be bounded as
\begin{align}
& \quad  K_f(P_{n}, P_{\hat\theta}; h_{\gamma, \hat\mu}) \nonumber \\
& \ge  K_f(P_\epsilon, P_{\hat\theta}; h_{\gamma, \hat\mu})  - \{ K_f(P_{n}, P_{\hat\theta}; h_{\gamma, \hat\mu}) - K_f(P_\epsilon, P_{\hat\theta}; h_{\gamma, \hat\mu})\} . \label{eq:pro-logit-L2-lower-prf1}
\end{align}
For any $\gamma\in \Gamma_{\rpL}$ with $\pen_2(\gamma)=  b_2 $, $E_{P_{\theta^*}} h_{\gamma, \hat\mu}(x)=0$, and $E_{P_{\hat \theta}} h_{\gamma, \hat\mu}(x)\le 0$,
applying {Lemma~\ref{lem:logit-L2-lower}(ii)} with $h= h_{\gamma, \hat\mu}$ and $b= b_2^\dag =b_2\sqrt{2p}$ yields
\begin{align*}
& K_f(P_\epsilon, P_{\hat\theta}; h_{\gamma, \hat\mu})
\ge f^{\prime}(\me^{- b_2^\dag })\epsilon \\
& \quad + R_{4, b_2^\dag} \left\{ \E_{P_{\theta^*}} h(x) - \E_{P_{\hat\theta}} h(x) \right\} -
\frac{1}{2}  \left\{ R_{31, b_2^\dag}\var_{P_{\theta^*}} h_{\gamma, \hat\mu}(x) + R_{32, b_2^\dag}\var_{P_{\hat\theta}} h_{\gamma, \hat\mu}(x) \right\}.
\end{align*}
By {Lemma~\ref{lem:lip-gaus}(i)}, $\var_{P_{\theta^*}} h_{\gamma, \hat\mu}(x)$ can bounded as follows:
\begin{align*}
& \quad \var_{P_{\theta^*}} h_{\gamma, \hat\mu}(x)
= \var_{P_{\theta^*}} \beta_1^\T \varphi_{\rp,c}(x-\hat\mu)  \\
& \le \|\beta_1\|_2^2 \cdot 2 C_{\mathrm{sg,12}}^2 (\sqrt{2})^2 \| \Sigma^*\|_\op
= 4 \pen_2^2(\beta) C_{\mathrm{sg,12}}^2 M_2 .
\end{align*}
Similarly, $\var_{P_{\theta^*}} h_{\gamma, \hat\mu}(x)$ can also be bounded by $4 \pen_2^2(\beta) C_{\mathrm{sg,12}}^2 M_2 $, because $\| \hat \Sigma\|_\op \le M_2$.
Hence $K_f(P_\epsilon, P_{\hat\theta}; h_{\gamma, \hat\mu}) $ can be bounded as
\begin{align}
& \quad K_f(P_\epsilon, P_{\hat\theta}; h_{\gamma, \hat\mu}) \nonumber \\
& \ge f^{\prime}(\me^{- b_2^\dag})\epsilon + R_{4, b_2^\dag} \left\{ \E_{P_{\theta^*}} h(x) - \E_{P_{\hat\theta}} h(x) \right\} - 4 C_{\mathrm{sg,12}}^2 M_2  b_2^2 R_{3, b_2^\dag},\label{eq:pro-logit-L2-lower-prf2}
\end{align}
where $R_{3,b} = R_{31,b} + R_{32,b}$ {as in {Lemma~\ref{lem:logit-L1-lower}}.}
Moreover, by Lemma~\ref{lem:logit-concen-L1-lower} with $b=\sqrt{2}  b_2 $ and $g(x) \in [0,1]^{q }$ defined above, it holds with probability at least $1-2\delta$ that
for any $\gamma\in \Gamma_{\rpL}$ with $\gamma_0=0$ and $\pen_2(\gamma)=  b_2 $,
\begin{align}
& \quad \{ K_f(P_{n}, P_{\hat\theta}; h_{\gamma, \hat\mu}) - K_f(P_\epsilon, P_{\hat\theta}; h_{\gamma, \hat\mu})\} \nonumber \\
& \le  b_2 R_{2, b_2\sqrt{q}} \left\{  C_{\mathrm{sg,12}} \sqrt{\frac{2 q V_g }{n}} + \sqrt{\frac{q  \log(\delta^{-1})}{n}} \right\} \nonumber \\
& \le  b_2 R_{2, b_2\sqrt{2p}} \left\{  C_{\mathrm{sg,12}} C_{\mathrm{rad3}} \sqrt{\frac{16 p}{n}} + \sqrt{\frac{2p \log(\delta^{-1})}{n}} \right\} , \label{eq:pro-logit-L2-lower-prf3}
\end{align}
where $V_g  = 4 C_{\mathrm{rad3}}^2$ is determined in Lemma~\ref{lem:logit-concen-L1-lower} as follows.
For $j=1,\ldots, q$,
consider the function class $\mathcal{G}_j = \{g_{\mu, \eta_0, j}: \mu \in \bbR^p, \eta_0 \in [0,1]^q \}$,
where $\mu = (\mu_1,\ldots,\mu_p)^\T$, $\eta_0=(\eta_{01},\ldots,\eta_{0q})^\T$, and, as defined in Lemma~\ref{lem:logit-concen-L1-lower}, $g_{\mu,\eta_0,j}(x)$ is
of the form $\ramp(x_{j_1} - \mu_{j_1}) - \eta_{0j}$ or $\ramp(x_{j_1}- \mu_{j_1}-1) - \eta_{0j}$.
By Lemma~\ref{lem:ramp-VC}, the VC index of moving-knot ramp functions is 2.
By Lemma~\ref{lem:intercept-VC}, the VC index of constant functions is also 2.
By applying Corollary~\ref{cor:entropy-sg2} (ii) with vanishing $\mathcal G$,
we obtain that conditionally on $(X_1,\ldots,X_n)$, the random variable
$Z_{n,j} = \sup_{\mu \in \bbR^p, \eta_0\in [0,1]^q }  |n^{-1} \sum_{i=1}^{n}\epsilon_i g_{\mu,\eta_0,j}(X_i) | = \sup_{f_j \in\mathcal{G}_j}  |n^{-1} \sum_{i=1}^{n}\epsilon_i f_j (X_i) |$
is sub-gaussian with tail parameter $C_{\mathrm{rad3}} \sqrt{4/n}$ for $j=1,\ldots,q$.

Combining the inequalities (\ref{eq:pro-logit-L2-lower-prf1})--(\ref{eq:pro-logit-L2-lower-prf3}) leads to the desired result.
\end{proof}

\begin{manualpro}{S10} \label{pro:logit-L2-combine}
In the setting of Proposition~\ref{pro:logit-L2-detailed} or Proposition~\ref{pro:hinge-L2-detailed}, suppose that for $a \in (0,1/2)$,
\begin{align}
D  \stackrel{\text{def}}{=} \sup_{\gamma \in \Gamma_{\rpL}, \pen_2(\gamma)=\sqrt{1/2}}
\left\{  \E_{P_{\theta^*}} h_{\gamma, \hat\mu}(x) - \E_{P_{\hat\theta}} h_{\gamma, \hat\mu}(x) \right\}  \le a. \label{eq:pro-spline-L1-combine-cond}
\end{align}
Then we have
\begin{align}
& \| \hat \mu- \mu^* \|_2\le S_{4,a} D ,  \label{eq:pro-logit-L2-combine-1} \\
& \| \hat \sigma - \sigma^* \|_2 \le S_{5} \left( D   + \|\hat\mu-\mu^*\|_2/2 \right) \le S_{6,a} D ,  \label{eq:pro-logit-L2-combine-2}
\end{align}
where {$S_{4,a}$, $S_{5}$, and $S_{6,a}$ are defined as in Lemma~\ref{lem:local-linear1} and Remark \ref{rem:local-linear1} with $M= M_2$.}
\end{manualpro}

\begin{proof}
For any $\gamma \in \Gamma_{\rpL}$ with $\pen_2(\gamma) =\sqrt{1/2}$, the function $ h_\gamma ( x) $ can be obtained as $h_{\rpL, \beta,  c }(x )$
with $ \pen_2 (\beta) = 1$.
For $j=1,\ldots,p$, we restrict $h_{\gamma,\hat\mu}(x)$ in (\ref{eq:pro-spline-L1-combine-cond}) such that $h_{\gamma}(x)$ is a ramp function of $x_j$,
in the form $\pm \ramp(x_j -c)$ for $c \in \{0,1\}$. Applying Lemma~\ref{lem:local-linear1} shows
that there exists $h_j^{(1)} (x_j)$ in the form $\pm \ramp(x_j)$ and $h_j^{(2)} (x_j)$ in the form $\pm \ramp (x_j-1)$ such that
\begin{align}
& | \hat \mu_j - \mu^*_j | \le S_{4,a} \left\{  \E_{P_{\theta^*}} h_j^{(1)} (x_j - \hat\mu_j) - \E_{P_{\hat\theta}} h_j^{(1)} (x_j - \hat\mu_j) \right\}, \label{eq:pro-logit-L2-combine-prf1}  \\
& | \hat \sigma_j - \sigma^*_j | \le S_{5} \left\{  \E_{P_{\theta^*}} h_j^{(2)} (x_j - \hat\mu_j) - \E_{P_{\hat\theta}} h_j^{(2)} (x_j - \hat\mu_j) \right\} + S_{5} | \hat\mu_j - \mu^*_j|/2 .
\label{eq:pro-logit-L2-combine-prf2}
\end{align}

From (\ref{eq:pro-logit-L2-combine-prf1}), we have that for any $L_2$ unit vector $w = (w_1, \ldots, w_p)^\T$,
\begin{align*}
\sum_{j=1}^p  | w_j ( \hat \mu_j- \mu_j) |
& \le  S_{4,a} \sum_{j=1}^p  |w_j| \left\{  \E_{P_{\theta^*}} h_j^{(1)} (x_j - \hat\mu_j) - \E_{P_{\hat\theta}} h_j^{(1)} (x_j - \hat\mu_j) \right\} \\
&  =  S_{4,a} \left\{ \E_{P_{\theta^*}} h^{(1)} (x - \hat\mu) - \E_{P_{\hat\theta}} h^{(1)} (x - \hat\mu) \right\} ,
\end{align*}
where $h^{(1)} (x) = \sum_{j=1}^p  |w_j| h_j^{(1)} (x_j )$. In fact, $h^{(1)} (x)$ can be expressed as $h_{\rpL, \beta,  c }(x )$ such that $c=(0,\ldots,0)^\T$ and
each component in $\beta_1$ is either $|w_j|$ or $-|w_j|$ for $j=1,\ldots,p$, which implies that $\pen_2( \beta ) = \| w\|_2 =1$.
Hence by the definition of $D $, we obtain (\ref{eq:pro-logit-L2-combine-1}).

Similarly, from (\ref{eq:pro-logit-L2-combine-prf2}), we have that for any $L_2$ unit vector $w = (w_1, \ldots, w_p)^\T$,
\begin{align*}
& \quad \sum_{j=1}^p  | w_j ( \hat \sigma_j- \sigma_j) | \\
& \le  S_{5} \sum_{j=1}^p  |w_j| \left\{  \E_{P_{\theta^*}} h_j^{(2)} (x_j - \hat\mu_j) - \E_{P_{\hat\theta}} h_j^{(2)} (x_j - \hat\mu_j) \right\} + S_{5} \sum_{j=1}^p |w_j (\hat\mu_j - \mu_j^*)|/2 \\
&  =  S_{5} \left\{ \E_{P_{\theta^*}} h^{(2)} (x - \hat\mu) - \E_{P_{\hat\theta}} h^{(2)} (x - \hat\mu) \right\} + S_{5} | w^\T (\hat \mu-\mu^*)|/2,
\end{align*}
where $h^{(2)} (x) = \sum_{j=1}^p  |w_j| h_j^{(2)} (x_j )$, which can be expressed in the form $h_{\rpL, \beta,  c }(x )$ with $c=(1,\ldots,1)^\T$ and $\pen_2( \beta ) = \| w\|_2 =1$.
Hence by the definition of $D $, we obtain (\ref{eq:pro-logit-L2-combine-2}).
\end{proof}


\begin{manualpro}{S11} \label{pro:logit-L2v-lower}
Let $ b_3 >0$ be fixed and $b_3^\dag = b_3 \sqrt{p(p-1)}$. In the setting of {Proposition~\ref{pro:logit-L2-detailed}}, it holds with probability at least $1-2 \delta$ that for any $\gamma\in \Gamma_{\rpQ}$ with
$\pen_2(\gamma)=  b_3 $, $\E_{P_{\theta^*}} h_{\gamma, \hat\mu}(x)=0$, and $\E_{P_{\hat \theta}} h_{\gamma, \hat\mu}(x)\le 0$,
\begin{align*}
& \quad  K_f(P_{n}, P_{\hat\theta}; h_{\gamma, \hat\mu}) \\
& \ge R_{4, 2b_3^\dag} \left\{ \E_{P_{\theta^*}} h(x) - \E_{P_{\hat\theta}} h(x) \right\} + f^{\prime}(\me^{- 2 b_3^\dag })\epsilon - (80 C_{\mathrm{sg,12}}^2  M_2) p b_3^2 R_{3, 2 b_3^\dag}
- \sqrt{p} b_3  R_{2, b_3^\dag} \lambda_{32},
\end{align*}
{where $R_{3,b} = R_{31,b} + R_{32,b}$ as in Lemma~\ref{lem:logit-L1-lower} and, with $C_{\mathrm{rad5}} = C_{\mathrm{sg,12}} C_{\mathrm{rad3}}$,}
$$
\lambda_{32} = {C_{\mathrm{rad5}}} \sqrt{\frac{12 (p-1)}{n}} + \sqrt{\frac{(p-1)\log(\delta^{-1})}{n}} ,
$$
{depending on the universal constants $C_{\mathrm{sg, 12}}$ and $ C_{\mathrm{rad3}}$ in Lemma~\ref{lem:subg} and Corollary~\ref{cor:entropy-sg2}.}
\end{manualpro}

\begin{proof}
By definition, for any $\gamma\in \Gamma_{\rpQ}$, $h_\gamma(x)$ can be represented as $h_{\rpQ,\beta }(x)$ such that $\beta_0=\gamma_0$ and $\pen_2 (\beta) =2 \pen_2(\gamma)$, where
\begin{align*}
h_{\rpQ, \beta }(x) & = \beta_0 + \sum_{1\le i\not= j \le p} \beta_{2,ij} \ramp( x_i) \ramp(x_j) \\
& = \beta_0 + \beta_2^\T \vec(\varphi_{\rp} (x) \otimes \varphi_{\rp}(x)) ,
\end{align*}
where $\beta= (\beta_0, \beta_2^\T)^\T$ with
$\beta_2 = (\beta_{2,ij}: 1\le i\not= j\le p)^\T$, and $\varphi_{\rp} (x): \bbR^p \to [0,1]^p$ denotes the vector of functions with the $j$th component $\ramp(x_j)$ for $j=1,\ldots,p$.
Then for any $\gamma\in \Gamma_{\rpQ}$ with $\pen_2(\gamma)=  b_3 $, we have
$\beta_0 = \gamma_0$ and $\pen_2(\beta)= 2 b_3$ correspondingly, and hence
$h_\gamma(x) -\gamma_0 = h_{\rpQ, \beta }(x) - \beta_0\in [- 2 b_3 \sqrt{p(p-1)},  2 b_3 \sqrt{p(p-1)}]$,
by the boundedness of the ramp function in $[0,1]$ and the Cauchy--Schwartz inequality, $\| \beta_2 \|_1 \le \sqrt{p(p-1)} \|\beta_2\|_2$.
Moreover, $h_{\rpQ,\beta }(x)$ can be expressed in the form
$\beta_0 + \pen_2(\beta) w^\T g(x)$, where for $q =p (p-1)$, $w \in \bbR^{q }$ is an $L_2$ unit vector,
$g: \bbR^p \to [0,1]^{q }$ is a vector of functions, including $\ramp(x_i)\ramp(x_j)$ for $1 \le i\not= j\le p$.
For symmetry, $ \ramp(x_i)\ramp(x_j)$ and $\ramp(x_j)\ramp(x_i)$ are included as two distinct components in $g$,
and the corresponding coefficients are assumed to be identical to each other in $w$.

For any $\gamma\in \Gamma_{\rpQ}$ with $\pen_2(\gamma)=  b_3 $ and $\E_{P_{\theta^*}} h_{\gamma, \hat\mu}(x)=0$, the function
$ h_{\gamma, \hat\mu}(x) $ can be expressed as
\begin{align*}
h_{\gamma, \hat\mu}(x)= \beta_2^\T \left\{ \varphi_\rp(x-\hat\mu) - \beta_0 \right\} ,
\end{align*}
where $\beta_0 = \E_{P_{\theta^*}} \varphi_\rp(x-\hat\mu)$.
The mean-centered ramp functions in $\varphi_\rp(x-\hat\mu) - \beta_0 $  are bounded between $[-1,1]$, and hence
$ h_{\gamma, \hat\mu}(x) \in [- b_3  2p,  b_3  2p]$ similarly as above.
Moreover, such $ h_{\gamma, \hat\mu}(x) $ can be expressed in the form
$ \pen_2(\beta) w^\T \{g(x -\hat\mu) - \eta_0\} $, where $w \in \bbR^{q }$ is an $L_2$ unit vector,
$g(x): \bbR^p \to [0,1]^{q }$ is defined as above, and
$\eta_0 = \E_{P_{\theta^*}} g(x - \hat\mu) \in [0,1]^{q }$ by the boundedness of the ramp function in $[0,1]$.

Next, $K_f(P_{n}, P_{\hat\theta}; h_{\gamma, \hat\mu})$ can be bounded as
\begin{align}
& \quad  K_f(P_{n}, P_{\hat\theta}; h_{\gamma, \hat\mu}) \nonumber \\
& \ge  K_f(P_\epsilon, P_{\hat\theta}; h_{\gamma, \hat\mu})  - | K_f(P_{n}, P_{\hat\theta}; h_{\gamma, \hat\mu}) - K_f(P_\epsilon, P_{\hat\theta}; h_{\gamma, \hat\mu})| . \label{eq:pro-logit-L2v-lower-prf1}
\end{align}
For any $\gamma\in \Gamma_{\rpQ}$ with $\pen_2(\gamma)=  b_3 $, $E_{P_{\theta^*}} h_{\gamma, \hat\mu}(x)=0$, and $E_{P_{\hat \theta}} h_{\gamma, \hat\mu}(x)\le 0$,
applying {Lemma~\ref{lem:logit-L2-lower}(ii)} with $h= h_{\gamma, \hat\mu}$ and $b= 2 b_3^\dag = 2 b_3 \sqrt{p(p-1)}$ yields
\begin{align*}
& K_f(P_\epsilon, P_{\hat\theta}; h_{\gamma, \hat\mu})
\ge f^{\prime}(\me^{- 2 b_3^\dag })\epsilon \\
& \quad + R_{4, 2 b_3^\dag} \left\{ \E_{P_{\theta^*}} h(x) - \E_{P_{\hat\theta}} h(x) \right\} -
\frac{1}{2}  \left\{ R_{31, 2 b_3^\dag}\var_{P_{\theta^*}} h_{\gamma, \hat\mu}(x) + R_{32, 2 b_3^\dag}\var_{P_{\hat\theta}} h_{\gamma, \hat\mu}(x) \right\}.
\end{align*}
By {Lemma~\ref{lem:lip-gaus2}(ii)}, $\var_{P_{\theta^*}} h_{\gamma, \hat\mu}(x)$ can bounded as follows:
\begin{align*}
& \quad \var_{P_{\theta^*}} h_{\gamma, \hat\mu}(x) \\
& = \var_{P_{\theta^*}} \beta_2^\T \vec(\varphi_\rp (x-\hat\mu) \otimes \varphi_\rp(x-\hat\mu)) \\
& \le \| \beta_2\|_2^2 \cdot 20 C_{\mathrm{sg,12}}^2 (\sqrt{2})^2 p \| \Sigma^*\|_\op \\
& \le 40 \pen_2^2(\beta) C_{\mathrm{sg,12}}^2 p M_2 .
\end{align*}
Similarly, $\var_{P_{\theta^*}} h_{\gamma, \hat\mu}(x)$ can also be bounded by $40 \pen_2^2(\beta) C_{\mathrm{sg,12}}^2 p M_2 $, because $\| \hat \Sigma\|_\op \le M_2$.
Hence, with $\pen_2(\beta) = 2b_3$, $K_f(P_\epsilon, P_{\hat\theta}; h_{\gamma, \hat\mu}) $ can be bounded as
\begin{align}
& \quad K_f(P_\epsilon, P_{\hat\theta}; h_{\gamma, \hat\mu}) \nonumber \\
& \ge  f^{\prime}(\me^{- 2 b_3^\dag })\epsilon + R_{4, 2 b_3^\dag} \left\{ \E_{P_{\theta^*}} h(x) - \E_{P_{\hat\theta}} h(x) \right\} -80 C_{\mathrm{sg,12}}^2  p M_2 b_3^2 R_{3, 2 b_3^\dag},
\label{eq:pro-logit-L2v-lower-prf2}
\end{align}
where $R_{3,b} = R_{31,b} + R_{32,b}$ {as in Lemma~\ref{lem:logit-L1-lower}.}
Moreover, by Lemma~\ref{lem:logit-concen-L1-lower} with $b=2 b_3 $ and $g(x) \in [0,1]^{q }$ defined above, it holds with probability at least $1-2\delta$ that
for any $\gamma\in \Gamma_{\rpQ}$ with $\gamma_0=0$ and $\pen_2(\gamma)=  b_3 $,
\begin{align}
& \quad | K_f(P_{n}, P_{\hat\theta}; h_{\gamma, \hat\mu}) - K_f(P_\epsilon, P_{\hat\theta}; h_{\gamma, \hat\mu})| \nonumber \\
& \le  b_3  R_{2, b_3 \sqrt{q} } \left\{  C_{\mathrm{sg,12}} \sqrt{\frac{2 q V_g }{n}} + \sqrt{\frac{q  \log(\delta^{-1})}{n}} \right\} \nonumber \\
& =  b_3  R_{2, b_3^\dag} \left\{  C_{\mathrm{sg,12}} C_{\mathrm{rad3}} \sqrt{\frac{12p(p-1)}{n}} + \sqrt{\frac{p(p-1) \log(\delta^{-1})}{n}} \right\} , \label{eq:pro-logit-L2v-lower-prf3}
\end{align}
where $V_g  = 6 C_{\mathrm{rad3}}^2$ is determined in Lemma~\ref{lem:logit-concen-L1-lower} as follows.
For $j=1,\ldots, q$,
consider the function class $\mathcal{G}_j = \{g_{\mu, \eta_0, j}: \mu \in \bbR^p, \eta_0 \in [0,1]^q \}$,
where $\mu = (\mu_1,\ldots,\mu_p)^\T$, $\eta_0=(\eta_{01},\ldots,\eta_{0q})^\T$, and, as defined in Lemma~\ref{lem:logit-concen-L1-lower}, $g_{\mu,\eta_0,j}(x)$ is
of the form $\ramp(x_{j_1} - \mu_{j_1}) \ramp(x_{j_2} - \mu_{j_2}) - \eta_{0j}$ for $1 \le j_1 \not= j_2 \le p$.
By Lemma~\ref{lem:ramp-VC}, the VC index of moving-knot ramp functions is 2.
By Lemma~\ref{lem:intercept-VC}, the VC index of constant functions is also 2.
By applying Corollary~\ref{cor:entropy-sg2} (ii),
we obtain that conditionally on $(X_1,\ldots,X_n)$, the random variable
$Z_{n,j} = \sup_{\mu \in \bbR^p, \eta_0\in [0,1]^q }  |n^{-1} \sum_{i=1}^{n}\epsilon_i g_{\mu,\eta_0,j}(X_i) | = \sup_{f_j \in\mathcal{G}_j}  |n^{-1} \sum_{i=1}^{n}\epsilon_i f_j (X_i) |$
is sub-gaussian with tail parameter $C_{\mathrm{rad3}} \sqrt{6/n}$ for $j=1,\ldots,q$.

Combining the inequalities (\ref{eq:pro-logit-L2v-lower-prf1})--(\ref{eq:pro-logit-L2v-lower-prf3}) leads to the desired result.
\end{proof}

\begin{manualpro}{S12} \label{pro:logit-L2v-combine}
In the setting of {Proposition~\ref{pro:logit-L2-detailed} or Proposition~\ref{pro:hinge-L2-detailed}}, denote
\begin{align*}
D  = \sup_{\gamma \in \Gamma_{\rpQ}, \pen_2(\gamma)=1/2}
\left\{  \E_{P_{\theta^*}} h_{\gamma, \hat\mu}(x) - \E_{P_{\hat\theta}} h_{\gamma, \hat\mu}(x) \right\} .
\end{align*}
Then we have
{
\begin{align*}
& \| \hat \Sigma - \Sigma^* \|_\fro \le  2 M_2^{1/2} \sqrt{p}  \| \hat \sigma - \sigma^*\|_2  + S_{7} (\sqrt{2p} \Delta_{\hat\mu,\hat\sigma} + D ),
\end{align*}
where $\Delta_{\hat\mu,\hat\sigma} = ( \| \hat\mu-\mu^*\|_2^2 + \| \hat\sigma - \sigma^*\|_2^2 )^{1/2}$ and $S_{7}$ is defined as in Lemma~\ref{lem:local-linear2} with $M=M_2$. }
\end{manualpro}

\begin{proof}
For any $\gamma \in \Gamma_{\rpQ}$ with $\pen_2(\gamma) =1/2$, the function $ h_\gamma ( x) \in \mathcal{H}_\rpQ $ can be obtained as $h_{\rpQ, \beta }(x )$
with $ \pen_2 (\beta) = 1$.
First, we handle the effect of different means and standard deviations between $P_{\theta^*}$ and $P_{\hat\theta}$ in $D $. Denote
\begin{align*}
D^\dag = \sup_{\gamma \in \Gamma_{\rpQ}, \pen_2(\gamma)=1/2}
\left\{  \E_{P_{\theta^\dag}} h_{\gamma, \hat\mu}(x) - \E_{P_{\hat\theta}} h_{\gamma, \hat\mu}(x) \right\} ,
\end{align*}
{where 
$\theta^\dag = (\hat\mu, \diag(\hat\sigma) \Sigma_0^* \diag (\hat\sigma))$
and $\Sigma_0^*$ is defined as the correlation matrix such that $\Sigma^* = \diag(\sigma^*) \Sigma_0^* \diag (\sigma^*)$. }
Then $D^\dag$ can be related to $D $ as follows:
\begin{align}
D^\dag \le D  + \sqrt{2p} \Delta_{\hat\mu,\hat\sigma} , \label{eq:pro-spline-L2v-combine-prf1}
\end{align}
where $\Delta_{\hat\mu,\hat\sigma} = ( \| \hat\mu-\mu^*\|_2^2 + \| \hat\sigma - \sigma^*\|_2^2 )^{1/2}$.
In fact, by Lemma~\ref{lem:lip-matching-bd} with $g(x)$ set to $\varphi_\rp(x)$, which is {($1/2$)}-Lipschitz and componentwise bounded in $[0,1]$,
we have that for any $\gamma \in \Gamma_{\rpQ}$ with $\pen_2(\gamma) =1/2$,
\begin{align*}
\left| \E_{P_{\theta^\dag }}  h_{\gamma,\hat\mu}(x) -\E_{P_{\theta^* }} h_{\gamma,\hat\mu}(x) \right|
\le \sqrt{2p} \Delta_{\hat\mu,\hat\sigma} .
\end{align*}

For each pair $1 \le i \not=j \le p$, we restrict $h_{\gamma,\hat\mu}(x)$ such that $h_{\gamma}(x)$ is
$\ramp(x_i)\ramp(x_j)$ or $-\ramp(x_i)\ramp(x_j)$.
Applying Lemma~\ref{lem:local-linear2} shows
that there exists $r_{ij}\in \{-1,1\}$ such that
\begin{align*}
| \hat \rho_{ij} - \rho^*_{ij} | \hat\sigma_i \hat\sigma_j
\le S_{7} r_{ij} \left\{  \E_{P_{\theta^\dag}} h_{ij} (x-\hat\mu) - \E_{P_{\hat\theta}} h_{ij} (x-\hat\mu) \right\} ,
\end{align*}
where $h_{ij}(x) = \ramp(x_i) \ramp(x_j)$.
By the triangle inequality, we have
\begin{align*}
& \quad | \hat \rho_{ij}  \hat\sigma_i \hat\sigma_j  - \rho^*_{ij} \sigma_i^* \sigma_j^* |\\
& \le \hat\sigma_i | \hat\sigma_j -\sigma_j^*| + \sigma_j^* |\hat \sigma_i^* - \sigma_i^* | + | \hat \rho_{ij} - \rho^*_{ij} | \hat\sigma_i \hat\sigma_j \\
& \le M_2^{1/2} |\hat\sigma_i-\sigma_i^*| + M_2^{1/2} |\hat\sigma_j -\sigma_j^*|
+ S_{7} r_{ij} \left\{  \E_{P_{\theta^\dag}} h_{ij} (x-\hat\mu) - \E_{P_{\hat\theta}} h_{ij} (x-\hat\mu) \right\} . \nonumber
\end{align*}
In addition, we have $|\hat\sigma_i^2 - {\sigma_i^*}^2 | =  | (\hat \sigma_i + \sigma_i^*)( \hat \sigma_i - \sigma_i^*)| \le 2 M_2^{1/2}  | \hat \sigma_i - \sigma_i^*|$.
Then for any $L_2$ unit vector $w = (w_{ij}: 1\le i,j \le p)^\T \in \bbR^{p\times p}$,
\begin{align*}
& \quad \sum_{i=1}^p \left| w_{ii} (\hat\sigma_i^2 - {\sigma_i^*}^2) \right|  + \sum_{1\le i\not=j\le p}  \left| w_{ij} (\hat \rho_{ij}  \hat\sigma_i \hat\sigma_j  - \rho^*_{ij} \sigma_1^* \sigma_2^*) \right| \\
& \le 2 M_2^{1/2} \sum_{i=1}^p |w_{ii}| | \hat \sigma_i - \sigma_i^*| + M_2^{1/2}  \sum_{1\le i\not=j\le p} |w_{ij}| \left( |\hat\sigma_i-\sigma_i^*| + |\hat\sigma_j -\sigma_j^*|  \right)  \\
& \quad + S_{7} \sum_{1\le i\not=j\le p} |w_{ij}| r_{ij} \left\{  \E_{P_{\theta^\dag}} h_{ij} (x-\hat\mu) - \E_{P_{\hat\theta}} h_{ij} (x-\hat\mu) \right\} \\
& =  M_2^{1/2}  \sum_{1\le i,j\le p} |w_{ij}| \left( |\hat\sigma_i-\sigma_i^*| + |\hat\sigma_j -\sigma_j^*|  \right)
+ S_{7} \left\{ \E_{P_{\theta^\dag}} h (x - \hat\mu) - \E_{P_{\hat\theta}} h (x - \hat\mu) \right\},
\end{align*}
where $h(x) = \sum_{1\le i \not=j \le p} |w_{ij}| r_{ij} h_{ij} (x )$.
The function $h (x)$ can be expressed as $h_{\rpQ, \beta }(x )$ such that $\beta_{2,ii}=0$ for $i=1,\ldots,p$ and $\beta_{2,ij} = |w_{ij} | r_{ij}$ for $1 \le i \not=j \le p$,
and hence $\pen_2 ( \beta) \le \| w \|_2 =1$. By the definition of $D^\dag$, we have
$ \E_{P_{\theta^\dag}} h (x - \hat\mu) - \E_{P_{\hat\theta}} h (x - \hat\mu) \le D^\dag$.
Moreover, by the Cauchy--Schwartz inequality,
$ \sum_{1\le i,j\le p} |w_{ij}| |\hat\sigma_i-\sigma_i^*| \le \sqrt{p} \| \hat \sigma - \sigma^*\|_2 $.
Substituting these inequalities into the preceding display shows that
\begin{align}
& \quad \sum_{i=1}^p \left| w_{ii} (\hat\sigma_i^2 - {\sigma_i^*}^2) \right|
+ \sum_{1\le i\not=j\le p}  \left| w_{ij} (\hat \rho_{ij}  \hat\sigma_i \hat\sigma_j  - \rho^*_{ij} \sigma_1^* \sigma_2^*) \right| \nonumber \\
& \le 2 M_2^{1/2} \sqrt{p}  \| \hat \sigma - \sigma^*\|_2  + S_{7} D^\dag .  \label{eq:pro-spline-L2v-combine-prf2}
\end{align}
Combining (\ref{eq:pro-spline-L2v-combine-prf1}) and (\ref{eq:pro-spline-L2v-combine-prf2}) yields the desired result.
\end{proof}


\subsection{Details in main proof of Theorem \ref{thm:hinge-L1}}

For completeness, we restate Proposition \ref{pro:hinge-L1-detailed} in the main proof of Theorem \ref{thm:hinge-L1} below.
For $\delta \in (0,1/7)$, define
\begin{align*}
& \lambda_{11} = \sqrt{\frac{2\log({5 p}) + \log(\delta^{-1})}{n}} + \frac{2\log({5 p}) + \log(\delta^{-1})}{n} , \\
& \lambda_{12} =  2C_{\mathrm{rad4}} \sqrt{\frac{ \log(2p(p+1))}{n}}  + \sqrt{\frac{2 \log(\delta^{-1})}{n}} ,
\end{align*}
where $C_{\mathrm{rad4}}=C_{\mathrm{sg6}} C_{\mathrm{rad3}}$, depending on universal constants $C_{\mathrm{sg6}}$ and $C_{\mathrm{rad3}}$ in
Lemmas~\ref{lem:subg-max} and Corollary \ref{cor:entropy-sg2} in the Supplement. Denote
\begin{align*}
\err_{h1} (n,p, \delta, \epsilon) =  3\epsilon + 2\sqrt{\epsilon/(n\delta)} +  \lambda_{12} + \lambda_1 .
\end{align*}
\begin{manualpro}{3}
Assume that $\|\Sigma_{*}\|_{\max} \le M_1$ and $\epsilon \le 1/5$. Let $\hat \theta=(\hat\mu,\hat\Sigma)$ be a solution to (\ref{eq:hinge-gan-L1}) with $\lambda _1 \ge C_{\mathrm{sp13}} M_{11} \lambda_{11}$ where $M_{11} =  M_1^{1/2} ( M_1^{1/2} +  {2} \sqrt{2\pi})$ and $C_{\mathrm{sp13}} = (5/3) ( C_{\mathrm{sp11}} \vee C_{\mathrm{sp12}})$,
depending on universal constants $ C_{\mathrm{sp11}}$ and $ C_{\mathrm{sp12}}$ in Lemma~\ref{lem:spline-L1-upper} in the Supplement. If $\sqrt{\epsilon(1-\epsilon)/(n \delta) }  \leq 1/5$ and $\err_{h1} (n,p, \delta, \epsilon) \le a $ for a constant $a\in(0,1/2)$,
then the following holds with probability at least $1-7\delta$ uniformly over contamination distribution $Q$,
\begin{align*}
 \| \hat\mu - \mu^* \|_\infty & {\le} S_{4,a}  \err_{h1} (n,p, \delta, \epsilon), \\
 \| \hat\Sigma - \Sigma^*\|_{\max} & {\le} S_{8,a}  \err_{h1} (n,p, \delta, \epsilon) ,
\end{align*}
where $S_{4,a} =  (1 +\sqrt{2 M_1\log\frac{2}{1-2a}})  /a $
and $S_{8,a} =  2 M_1^{1/2} S_{6,a} + S_{7} ( 1+ S_{4,a} + S_{6,a}) $ with
$S_{6,a}= S_{5} ( 1 + S_{4,a}/2)$, $S_{5} = 2\sqrt{2\pi} ( 1- \me^{-2/M_1})^{-1} $, and
$S_{7} =8 \pi M_1  \me^{1/(4M_1)}$
$S_{7} =4 \{ (\frac{1}{\sqrt{2\pi M_1} } \me^{- 1/(8M_1)} ) \vee (1 - 2 \me^{- 1/(8M_1)} ) \}^{-2} $.
\end{manualpro}

 To see why Proposition~\ref{pro:hinge-L1-detailed} leads to Theorem~\ref{thm:hinge-L1}, we show that conditions in Proposition~\ref{pro:hinge-L1-detailed} are satisfied under the setting of Theorem~\ref{thm:hinge-L1}. For a constant $a \in (0, 1/2)$, let
 \begin{align*}
     C_1 &= 2\sqrt{3}C_{\mathrm{sp13}} M_{11},\\
     C_2 &= \frac{1}{5} \wedge \frac{\sqrt{3}C_1}{6} \wedge \left( 3\vee\left(\frac{\sqrt{2}C_1}{4C_{\mathrm{rad4}}+2}+1\right)\right),\\
     C &= S_{8, a} \left( 3\vee\left(\frac{\sqrt{2}C_1}{4C_{\mathrm{rad4}}+2}+1\right)\right) .
 \end{align*}
  Then the following conditions
 \begin{itemize}
     \item[(i)] $\lambda_1 \ge C_1 \left( \sqrt{\log{p}/{n}} + \sqrt{\log{(1/\delta)}/{n}}\right)$,
     \item[(ii)] $\epsilon + \sqrt{\epsilon/(n\delta)} + \lambda_{1} \le C_2$,
\end{itemize}
imply the conditions
\begin{itemize}
     \item[(iii)] $\lambda _1 \ge C_{\mathrm{sp13}} M_{11} \lambda_{11}$,
     \item[(iv)] $\err_{h1} (n,p, \delta, \epsilon) \le a $,
     \item[(v)] $\epsilon \le 1/5$ and $\sqrt{\epsilon(1-\epsilon)/(n\delta)} \le 1/5$.
 \end{itemize}
 In fact, condition (v) follows directly from condition (ii) because $C_2 \le 1/5$. For conditions (iii) and (iv), we first show that $\lambda_{11}$ and $\lambda_{12}$ can be upper bounded as follows:
\begin{align}
    \lambda_{11} &\le \sqrt{\frac{2\log{p} + 3\log(\delta^{-1})}{n}}  + {\frac{2\log{p} + 3\log(\delta^{-1})}{n}}  \label{eq:hinge-L1-aux-1}  \\
    & \le  2\sqrt{\frac{2\log{p} + 3\log(\delta^{-1})}{n}}  \le 2\sqrt{\frac{2\log{p}}{n}}  + 2\sqrt{\frac{3\log(\delta^{-1})}{n}},   \label{eq:hinge-L1-aux-2}
\end{align}
and
\begin{align}
    \lambda_{12} &\le 2C_{\mathrm{rad4}} \sqrt{\frac{2\log{2} + 2\log{p}}{n}}  + \sqrt{\frac{2 \log(\delta^{-1})}{n}} \label{eq:hinge-L1-aux-3}  \\
     &\le 2C_{\mathrm{rad4}} \sqrt{\frac{ 2\log{p}}{n}}  + \left(2C_{\mathrm{rad4}}+1\right)\sqrt{\frac{2 \log(\delta^{-1})}{n}}  \label{eq:hinge-L1-aux-4}.
\end{align}
Lines (\ref{eq:hinge-L1-aux-1}) and (\ref{eq:hinge-L1-aux-4}) hold because $\log(1/\delta) \ge \log(5)$ for $\delta \in (0, 1/7)$. Line (\ref{eq:hinge-L1-aux-2}) holds because $\sqrt{\frac{2\log{p} + 3\log(\delta^{-1})}{n}} \le 1$ and hence the linear term in $\lambda_{11}$ is upper bounded by the square root term. To see this, by conditions (i) and (ii) we have
\begin{align*}
    \sqrt{\frac{2\log{p} + 3\log(\delta^{-1})}{n}} &\le \sqrt{\frac{3\log{p}}{n}} + \sqrt{\frac{3\log(\delta^{-1})}{n}}
     \le \frac{\sqrt{3}\lambda_1}{C_1}  \le \frac{\sqrt{3}C_2}{C_1} \le 1.
\end{align*}
Line (\ref{eq:hinge-L1-aux-3}) holds because $\log{(2p(p+1))} \le 2\log{2} + 2\log{p}$ for $p \ge 1$. With the above upper bounds for $\lambda_{11}$ and $\lambda_{12}$, we show that condition (i) implies condition (iii) as follows:
\begin{align*}
    \lambda _1 &\ge C_{1} \left( \sqrt{\frac{\log{p}}{n}} + \sqrt{\frac{\log{(1/\delta)}}{n}}\right) \\
    & = 2\sqrt{3}C_{\mathrm{sp13}} M_{11} \left( \sqrt{\frac{\log{p}}{n}} + \sqrt{\frac{\log{(1/\delta)}}{n}}\right)
    \ge C_{\mathrm{sp13}} M_{11} \lambda_{11} \nonumber,
\end{align*}
and condition (ii) implies condition (iv) as follows:
\begin{align*}
    \err_{h1} (n,p, \delta, \epsilon) &=  3\epsilon + 2\sqrt{\epsilon/(n\delta)} +  \lambda_{12} + \lambda_1  \nonumber\\
    &\le  3\left(\epsilon +\sqrt{\frac{\epsilon}{n\delta}}\right) + \left(\frac{\sqrt{2}C_1}{4C_{\mathrm{rad4}}+2}+1\right)\lambda_1 \\
    &\le  \left( 3\vee\left(\frac{\sqrt{2}C_1}{4C_{\mathrm{rad4}}+2}+1\right)\right) \left(\epsilon +\sqrt{\frac{\epsilon}{n\delta}} + \lambda_1\right) \nonumber \\
    & \le  \left( 3\vee\left(\frac{\sqrt{2}C_1}{4C_{\mathrm{rad4}}+2}+1\right)\right) C_2
    \le a.
\end{align*}
Therefore, Proposition~\ref{pro:hinge-L1-detailed} implies Theorem~\ref{thm:hinge-L1} with constant $C = S_{8, a} \left( 3\vee\left(\frac{\sqrt{2}C_1}{4C_{\mathrm{rad4}}+2}+1\right)\right)$.

\begin{lem} \label{lem:hinge-upr-bound}
    Consider the hinge GAN (\ref{eq:hinge-gan}).

    (i) For any $\epsilon \in [0, 1]$ and any function $h: \bbR^p \to \bbR$, we have
    \begin{align*}
    K_{\HG}(P_{\epsilon}, P_{\theta^*}; h) \le 2\epsilon .
    \end{align*}

    (ii) For any function $h: \bbR^p \to \bbR$, we have
    \begin{align}
    K_{\HG}(P_n, P_{\theta^{*}} ; h) \le 2 \hat\epsilon + | \E_{P_{\theta^{*},n}} h(x) - \E_{P_{\theta^{*}}} h(x) | ,\label{eq:lem-logit-upper-bound-emp}
    \end{align}
    where {$\hat{\epsilon} = n^{-1} \sum_{i=1}^{n} {U_i}$ and} $P_{\theta^*,n}$ denotes the empirical distribution of $\{X_i: {U_i}=0,i=1,\ldots,n\}$ in the latent representation of Huber's contamination model.
    \end{lem}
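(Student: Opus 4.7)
\emph{Part (i)} is a direct application of the nonparametric upper bound (\ref{eq:hinge-gan-lbd}), $K_{\HG}(P_\epsilon, P_{\theta^*}; h) \le 2 D_{\TV}(P_\epsilon \| P_{\theta^*})$, combined with the elementary identity $P_\epsilon - P_{\theta^*} = \epsilon(Q - P_{\theta^*})$, which gives $D_{\TV}(P_\epsilon \| P_{\theta^*}) = \epsilon\, D_{\TV}(Q \| P_{\theta^*}) \le \epsilon$.

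For \emph{part (ii)}, the plan is to exploit the fact that $K_{\HG}(\cdot, P_{\theta^*}; h)$ is affine in its first argument. Writing $P_n = (1 - \hat\epsilon) P_{\theta^*, n} + \hat\epsilon Q_n$, where $Q_n$ denotes the empirical distribution of the contaminated observations $\{X_i : U_i = 1\}$, one obtains by linearity
\begin{equation*}
K_{\HG}(P_n, P_{\theta^*}; h) = (1 - \hat\epsilon)\, K_{\HG}(P_{\theta^*, n}, P_{\theta^*}; h) + \hat\epsilon\, K_{\HG}(Q_n, P_{\theta^*}; h).
\end{equation*}
The second term is bounded by $2\hat\epsilon$ via the same argument as in part (i), since $D_{\TV}(Q_n \| P_{\theta^*}) \le 1$. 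For the first term, substituting the hinge representation $K_{\HG}(P, P'; h) = 2 - E_{P}(1-h)_+ - E_{P'}(1+h)_+$ and applying the elementary tangent-line inequalities $(1-h)_+ \ge 1-h$ and $(1+h)_+ \ge 1+h$ yields
\begin{equation*}
K_{\HG}(P_{\theta^*, n}, P_{\theta^*}; h) \le E_{P_{\theta^*, n}} h - E_{P_{\theta^*}} h.
\end{equation*}
Since $(1-\hat\epsilon) K \le \max(0, K)$ for any real $K$ when $\hat\epsilon \in [0,1]$, and since the positive part is monotone, this yields $(1-\hat\epsilon) K_{\HG}(P_{\theta^*, n}, P_{\theta^*}; h) \le (E_{P_{\theta^*, n}} h - E_{P_{\theta^*}} h)_+ \le |E_{P_{\theta^*, n}} h - E_{P_{\theta^*}} h|$, and assembling the two pieces gives the claim.

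The main obstacle is the absence of any uniform control on $h$: since $\min(1, h)$ is unbounded below, crude contamination bounds based only on $\min(1, h) \le 1$ would leave behind an uncontrolled term of the form $E_{P_{\theta^*, n}} \min(1, h)$. The affine decomposition of $P_n$ circumvents this by absorbing the entire contamination contribution into a pure total-variation term of size $2\hat\epsilon$, while the tangent-line inequality $(1-h)_+ \ge 1-h$ reduces the residual hinge gap between $P_{\theta^*, n}$ and $P_{\theta^*}$ to a first-moment difference that depends on $h$ only through its expectations under these two measures.
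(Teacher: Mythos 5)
Your proof is correct, and it takes a genuinely different route from the paper's. For part (i), you invoke the variational upper bound $K_{\HG}(P_\epsilon, P_{\theta^*}; h) \le 2 D_{\TV}(P_\epsilon \| P_{\theta^*})$ together with the identity $D_{\TV}(P_\epsilon \| P_{\theta^*}) = \epsilon D_{\TV}(Q \| P_{\theta^*}) \le \epsilon$; the paper instead argues directly with the $\min(1,\cdot)$ bounds, using $\min(1,u)\le 1$ twice and the cancellation $\min(1,u)+\min(1,-u)\le 0$. Both work; yours is shorter because it borrows a previously stated inequality. For part (ii), the paper's proof splits only the $\E_{P_n}\min(1,h)$ term by conditioning on $U_i$, applies Jensen's inequality via the concavity of $\min(1,u)$ and $\min(1,-u)$ to pull the means inside, uses the $\min$-cancellation, and finishes with the 1-Lipschitz property of $\min(1,-u)$. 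You instead exploit that $K_{\HG}(\cdot, P_{\theta^*}; h)$ is affine in its first argument, decompose $P_n = (1-\hat\epsilon) P_{\theta^*,n} + \hat\epsilon Q_n$, and then handle the two pieces separately: the contamination piece is crudely bounded by $2\hat\epsilon$ via $K_{\HG}\le 2$ (or $\le 2D_{\TV}\le 2$), and the clean piece is bounded by the moment difference using the tangent-line inequalities $(1\mp h)_+ \ge 1\mp h$, followed by the elementary observation $(1-\hat\epsilon)K \le \max(0,K)\le |K|$. The underlying analytic content is the same -- $(1-h)_+\ge 1-h$ is precisely the tangent at $h=0$ to the concave function the paper applies Jensen to, and your affine split mirrors the paper's $\sum_i = \sum_{U_i=0}+\sum_{U_i=1}$ split -- but your organization sidesteps Jensen's inequality and the Lipschitz argument entirely, which makes the proof a bit more transparent. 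One stylistic trade-off: the paper's route through Jensen and the concavity of $\min(1,\cdot)$ deliberately parallels the structure of Lemma \ref{lem:logit-upper} for the logit $f$-GAN case and is reused for the two-objective variant (Lemma \ref{lem:hinge-two-obj-upr-bound}), whereas your argument is tailored to the hinge objective.
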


    \begin{proof}
    (i) For any $h: \bbR^p \to \bbR$ we have
    \begin{align}
    &\quad K_{\HG}(P_{\epsilon}, P_{\theta^*}; h) \nonumber\\
    &= \epsilon \E_{Q} \min(1, h(x)) + (1-\epsilon) \E_{P_{\theta^*}} \min(1, h(x)) + \E_{P_{\theta^*}} \min(1, -h(x)) \nonumber\\
    &\le \epsilon + (1-\epsilon)\E_{P_{\theta^*}} \min(1, h(x)) + \E_{P_{\theta^*}} \min(1, -h(x)) \label{eq: lem-hinge-upr-bound-prf1}\\
    &\le 2\epsilon + (1-\epsilon)\left\{\E_{P_{\theta^*}} \min(1, h(x)) + \E_{P_{\theta^*}} \min(1, -h(x))\right\} \label{eq: lem-hinge-upr-bound-prf2}\\
    &\le 2\epsilon. \label{eq: lem-hinge-upr-bound-prf3}
    \end{align}
    Inequalities (\ref{eq: lem-hinge-upr-bound-prf1}) and (\ref{eq: lem-hinge-upr-bound-prf2}) hold because $\min(1, u) \vee \min(1, -u) \le 1$ for all $u \in \bbR$. Inequality (\ref{eq: lem-hinge-upr-bound-prf3}) holds because $\min(1 ,u) + \min(1, -u) \le 0$ for all $u \in \bbR$.

    (ii) Because both $\min(1, u)$ and $\min(1, -u)$ are concave in $u \in \bbR$ and upper bounded by $1$, we have
    \begin{align}
    & \quad K_{\HG}(P_n, P_{\theta^{*}} ; h) \nonumber \\
    & = \frac{1}{n}\sum_{i=1}^{n}R_i \min(1, h(X_i)) + \frac{1}{n}\sum_{i=1}^{n}(1-R_i)\min(1, h(X_i))   + \E_{P_{\theta^{*}}}  \min(1, -h(X_i))  \nonumber\\
    &\le \hat{\epsilon} + (1-\hat{\epsilon})\E_{P_{\theta^{*}, n}} \min(1, h(X_i))  + \E_{P_{\theta^{*}}}  \min(1, -h(X_i))  \label{eq:lem-hinge-upr-bound-prf4} \\
    &\le \hat{\epsilon} + (1-\hat{\epsilon})\min(1, \E_{P_{\theta^{*}, n}} h(X_i))  +  \min(1, -\E_{P_{\theta^{*}}} h(X_i))  \label{eq:lem-hinge-upr-bound-prf5} \\
    &\le 2\hat{\epsilon} + (1-\hat{\epsilon})\left\{\min(1, \E_{P_{\theta^{*}, n}} h(X_i)) + \min(1, -\E_{P_{\theta^{*}}} h(X_i))\right\}  \label{eq:lem-hinge-upr-bound-prf6} \\
    &\le 2\hat{\epsilon} + 0 + |\min(1, -\E_{P_{\theta^{*},n}} h(x)) - \min(1, -\E_{P_{\theta^{*}}} h(x)) | \label{eq:lem-hinge-upr-bound-prf7}\\
    &\le 2\hat{\epsilon} + 0 + |\E_{P_{\theta^{*},n}} h(x) - \E_{P_{\theta^{*}}} h(x) | \label{eq:lem-hinge-upr-bound-prf8}
    \end{align}
    Lines (\ref{eq:lem-hinge-upr-bound-prf4}) and (\ref{eq:lem-hinge-upr-bound-prf6}) hold because $\min(1, u) \vee \min(1, -u) \le 1$ for all $u \in \bbR$.
    Line (\ref{eq:lem-hinge-upr-bound-prf5}) follows from Jensen's inequality by the concavity of $\min(1, u)$ and $\min(1, -u)$. Line (\ref{eq:lem-hinge-upr-bound-prf7}) follows because $\min(1 ,u) + \min(1, -u) \le 0$ for all $u \in \bbR$, and the last line (\ref{eq:lem-hinge-upr-bound-prf8}) holds because $\min(1, -u)$ is $1$-Lipschitz {in $u$}.
    \end{proof}

    \begin{manualpro}{S13} \label{pro:hinge-L1-upper}
    In the setting of Proposition~\ref{pro:hinge-L1-detailed}, it holds with probability at least $1-{5} \delta$ that for any $\gamma\in\Gamma$,
    \begin{align*}
    & K_\HG(P_{n}, P_{\theta^* }; h_{\gamma, \mu^*})
    \le 2(\epsilon + \sqrt{\epsilon/(n\delta)} ) + \pen_1(\gamma) C_{\mathrm{sp13}}  M_{11}  \lambda_{11} ,
    \end{align*}
    where $C_{\mathrm{sp13}} = (5/3) ( C_{\mathrm{sp11}} \vee C_{\mathrm{sp12}})$ with $C_{\mathrm{sp11}}$ and  $C_{\mathrm{sp12}}$ as in Lemma~\ref{lem:spline-L1-upper}, $M_{11} =  M_1^{1/2} ( M_1^{1/2} +  2\sqrt{2\pi})$, and
    $$
    \lambda_{11} = \sqrt{\frac{2\log(5p) + \log(\delta^{-1})}{n}} + \frac{2\log(5p)+ \log(\delta^{-1})}{n} .
    $$
    \end{manualpro}

    \begin{proof}
    The proof is similar to that of Proposition~\ref{pro:logit-L1-upper} and we {use the same definition of $\Omega_1$ and $\Omega_2$}.
    In {the} event $\Omega_1$ we have $ |\hat\epsilon -\epsilon | \le 1/5$ by the assumption $\sqrt{\epsilon(1-\epsilon)/(n\delta)} \le 1/5$
    and hence $\hat\epsilon \le 2/5$ by the assumption $\epsilon \le 1/5$.
    Thus, by Lemma~\ref{lem:hinge-upr-bound} with $\epsilon_1=2/5$, it holds in {the} event $\Omega_1$ that for any $\gamma\in\Gamma$,
    \begin{align}
    & \quad K_\HG(P_n, P_{\theta^* } ; h_{\gamma, \mu^*}) \nonumber \\
    & \le 2 \hat\epsilon + \left| \E_{P_{\theta^* ,n}} h_{\gamma, \mu^*}(x) - \E_{P_{\theta^* }} h_{\gamma, \mu^*}(x) \right| \nonumber \\
    & \le 2 (\epsilon + \sqrt{\epsilon/(n\delta)} ) + \left| \E_{P_{\theta^*,n}} h_{\gamma}(x-\mu^*) - \E_{P_{(0,\Sigma^*)}} h_{\gamma}(x) \right|. \label{pro:hinge-spline-L1-upper-prf1}
    \end{align}
    The last step uses the fact that
    $ \E_{P_{\theta^* }} h_{\gamma, \mu^*}(x) = \E_{P_{(0,\Sigma^*)}} h_{\gamma}(x)$ and $\E_{P_{\theta^* ,n}} h_{\gamma, \mu^*}(x) =  \E_{P_{\theta^*,n}} h_{\gamma}(x-\mu^*)$,
    by the definition $h_{\gamma, \mu^*}(x) = h_{\gamma} ( x- \mu^*)$.

    {Next, as shown in Proposition~\ref{pro:logit-L1-upper}, it holds in the event $\Omega_2$ while conditionally on $\Omega_1$ that for any} $\gamma = (\gamma_0, \gamma_1^\T, \gamma_2^\T)^\T \in \Gamma$,
    \begin{align}
    & \quad \left| \E_{P_{\theta^*,n}} h_{\gamma}(x-\mu^*) - \E_{P_{(0,\Sigma^*)}} h_{\gamma}(x) \right| \nonumber \\
    & \le \pen_1(\gamma) (5/3) ( C_{\mathrm{sp11}} \vee C_{\mathrm{sp12}}) M_{11} \lambda_{11} , \label{pro:hinge-spline-L1-upper-prf2}
    \end{align}
    where $h_{\gamma}(x) = \gamma_0 +  \gamma_1^\T \varphi (x)+ \gamma_2^\T (\varphi(x)\otimes \varphi(x))$ and
    $\pen_1 (\gamma) = \|\gamma_1\|_1 + \|\gamma_2\|_1$.
    Combining (\ref{pro:hinge-spline-L1-upper-prf1}) and (\ref{pro:hinge-spline-L1-upper-prf2}) indicates that in {the event $\Omega_1 \cap \Omega_2$ with  probability at least $1-{5}\delta$},
    the desired inequality holds for any $\gamma\in\Gamma$.
    \end{proof}

    \begin{manualpro}{S14} \label{pro:hinge-L1-lower} In the setting of Proposition~\ref{pro:hinge-L1-detailed}, it holds with probability at least $1-2 \delta$ that for any $\gamma\in \Gamma_{\rp}$ with $\gamma_0=0$ and $\pen_1(\gamma)=  1 $,
    \begin{align*}
    & \quad  K_\HG(P_{n}, P_{\hat\theta}; h_{\gamma, \hat\mu}) \\
    & \ge \left\{\E_{P_{\theta^*}} h_{\gamma, \hat\mu}(x) - \E_{P_{\hat\theta}} h_{\gamma, \hat\mu}(x) \right\} - \epsilon -  \lambda_{12}
    \end{align*}
    where, with $C_{\mathrm{rad4}}=C_{\mathrm{sg6}} C_{\mathrm{rad3}}$ is the same constant in Proposition~\ref{pro:logit-L1-lower},
    $$
    \lambda_{12} = C_{\mathrm{rad4}} \sqrt{\frac{4 \log(2p(p+1))}{n}} + \sqrt{\frac{2 \log(\delta^{-1})}{n}} ,
    $$
    depending on the universal constants $C_{\mathrm{sg6}}$ and $ C_{\mathrm{rad3}}$ in Lemma~\ref{lem:subg-max} and Corollary~\ref{cor:entropy-sg2}.
    \end{manualpro}

    \begin{proof}
    By definition, for any $\gamma\in \Gamma_{\rp}$, $h_\gamma(x)$ can be represented as
    $h_{\rp,\beta, c }(x)$ such that $\beta_0=\gamma_0$ and $\pen_1 (\beta) = \pen_1(\gamma)$ in the same way as in Proposition~\ref{pro:logit-L1-lower}.
    Then for any $\gamma\in \Gamma_{\rp}$ with $\gamma_0=0$ and $\pen_1(\gamma)=  1 $, we have $\beta_0=0$ and $\pen_1(\beta) =  1$ correspondingly,
    and hence $h _\gamma (x) = h_{\rp, \beta,  c }(x) \in [- 1 ,  1 ]$ by the boundedness of the ramp function in $[0, 1]$.
    Moreover, $h_{\rp,\beta, c }(x)$ with $\beta_0=0$ and $\pen_1 (\beta) =  1 $ can be expressed in the form
    $ w^\T g(x)$, where for $q =2 p + p(p-1)$, $w \in \bbR^{q }$ is an $L_1$ unit vector, and
    $g: \bbR^p \to [0,1]^{q }$ is a vector of functions including $\ramp(x_j)$ and $\ramp(x_j-1)$ for $j=1,\ldots,p$, and $\ramp(x_i)\ramp(x_j)$ for $1 \le i\not= j\le p$.
    For symmetry, $\ramp(x_i)\ramp(x_j)$ and $\ramp(x_j)\ramp(x_i)$ are included as two distinct components in $g$,
    and the corresponding coefficients are identical to each other in $w$.

    Next, $K_\HG(P_{n}, P_{\hat\theta}; h_{\gamma, \hat\mu})$ can be bounded as
    \begin{align}
    & \quad  K_\HG(P_{n}, P_{\hat\theta}; h_{\gamma, \hat\mu}) \nonumber\\
    & \ge  K_\HG(P_\epsilon, P_{\hat\theta}; h_{\gamma, \hat\mu})  - \{ K_\HG(P_{n}, P_{\hat\theta}; h_{\gamma, \hat\mu}) - K_\HG(P_\epsilon, P_{\hat\theta}; h_{\gamma, \hat\mu})\} . \label{eq:hinge-L1-lower-pro-1}
    \end{align}
    For any $\gamma\in \Gamma_{\rp}$ with $\gamma_0=0$ and $\pen_1(\gamma)=  1 $, because $h_{\gamma,\hat{\mu}}(x) \in [-1, 1]$, we have $\min(h_{\gamma,\hat{\mu}}(x),1)=h_{\gamma,\hat{\mu}}(x)$ and $\min(-h_{\gamma,\hat{\mu}}(x),1)=-h_{\gamma,\hat{\mu}}(x)$. Hence the {hinge} term $ K_\HG(P_\epsilon, P_{\hat\theta}; h_{\gamma, \hat\mu}) $ in (\ref{eq:hinge-L1-lower-pro-1}) reduces to a moment matching term and can be lower bounded as follows:
    \begin{align*}
    &\quad  K_\HG(P_\epsilon, P_{\hat\theta}; h_{\gamma, \hat\mu}) \\
    &= \E_{P_\epsilon} \min(h_{\gamma,\hat{\mu}}(x), 1) + \E_{P_{\hat\theta}} \min(-h_{\gamma,\hat{\mu}}(x), 1)\\
    &= \E_{P_\epsilon} h_{\gamma,\hat{\mu}}(x) - \E_{P_{\hat\theta}} h_{\gamma,\hat{\mu}}(x)\\
    &=\epsilon\E_{Q} h_{\gamma,\hat{\mu}}(x) + (1-\epsilon)\E_{P_\theta^*} h_{\gamma,\hat{\mu}}(x)- \E_{P_{\hat\theta}} h_{\gamma,\hat{\mu}}(x)\\
    &\ge -\epsilon + \left\{\E_{P_\theta^*} h_{\gamma,\hat{\mu}}(x)- \E_{P_{\hat\theta}} h_{\gamma,\hat{\mu}}(x)\right\}.
    \end{align*}
    Similarly, the two other {hinge} terms in (\ref{eq:hinge-L1-lower-pro-1}) also reduce to moment matching terms:
    \begin{align*}
    &\quad \{ K_\HG(P_{n}, P_{\hat\theta}; h_{\gamma, \hat\mu}) - K_\HG(P_\epsilon, P_{\hat\theta}; h_{\gamma, \hat\mu})\}\\
    &= \left\{\E_{P_n} h_{\gamma,\hat{\mu}}(x)- \E_{P_{\hat\theta}} h_{\gamma,\hat{\mu}}(x)\right\} - \left\{\E_{P_\epsilon} h_{\gamma,\hat{\mu}}(x)- \E_{P_{\hat\theta}} h_{\gamma,\hat{\mu}}(x)\right\}\\
    &=\E_{P_n} h_{\gamma,\hat{\mu}}(x)- \E_{P_\epsilon} h_{\gamma,\hat{\mu}}(x).
    \end{align*}
     We apply Lemma~\ref{lem:logit-concen-L1-lower} {with $b= 1 $, $g(x)\in [0,1]^{q }$ defined above, and $f^{\prime}(\me^u)$ and $f^{\#}(\me^u)$ replaced by the identity function in $u$.} It holds with probability at least $1-2\delta$ that
    for any $\gamma\in \Gamma_{\rp}$ with $\gamma_0=0$ and $\pen_1(\gamma)=  b_1 $,
    \begin{align*}
    & \quad \E_{P_n} h_{\gamma,\hat{\mu}}(x)- \E_{P_\epsilon} h_{\gamma,\hat{\mu}}(x) \\
    & \le    C_{\mathrm{sg6}} \sqrt{\frac{V_g \log(2q )}{n}} + \sqrt{\frac{2 \log(\delta^{-1})}{n}}  \\
    & =   C_{\mathrm{sg6}} C_{\mathrm{rad3}}\sqrt{\frac{4 \log(2p(p+1))}{n}} + \sqrt{\frac{2 \log(\delta^{-1})}{n}} ,
    \end{align*}
    as shown in Proposition~\ref{pro:logit-L1-lower}. Combining the preceding three displays leads to the desired result.
    \end{proof}

\subsection{Details in main proof of Theorem~\ref{thm:hinge-L2}}

    \begin{manualpro}{S15} \label{pro:hinge-L2-upper}
    In the setting of Proposition~\ref{pro:hinge-L2-detailed}, it holds with probability at least $1-{4} \delta$ that for any $\gamma = (\gamma_0,\gamma_1, \gamma_2)^\T \in /Gamma$,
    \begin{align*}
    K_\HG(P_{n}, P_{\theta^* }; h_{\gamma, \mu^*})
    & \le 2(\epsilon + \sqrt{\epsilon/(n\delta)} ) + \pen_2(\gamma_1) (5/3) C_{\mathrm{sp21}} M_2^{1/2} R_1  \lambda_{21} \\
    & \quad   +  \pen_2 (\gamma_2) {(25\sqrt{5}/3)} C_{\mathrm{sp22}} M_{21} R_1 \sqrt{p} \lambda_{31} ,
    \end{align*}
    where $C_{\mathrm{sp21}}$ and $C_{\mathrm{sp22}}$ are defined as in Lemma \ref{lem:spline-L2-upper}, $M_{21} = M_2^{1/2} (M_2^{1/2} + 2 \sqrt{2\pi}) $, and
    \begin{align*}
    \lambda_{21} = \sqrt{\frac{5 p + \log(\delta^{-1})}{n}}, \quad \lambda_{31} = \lambda_{21} + \frac{5 p+ \log(\delta^{-1})}{n} .
    \end{align*}
    \end{manualpro}

    \begin{proof}
    The proof is similar to that of Proposition~\ref{pro:logit-L2-upper} and we use the same definition of $\Omega_1$ and $\Omega_2$. In the event $\Omega_1$ we have $ |\hat\epsilon -\epsilon | \le 1/5$ by the assumption $\sqrt{\epsilon(1-\epsilon)/(n\delta)} \le 1/5$
    and hence $\hat\epsilon \le 2/5$ by the assumption $\epsilon \le 1/5$.
    By Lemma~\ref{lem:hinge-upr-bound} with $\epsilon_1=2/5$, it holds that {in the event $\Omega_1$} for any $\gamma\in\Gamma$,
    \begin{align}
    & \quad K_\HG(P_n, P_{\theta^* } ; h_{\gamma, \mu^*}) \nonumber \\
    & \le 2 \hat\epsilon + \left| \E_{P_{\theta^* ,n}} h_{\gamma, \mu^*}(x) - \E_{P_{\theta^* }} h_{\gamma, \mu^*}(x) \right| \nonumber \\
    & \le 2 (\epsilon + \sqrt{\epsilon/(n\delta)} ) + \left| \E_{P_{\theta^*,n}} h_{\gamma}(x-\mu^*) - \E_{P_{(0,\Sigma^*)}} h_{\gamma}(x) \right| . \label{eq:pro-hinge-L2-upper-prf1}
    \end{align}
    The last step uses the fact that
    $ \E_{P_{\theta^* }} h_{\gamma, \mu^*}(x) = \E_{P_{(0,\Sigma^*)}} h_{\gamma}(x)$ and $\E_{P_{\theta^* ,n}} h_{\gamma, \mu^*}(x) =  \E_{P_{\theta^*,n}} h_{\gamma}(x-\mu^*)$,
    by the definition $h_{\gamma, \mu^*}(x) = h_{\gamma} ( x- \mu^*)$.

    Next, as shown in Proposition~\ref{pro:logit-L2-upper}, it holds {in event $\Omega_2$ while conditionally on $\Omega_1$} that for any $\gamma = (\gamma_0, \gamma_1^\T, \gamma_2^\T)^\T \in \Gamma$,
    \begin{align}
    & \quad \left| \E_{P_{\theta^*,n}} h_{\gamma}(x-\mu^*) - \E_{P_{(0,\Sigma^*)}} h_{\gamma}(x) \right| \nonumber \\
    & \le \pen_2 (\gamma_1) (5/3) C_{\mathrm{sp21}}  M_2^{1/2} \lambda_{21} +
    \pen_2 (\gamma_2) (5/3) C_{\mathrm{sp22}}  3 M_{21} \sqrt{3 p} \lambda_{31}, \label{eq:pro-hinge-L2-upper-prf2}
    \end{align}
    where $h_{\gamma}(x) = \gamma_0 +  \gamma_1^\T \varphi (x) + \gamma_2^\T (\varphi(x) \otimes \varphi(x))$,
    $\pen_2 (\gamma_1) = \|\gamma_1\|_2$, and $\pen_2 (\gamma_2) = \|\gamma_2\|_2$.
    Combining (\ref{eq:pro-hinge-L2-upper-prf1}) and (\ref{eq:pro-hinge-L2-upper-prf2}) indicates that, {in the event $\Omega_1 \cap \Omega_2$  with probability at least $1-4\delta$}, the desired inequality holds for any $\gamma\in\Gamma$.
    \end{proof}

\begin{manualpro}{S16} \label{pro:hinge-L2-lower} In the setting of Proposition~\ref{pro:hinge-L2-detailed}, it holds with probability at least $1-2 \delta$ that for any $\gamma\in \Gamma_{10}$,
    \begin{align*}
    & \quad  K_\HG(P_{n}, P_{\hat\theta}; h_{\gamma, \hat\mu}) \\
    & \ge \left\{\E_{P_{\theta^*}} h_{\gamma, \hat\mu}(x) - \E_{P_{\hat\theta}} h_{\gamma, \hat\mu}(x) \right\} - \epsilon -  \lambda_{22}(2p)^{-1/2}
    \end{align*}
    where, with $C_{\mathrm{rad5}}  = C_{\mathrm{sg,12}} C_{\mathrm{rad3}} $, and
    $$
    \lambda_{22} = C_{\mathrm{rad5}}  \sqrt{\frac{16 p}{n}} + \sqrt{\frac{2p \log(\delta^{-1})}{n}} ,
    $$
    depending on the universal constants $C_{\mathrm{sg, 12}}$ and $ C_{\mathrm{rad3}}$ in Lemma~\ref{lem:subg} and Corollary~\ref{cor:entropy-sg2}.
    \end{manualpro}

    \begin{proof}
    For any $\gamma\in \Gamma_{10} \subset \Gamma_{\rpL}$, because
    $\pen_2(\gamma)=  (2p)^{-1/2}$ and $\Gamma_0 = 0$, we have $\beta_0=0$ and $\pen_2(\beta)=  p^{-1/2}$. Hence, $h_\gamma(x) = h_{\rpL, \beta,  c }(x)\in [- 1,  1]$
    by the Cauchy--Schwartz inequality and the boundedness of the ramp function in $[0,1]$.
    {Then the mean-centered version $h_{\gamma, \hat\mu}(x)$, with $\E_{P_{\theta^*}} h_{\gamma, \hat\mu}(x)=0$, is also bounded in $[-1,1]$.
    Moreover, such $ h_{\gamma, \hat\mu}(x) $ can be expressed in the form
$ \pen_2(\beta) w^\T \{g(x -\hat\mu) - \eta_0\} $, where, for $q=2p$, $w \in \bbR^{q }$ is an $L_2$ unit vector,
$g: \bbR^p \to [0,1]^{q }$ is a vector of functions, including $\ramp(x_j)$ and $\ramp(x_j-1)$ for $j=1,\ldots,p$, and
$\eta_0 = \E_{P_{\theta^*}} g(x - \hat\mu) \in [0,1]^{q }$.}

    Next, $K_\HG(P_{n}, P_{\hat\theta}; h_{\gamma, \hat\mu})$ can be bounded as
    \begin{align}
    & \quad  K_\HG(P_{n}, P_{\hat\theta}; h_{\gamma, \hat\mu}) \nonumber\\
    & \ge  K_\HG(P_\epsilon, P_{\hat\theta}; h_{\gamma, \hat\mu})  - \{ K_\HG(P_{n}, P_{\hat\theta}; h_{\gamma, \hat\mu}) - K_\HG(P_\epsilon, P_{\hat\theta}; h_{\gamma, \hat\mu})\} \label{eq:pro-hinge-L2-lower-prf0}.
    \end{align}
    For any $\gamma\in \Gamma_{10}$, {because $h_{\gamma,\hat{\mu}}(x) \in [-1, 1]$, we have $\min(h_{\gamma,\hat{\mu}}(x),1)=h_{\gamma,\hat{\mu}}(x)$ and $\min(-h_{\gamma,\hat{\mu}}(x),1)=-h_{\gamma,\hat{\mu}}(x)$. Then the hinge term $K_\HG(P_{n}, P_{\hat\theta}; h_{\gamma, \hat\mu}) $ reduces to a moment matching term and can be lower bounded as follows:}
    \begin{align}
    &\quad  K_\HG(P_\epsilon, P_{\hat\theta}; h_{\gamma, \hat\mu})\nonumber \\
    &= \E_{P_\epsilon} \min(h_{\gamma,\hat{\mu}}(x), 1) + \E_{P_{\hat\theta}} \min(-h_{\gamma,\hat{\mu}}(x), 1)\nonumber\\
    &= \E_{P_\epsilon} h_{\gamma,\hat{\mu}}(x) - \E_{P_{\hat\theta}} h_{\gamma,\hat{\mu}}(x)\nonumber\\
    &=\epsilon\E_{Q} h_{\gamma,\hat{\mu}}(x) + (1-\epsilon)\E_{P_\theta^*} h_{\gamma,\hat{\mu}}(x)- \E_{P_{\hat\theta}} h_{\gamma,\hat{\mu}}(x) \nonumber\\
    &\ge -\epsilon + \left\{\E_{P_\theta^*} h_{\gamma,\hat{\mu}}(x)- \E_{P_{\hat\theta}} h_{\gamma,\hat{\mu}}(x)\right\} \label{eq:pro-hinge-L2-lower-prf1}.
    \end{align}
    Similarly, the {absolute difference term in (\ref{eq:pro-hinge-L2-lower-prf0}) can be simplified} as follows:
    \begin{align*}
    &\quad \{ K_\HG(P_{n}, P_{\hat\theta}; h_{\gamma, \hat\mu}) - K_\HG(P_\epsilon, P_{\hat\theta}; h_{\gamma, \hat\mu})\}\\
    &=  \left\{\E_{P_n} h_{\gamma,\hat{\mu}}(x)- \E_{P_{\hat\theta}} h_{\gamma,\hat{\mu}}(x)\right\} - \left\{\E_{P_\epsilon} h_{\gamma,\hat{\mu}}(x)- \E_{P_{\hat\theta}} h_{\gamma,\hat{\mu}}(x)\right\}\\
    &=\E_{P_n} h_{\gamma,\hat{\mu}}(x)- \E_{P_\epsilon} h_{\gamma,\hat{\mu}}(x).
    \end{align*}
    {We apply Lemma~\ref{lem:logit-concen-L2-lower} with $b= 1$, $g(x)\in [0,1]^{q }$ and $\eta_0$ defined above, and
    $f^{\prime}(\me^u)$ and $f^{\#}(\me^u)$ replaced by the identity function in $u$.}
    It holds with probability at least $1-2\delta$ that
    for any $\gamma\in \Gamma_{01}$,
    \begin{align}
    & \quad \E_{P_n} h_{\gamma,\hat{\mu}}(x)- \E_{P_\epsilon} h_{\gamma,\hat{\mu}}(x)\nonumber \\
    & \le  (2p)^{-1/2} \left\{  C_{\mathrm{sg,12}} \sqrt{\frac{2 q V_g }{n}} + \sqrt{\frac{q  \log(\delta^{-1})}{n}} \right\} \nonumber \\
    & \le  (2p)^{-1/2} \left\{  C_{\mathrm{sg,12}} C_{\mathrm{rad3}} \sqrt{\frac{16 p}{n}} + \sqrt{\frac{2p \log(\delta^{-1})}{n}} \right\} , \label{eq:pro-hinge-L2-lower-prf2}
    \end{align}
    as shown in Proposition~\ref{pro:logit-L2-lower}. Combining the inequalities (\ref{eq:pro-hinge-L2-lower-prf0})--(\ref{eq:pro-hinge-L2-lower-prf2}) leads to the desired result.
    \end{proof}

    \begin{manualpro}{S17} \label{pro:hinge-L2v-lower} In the setting of Proposition~\ref{pro:hinge-L2-detailed}, it holds with probability at least $1-2 \delta$ that for any $\gamma\in \Gamma_{20}$,
    \begin{align*}
    & \quad  K_\HG(P_{n}, P_{\hat\theta}; h_{\gamma, \hat\mu}) \\
    & \ge \left\{\E_{P_{\theta^*}} h_{\gamma, \hat\mu}(x) - \E_{P_{\hat\theta}} h_{\gamma, \hat\mu}(x) \right\} - \epsilon -  \sqrt{p}\lambda_{32}(4q)^{-1/2}
    \end{align*}
    where $q=p(1-p)$, and, with $C_{\mathrm{rad6}} = C_{\mathrm{sg,12}} C_{\mathrm{rad3}}$,
    $$
    \lambda_{32} = C_{\mathrm{rad6}} \sqrt{\frac{12 (p-1)}{n}} + \sqrt{\frac{(p-1)\log(\delta^{-1})}{n}} .
    $$
    \end{manualpro}

    \begin{proof}
    For any $\gamma\in \Gamma_{20}\subset \Gamma_{\rpQ}$, {because} $\pen_2(\gamma)=  (4q)^{-1/2} $ and $\gamma_0=0$, we have $\pen_2(\beta)= 2 \pen_2(\gamma) = q^{-1/2}$. Hence
    $h_\gamma(x) = h_{\rpQ, \beta }(x) \in [- 1,1]$ by the boundedness of the ramp function in $[0,1]$ and the Cauchy--Schwartz inequality, $\| \beta_2 \|_1 \le q^{1/2} \|\beta_2\|_2$.
    {Then the mean-centered version $h_{\gamma, \hat\mu}(x)$, with $\E_{P_{\theta^*}} h_{\gamma, \hat\mu}(x)=0$, is also bounded in $[-1,1]$.
    Moreover, such $ h_{\gamma, \hat\mu}(x) $ can be expressed in the form
$ \pen_2(\beta) w^\T \{g(x -\hat\mu) - \eta_0\} $, where, for $q=p(p-1)$, $w \in \bbR^{q }$ is an $L_2$ unit vector,
$g: \bbR^p \to [0,1]^{q }$ is a vector of functions, including $\ramp(x_i)\ramp(x_j)$ for $1 \le i\not= j\le p$, and
$\eta_0 = \E_{P_{\theta^*}} g(x - \hat\mu) \in [0,1]^{q }$.}

    Next, $K_\HG(P_{n}, P_{\hat\theta}; h_{\gamma, \hat\mu})$ can be bounded as
    \begin{align}
    & \quad  K_\HG(P_{n}, P_{\hat\theta}; h_{\gamma, \hat\mu}) \nonumber\\
    & \ge  K_\HG(P_\epsilon, P_{\hat\theta}; h_{\gamma, \hat\mu})  - | K_\HG(P_{n}, P_{\hat\theta}; h_{\gamma, \hat\mu}) - K_\HG(P_\epsilon, P_{\hat\theta}; h_{\gamma, \hat\mu})| \label{eq:pro-hinge-L2v-lower-prf1}.
    \end{align}
    For any $\gamma\in \Gamma_{20}$, {because} $h_{\gamma,\hat{\mu}}(x) \in [-1, 1]$, we have $\min(h_{\gamma,\hat{\mu}}(x),1)=h_{\gamma,\hat{\mu}}(x)$ and $\min(-h_{\gamma,\hat{\mu}}(x),1)=-h_{\gamma,\hat{\mu}}(x)$. Then the hinge term $K_\HG(P_{n}, P_{\hat\theta}; h_{\gamma, \hat\mu}) $ reduces to a moment matching term and can be lower bounded as follows:
    \begin{align}
    &\quad  K_\HG(P_\epsilon, P_{\hat\theta}; h_{\gamma, \hat\mu})\nonumber \\
    &= \E_{P_\epsilon} \min(h_{\gamma,\hat{\mu}}(x), 1) + \E_{P_{\hat\theta}} \min(-h_{\gamma,\hat{\mu}}(x), 1)\nonumber\\
    &= \E_{P_\epsilon} h_{\gamma,\hat{\mu}}(x) - \E_{P_{\hat\theta}} h_{\gamma,\hat{\mu}}(x)\nonumber\\
    &=\epsilon\E_{Q} h_{\gamma,\hat{\mu}}(x) + (1-\epsilon)\E_{P_\theta^*} h_{\gamma,\hat{\mu}}(x)- \E_{P_{\hat\theta}} h_{\gamma,\hat{\mu}}(x) \nonumber\\
    &\ge -\epsilon + \left\{\E_{P_\theta^*} h_{\gamma,\hat{\mu}}(x)- \E_{P_{\hat\theta}} h_{\gamma,\hat{\mu}}(x)\right\} \label{eq:pro-hinge-L2v-lower-prf2}.
    \end{align}
    Similarly, the {absolute difference term in (\ref{eq:pro-hinge-L2v-lower-prf1}) can be simplified as follows:}
    \begin{align*}
    &\quad | K_\HG(P_{n}, P_{\hat\theta}; h_{\gamma, \hat\mu}) - K_\HG(P_\epsilon, P_{\hat\theta}; h_{\gamma, \hat\mu})|\\
    &= \left| \left\{\E_{P_n} h_{\gamma,\hat{\mu}}(x)- \E_{P_{\hat\theta}} h_{\gamma,\hat{\mu}}(x)\right\} - \left\{\E_{P_\epsilon} h_{\gamma,\hat{\mu}}(x)- \E_{P_{\hat\theta}} h_{\gamma,\hat{\mu}}(x)\right\}\right|\\
    &=|\E_{P_n} h_{\gamma,\hat{\mu}}(x)- \E_{P_\epsilon} h_{\gamma,\hat{\mu}}(x)|.
    \end{align*}
    {We apply Lemma~\ref{lem:logit-concen-L2-lower} with $b= 1$, $g(x)\in [0,1]^{q }$ and $\eta_0$ defined above, and
    $f^{\prime}(\me^u)$ and $f^{\#}(\me^u)$ replaced by the identity function in $u$.}
    It holds with probability at least $1-2\delta$ that
   for any $\gamma\in \Gamma_{20}$,
    \begin{align}
    & \quad | \E_{P_n} h_{\gamma,\hat{\mu}}(x)- \E_{P_\epsilon} h_{\gamma,\hat{\mu}}(x)| \nonumber \\
    & \le  (4q)^{-1/2} \left\{  C_{\mathrm{sg,12}} \sqrt{\frac{2 q V_g }{n}} + \sqrt{\frac{q  \log(\delta^{-1})}{n}} \right\} \nonumber \\
    & =   \sqrt{p}(4q)^{-1/2} \left\{  C_{\mathrm{sg,12}} C_{\mathrm{rad3}} \sqrt{\frac{12(p-1)}{n}} + \sqrt{\frac{(p-1) \log(\delta^{-1})}{n}} \right\} , \label{eq:pro-hinge-L2v-lower-prf3}
    \end{align}
    as shown in Proposition~\ref{pro:logit-L2v-lower}. Combining the inequalities (\ref{eq:pro-hinge-L2v-lower-prf1})--(\ref{eq:pro-hinge-L2v-lower-prf3}) leads to the desired result.
    \end{proof}

\subsection{Details in proof of Corollary~\ref{cor:two-obj}} \label{sec:two-obj}

\begin{lem} \label{lem:two-obj-centered-upr-bound}
    Assume that $f$ satisfies Assumptions \ref{ass:f-condition} and \ref{ass:f-condition2},
    and $G$ satisfies Assumption \ref{ass:generator-condition}.
    Let $(\hat{\gamma}, \hat{\theta})$ be a solution to the alternating optimization problem (\ref{eq:two-obj-gan-centered}).

    (i) Let $\epsilon_0\in (0,1)$ be fixed. For any $\epsilon \in [0, \epsilon_0]$ and any function $h : \bbR^p \to \bbR$, we have
    \begin{align*}
    K_{f}(P_{\epsilon}, P_{\hat{\theta}}; h) \le -f^\prime(1-\epsilon_0) \epsilon .
    \end{align*}

    (ii) Let $\epsilon_1\in (0,1)$ be fixed. If $\hat{\epsilon} = n^{-1} \sum_{i=1}^{n} U_i \in [0, \epsilon_1]$, then
    for any function $h : \bbR^p \to \bbR$, we have
    \begin{align}
    K_f(P_n, P_{\hat{\theta}} ; h) \le -f^\prime(1-\epsilon_1) \hat\epsilon + R_1| \E_{P_{\theta^{*},n}} h(x) - \E_{P_{\theta^{*}}} h(x) | ,
    \end{align}
    where $P_{\theta^*,n}$ denotes the empirical distribution of $\{X_i: U_i=0,i=1,\ldots,n\}$ in the latent representation of Huber's contamination model.
    \end{lem}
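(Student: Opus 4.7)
The overall plan is to follow the template of Lemma \ref{lem:logit-upper}(iii), replacing each use of the true generator distribution $P_{\theta^*}$ in the second slot of $K_f$ by the alternating-optimization solution $P_{\hat\theta}$, and invoking the generator optimality in (\ref{eq:two-obj-gan-centered}) to control the resulting discrepancy. Since part (i) is the population analog of part (ii), I would focus the main work on (ii) and then read off (i) by replacing $P_n$ with $P_\epsilon$ throughout, at which point the empirical-population concentration term drops out.

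For (ii), I start from $K_f(P_n, P_{\hat\theta}; h) = \E_{P_n} f^{\prime}(\me^{h(x)}) - \E_{P_{\hat\theta}} f^\#(\me^{h(x)})$, split $P_n$ into its uncontaminated empirical $P_{\theta^*,n}$ and its contaminated empirical counterpart, and drop the contaminated part of the $f^\prime$ term using $f^\prime \le 0$ from Assumption \ref{ass:f-condition}(ii). Applying Jensen's inequality with the concavity of $f^\prime(\me^u)$ (Assumption \ref{ass:f-condition2}(i)) and of $-f^\#(\me^u)$ (inherited via Remark \ref{rem:f-condition2-a}) gives an upper bound of the form $(1-\hat\epsilon) f^\prime(\me^{A}) - f^\#(\me^{B})$, where $A = \E_{P_{\theta^*,n}} h(x)$ and $B = \E_{P_{\hat\theta}} h(x)$. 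Adding and subtracting $f^\#(\me^{A})$, invoking the pointwise bound $(1-\hat\epsilon) f^\prime(t) - f^\#(t) \le -f^\prime(1-\epsilon_1)\hat\epsilon$ from Lemma \ref{lem:logit-upper}(i), and using the $R_1$-Lipschitz continuity of $f^\#(\me^u)$ from Assumption \ref{ass:f-condition2}(ii) reduces the task to controlling a Lipschitz gap of the form $R_1 |A - B|$.

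The main obstacle is then replacing $|A - B| = |\E_{P_{\theta^*,n}} h(x) - \E_{P_{\hat\theta}} h(x)|$ by the target quantity $|\E_{P_{\theta^*,n}} h(x) - \E_{P_{\theta^*}} h(x)|$, which amounts to effectively identifying $\E_{P_{\hat\theta}} h(x)$ with $\E_{P_{\theta^*}} h(x)$. This is precisely where the second line of (\ref{eq:two-obj-gan-centered}) enters: since $\hat\theta$ minimizes the generator objective $\E_{P_n} f^\prime(\me^{h_{\hat\gamma,\mu}(x)}) - \E_{P_\theta} G(h_{\hat\gamma,\mu}(x))$ over $\theta\in\Theta$, comparing the value at $\hat\theta$ with the value at $\theta^*\in\Theta$ yields a one-sided inequality on $\E_{P_{\hat\theta}} G(h_{\hat\gamma,\hat\mu}) - \E_{P_{\theta^*}} G(h_{\hat\gamma,\mu^*})$. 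Using Assumption \ref{ass:generator-condition} (convexity and strict monotonicity of $G$, so that $G^{-1}$ is concave and monotone) together with Jensen's inequality applied through $G^{-1}$ converts this into the needed one-sided control of $\E_{P_{\hat\theta}} h - \E_{P_{\theta^*}} h$. This last step is the delicate one; it is the only place where the particular two-objective structure (rather than a single-objective $K_f$) is actively used, and verifying that the resulting slack can be absorbed into the existing $R_1$ term is what I expect to require the most care.
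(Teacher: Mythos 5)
Your plan derails at the step you yourself flag as ``delicate,'' and the difficulty is not a matter of care --- it is structural. Following the Lemma~\ref{lem:logit-upper}(iii) template with $P_{\hat\theta}$ in the second slot of $K_f$ does give
\[
K_f(P_n, P_{\hat\theta}; h) \le -f'(1-\epsilon_1)\hat\epsilon + R_1\bigl|\E_{P_{\theta^*,n}} h(x) - \E_{P_{\hat\theta}} h(x)\bigr|,
\]
but this is not the target inequality: the gap is between the \emph{empirical} mean under $P_{\theta^*,n}$ and the mean under the \emph{estimated} generator $P_{\hat\theta}$, not under the \emph{true} $P_{\theta^*}$. The former is not a concentration term and can be arbitrarily large (nothing constrains $\hat\theta$ to be near $\theta^*$ at this stage). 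Your proposed fix --- using the generator optimality in (\ref{eq:two-obj-gan-centered}) to ``effectively identify'' $\E_{P_{\hat\theta}} h$ with $\E_{P_{\theta^*}} h$ --- cannot work, because the two alternating-optimization comparisons (against $(\mu^*,\hat\Sigma)$ and $(\hat\mu,\Sigma^*)$) give only \emph{one-sided} inequalities, the first on $\E_{P_n} T(h_{\hat\gamma,\cdot})$ and the second on $\E_{P_{\hat\theta}} G(h_{\hat\gamma,\hat\mu})$. Pushed through $G^{-1}$ and Jensen, the second yields at best a one-sided lower bound $\E_{P_{\hat\theta}} h_{\hat\gamma,\hat\mu} \ge \E_{P_{\theta^*}} h_{\hat\gamma,\mu^*}$; since your residual sits inside an absolute value, a one-sided control cannot absorb it.

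The paper's proof avoids ever creating the $\E_{P_{\hat\theta}} h$-versus-$\E_{P_{\theta^*}} h$ gap. It writes $f^\#(\me^h) = f^*(T(h)) = f^*\!\bigl(T(G^{-1}(G(h)))\bigr)$, applies Jensen at the level of $G(h)$ so that the resulting bound contains $\E_{P_{\hat\theta}} G(h_{\hat\gamma,\hat\mu})$ rather than $\E_{P_{\hat\theta}} h$, and only then invokes generator optimality. The comparison at $(\hat\mu,\Sigma^*)$ replaces $\E_{P_{\hat\theta}} G(h_{\hat\gamma,\hat\mu})$ by $\E_{P_{(\hat\mu,\Sigma^*)}} G(h_{\hat\gamma,\hat\mu})$, which by the location-invariance of the shifted discriminator equals $\E_{P_{\theta^*}} G(h_{\hat\gamma,\mu^*})$; the comparison at $(\mu^*,\hat\Sigma)$ similarly swaps the $\E_{P_n} T$ term to the $\mu^*$-shifted discriminator. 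After both substitutions \emph{all} expectations are under $P_{\theta^*}$ (or its empirical counterpart) at the discriminator $h_{\hat\gamma,\mu^*}$, so the Fenchel inequality $(1-\hat\epsilon)s - f^*(s) \le f(1-\hat\epsilon)$ closes the bound, and the $R_1$-Lipschitz step produces the intended concentration gap $|\E_{P_{\theta^*,n}} h - \E_{P_{\theta^*}} h|$. In short, you must thread the $G$-composition and the generator-optimality comparisons \emph{before} Jensen produces any single-point evaluation of $f^\#(\me^{\E h})$, not after.
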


    \begin{proof}
    (i) As mentioned in Remark \ref{rem:f-gan}, the logit $f$-GAN objective in (\ref{eq:fgan}) can be equivalently written as
    \begin{align*}
    K_f(P_{\epsilon}, P_{\hat{\theta}}; h)
    &= \E_{P_{\epsilon}}f^{\prime}(\me^{h(x)}) - \E_{P_{\hat{\theta}}} f^\#(\me^{h(x)}) \\
    &= \E_{P_{\epsilon}}T(h(x)) - \E_{P_{\hat{\theta}}}f^{*}\{T(h(x))\},
    \end{align*}
    where $T(u)=f^{\prime}(\me^{u})$ and $f^{*}$ is the convex conjugate of $f$.
    Because $f$ is convex and {non-decreasing} by Assumptions $\ref{ass:f-condition}$ and $\ref{ass:f-condition2}$, we have that $f^{*}$ is convex and {non-decreasing.}

    Denote as $L_G(\theta, \gamma)$ the generator objective function, $\E_{P_{\epsilon}}f^{\prime} (\me^{h_{\gamma, \mu}(x)})-\E_{P_{\theta}}G (h_{\gamma, \mu}(x))$.
    By the definition of a solution to alternating optimization  (see Remark \ref{rem:nash}), we have
    \begin{align*}
    & L_G\{(\hat\mu, \hat\Sigma), \hat\gamma\} \le L_G\{(\mu^*, \hat\Sigma), \hat\gamma\}, \\
    & L_G\{(\hat\mu, \hat\Sigma), \hat\gamma\} \le L_G\{(\hat\mu, \Sigma^*), \hat\gamma\},
    \end{align*}
    that is,  the generator loss at $\hat\theta$ is less than that at any $\theta$, with the discriminator parameter fixed at $\hat{\gamma}$.
    The preceding inequalities can be written out as
    \begin{align*}
    \E_{P_{\epsilon}}T(\me^{h_{\hat\gamma, \hat\mu}(x)})-\E_{P_{\hat\theta}}G (h_{\hat\gamma, \hat\mu}(x)) \le \E_{P_{\epsilon}}T (\me^{h_{\hat\gamma, \mu^*}(x)})-\E_{P_{\mu^*, \hat\Sigma}}G (h_{\hat\gamma, \mu^*}(x)) ,
    \end{align*}
    and
    \begin{align*}
    \E_{P_{\epsilon}}T (\me^{h_{\hat\gamma, \hat\mu}(x)})-\E_{P_{\hat\theta}}G (h_{\hat\gamma, \hat\mu}(x)) \le \E_{P_{\epsilon}}T (\me^{h_{\hat\gamma, \hat\mu}(x)})-\E_{P_{\hat\mu, \Sigma^*}}G (h_{\hat\gamma, \hat\mu}(x)).
    \end{align*}
    Note that either the location or the variance matrix, but not both, is changed on the two sides in each inequality.
    In the first inequality, we have $\E_{P_{\hat\theta}}G (h_{\hat\gamma, \hat\mu}(x))=\E_{P_{\mu^*, \hat\Sigma}}G (h_{\hat\gamma, \mu^*}(x))$ because both are equal to $\E_{P_{0, \hat\Sigma}}G (h_{\hat\gamma, 0}(x))$. In the second inequality, the term $\E_{P_{\epsilon}}G (h_{\hat\gamma, \hat\mu}(x))$ is on both sides.
    Then the two inequalities yield
    \begin{align*}
    \E_{P_{\epsilon}}T (\me^{h_{\hat\gamma, \hat\mu}(x)}) &\le \E_{P_{\epsilon}}T (\me^{h_{\hat\gamma, \mu^*}(x)}), \\
    -\E_{P_{\hat\theta}}G (h_{\hat\gamma, \hat\mu}(x)) &\le -\E_{P_{\hat\mu, \Sigma^*}}G (h_{\hat\gamma, \hat\mu}(x)).
    \end{align*}

    Now we are ready to derive an upper bound for $K_{f}(P_{\epsilon}, P_{\hat{\theta}}; h_{\hat{\gamma}, \hat{\mu}}) $:
    \begin{align}
    & \quad  K_{f}(P_{\epsilon}, P_{\hat{\theta}}; h_{\hat{\gamma}, \hat{\mu}}) \nonumber \\
    &  \E_{P_{\epsilon}}T(h_{\hat{\gamma}, \hat{\mu}}) -  \E_{P_{\hat{\theta}}} f^{*}\{T(G^{-1}(G(h_{\hat{\gamma}, \hat{\mu}})))\} \nonumber \\
    & \le \E_{P_{\epsilon}}T(h_{\hat{\gamma}, \hat{\mu}}) -   f^{*}\{T(G^{-1}(\E_{P_{\hat{\theta}}} G(h_{\hat{\gamma}, \hat{\mu}})))\} \label{eq:lem-two-obj-centered-upr-bound-prf1}\\
    & \le \E_{P_{\epsilon}}T(h_{\hat{\gamma}, \mu^*}) -   f^{*}\{T(G^{-1}(\E_{P_{\hat\mu, \Sigma^*}} G(h_{\hat{\gamma}, \hat{\mu}})))\} \label{eq:lem-two-obj-centered-upr-bound-prf2}\\
    & \le (1-\epsilon)\E_{P_{\theta^*}}T(h_{\hat{\gamma}, \mu^*}) -   f^{*}\{T(G^{-1}(\E_{P_{\mu^*, \Sigma^*}} G(h_{\hat{\gamma}, \mu^*})))\} \label{eq:lem-two-obj-centered-upr-bound-prf3}\\
    & \le (1-\epsilon)T(\E_{P_{\theta^*}}h_{\hat{\gamma}, \mu^*}) -   f^{*}\{T(\E_{P_{\mu^*,\Sigma^*}} h_{\hat{\gamma}, \mu^*})\} \label{eq:lem-two-obj-centered-upr-bound-prf4}\\
    & \le f(1-\epsilon) \le -f^{\prime}(1-\epsilon_0)\epsilon \label{eq:lem-two-obj-centered-upr-bound-prf5}.
    \end{align}
    Line (\ref{eq:lem-two-obj-centered-upr-bound-prf1}) follows from Jensen's inequality by the convexity of $f^*$.
    Line (\ref{eq:lem-two-obj-centered-upr-bound-prf2}) follows from the two inequalities derived above, together with
    {the fact that $-f^{*}(T(G^{-1}))$ is non-increasing, by non-decreasingness of $f^*$, $T$, and $G^{-1}$.}
    In (\ref{eq:lem-two-obj-centered-upr-bound-prf3}) we use the fact that $\E_{P_{\hat\mu, \Sigma^*}} G(h_{\hat{\gamma}, \hat{\mu}})=\E_{P_{\mu^*, \Sigma^*}}G (h_{\hat\gamma, \mu^*}(x))$ and drop the $\E_Q$ term because $T \le 0$.
    Line (\ref{eq:lem-two-obj-centered-upr-bound-prf4}) follows from Jensen's inequality by the convexity of $G$ and the concavity of $T$, together with the fact that
    $-f^{*}(T(G^{-1}))$ is non-increasing.
    For the last line (\ref{eq:lem-two-obj-centered-upr-bound-prf5}), by the definition of Fenchel conjugate we have
    \begin{align*}
    (1-\epsilon)s - f^{*}(s)\le f(1-\epsilon) \le -f^{\prime}(1-\epsilon_0)\epsilon,
    \end{align*}
    with $s$ set to $\E_{P_{\theta^*}}T(h_{\hat{\gamma}, \mu^*})$.

    (ii) To derive an upper bound for $K_f(P_n, P_{\hat{\theta}} ; h_{\hat{\gamma}})$, we first argue similarly as in part (i):
    \begin{align}
    & \quad  K_{f}(P_{n}, P_{\hat{\theta}}; h_{\hat{\gamma}, \hat{\mu}}) \nonumber \\
    & = \hat{\epsilon}  \, \E_Q T(h_{\hat{\gamma}, \hat{\mu}}) + (1-\hat{\epsilon})\E_{P_{\theta^*, n}}T(h_{\hat{\gamma}, \hat{\mu}}) -  \E_{P_{\hat{\theta}}} f^{*}\{T(h_{\hat{\gamma}, \hat{\mu}})\} \nonumber \\
    & \le (1-\hat\epsilon)T(\E_{P_{\theta^*, n}}h_{\hat{\gamma}, \mu^*}) -   f^{*}\{T(\E_{P_{\mu^*,\Sigma^*}} h_{\hat{\gamma}, \mu^*})\} \nonumber.
    \end{align}
    Then we {use} the $R_1$-Lipschitz property of $f^*(T)$ and obtain
    \begin{align*}
    & \quad (1-\hat\epsilon)T(\E_{P_{\theta^*, n}}h_{\hat{\gamma}, \mu^*}) -   f^{*}\{T(\E_{P_{\mu^*,\Sigma^*}} h_{\hat{\gamma}, \mu^*})\} \nonumber\\
    &\le (1-\hat{\epsilon})T(\E_{P_{\theta^*, n}}h_{\hat{\gamma}, \mu^*}) -   f^{*}\{T(\E_{P_{\theta^*, n}} h_{\hat{\gamma}, \mu^*})\} + R_1|\E_{P_{\theta^*, n}} h_{\hat{\gamma}, \mu^*} - \E_{P_{\theta^*}} h_{\hat{\gamma}, \mu^*}|\nonumber\\
    &\le f(1-\hat{\epsilon})+R_1|\E_{P_{\theta^*, n}} h_{\hat{\gamma}, \mu^*} - \E_{P_{\theta^*}} h_{\hat{\gamma}, \mu^*}|\nonumber\\
    &\le -f^{\prime}(1-\epsilon_1)\hat{\epsilon}+R_1|\E_{P_{\theta^*, n}} h_{\hat{\gamma}, \mu^*} - \E_{P_{\theta^*}} h_{\hat{\gamma}, \mu^*}|\nonumber.
    \end{align*}
    Combining the preceding displays completes the proof.
    \end{proof}
    \begin{lem} \label{lem:hinge-two-obj-upr-bound}
       Let $(\hat{\gamma}, \hat{\theta})$ be a solution to the alternating optimization problem (\ref{eq:two-obj-hinge-centered}).

        (i) For any $\epsilon \in [0, 1]$ and any function $h : \bbR^p \to \bbR$, we have
        \begin{align*}
        K_{\HG}(P_{\epsilon}, P_{\hat{\theta}}; h) \le 2\epsilon .
        \end{align*}

        (ii) If $\hat{\epsilon} = n^{-1} \sum_{i=1}^{n} U_i \in [0, 1]$, then
        for any function $h : \bbR^p \to \bbR$, we have
        \begin{align}
        K_\HG(P_n, P_{\hat{\theta}} ; h) \le 2\hat\epsilon + |\E_{P_{\theta^{*},n}} h(x) - \E_{P_{\theta^{*}}} h(x) | ,
        \end{align}
        where $P_{\theta^*,n}$ denotes the empirical distribution of $\{X_i: U_i=0,i=1,\ldots,n\}$ in the latent representation of Huber's contamination model.
        \end{lem}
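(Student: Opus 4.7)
The plan is to mimic the proof of Lemma \ref{lem:two-obj-centered-upr-bound} for logit $f$-GANs, with the concave non-decreasing functions $u \mapsto \min(1, u)$ and $u \mapsto \min(1, -u)$ taking the role of $f'(\me^u)$ and $-f^*(f'(\me^u))$ respectively. The key structural ingredients are concavity of $\min(1, \cdot)$, the elementary inequality $\min(1, u) + \min(1, -u) \le 0$ together with $\min(1, u) \le 1$, Assumption \ref{ass:generator-condition} on $G$, and the alternating optimization characterization of $\hat\theta$ combined with the shift-invariance $\E_{P_\theta} G(h_{\hat\gamma, \mu}) = \E_{P_{0, \Sigma}} G(h_{\hat\gamma, 0})$ (which makes the $G$ term depend only on $\Sigma$, not $\mu$).

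I would first exploit the definition of $(\hat\gamma, \hat\theta)$ as a solution to the alternating optimization problem (\ref{eq:two-obj-hinge-centered}). Perturbing $\hat\theta$ in the generator step to $(\mu^*, \hat\Sigma)$ and $(\hat\mu, \Sigma^*)$ and using shift-invariance to cancel either the $G$ terms or the $\min$ terms, I obtain two one-sided inequalities analogous to those in the proof of Lemma \ref{lem:two-obj-centered-upr-bound}:
(a) $\E_{P_*} \min(1, h_{\hat\gamma, \hat\mu}(x)) \le \E_{P_*} \min(1, h_{\hat\gamma, \mu^*}(x))$ and
(b) $\E_{P_{\hat\theta}} G(h_{\hat\gamma, \hat\mu}(x)) \ge \E_{P_{\mu^*, \Sigma^*}} G(h_{\hat\gamma, \mu^*}(x))$,
where $P_*$ is $P_\epsilon$ for part (i) and $P_n$ for part (ii). Next, I chain Jensen-type inequalities through the composition $u \mapsto \min(1, -G^{-1}(u))$ (parallel to the chain $u \mapsto -f^*(T(G^{-1}(u)))$ in Lemma \ref{lem:two-obj-centered-upr-bound}), use (b) to transfer from $P_{\hat\theta}$ to $P_{\mu^*, \Sigma^*}$, and apply Jensen on convex $G$ together with monotonicity of $\min(1, \cdot)$ to strip $G$, yielding $\E_{P_{\hat\theta}} \min(1, -h_{\hat\gamma, \hat\mu}(x)) \le \min(1, -m)$ with $m = \E_{P_{\theta^*}} h_{\hat\gamma, \mu^*}(x)$.

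For the first hinge term, I combine (a) with the decomposition $P_\epsilon = (1-\epsilon) P_{\theta^*} + \epsilon Q$ (respectively its empirical analogue $P_n$ via the contamination indicators $U_i$), the trivial bound $\min(1, \cdot) \le 1$, and Jensen's inequality applied to the concave $\min(1, \cdot)$ to obtain $\E_{P_\epsilon} \min(1, h_{\hat\gamma, \hat\mu}(x)) \le (1-\epsilon)\min(1, m) + \epsilon$. Combining with the bound from the previous paragraph gives $K_{\HG}(P_\epsilon, P_{\hat\theta}; h_{\hat\gamma, \hat\mu}) \le \epsilon + (1-\epsilon)\min(1, m) + \min(1, -m)$. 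A short case analysis on the sign of $m$ using $\min(1, m) + \min(1, -m) \le 0$ and $\min(1, m) \le 1$ shows the right-hand side is at most $2\epsilon$, establishing part (i). Part (ii) follows by the identical argument with $P_\epsilon$ replaced by $P_n$: the residual $|\E_{P_{\theta^*, n}} h(x) - \E_{P_{\theta^*}} h(x)|$ arises because after Jensen one needs to compare $\min(1, \E_{P_{\theta^*, n}} h)$ to $\min(1, \E_{P_{\theta^*}} h)$, which is controlled via the $1$-Lipschitz property of $\min(1, \cdot)$.

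The main obstacle is rigorously justifying the Jensen-type step $\E_{P_{\hat\theta}} \min(1, -h) \le \min(1, -G^{-1}(\E_{P_{\hat\theta}} G(h)))$, which requires the composition $u \mapsto \min(1, -G^{-1}(u))$ to be concave (in order to push $\E_{P_{\hat\theta}}$ inside); this is the hinge analogue of the composition convexity of $u \mapsto f^*(T(G^{-1}(u)))$ implicitly used in Lemma \ref{lem:two-obj-centered-upr-bound}, and depends on structural compatibility between $G$ and the hinge truncation beyond the bare hypotheses of Assumption \ref{ass:generator-condition}.
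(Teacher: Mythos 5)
Your proposal mirrors the paper's proof: the paper explicitly transfers the argument of Lemma \ref{lem:two-obj-centered-upr-bound} by replacing $T(u)$ with $\min(u,1)$ and $-f^*(T(u))$ with $\min(-u,1)$, and your outline reproduces exactly the same ingredients --- the two alternating-optimization inequalities obtained via shift-invariance of $\E_{P_\theta}G(h_{\hat\gamma,\mu})$, the Jensen chain through $G$ and $G^{-1}$, the use of $\min(u,1)\le 1$ to extract the $\epsilon$ (resp.\ $\hat\epsilon$) factor, the elementary bound $\min(u,1)+\min(-u,1)\le 0$, and, for part (ii), the $1$-Lipschitz property of $\min(-u,1)$ producing the residual $|\E_{P_{\theta^*,n}}h - \E_{P_{\theta^*}}h|$.

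The obstacle you flag in your final paragraph is genuine, but it is not specific to your writeup: it is exactly the same soft spot as in the paper's own proof of Lemma \ref{lem:two-obj-centered-upr-bound}. The step labeled (\ref{eq:lem-two-obj-centered-upr-bound-prf1}) is attributed there to ``Jensen's inequality by the convexity of $f^*$,'' but what is actually required to pass the expectation $\E_{P_{\hat\theta}}$ inside $G$ is convexity of the full composition $y\mapsto f^*(T(G^{-1}(y)))$ applied to $Y=G(h)$. That composition is a convex non-decreasing function ($f^*\circ T$) of a concave increasing function ($G^{-1}$), which is not guaranteed to be convex under Assumptions \ref{ass:f-condition2} and \ref{ass:generator-condition}. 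For instance, for reverse KL one has $f^*(T(u))=f^\#(\me^u)=u-1$ (affine), so the composition equals $G^{-1}(y)-1$, which is concave whenever $G$ is strictly convex; Jensen then runs in the wrong direction. Your hinge-side observation --- that $u\mapsto\min(1,-G^{-1}(u))$ would need to be concave, but $-G^{-1}$ is convex and so the composite with $\min(1,\cdot)$ need not be concave --- is the precise analogue, and fails in the same way for strictly convex $G$ (e.g.\ $G(h)=\me^h$ gives the piecewise function $\min(1,-\log y)$, which has a convex piece). Both chains do close when $G$ is linear, which covers the geometric GAN and calibrated rKL-GAN examples; beyond that, neither the paper nor your proposal supplies the extra structural condition, so you have correctly located a gap that is shared with the paper rather than introduced by your argument.
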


        \begin{proof}
            (i) By the same argument used in the proof of Lemma~\ref{lem:two-obj-centered-upr-bound} with $T(u)$ replaced by $\min(u, 1)$ and $-f^*(T(u))$ replaced by $\min(-u, 1)$ we have:
        \begin{align}
        & \quad  K_{\HG}(P_{\epsilon}, P_{\hat{\theta}}; h_{\hat{\gamma}, \hat{\mu}}) \nonumber \\
        & \le \epsilon + (1-\epsilon)\min(\E_{P_{\theta^*}}h_{\hat{\gamma}, \mu^*}, 1) + \min(-\E_{P_{\theta^*}} h_{\hat{\gamma},\mu^*}, 1) \label{eq:lem-two-obj-hinge-upr-bound-prf1}\\
        & \le 2\epsilon \label{eq:lem-two-obj-hinge-upr-bound-prf2}.
        \end{align}
        Inequality (\ref{eq:lem-two-obj-hinge-upr-bound-prf1}) is derived by the same argument that leads to (\ref{eq:lem-two-obj-centered-upr-bound-prf1})-(\ref{eq:lem-two-obj-centered-upr-bound-prf4}) with the fact that $\min(u, 1)$ is concave and non-decreasing and that $\min(-u, 1)$ is concave and non-increasing just like $T$ and $-f^*(T)$ in Lemma~\ref{lem:two-obj-centered-upr-bound} respectively. The $\epsilon$ term is a result of the fact that $\min(u, 1) \le 1$. Inequality (\ref{eq:lem-two-obj-hinge-upr-bound-prf2}) is by the same argument used in Lemma~\ref{lem:hinge-upr-bound}:
        \begin{align*}
            &\quad (1-\epsilon)\min(u, 1) + \min(-u, 1) \\
            & \le \epsilon + (1-\epsilon)\{\min(u, 1) + \min(-u, 1)\}\\
            &\le \epsilon,
        \end{align*}
        with $u$ set to be $h_{\hat\gamma, \mu^*}$.

        (ii) To derive an upper bound for $K_\HG(P_n, P_{\hat{\theta}} ; h_{\hat{\gamma}})$, we first argue similarly as in part (i):
        \begin{align}
        & \quad  K_\HG(P_{n}, P_{\hat{\theta}}; h_{\hat{\gamma}, \hat{\mu}}) \nonumber \\
        & = \hat{\epsilon}  \, \E_Q \min(h_{\hat{\gamma}, \hat{\mu}}, 1) + (1-\hat{\epsilon})\E_{P_{\theta^*, n}}\min(h_{\hat{\gamma}, \hat{\mu}}, 1) + \E_{P_{\hat{\theta}}}\min(-h_{\hat{\gamma}, \hat{\mu}},1) \nonumber \\
        & \le \hat\epsilon + (1-\hat\epsilon)\min(\E_{P_{\theta^*, n}}h_{\hat{\gamma}, \mu^*}, 1) +  \min(-\E_{P_{\mu^*,\Sigma^*}} h_{\hat{\gamma}, \mu^*}, 1) \nonumber.
        \end{align}
        Then we use the $1$-Lipschitz property of $\min(-u ,1)$ and obtain
        \begin{align*}
        & \quad (1-\hat\epsilon)\min(\E_{P_{\theta^*, n}}h_{\hat{\gamma}, \mu^*}, 1) + \min(-\E_{P_{\mu^*,\Sigma^*}} h_{\hat{\gamma}, \mu^*}, 1) \nonumber\\
        &\le (1-\hat\epsilon)\min(\E_{P_{\theta^*, n}}h_{\hat{\gamma}, \mu^*}, 1) + \min(-\E_{P_{\theta^*, n}} h_{\hat{\gamma}, \mu^*}, 1) + |\E_{P_{\theta^*, n}} h_{\hat{\gamma}, \mu^*} - \E_{P_{\theta^*}} h_{\hat{\gamma}, \mu^*}|\nonumber\\
        &\le \hat{\epsilon} + |\E_{P_{\theta^*, n}} h_{\hat{\gamma}, \mu^*} - \E_{P_{\theta^*}} h_{\hat{\gamma}, \mu^*}|\nonumber.
        \end{align*}
        Combining the preceding displays completes the proof.
        \end{proof}

    \subsection{Proofs in Section~\ref{sec:discussion}}

\begin{proof}[Proof of Proposition~\ref{pro:GYZ}]
    We first verify that $f(t) = \frac{1+t}{2} g_0(\frac{2t}{1+t})$ is convex on $[0, +\infty)$ and $f(1)=0$ so that $D_f$ is a valid $f$-divergence. Because $g_0$ is convex by the convexity of $g$ and $f^{\dprime}(t) = \frac{2}{(1+t)^3}g_0^{\dprime}(\frac{2t}{1+t})$, it follows that $f$ is convex on $[0, +\infty)$. Direct calculation gives $f(1) = g_0(1) = 0$. Thus, $f$ defines a valid $f$-divergence.

    Next we show that $L_g(P_*, P_\theta; q_\gamma) = K_f(P_*, P_\theta; h_\gamma)$. Denote $\me^{h_\gamma(x)}$ by $t$ and $q_\gamma(x)$ by $q$. Then we have
    \begin{align}
        & \quad K_f(P_*, P_\theta; h_\gamma) = \E_{P_*}f^{\prime}(t) - \E_{P_\theta}\left\{t f^{\prime}(t) - f(t) \right\} \nonumber\\
        &= \E_{P_*}\left\{\frac{1}{1+t}g_0^{\prime}\left(\frac{2t}{1+t} \right) + \frac{1}{2}g_0\left(\frac{2t}{1+t}\right)\right\}
         - \E_{P_\theta}\left\{\frac{t}{1+t}g_0^{\prime}\left(\frac{2t}{1+t}\right) - \frac{1}{2}g_0\left(\frac{2t}{1+t}\right) \right\} \label{eq:GYZ-1}\\
        &=\E_{P_*}\left\{(1-q)g_0^{\prime}\left(2q\right) + \frac{1}{2}g_0\left(2q\right)\right\} - \E_{P_\theta}\left\{q g_0^{\prime}\left(2q\right) - \frac{1}{2}g_0\left(2q\right) \right\} \nonumber\\
        &=\E_{P_*}\left\{\frac{1-q}{2}g^{\prime}\left(q\right) + \frac{1}{2}g\left(q\right) - \frac{1}{2}g\left(\frac{1}{2}\right)\right\}
         - \E_{P_\theta}\left\{\frac{q}{2} g^{\prime}\left(q\right) - \frac{1}{2}g\left(q\right) + \frac{1}{2}g\left(\frac{1}{2}\right)\right\}\label{eq:GYZ-2} \\
        &=\frac{1}{2} \left\{ \E_{P_*} S_g(q, 1) - \E_{P_\theta}S_g(q, 0)\right\} - g\left(\frac{1}{2}\right). \label{eq:GYZ-3}
    \end{align}
    Line (\ref{eq:GYZ-1}) is by direct calculation. Lines (\ref{eq:GYZ-2})--(\ref{eq:GYZ-3}) are by the definition of $g$ and $S_g$.

     Finally, by the definition of $f$ from $g_0$, direct calculation gives $$\int q f\left(\frac{p}{q}\right) = \int \frac{p+q}{2} g_0\left(\frac{2p}{p+q}\right),$$ which implies that $D_{g_0}(P_* || (P_* + P_\theta)/2) = D_f(P_* || P_\theta)$.
\end{proof}


\section{Auxiliary lemmas}

\subsection{Truncated linear basis}
The following result gives upper bounds on the moments of the truncated linear basis, which are used in the proofs of
Lemma~\ref{lem:spline-L1-upper} and~\ref{lem:spline-L2-upper}.

\begin{lem} \label{lem:truncated-mean}
For $X \sim \N(0,\sigma^2)$ and $\xi \in \bbR$, we have
\begin{align*}
& \E (X-\xi)_+ \le \frac{\sigma}{\sqrt{2\pi}} + |\xi|, \\
&  \E [ \{ (X-\xi)_+\}^2 ] \le \sigma^2 + \xi^2.
\end{align*}
\end{lem}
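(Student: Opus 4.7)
The plan is to evaluate the first moment exactly and bound each piece, then control the second moment by dropping the truncation and using the triangle inequality in $L^2$.

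For the first inequality, I would start by writing the expectation as an integral and computing it in closed form. Substituting $u = x/\sigma$ and using the standard identity $\int_a^{\infty} u\, \phi(u)\, du = \phi(a)$, where $\phi$ is the standard Gaussian density, gives
\begin{align*}
\E (X-\xi)_+ = \frac{\sigma}{\sqrt{2\pi}} \exp\!\left(-\frac{\xi^2}{2\sigma^2}\right) - \xi\, \bar\Phi(\xi/\sigma),
\end{align*}
with $\bar\Phi = 1 - \Phi$. I would then bound the first term by $\sigma/\sqrt{2\pi}$ using $\exp(-\xi^2/(2\sigma^2)) \le 1$, and bound the second term by $|\xi|$ using $\bar\Phi(\xi/\sigma) \in [0,1]$ (noting that when $\xi\ge 0$ the second term is nonpositive and when $\xi<0$ it is at most $|\xi|$). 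Adding the two bounds yields the claim.

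For the second inequality, the argument is even shorter: since $(X-\xi)_+^2 \le (X-\xi)^2$ pointwise, monotonicity of expectation gives
\begin{align*}
\E\bigl[\{(X-\xi)_+\}^2\bigr] \le \E(X-\xi)^2 = \E X^2 - 2\xi\, \E X + \xi^2 = \sigma^2 + \xi^2,
\end{align*}
where the last step uses $\E X = 0$ and $\E X^2 = \sigma^2$.

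There is no real obstacle here; the only minor subtlety is ensuring the sign analysis in the first inequality is done cleanly so that the bound $-\xi\, \bar\Phi(\xi/\sigma) \le |\xi|$ holds uniformly over $\xi \in \bbR$. Everything else is a direct computation, and the resulting bounds are exactly of the form needed to feed into Lemmas \ref{lem:spline-L1-upper} and \ref{lem:spline-L2-upper}, where they control $\|\E \varphi_l\|_\infty$ and $\|\E \varphi_l\|_2$ in the spline concentration arguments.
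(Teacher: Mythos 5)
Your proposal is correct and follows essentially the same route as the paper: compute $\E (X-\xi)_+$ exactly, bound the Gaussian density factor by $1$ and the tail probability by $1$ (with the sign check you describe), and for the second moment simply drop the positive-part truncation. The sign analysis you flag is indeed the only small point, and your treatment of it is complete.
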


\begin{proof}
The second result is immediate: $\E [ \{ (X-\xi)_+\}^2 ] \le \E \{ (X-\xi)^2 \} = \sigma^2 + \xi^2$.
The first result can be shown as follows:
\begin{align*}
& \quad \E (X-\xi)_+ = \int_\xi^\infty (x-\xi) \frac{1}{\sqrt{2\pi} \sigma} \me^{-\frac{x^2}{2\sigma^2}} \,\dif x \\
& = \frac{\sigma}{\sqrt{2\pi}} \me^{- \frac{\xi^2}{2\sigma^2}} - \xi \int_\xi^\infty \frac{1}{\sqrt{2\pi}\sigma} \me^{-\frac{x^2}{2\sigma^2}} \,\dif x  \\
& \le \frac{\sigma}{\sqrt{2\pi}} + |\xi|.
\end{align*}
The last inequality holds because $0 \le \int_\xi^\infty \frac{1}{\sqrt{2\pi}\sigma} \me^{-\frac{x^2}{2\sigma^2}} \,\dif x \le 1$.
\end{proof}

\subsection{VC index of ramp functions}

For a collection $\mathcal{C}$ of subsets of $\mathcal{X}$, and points $x_1, \dots, x_n \in \mathcal{X}$, define
\begin{equation*}
\Delta_{n}^{\mathcal{C}}(x_1, \dots, x_n) = \# \{C \cap \{x_1, \dots, x_n\}: C \in \mathcal{C}  \},
\end{equation*}
that is, $\Delta_{n}^{\mathcal{C}}(x_1, \dots, x_n)$ is the number of subsets of $\{x_1, \dots, x_n\}$ picked out by the collection $\mathcal{C}$.
We say that a subset $\{x_i, \dots, x_j\} \subset \{x_1, \dots, x_n\}$ is picked up by $\mathcal{C}$ if $\{x_i, \dots, x_j\} \in\{C \cap \{x_1, \dots, x_n\}: C \in \mathcal{C}  \}$.
For convenience, we also say that $\{x_i, \dots, x_j\}$ is picked up by $C$ if $\{x_i, \dots, x_j\} = C \cap \{x_1, \dots, x_n\}$.
Moreover, define
\begin{align*}
& m^{\mathcal{C}}(n) = \max_{x_1, \dots, x_n} \Delta_{n}^{\mathcal{C}}(x_1, \dots, x_n),
\end{align*}
and the Vapnik--Chervonenkis (VC) index of $\mathcal{C}$ as (\citeappend{VW})
\begin{align*}
& V(\mathcal{C}) = \inf\{n \ge 1 :  m^{\mathcal{C}}(n) < 2^n \} .
\end{align*}
where the infimum over the empty set is taken to be infinity.

The subgraph of a function $f: \mathcal{X} \to \bbR$ is defined as $G_f= \{(x, t) \in \mathcal{X} \times \bbR: t < f(x)\}$.
For a collection of functions $\mathcal{F}$, denote the collection of corresponding subgraphs as
$G_{\mathcal{F}} = \{G_f : f \in \mathcal{F}\}$, and define the VC index of $\mathcal{F}$ as $V(\mathcal{F})=V(G_{\mathcal{F}})$.

\begin{lem} \label{lem:ramp-VC}
For $\mathcal{F} =\{f_b(x)=1-(x+b)_+ + (x+b-1)_+ : b\in \bbR\}$, we have that $V(\mathcal{F}) = 2$.
Moreover, the VC index of $\{f_b(-x): b\in \bbR\} = \{ \ramp(x-b): b \in \bbR\}$ is $2$. That is, the VC index of moving-knots ramp functions is $2$.
\end{lem}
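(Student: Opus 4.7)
The plan is to establish $V(\mathcal{F}) = 2$ by proving two things: (a) some single point in $\bbR \times \bbR$ is shattered by the subgraph collection $\{G_{f_b} : b \in \bbR\}$, so $m^{\mathcal{F}}(1) = 2$; and (b) no pair of points is shattered, so $m^{\mathcal{F}}(2) < 4$. Throughout, note that $f_b$ is non-increasing in $x$, with $f_b(x) = 1$ for $x \le -b$, $f_b(x) = 1 - (x+b)$ on $[-b,\, 1-b]$, and $f_b(x) = 0$ for $x \ge 1-b$, so the image of $f_b$ lies in $[0,1]$ and its difference quotient is bounded by $1$ in absolute value.

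For part (a), fix any $x_1 \in \bbR$ and $t_1 \in (0,1)$. Choosing $b = -x_1$ gives $f_b(x_1) = 1 > t_1$ so $(x_1, t_1) \in G_{f_b}$, while $b = 1 - x_1$ gives $f_b(x_1) = 0 < t_1$ so $(x_1, t_1) \notin G_{f_b}$. Hence both subsets of $\{(x_1, t_1)\}$ are realized.

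For part (b), assume without loss of generality $x_1 < x_2$. I will show that the two ``cross-over'' labelings are mutually exclusive, so the four possible labelings of $\{(x_1,t_1),(x_2,t_2)\}$ cannot all be realized. The cross-over ``exclude $(x_1,t_1)$, include $(x_2,t_2)$'' requires $f_b(x_1) \le t_1$ and $f_b(x_2) > t_2$. Since $t_2 \ge 0$ forces $f_b(x_2) > 0$, the point $x_2 + b$ must lie in $(-\infty, 1)$, and since $t_1 < 1$ forces $f_b(x_1) < 1$, the point $x_1+b$ must lie in $(0, \infty)$; a short case analysis over the three linear pieces of $f_b$ then shows the only feasible case is $x_1+b,\, x_2+b \in [0,1]$, leading to the linear constraints $b \ge 1 - t_1 - x_1$ and $b < 1 - t_2 - x_2$, which is solvable if and only if $x_2 - x_1 < t_1 - t_2$. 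Symmetrically, the cross-over ``include $(x_1,t_1)$, exclude $(x_2,t_2)$'' requires $f_b(x_1) > t_1$ and $f_b(x_2) \le t_2$, and an analogous case check over the three linear pieces shows the weakest feasibility condition is $x_2 - x_1 > t_1 - t_2$. These two strict inequalities are incompatible, so at least one cross-over labeling fails, giving $m^{\mathcal{F}}(2) \le 3 < 4$.

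The ``moreover'' part reduces to part (a)--(b) by a change of variable: the subgraphs of $\{f_b(-\cdot)\}$ evaluated on points $(x_i, t_i)$ pick out exactly the same subsets as the subgraphs of $\{f_b\}$ evaluated on $(-x_i, t_i)$, and since the mapping $x_i \mapsto -x_i$ is a bijection on $\bbR$, the class $\{f_b(-\cdot)\}$ has the same VC index as $\mathcal{F}$; identifying this with the moving-knot ramp class $\{\ramp(x-b) : b \in \bbR\}$ (both being parameterized affine monotone rising ramps, which produce the same family of subgraphs up to reparameterization of $b$) concludes the proof. The main obstacle is executing the case analysis in part (b) cleanly over the nine possible combinations of which linear piece of $f_b$ contains $x_1+b$ and $x_2+b$; the simplifying observation is that in any case where one of the values is pinned at $0$ or $1$, at least one of the required strict inequalities $f_b(x_i) < 1$ or $f_b(x_i) > 0$ fails immediately, so only the ``doubly linear'' case requires real computation.
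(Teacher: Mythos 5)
Your proof follows essentially the same strategy as the paper's: you shatter one point explicitly, then dichotomize any pair by the sign of $t_1-t_2-(x_2-x_1)$ to show one of the two singleton labelings fails, exploiting the monotone-nonincreasing, unit-Lipschitz structure of $f_b$. Two details need tightening. First, your deductions that $x_2+b<1$ and $x_1+b>0$ tacitly assume $0\le t_2$ and $t_1<1$; the remaining edge cases are trivial ($t_2<0$ puts $(x_2,t_2)$ in every subgraph, $t_1\ge 1$ puts $(x_1,t_1)$ in none, so two labelings fail outright) but should be dispatched before the piecewise analysis.

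Second, and more substantively, the ``simplifying observation'' in your final paragraph is valid only for the first cross-over. There the constraints $f_b(x_1)\le t_1<1$ and $f_b(x_2)>t_2\ge 0$ do force both $x_1+b$ and $x_2+b$ onto the interior linear piece. But for the second cross-over, $f_b(x_1)>t_1$ is perfectly compatible with $f_b(x_1)=1$ (i.e.\ $x_1+b\le 0$) and $f_b(x_2)\le t_2$ is compatible with $f_b(x_2)=0$ (i.e.\ $x_2+b\ge 1$): the constant pieces stay live, so the analysis does not collapse to the doubly-linear case, and you would need to verify that $x_2-x_1>t_1-t_2$ survives in each of the remaining subcases (it does, but that is exactly the part you were hoping to skip). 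The cleaner route, which the paper takes in its case~1, is to invoke the $1$-Lipschitz bound directly: $f_b(x_1)>t_1$ and $f_b(x_2)\le t_2$ give $t_1-t_2<f_b(x_1)-f_b(x_2)\le x_2-x_1$ for every $b$, with no piecewise decomposition at all.
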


\begin{proof}

To show $V(\mathcal{F})=2$, we need to show $m^{G_\mathcal{F}}(1) = 2^1$ and $m^{G_\mathcal{F}}(2) < 2^2$.
The first property is trivially true. For the second property, it suffices to show that
for any two distinct points $\{(x_1, t_1), (x_2, t_2)\}$, there is at least a subset of $\{(x_1, t_1), (x_2, t_2)\}$ that cannot be picked up by $G_{f_b}$ for any $f_b \in \mathcal{F}$.
Without loss of generality, assume that $x_1 \leq x_2$.
We arbitrarily fix $f_b \in \mathcal{F}$ and discuss several cases depending on $t_2 - t_1$ and $x_2 - x_1$.



If $t_2 - t_1 \le -(x_2 - x_1)$, then if $(x_1, t_1) \in G_{f_b}$, i.e., $f_b(x_1) > t_1$, we have
\begin{align*}
f_b(x_2) - f_b(x_1) &\ge (-1) (x_2 - x_1) \\
&\ge t_2 - t_1.
\end{align*}
This implies that $f_b(x_2) \ge  f_b(x_1) - t_1 + t_2 >  t_2$ and hence $(x_2, t_2) \in G_{f_b}$. As a result, a subset containing just $(x_1, t_1)$ cannot be picked up by $G_{f_b}$.

If $t_2 - t_1 \ge 0$, then if $(x_2, t_2) \in G_{f_b}$, i.e., $f_b(x_2) > t_2$, we have
\begin{align*}
f_b(x_1) \ge f_b(x_2) > t_2 \ge t_1,
\end{align*}
and hence $(x_2, t_2) \in G_{f_b}$. Thus, the subset $\{(x_2, t_2)\}$ can never be picked up by $G_{f_b}$.

If $t_2-t_1 < 0$ and $t_2 - t_1 > -(x_2-x_1)$, then if $f_b(x_2) > t_2$, we have
\begin{align*}
f_b(x_1) - f_b(x_2) &\ge (-1) (x_1 - x_2) \\
&> -(t_2 - t_1).
\end{align*}
This implies that $f_b(x_1) > f_b(x_2) - t_2 + t_1 > t_1$. As a result, the subset $\{(x_2, t_2)\}$ can never be picked up by $G_{f_b}$.

Combining the preceding cases shows that $m^{G_{\mathcal{F}}}(2) < 2^2$ and $V(\mathcal{F}) = 2$.
Moreover, the class of functions $\{f_b(-x):b\in\bbR\}$, denoted as $\tilde{\mathcal{F}}$,
admits a one-to-one correspondence with $\mathcal{F}$.
A subset of $\{(x_1, t_1), (x_2, t_2)\}$ is picked up by $G_{\mathcal{F}}$ if and only if the corresponding subset of $\{(-x_1, t_1), (-x_2, t_2)\}$ is picked up by $G_{\tilde{\mathcal{F}}}$.
Hence $V( \tilde{\mathcal{F}} ) = V(\mathcal{F}) =2$.
\end{proof}

\begin{lem} \label{lem:intercept-VC}
For $\mathcal{F} =\{f(x) \equiv b  : b  \in \bbR\}$, we have that $V(\mathcal{F}) = 2$.
That is, the VC index of constant functions is $2$.
\end{lem}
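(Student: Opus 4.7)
The plan is to mirror the structure of the proof of Lemma \ref{lem:ramp-VC} and verify the two conditions in the definition of the VC index directly: $m^{G_{\mathcal{F}}}(1) = 2$ and $m^{G_{\mathcal{F}}}(2) < 2^2$. Observe first that for a constant function $f(x) \equiv b$, the subgraph is the half-plane $G_{f_b} = \{(x,t) \in \mathcal{X} \times \bbR : t < b\}$, which depends only on the $t$-coordinate. This geometric observation is the key driver of both steps.

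For the first step, given a single point $(x_1, t_1)$, choosing $b = t_1 - 1$ gives $G_{f_b} \cap \{(x_1,t_1)\} = \emptyset$, and choosing $b = t_1 + 1$ gives $G_{f_b} \cap \{(x_1,t_1)\} = \{(x_1,t_1)\}$. Hence both possible subsets of a single point are picked up by $G_{\mathcal{F}}$, so $m^{G_{\mathcal{F}}}(1) = 2 = 2^1$.

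For the second step, I need to show that for any two distinct points $(x_1, t_1), (x_2, t_2)$, at least one of the four subsets cannot be picked up by any $G_{f_b}$. Assume without loss of generality that $t_1 \le t_2$. Then for any $b \in \bbR$, if $(x_2, t_2) \in G_{f_b}$, i.e., $t_2 < b$, then automatically $t_1 \le t_2 < b$, so $(x_1, t_1) \in G_{f_b}$ as well. Consequently, the singleton $\{(x_2, t_2)\}$ can never be picked up by $G_{f_b}$ for any $b \in \bbR$, which gives $m^{G_{\mathcal{F}}}(2) \le 3 < 2^2$.

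Combining the two steps yields $V(\mathcal{F}) = \inf\{n \ge 1 : m^{G_{\mathcal{F}}}(n) < 2^n\} = 2$. There is no real obstacle here since the subgraphs form a totally ordered family of half-planes; the only mild point to watch is the tie-breaking in the strict inequality defining $G_{f_b}$, which is handled cleanly by the assumption $t_1 \le t_2$ and the strict inequality $t_2 < b$.
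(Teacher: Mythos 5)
Your proof is correct and follows essentially the same argument as the paper's: you show the singleton $\{(x_2, t_2)\}$ (with $t_1 \le t_2$) can never be picked out because $t_2 < b$ forces $t_1 < b$. The only difference is that you also spell out the easy verification $m^{G_{\mathcal{F}}}(1) = 2$, which the paper leaves implicit.
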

\begin{proof}
For any two distinct points $(x_1, t_1)$ and $(x_2, t_2)$, assume that with loss of generality $t_1 \le t_2$.
Then the singleton $\{(x_2, t_2)\}$ can never be picked up by $G_f$ for any $f \in \mathcal{F}$, and hence $m^{G_{\mathcal{F}}}(2) < 2^2$ and $V(\mathcal{F}) = 2$.
In fact, if $(x_2, t_2)$ is in the subgraph of $f(x) \equiv b $, then $t_2 < b $. As a result, $t_1 \le t_2 < b $, indicating that $(x_1, t_1)$ is also in the subgraph.
\end{proof}

\subsection{Lipschitz functions of Gaussian vectors}

Say that a function $g: \bbR^p \to \bbR^m$ is $L$-Lipschitz if
$ \| g(x_1) - g(x_2) \|_2 \le L \| x_1 - x_2 \|_2$ for any $x_1, x_2 \in \bbR^p$.

\begin{lem} \label{lem:lip-gaus}
Let $X \sim \N_p (\mu, \Sigma)$, and $g: \bbR^p \to \bbR^m$ be an $L$-Lipschitz function.

(i) For any vector $w \in \bbR^m$ with $\| w \|_2=1$, we have
\begin{align*}
\E\left[ \{ w^\T  ( g(X) - \E g(X) ) \}^2 \right] \le 2 C_{\mathrm{sg,12}}^2 L^2 \| \Sigma \|_{\op},
\end{align*}
where $C_{\mathrm{sg,12}}$ is {the} universal constant from Lemma~\ref{lem:subg}.
Hence we have $$\| \var \, g(X)  \|_{\op} \le 2 C_{\mathrm{sg,12}}^2 L^2 \| \Sigma \|_{\op}. $$

(ii) For any symmetric matrix $A \in \bbR^{m\times m}$ with $ \|A\|_{\fro}= 1$, we have
\begin{align*}
\E \left[ \{ (g(X) -\E g(X) )^\T A ( g(X) - \E g(X) )  \}^2 \right] \le 4 C_{\mathrm{sg,12}}^4 m L^4 \| \Sigma \|_{\op}^2.
\end{align*}
\end{lem}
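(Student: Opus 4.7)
\textbf{Proof plan for Lemma \ref{lem:lip-gaus}.} The strategy for both parts is to reduce to a scalar Lipschitz function of a standard Gaussian vector and then invoke Gaussian concentration together with the sub-gaussian moment bound (Lemma~\ref{lem:subg}).

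For part (i), I will represent $X = \mu + \Sigma^{1/2} Z$ with $Z \sim \N_p(0,I_p)$, and define $\phi_w(z) = w^\T g(\mu + \Sigma^{1/2} z)$ for a fixed unit vector $w \in \bbR^m$. The composition satisfies
\begin{align*}
|\phi_w(z_1) - \phi_w(z_2)| \le \|w\|_2 \cdot L \cdot \|\Sigma^{1/2}(z_1-z_2)\|_2 \le L \|\Sigma\|_{\op}^{1/2} \|z_1-z_2\|_2,
\end{align*}
so $\phi_w$ is $L\|\Sigma\|_{\op}^{1/2}$-Lipschitz. By the Gaussian concentration inequality (Theorem 5.6 of \citeappend{BLM13}, already invoked in Lemma~\ref{lem:spline-L2-upper}(i)), the random variable $\phi_w(Z) - \E \phi_w(Z) = w^\T(g(X) - \E g(X))$ is sub-gaussian with tail parameter $L\|\Sigma\|_{\op}^{1/2}$. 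Applying the second moment bound from Lemma~\ref{lem:subg} then yields $\E[\{w^\T(g(X)-\E g(X))\}^2] \le 2 C_{\mathrm{sg,12}}^2 L^2 \|\Sigma\|_{\op}$. Taking the supremum over unit $w$ gives the operator-norm bound on $\var\, g(X)$.

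For part (ii), I will use the spectral decomposition $A = \sum_{l=1}^m \lambda_l w_l w_l^\T$ where $\{w_l\}$ are orthonormal eigenvectors and $\sum_l \lambda_l^2 = \|A\|_{\fro}^2 = 1$. Writing $Y_l = w_l^\T(g(X) - \E g(X))$, the quadratic form becomes $\sum_l \lambda_l Y_l^2$. By the Cauchy--Schwarz inequality,
\begin{align*}
\left(\sum_l \lambda_l Y_l^2\right)^2 \le \left(\sum_l \lambda_l^2\right)\left(\sum_l Y_l^4\right) = \sum_l Y_l^4.
\end{align*}
Each $Y_l$ is sub-gaussian with parameter $L\|\Sigma\|_{\op}^{1/2}$ by the argument in part (i) applied with $w = w_l$, so the fourth-moment bound from Lemma~\ref{lem:subg} gives $\E Y_l^4 \le 4 C_{\mathrm{sg,12}}^4 L^4 \|\Sigma\|_{\op}^2$. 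Summing over $l = 1,\ldots,m$ yields the claimed bound $4 C_{\mathrm{sg,12}}^4 m L^4 \|\Sigma\|_{\op}^2$.

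The only delicate point is keeping the constants consistent with the statement of Lemma~\ref{lem:subg}, where one expects bounds of the form $\E Y^{2k} \le (2 C_{\mathrm{sg,12}}^2 \tau^2)^k$ for $k=1,2$; apart from that, the argument is a routine application of Lipschitz-to-subgaussian reduction followed by Cauchy--Schwarz, with the eigendecomposition step being the one substantive idea for part (ii).
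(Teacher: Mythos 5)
Your proof is correct and follows essentially the same route as the paper's: part (i) reduces $w^\T g(X)$ to an $L\|\Sigma\|_{\op}^{1/2}$-Lipschitz function of a standard Gaussian vector and applies Gaussian concentration plus the moment bound from Lemma~\ref{lem:subg}, and part (ii) uses the spectral decomposition of $A$ followed by Cauchy--Schwarz and the fourth-moment bound on each $w_j^\T \tilde g$ — exactly the paper's argument. The constant-tracking caveat you flag for the fourth moment is the only loose thread, but the paper uses the same constant, so there is no discrepancy to resolve.
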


\begin{proof}
(i) By \citeappend{BLM13}, Theorem 5.6, it can be shown that for any $L_2$ unit vector $w$,
$ w^\T  ( g(X) - \E g(X) ) $ is sub-gaussian with tail parameter $ L \| \Sigma \|_{\op}^{1/2}$. See the proof of Lemma~\ref{lem:spline-L2-upper}(i) for a similar argument.
Then by Lemma~\ref{lem:subg}, $\E [ \{ w^\T  ( g(X) - \E g(X) ) \}^2 ]  \le 2 C_{\mathrm{sg,12}}^2 L^2 \| \Sigma\|_{op}$.

(ii) Consider an eigen-decomposition $A = \sum_{j=1}^m \lambda_j w_j w_j^{\T} $,
where $\lambda_j$'s are eigenvalues and $w_j$'s are the eigenvectors with $\|w_j\|_2 =1$.
Denote $g=g(X)$ and $\tilde g = g- \E g$. Then $\tilde g^\T A \tilde g = \sum_{j=1}^m \lambda_j (w_j^\T \tilde g)^2$ and
\begin{align*}
& \quad ( \tilde g^\T A \tilde g )^2  \le \left( \sum_{j=1}^m \lambda_j^2 \right) \left( \sum_{j=1}^m (w_j^\T \tilde g)^4 \right)
\le  4 C_{\mathrm{sg,12}}^4 m L^4 \| \Sigma \|_{\op}^2 .
\end{align*}
The first step uses the Cauchy--Schwartz inequality, and the second step uses the fact that $\sum_{j=1}^m \lambda_j^2 = \|A\|_{\fro}^2=1$
and $E (w_j^\T \tilde g)^4  \le 4 C_{\mathrm{sg,12}}^4 L^4 \| \Sigma \|_{\op}^2$ by Lemma~\ref{lem:subg}
because $ w_j^\T \tilde g_j $ is sub-gaussian with tail parameter $ L \| \Sigma \|_{\op}^{1/2}$ for each $j$.
\end{proof}

The following result provides a $4$th-moment bound which depends linearly on $L^2 \| \Sigma \|_{\op}$, under a boundedness condition
in addition to the Lipschitz condition.

\begin{lem} \label{lem:lip-gaus2}
Let $X \sim \N_p (\mu, \Sigma)$, and $g: \bbR^p \to  [0,1]^m$ be an $L$-Lipschitz function.

(i) For any matrix $A \in \bbR^{m\times m}$ with $ \|A\|_{\fro}= 1$, we have
\begin{align*}
\E \left[ \{ (g(X) -\E g(X) )^\T A ( g(X) - \E g(X) )  \}^2 \right] \le 2 C_{\mathrm{sg,12}}^2 m L^2 \| \Sigma \|_{\op}.
\end{align*}

(ii) For any matrix $A \in \bbR^{m\times m}$ with $ \|A\|_{\fro}= 1$, we have
\begin{align*}
\E \left[ \{ g^\T(X) A g(X) - \E g^\T(X) A g(X) \}^2 \right] \le 20 C_{\mathrm{sg,12}}^2 m L^2 \| \Sigma \|_{\op}.
\end{align*}
\end{lem}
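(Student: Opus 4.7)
My plan is to prove part~(i) by reducing to symmetric $A$, using the spectral decomposition, and carrying out a specific Cauchy--Schwarz that exploits the boundedness $g \in [0,1]^m$; part~(ii) will then follow from part~(i) by a mean--centering decomposition together with Lemma~\ref{lem:lip-gaus}(i).

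For part~(i), I would first replace $A$ by $(A+A^\T)/2$, which leaves $(g(X)-\E g(X))^\T A (g(X)-\E g(X))$ unchanged and has Frobenius norm at most $\|A\|_\fro$; so without loss of generality $A$ is symmetric with eigendecomposition $A = \sum_j \lambda_j w_j w_j^\T$ satisfying $\sum_j \lambda_j^2 = 1$. Writing $\tilde g = g(X) - \E g(X)$ and $y_j = (w_j^\T \tilde g)^2 \ge 0$, orthonormality of $\{w_j\}$ gives $\sum_j y_j = \|\tilde g\|_2^2$. The key estimate is the Cauchy--Schwarz inequality
\[
(\tilde g^\T A \tilde g)^2 = \Bigl(\sum_j \lambda_j y_j\Bigr)^2 \le \Bigl(\sum_j y_j\Bigr)\Bigl(\sum_j \lambda_j^2 y_j\Bigr) = \|\tilde g\|_2^2 \sum_j \lambda_j^2 (w_j^\T \tilde g)^2,
\]
obtained by pairing $\lambda_j y_j^{1/2}$ with $y_j^{1/2}$. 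Using the deterministic bound $\|\tilde g\|_2^2 \le m$ (from $|\tilde g_i| \le 1$), taking expectations, applying Lemma~\ref{lem:lip-gaus}(i) to get $\E (w_j^\T \tilde g)^2 \le 2 C_{\mathrm{sg,12}}^2 L^2 \|\Sigma\|_\op$ (since each $w_j^\T g(\cdot)$ is $L$-Lipschitz), and summing with $\sum_j \lambda_j^2 = 1$ yield the target bound.

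For part~(ii), I would expand $g^\T A g - \E g^\T A g = 2\mu_g^\T A \tilde g + (\tilde g^\T A \tilde g - \E \tilde g^\T A \tilde g)$ with $\mu_g = \E g(X)$, again assuming symmetric $A$ as above, and apply $(a+b)^2 \le 2a^2 + 2b^2$. The linear piece $\mu_g^\T A \tilde g = v^\T \tilde g$ with $v = A\mu_g$ satisfies $\E (v^\T \tilde g)^2 \le \|v\|_2^2 \cdot 2 C_{\mathrm{sg,12}}^2 L^2 \|\Sigma\|_\op$ by Lemma~\ref{lem:lip-gaus}(i), and $\|v\|_2 \le \|A\|_\op \|\mu_g\|_2 \le \|\mu_g\|_2 \le \sqrt m$ since $\mu_g \in [0,1]^m$ (using $\|A\|_\op \le \|A\|_\fro = 1$). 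The centered quadratic piece is bounded by its second moment $\E(\tilde g^\T A \tilde g)^2$, which is controlled by part~(i). Summing gives the constant $8 \cdot 2 + 2 \cdot 2 = 20$, matching the statement.

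The main obstacle is avoiding a spurious factor of $m$. A naive argument using $|\tilde g^\T A \tilde g| \le \|A\|_\op \|\tilde g\|_2^2 \le m$ together with $\E \|\tilde g\|_2^2 = \sum_i \E \tilde g_i^2 \le 2 C_{\mathrm{sg,12}}^2 m L^2 \|\Sigma\|_\op$ yields only $2 C_{\mathrm{sg,12}}^2 m^2 L^2 \|\Sigma\|_\op$, which is too loose by a factor of $m$. The specific Cauchy--Schwarz split above is what recovers the correct linear-in-$m$ rate: it isolates the $\lambda$-weights (which contribute the unit factor $\sum_j \lambda_j^2 = 1$) from the random quantities $(w_j^\T \tilde g)^2$ (whose expectations are controlled by sub-gaussian second moments), while letting the boundedness hypothesis absorb exactly one dimension factor through $\|\tilde g\|_2^2 \le m$. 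This is precisely where the hypothesis $g \in [0,1]^m$ enters, and is what allows the $L^2 \|\Sigma\|_\op$ scaling here to improve on the $L^4 \|\Sigma\|_\op^2$ scaling of Lemma~\ref{lem:lip-gaus}(ii).
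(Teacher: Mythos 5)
Your proof is correct and takes essentially the same approach as the paper: both rely on the same mean--centering decomposition in part~(ii), the same appeal to Lemma~\ref{lem:lip-gaus}(i), and the same pointwise bound $(\tilde g^\T A \tilde g)^2 \le m\,\tilde g^\T A^\T A\,\tilde g$ (driven by $\|\tilde g\|_2^2 \le m$) to secure the linear-in-$m$ rate. The only cosmetic difference is that you obtain this pointwise bound via Cauchy--Schwarz in the eigenbasis of a symmetrized $A$, whereas the paper derives it via two applications of von Neumann's trace inequality; for symmetric $A$ these are the same inequality.
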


\begin{proof}
(i) Denote $g=g(X)$ and $\tilde g = g- \E g$. Then each component of $\tilde g$ is contained in $[-1,1]$ by the boundedness of $g$. The variable $( \tilde g^\T A \tilde g )^2 $ can be bounded as follows:
\begin{align}
& \quad  ( \tilde g^\T A \tilde g )^2  = \tr ( \tilde g^\T A \tilde g \tilde g^\T A^\T \tilde g )
= \tr (A \tilde g \tilde g^\T  A^\T \tilde g \tilde g^\T ) \nonumber \\
& \le \tr ( A \tilde g \tilde g^\T A^\T ) \| \tilde g \tilde g^\T \|_{\op} \label{eq:lem-lip-gaus2-prf1} \\
& \le m\, \tr (A \tilde g \tilde g^\T A^\T) =  m\, \tr (A^\T A \tilde g \tilde g^\T) .  \label{eq:lem-lip-gaus2-prf2}
\end{align}
Line (\ref{eq:lem-lip-gaus2-prf1}) follows from von Neumann's trace equality.
Line (\ref{eq:lem-lip-gaus2-prf2}) uses the fact that $\|\tilde g \tilde g^\T \|_{\op} \le m$, because
$ w^\T \tilde g \tilde g^\T u = (w^\T \tilde g)^2 \le \| w\|_2^2 \|\tilde g\|_2^2 \le m \| w\|_2^2$ for any $w \in \bbR^m$,
by the boundedness of $\tilde g$. Then the desired result follows because
\begin{align}
& \quad \E \tr (A^\T A \tilde g \tilde g^\T) =  \tr (A^\T A \, \var ( g) ) \nonumber \\
& \le \tr (A^\T A) \| \var (  g) \|_{\op} \label{eq:lem-lip-gaus2-prf3}  \\
& \le 2 C_{\mathrm{sg,12}}^2 L^2 \| \Sigma\|_{\op}. \label{eq:lem-lip-gaus2-prf4}
\end{align}
Line (\ref{eq:lem-lip-gaus2-prf3}) also follows from von Neumann's trace equality.
Line (\ref{eq:lem-lip-gaus2-prf4}) follows because $\tr(A^\T A) =\| A \|_{\fro}^2 = 1$ and
$\| \var ( g) \|_{\op} \le  2 C_{\mathrm{sg,12}}^2 L^2 \| \Sigma\|_{\op}$ by Lemma~\ref{lem:lip-gaus}(i), with $g$ being an $L$-Lipschitz function.

(ii) The difference $g^\T A g - \E g^\T A g$ can be expressed in terms of the centered variables as
$g^\T A g - \E g^\T A g =  (\tilde g^\T A \tilde g  -\E \tilde g^\T A \tilde g) + 2 (\E g)^\T A \tilde g $. Then
\begin{align}
& \quad (g^\T A g - \E g^\T A g)^2 \le 2 ( \tilde g^\T A \tilde g -\E \tilde g^\T A \tilde g)^2 + 8  \{ (\E g)^\T A \tilde g \}^2 \nonumber \\
& \le 2 ( \tilde g^\T A \tilde g -\E \tilde g^\T A \tilde g)^2 + 8  \| \E g\|_2^2 \| A \tilde g \|_2^2. \label{eq:lem-lip-gaus2-prf5}
\end{align}
The expectation of the first term on (\ref{eq:lem-lip-gaus2-prf5}) can be bounded using (i) as
\begin{align*}
& \quad 2 \E \{ ( \tilde g^\T A \tilde g -\E \tilde g^\T A \tilde g)^2  \}  = 2   \E \{ ( \tilde g^\T A \tilde g )^2 \} - 2  ( \E \tilde g^\T A \tilde g )^2 \\
& \le 2 \E \{ ( \tilde g^\T A \tilde g )^2 \} \le 4 m  C_{\mathrm{sg,12}}^2 L^2 \| \Sigma\|_{\op}.
\end{align*}
The expectation of the second term on (\ref{eq:lem-lip-gaus2-prf5}) can be bounded as
\begin{align*}
& \quad 8 \| \E g\|_2^2 \, \E \| A \tilde g \|_2^2 = 8 \| \E g\|_2^2 \; \E \tr (A^\T A \tilde g \tilde g^\T) \\
& \le 16 m  C_{\mathrm{sg,12}}^2 L^2 \| \Sigma\|_{\op}.
\end{align*}
by inequality (\ref{eq:lem-lip-gaus2-prf4}) and the fact that $\| \E g\|_2^2 \le m$.
Combining the preceding two bounds yields the desired result.
\end{proof}

\subsection{Moment matching for Lipschitz functions}

{The following result gives an upper bound on moment matching of quadratic forms under a Lipschitz condition.}

\begin{lem} \label{lem:lip-matching}
Let $g: \bbR^p \to \bbR^m$ be an $L$-Lipschitz function,
and let $X_1 = \mu_1 + D_1  Z$ and $X_2 = \mu_2 + D_2 Z$, where
$Z\in \bbR^p$ is a random vector in which the second moments of all components are 1,
$\mu_1, \mu_2 \in \bbR^p$, and $D_1 = \diag (d_1)$ and $D_2 = \diag(d_2)$ with $d_1 , d_2 \in \bbR_+^p$. Then
for any matrix $A \in \bbR^{m\times m}$ with $\| A \|_\fro=1$,
\begin{align*}
& \quad \left| \E g^\T(X_1) A g (X_1) - \E g^\T (X_2) A g(X_2) \right| \\
& \le 2 \sqrt{m} L^2 \Delta^2 + 2 \sqrt{2} L \Delta ( \E \| g_2\|_2^2)^{1/2} ,
\end{align*}
where $\Delta^2 =   \| \mu_1-\mu_2\|_2^2 + \| d_1 - d_2 \|_2^2$.
\end{lem}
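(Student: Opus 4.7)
The plan is to reduce the problem to controlling two bilinear quantities via the algebraic identity
\begin{align*}
g_1^\T A g_1 - g_2^\T A g_2 = (g_1 - g_2)^\T A g_1 + g_2^\T A (g_1 - g_2),
\end{align*}
where $g_i = g(X_i)$ for $i=1,2$. Each of the two terms on the right couples the Lipschitz-small factor $g_1 - g_2$ against either $g_1$ or $g_2$, and the normalization $\|A\|_\fro = 1$ together with $\|A\|_\op \le \|A\|_\fro$ means that $A$ will contribute at most a factor of one inside any Cauchy--Schwartz step.

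Next I would apply Cauchy--Schwartz in expectation to each bilinear piece, reducing the task to controlling $\E \|g_1 - g_2\|_2^2$ and the second moments $\E \|g_1\|_2^2$ and $\E \|g_2\|_2^2$. The Lipschitz assumption gives $\|g_1 - g_2\|_2 \le L \|X_1 - X_2\|_2$, and the elementary bound
\begin{align*}
\|(\mu_1 - \mu_2) + (D_1 - D_2) Z\|_2^2 \le 2\|\mu_1 - \mu_2\|_2^2 + 2\|(D_1 - D_2) Z\|_2^2,
\end{align*}
together with the hypothesis $\E Z_j^2 = 1$ componentwise, yields $\E \|X_1 - X_2\|_2^2 \le 2\Delta^2$ and hence $(\E \|g_1 - g_2\|_2^2)^{1/2} \le \sqrt{2}\, L \Delta$. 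The triangle inequality then controls $(\E \|g_1\|_2^2)^{1/2}$ in terms of $(\E \|g_2\|_2^2)^{1/2}$ and $(\E \|g_1 - g_2\|_2^2)^{1/2}$. Assembling these estimates in the two Cauchy--Schwartz bounds produces the claimed inequality up to bookkeeping of constants.

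The argument is essentially a routine Cauchy--Schwartz plus Lipschitz chain, so there is no serious obstacle. The only two points requiring a touch of care are the use of the elementary inequality $(a+b)^2 \le 2a^2 + 2b^2$ at the right step, which sidesteps any need for a zero-mean assumption on $Z$ (the hypothesis specifies only the second moments of its components), and the conversion of $\|A\|_\fro = 1$ into an operator-norm bound so that Cauchy--Schwartz produces the factor $\|g_i\|_2$ rather than something coarser. I expect the resulting constant on the $L^2 \Delta^2$ term to actually be smaller than the stated $2\sqrt{m}$, which already accommodates the derived $2$, so no sharpening step will be needed.
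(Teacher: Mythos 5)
Your proof is correct, and it takes a genuinely different route from the paper's. The paper splits via
\[
g_1^\T A g_1 - g_2^\T A g_2 = (g_1 - g_2)^\T A (g_1 - g_2) + 2\,(g_1 - g_2)^\T A g_2,
\]
handles the quadratic-in-difference piece by von Neumann's trace inequality (producing $\sum_j s_j(A)\,\|V\|_\op \le \sqrt{m}\,\|A\|_\fro\|V\|_\op$, which is where the $\sqrt{m}$ factor originates), and handles the cross term by Cauchy--Schwartz. Your decomposition $(g_1-g_2)^\T A g_1 + g_2^\T A(g_1-g_2)$ is an exact identity for \emph{any} $A$ (the terms $\mp g_2^\T A g_1$ cancel directly, so no symmetry of $A$ is used, whereas the paper's decomposition and the later Lemma~\ref{lem:lip-matching-bd} implicitly rely on $A$ being symmetric), and it lets you apply Cauchy--Schwartz to both pieces with $\|A\|_\op \le \|A\|_\fro = 1$, avoiding the trace inequality entirely. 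Combined with the triangle inequality $(\E\|g_1\|_2^2)^{1/2}\le (\E\|g_1-g_2\|_2^2)^{1/2}+(\E\|g_2\|_2^2)^{1/2}$ to eliminate the $g_1$ moment, this yields the bound with a dimension-free coefficient of $2$ on the $L^2\Delta^2$ term in place of $2\sqrt{m}$, which is strictly sharper for $m>1$ and certainly suffices for the stated inequality. In effect, you apply to this lemma the strategy the paper reserves for the bounded companion Lemma~\ref{lem:lip-matching-bd}, supplementing it with the triangle-inequality step that compensates for the absence of the boundedness hypothesis; both routes are sound, but yours buys a better constant and drops the symmetry requirement on $A$.
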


\begin{proof}
Denote $g_1 = g (X_1)$ and $g_2 = g(X_2)$. The difference  $g_1^\T A g_1 - g_2^\T A g_2 $ can be decomposed as
\begin{align}
& \quad g_1^\T A g_1 - g_2^\T A g_2 \nonumber \\
& = ( g_1 - g_2)^\T  A (g_1 - g_2)  + 2 (g_1 - g_2) ^\T A g_2 \label{eq:lem-lip-matching-prf1}.
\end{align}
The expectation of the first term on (\ref{eq:lem-lip-matching-prf1}) can be bounded as
\begin{align}
& \quad \left| \E ( g_1 - g_2)^\T  A (g_1 - g_2) \right|
= \left| \tr \left( A V \right) \right| \nonumber \\
& \le  \sum_{j=1}^m s_j(A) \| V \|_\op \label{eq:lem-lip-matching-prf2} \\
& \le 2 \sqrt{m} L^2 \left( \| \mu_1 - \mu_2\|_2^2 + \|d_1 - d_2\|_2^2  \right), \label{eq:lem-lip-matching-prf3}
\end{align}
where $s_1(A), \ldots, s_m(A)$ are the singular values of $A$, and $V= \E \{ ( g_1 - g_2) (g_1 - g_2)^\T \}$.
Line (\ref{eq:lem-lip-matching-prf2}) follows from von Neumann's trace inequality.
Line (\ref{eq:lem-lip-matching-prf3}) follows because
$\sum_{j=1}^m s_j(A)  \le \sqrt{m} \{\sum_{j=1}^m s_j^2 (A) \}^{1/2} = \sqrt{m} \| A \|_\fro = \sqrt{m}$ with $ \|A \|_\fro=1$
and $ \| V \|_\op \le 2 L^2 ( \| \mu_1 - \mu_2\|_2^2 + \|d_1 - d_2\|_2^2  )$,
which can be shown as follows. For any $L_2$ unit vector $w$, we have
\begin{align}
& \quad w^\T V w =   \E \left[ \{ w^\T ( g_1 - g_2) \}^2  \right] \le \E \| g_1 - g_2 \|_2 ^2 \nonumber \\
& \le L^2 \E \| \mu_1 + D_1 Z - (\mu_2 + D_2 Z) \|_2^2 \nonumber \\
& \le 2 L^2 \left\{ \| \mu_1 - \mu_2\|_2^2 + \E  \| (D_1 - D_2) Z \|_2^2  \right\} \nonumber \\
& \le 2 L^2 \left( \| \mu_1 - \mu_2\|_2^2 + \|d_1 - d_2\|_2^2  \right), \label{eq:lem-lip-matching-prf4}
\end{align}
using the fact that $g(\cdot)$ is $L$-Lipschitz and the marginal variances of $Z$ are 1.
The expectation of the second term on (\ref{eq:lem-lip-matching-prf1}) can be bounded as
\begin{align}
& \quad \left| \E  (g_1 - g_2) ^\T A g_2 \right| \le  \E  \left| (g_1 - g_2) ^\T A g_2 \right| \nonumber \\
& \le \E  \|g_1 - g_2 \|_2 \| A g_2 \|_2  \nonumber \\
& \le \E^{1/2} (\|g_1 - g_2 \|_2^2) \E^{1/2} ( \| A g_2 \|_2^2 ) \nonumber\\
& \le \sqrt{2} L \left( \| \mu_1 - \mu_2\|_2^2 + \|d_1 - d_2\|_2^2  \right)^{1/2} ( \E \| g_2\|_2^2)^{1/2} . \label{eq:lem-lip-matching-prf5}
\end{align}
Line (\ref{eq:lem-lip-matching-prf5}) uses the fact that
$\E \{ \|g_1 - g_2 \|_2^2\} \le   2 L^2 ( \| \mu_1 - \mu_2\|_2^2 + \|d_1 - d_2\|_2^2  )$ based on (\ref{eq:lem-lip-matching-prf4})
and the following argument:
\begin{align*}
\E \|A g_2\|_2^2  \le \E (\|A\|_\op^2 \|g_2\|_2^2 ) \le \E \| g_2\|_2^2 ,
\end{align*}
where the last step follows because $ \|A\|_\op \le \|A\|_\fro =1$.
Combining (\ref{eq:lem-lip-matching-prf1}), (\ref{eq:lem-lip-matching-prf3}), and (\ref{eq:lem-lip-matching-prf5}) yields the desired result.
\end{proof}

The following result gives a tighter bound than in Lemma~\ref{lem:lip-matching} under a boundedness condition
in addition to the Lipschitz condition.

\begin{lem} \label{lem:lip-matching-bd}
In the setting of Lemma~\ref{lem:lip-matching}, suppose that each component of $g_1(x)$ and $g_2(x)$ is bounded in $[-1,1]$.
Then for any matrix $A \in \bbR^{m\times m}$ with $\| A \|_\fro=1$,
\begin{align*}
\left| \E g^\T(X_1) A g (X_1) - \E g^\T (X_2) A g(X_2) \right|
\le 2 \sqrt{2 m} L \Delta ,
\end{align*}
where $\Delta^2 =   \| \mu_1-\mu_2\|_2^2 + \| d_1 - d_2 \|_2^2$.
\end{lem}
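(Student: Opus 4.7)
The plan is to sharpen the bound from Lemma~\ref{lem:lip-matching} by replacing the decomposition $g_1^\T A g_1 - g_2^\T A g_2 = (g_1-g_2)^\T A (g_1-g_2) + 2(g_1-g_2)^\T A g_2$ used there (which generates a term quadratic in $g_1-g_2$ and hence in $\Delta$) by the symmetric telescoping identity
\[
g^\T(X_1) A g(X_1) - g^\T(X_2) A g(X_2) = (g_1 - g_2)^\T A g_1 + g_2^\T A (g_1 - g_2),
\]
where $g_i = g(X_i)$. Both pieces on the right are bilinear, linear in $g_1 - g_2$, so after applying Cauchy--Schwartz, the $\|g_1 - g_2\|_2$ factor will contribute only a single power of $\Delta$ via the Lipschitz bound, and the remaining factor will be controlled by the new boundedness hypothesis rather than by $\Delta$ again.

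Next, I would bound $|\E (g_1-g_2)^\T A g_1| \le \{\E \|g_1-g_2\|_2^2\}^{1/2} \{\E \|Ag_1\|_2^2\}^{1/2}$, and analogously for the second term. For the first factor, the same calculation as in the proof of Lemma~\ref{lem:lip-matching} (expanding $X_i = \mu_i + D_i Z$ and using that the components of $Z$ have second moment 1) yields $\E \|g_1 - g_2\|_2^2 \le L^2 \E\|X_1 - X_2\|_2^2 \le 2 L^2 \Delta^2$. For the second factor, this is where the boundedness enters: since each component of $g_i$ lies in $[-1,1]$, we have $\|g_i\|_2^2 \le m$ deterministically, and combined with $\|A\|_\op \le \|A\|_\fro = 1$ this gives $\E\|A g_i\|_2^2 \le \|A\|_\op^2 \|g_i\|_2^2 \le m$. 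Thus each of the two pieces is bounded by $\sqrt{2 L^2 \Delta^2}\sqrt{m} = \sqrt{2m}\, L \Delta$, and summing delivers the desired bound $2\sqrt{2m}\, L \Delta$.

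There is no real obstacle: the lemma's improvement over Lemma~\ref{lem:lip-matching} is algebraic and concentrated entirely in choosing the symmetric splitting, together with the trivial use of the $[-1,1]$ boundedness to replace the earlier $(\E\|g_2\|_2^2)^{1/2}$ factor by the deterministic $\sqrt{m}$. Note that the symmetry of $A$ is not needed for the telescoping identity, so the proof goes through for any $A$ with $\|A\|_\fro = 1$ as stated.
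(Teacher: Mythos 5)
Your proof is correct and follows the same strategy as the paper: replace the asymmetric decomposition from Lemma~\ref{lem:lip-matching} (which produces a term quadratic in $g_1-g_2$ and hence in $\Delta$) by a telescoping identity that keeps both pieces linear in $g_1-g_2$, then bound each piece via Cauchy--Schwartz together with the deterministic estimate $\|g_i\|_2 \le \sqrt m$ from the $[-1,1]$ componentwise boundedness and $\|A\|_\op \le \|A\|_\fro = 1$. One small point worth noting: your identity
$g_1^\T A g_1 - g_2^\T A g_2 = (g_1-g_2)^\T A g_1 + g_2^\T A (g_1-g_2)$
is actually the correct one for arbitrary $A$, whereas the identity as written in the paper's proof, $(g_1-g_2)^\T A g_1 + (g_1-g_2)^\T A g_2$, equals $(g_1-g_2)^\T A (g_1+g_2)$ and agrees with $g_1^\T A g_1 - g_2^\T A g_2$ only when $g_1^\T A g_2 = g_2^\T A g_1$, i.e.\ when $A$ is symmetric; the lemma, however, is stated for any $A$ with $\|A\|_\fro = 1$. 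In the paper's downstream application $A$ is a symmetrized coefficient matrix, so the result is used correctly there, but your version of the splitting is the one that proves the lemma exactly as stated.
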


\begin{proof}
The difference $g_1^\T A g_1 - g_2^\T A g_2 $ can also be decomposed as
\begin{align*}
g_1^\T A g_1 - g_2^\T A g_2
=(g_1 - g_2) ^\T A g_1 + (g_1 - g_2) ^\T A g_2 .
\end{align*}
By (\ref{eq:lem-lip-matching-prf5}), both of the two terms on the right-hand side can be bounded in absolute values by
$\sqrt{2} L \Delta ( \E \| g_2\|_2^2)^{1/2} $
and hence by $\sqrt{2 m} L \Delta $, because $\E \| g_2\|_2^2 \le m$ by the componentwise boundedness of $g_1$ and $g_2$.
\end{proof}


\section{Technical tools}

\subsection{von Neumann's trace inequality}


\begin{lem} (\citeappend{Von62})
For any $m\times m$ matrices $A$ and $B$ with singular values $\alpha_1 \ge \cdots \ge \alpha_m \ge 0 $ and $\beta_1 \ge \cdots \ge \beta_m \ge 0$ respectively,
\begin{align*}
| \tr ( A B ) | \le \sum_{j=1}^m \alpha_j \beta_j.
\end{align*}
\end{lem}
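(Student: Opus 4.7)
The plan is to reduce to the diagonal case via singular value decomposition and then exploit doubly-stochastic structure together with the rearrangement inequality.

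First, I would write the singular value decompositions $A = U_A \Sigma_A V_A^\T$ and $B = U_B \Sigma_B V_B^\T$, where $\Sigma_A = \diag(\alpha_1,\ldots,\alpha_m)$, $\Sigma_B = \diag(\beta_1,\ldots,\beta_m)$, and the factors are orthogonal. Using cyclic invariance of the trace I can rewrite
\begin{align*}
\tr(AB) = \tr(\Sigma_A P \Sigma_B Q), \quad P = V_A^\T U_B, \; Q = V_B^\T U_A,
\end{align*}
where $P$ and $Q$ are both orthogonal. Because $\Sigma_A$ and $\Sigma_B$ are diagonal, entrywise expansion gives
\begin{align*}
\tr(\Sigma_A P \Sigma_B Q) = \sum_{i,j=1}^{m} \alpha_i \beta_j P_{ij} Q_{ji}.
\end{align*}
Applying $|P_{ij} Q_{ji}| \le (P_{ij}^2 + Q_{ji}^2)/2$ then yields
\begin{align*}
|\tr(AB)| \le \tfrac{1}{2} \sum_{i,j} \alpha_i \beta_j P_{ij}^2 + \tfrac{1}{2} \sum_{i,j} \alpha_i \beta_j Q_{ji}^2.
\end{align*}

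Next, I would observe that because $P$ and $Q$ are orthogonal, the entrywise-squared matrices $(P_{ij}^2)$ and $(Q_{ji}^2)$ are doubly stochastic, since $\sum_j P_{ij}^2 = (PP^\T)_{ii} = 1$ and $\sum_i P_{ij}^2 = (P^\T P)_{jj} = 1$, and similarly for $Q$. By Birkhoff's theorem each doubly stochastic matrix is a convex combination of permutation matrices, and since each of the two sums is linear in the doubly stochastic matrix it is bounded above by $\max_\pi \sum_i \alpha_i \beta_{\pi(i)}$, the maximum over permutations $\pi$ of $\{1,\ldots,m\}$. Since the sequences $(\alpha_i)$ and $(\beta_i)$ are both nonnegative and sorted in decreasing order, the rearrangement inequality identifies this maximum with $\sum_i \alpha_i \beta_i$, attained at the identity permutation. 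Combining the two identical bounds on the two halves yields $|\tr(AB)| \le \sum_i \alpha_i \beta_i$.

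The main technical point that needs care is the Birkhoff--rearrangement step: one must verify that it is the maximum over permutations (not the minimum) that upper-bounds the value on doubly stochastic matrices, and that like-ordering of $(\alpha_i)$ and $(\beta_i)$ (both decreasing and nonnegative) is precisely the configuration in which the rearrangement inequality delivers $\sum_i \alpha_i \beta_i$ as the maximum. Apart from this, the argument is a routine chain of standard manipulations with SVD, AM--GM, and classical combinatorial inequalities, so no deeper input is required.
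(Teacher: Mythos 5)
Your argument is correct. Note that the paper does not actually prove this lemma; it simply cites von Neumann (1962), so there is no in-paper proof to compare against. What you have supplied is a self-contained, classical proof of von Neumann's trace inequality via SVD reduction, the AM--GM bound $|P_{ij}Q_{ji}| \le (P_{ij}^2 + Q_{ji}^2)/2$, Birkhoff's theorem on doubly stochastic matrices, and the rearrangement inequality. All the steps check out: the cyclicity reduction to $\tr(\Sigma_A P \Sigma_B Q)$ with $P=V_A^\T U_B$, $Q=V_B^\T U_A$ orthogonal is exact; nonnegativity of $\alpha_i\beta_j$ lets you push the absolute value inside the double sum before applying AM--GM; the entrywise-square of an orthogonal matrix is doubly stochastic (row and column sums are the diagonal entries of $PP^\T$ and $P^\T P$, and likewise for the transposed indexing on $Q$); maximizing a linear functional over doubly stochastic matrices reduces, by Birkhoff, to permutation matrices; and since $(\alpha_i)$, $(\beta_i)$ are both non-increasing, the rearrangement inequality pins the maximum permutation sum at $\sum_i \alpha_i\beta_i$, giving the stated bound. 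This is one of the standard proofs of the result, more elementary than von Neumann's original variational argument, and it is complete as written.
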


As a direct consequence, if $A$ is symmetric and non-negative definite, then
\begin{align*}
|\tr (A B)| \le \tr(A) \| B \|_\op
\end{align*}
This follows because the singular values $\alpha_i$'s are also the eigenvalues of $A$ and hence $\tr(A) = \sum_{j=1}^m \alpha_j$ for a symmetric and nonnegative definite matrix $A$,
and $\| B\|_\op = \max_{i=j,\ldots,m} \beta_j$ by the definition of $\|B\|_\op$.

\subsection{Sub-gassisan and sub-exponential properties}

The following results can be obtained from \citeappend{Ver18}, Proposition 2.5.2.

\begin{lem} \label{lem:subg}
For a random variable $Y$, the following properties are equivalent: there exist universal constants $C_{\mathrm{sg},ij}>0$ such that
the $C_{\mathrm{sg},ij}^{-1} K_j \le K_i \le C_{\mathrm{sg},ij} K_j$ for all $1\le i \not=j \le 4$,
where $K_i$ is the parameter appearing in property (i).
\begin{itemize}
\item[(i)] $\pr ( |Y| > t) \le 2 \exp  ( - \frac{t^2}{2 K_1^2} )$ for any $t>0$.

\item[(ii)] $\E^{1/p} ( |Y|^p ) \le K_2 \sqrt{p}$ for any $p \ge 1$.

\item[(iii)] $\E \exp ( Y^2 / K_3^2 ) \le 2$.
\end{itemize}
If $\E Y=0$, then properties (1)--(3) are also equivalent to the following one.
\begin{itemize}
\item[(iv)] $\E \exp( s Y ) \le \exp (\frac{K_4^2 s^2 }{2} )$, for any $s \in \bbR$.
\end{itemize}
\end{lem}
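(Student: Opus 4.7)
}
The plan is to establish a cycle of implications (i) $\Rightarrow$ (ii) $\Rightarrow$ (iii) $\Rightarrow$ (i), each with an explicit constant factor, and then to prove (i) $\Leftrightarrow$ (iv) separately under the zero-mean hypothesis. From these, the pairwise constants $C_{\mathrm{sg},ij}$ are obtained by composition.

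First, I would derive (i) $\Rightarrow$ (ii) via the layer-cake identity
\[
\E |Y|^p = \int_0^\infty p\, t^{p-1} \pr(|Y|>t)\,\dif t,
\]
substitute the tail bound from (i), and recognize the resulting integral as a Gamma function; Stirling's bound $\Gamma(p/2+1)^{1/p} \le C\sqrt{p}$ then gives $K_2 \le C K_1$ for a universal $C$. Next, for (ii) $\Rightarrow$ (iii), I would Taylor-expand
\[
\E \exp(Y^2/K_3^2) = \sum_{k=0}^{\infty} \frac{\E(Y^{2k})}{k!\, K_3^{2k}},
\]
use (ii) to bound $\E(Y^{2k}) \le (K_2\sqrt{2k})^{2k}$, and combine with Stirling's inequality $(2k)^k/k! \le (2e)^k$ to obtain geometric convergence provided $K_3 \ge c K_2$ for a sufficiently large universal constant $c$. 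The summed series is then $\le 2$. Finally, (iii) $\Rightarrow$ (i) follows directly from Markov's inequality applied to $\exp(Y^2/K_3^2)$:
\[
\pr(|Y|>t) = \pr\bigl(\me^{Y^2/K_3^2} \ge \me^{t^2/K_3^2}\bigr) \le 2\me^{-t^2/K_3^2},
\]
yielding $K_1 \le C K_3$.

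For the mean-zero equivalence with (iv), one direction (iv) $\Rightarrow$ (i) is the standard Chernoff argument: $\pr(|Y|>t) \le 2\me^{-t^2/(2K_4^2)}$ after optimizing over $s$. The reverse (i) $\Rightarrow$ (iv) is the more delicate step and is where I expect the main technical effort. I would expand $\E\exp(sY) = 1 + \sum_{k\ge 2} s^k \E(Y^k)/k!$ (the $k=1$ term vanishes by $\E Y = 0$), bound the even moments via (ii) (equivalent to (i) by the already-proved cycle), and use $|s|^k \E|Y|^k/k! \le (|s|K_2\sqrt{k})^k/k! \le (C|s|K_2)^k/k^{k/2}$ to sum the series. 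One must split into $|s|K_2$ small vs.\ large and use $1+u \le \me^u$ carefully; the output is $\E \me^{sY} \le \me^{C s^2 K_2^2}$, hence $K_4 \le C K_2$.

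The main obstacle is the centering step (i) $\Rightarrow$ (iv): handling the odd moments (which do not vanish individually) while keeping the final exponent quadratic in $s$ requires a careful term-by-term bound and the symmetric split in $|s|$. All other steps are essentially elementary integral or series computations; the universal constants $C_{\mathrm{sg},ij}$ arise as products along the chain.
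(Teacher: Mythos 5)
The paper does not prove this lemma in-house; it cites \citeappend{Ver18}, Proposition~2.5.2, and the cycle (i) $\Rightarrow$ (ii) $\Rightarrow$ (iii) $\Rightarrow$ (i) by layer-cake/Gamma, Taylor expansion with Stirling, and Markov, together with the separate Chernoff and moment-series argument for (iv) under $\E Y=0$, is exactly the proof given there. Your proposal is correct and takes the same approach.
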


Say that $Y$ is a sub-gaussian random variable with tail parameter $K$ if property (1) holds in Lemma~\ref{lem:subg} with $K_1=K$.
The following result shows that being sub-gaussian depends only on tail probabilities of a random variable.

\begin{lem} \label{lem:subg-tail}
Suppose that for some $b, K>0$, a random variable $Y$ satisfies that
\begin{align*}
\pr ( |Y| > b+t ) \le 2 \me^{- \frac{t^2}{ 2K^2} } \quad \text{for any $t >0$}.
\end{align*}
Then $Y$ is sub-gaussian with tail parameter $K +b$.
\end{lem}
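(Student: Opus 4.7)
The plan is to verify the tail-probability characterization (property (i) in Lemma~\ref{lem:subg}) directly with parameter $K_1 = K+b$. Concretely, I would show $\pr(|Y| > t) \le 2\exp(-t^2/[2(K+b)^2])$ for every $t > 0$, by a simple case split at the threshold $t_0 = K+b$. Given that conclusion, the equivalence in Lemma~\ref{lem:subg} immediately gives the sub-gaussian property with tail parameter on the order of $K+b$, which is what the lemma asserts.

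For the small-$t$ regime $0 < t \le K+b$, the bound is essentially vacuous: the right-hand side satisfies $2\exp(-t^2/[2(K+b)^2]) \ge 2e^{-1/2} > 1 \ge \pr(|Y| > t)$, so no information about the distribution of $Y$ is needed. For the large-$t$ regime $t > K+b$, I would write $t = b + s$ with $s = t-b > K > 0$, and apply the hypothesis to obtain
\[
\pr(|Y| > t) \le 2\exp\!\left(-\frac{(t-b)^2}{2K^2}\right).
\]
The remaining step is the elementary algebraic inequality $(K+b)(t-b) \ge Kt$, which expands to $bt \ge b(K+b)$ and holds precisely when $t \ge K+b$. Squaring yields $(t-b)^2/K^2 \ge t^2/(K+b)^2$, and combining with the displayed inequality delivers the desired exponential tail bound.

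The main obstacle is merely identifying the correct threshold so that the two cases mesh: $t_0 = K+b$ works because at this point the ``trivial'' bound $2e^{-1/2} > 1$ already suffices for small $t$, while for large $t$ the multiplicative loss $1/K \to 1/(K+b)$ in the exponent is exactly compensated by the additive shift $t \to t-b$. No probabilistic ingredient beyond applying the hypothesis at a shifted argument is required, so the proof reduces to these two elementary observations.
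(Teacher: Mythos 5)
Your proof is correct and follows essentially the same route as the paper: both split at the threshold $K+b$, dispose of the small-$t$ regime with the trivial bound $2e^{-1/2}>1\ge\pr(|Y|>t)$, and handle the large-$t$ regime by applying the hypothesis at $t-b$ together with the algebraic inequality $(K+b)(t-b)\ge Kt$ (equivalently $(t-b)/K\ge t/(K+b)$) to compare exponents. No substantive difference.
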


\begin{proof}
We distinguish two cases of $y>0$.
First, if $y > K+b$, then $ (y-b)/ K > y/(K+b) > 1$ and
$$
\pr ( | Y| > y ) \le 2 \exp\left\{ - \frac{(y-b)^2}{2 K^2} \right\} \le 2 \exp\left\{ - \frac{y^2}{2 (K+b)^2} \right\} .
$$
Second, if $y \le K+b$, then
$$
\pr ( | Y| > y )  \le 1 \le 2 \me^{-\frac{1}{2}} \le 2 \exp\left\{ -\frac{y^2}{2 (K+b)^2} \right\}.
$$
Hence the desired result holds.
\end{proof}

The following result follows directly from Chernoff's inequality.

\begin{lem} \label{lem:subg-concentration}
Suppose that $(Y_1\ldots,Y_n)$ are independent such that $E Y_i=0$ and $Y_i$ is sub-gaussian with tail parameter $K$ for $i=1,\ldots,n$. Then
\begin{align*}
\pr \left( \left|  \frac{1}{n} \sum_{i=1}^n Y_i \right|  >  C_{\mathrm{sg}5} t \right) \le 2 \exp \left( -\frac{n t^2}{2K^2} \right) \quad \text{for any $t>0$}.
\end{align*}
where $C_{\mathrm{sg5}}=C_{\mathrm{sg,14}}$ as in Lemma~\ref{lem:subg}.
\end{lem}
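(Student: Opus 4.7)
The plan is to apply Chernoff's bound to the centered sum, using the moment generating function characterization of sub-gaussianity supplied by Lemma~\ref{lem:subg}. First, since each $Y_i$ has mean zero and is sub-gaussian with tail parameter $K$ in the sense of property (i), the equivalence in Lemma~\ref{lem:subg} provides a universal constant $C_{\mathrm{sg},14}$ such that the MGF bound $\E \exp(s Y_i) \le \exp(C_{\mathrm{sg},14}^2 K^2 s^2/2)$ holds for every $s \in \bbR$ and every $i=1,\ldots,n$. By independence, the MGF of the sum factorizes:
\begin{align*}
\E \exp\!\left( s \sum_{i=1}^n Y_i \right) \le \exp\!\left( \frac{n C_{\mathrm{sg},14}^2 K^2 s^2}{2} \right) .
\end{align*}

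Next, for any $u>0$, Markov's inequality applied to the exponential transform yields
\begin{align*}
\pr\!\left( \frac{1}{n} \sum_{i=1}^n Y_i > u \right) \le \exp(-n s u) \, \E \exp\!\left( s \sum_{i=1}^n Y_i \right) \le \exp\!\left( -n s u + \frac{n C_{\mathrm{sg},14}^2 K^2 s^2}{2} \right) .
\end{align*}
Optimizing over $s>0$ by taking $s = u /(C_{\mathrm{sg},14}^2 K^2)$ gives $\pr( n^{-1} \sum_i Y_i > u ) \le \exp( -n u^2 /( 2 C_{\mathrm{sg},14}^2 K^2 ))$. Setting $u = C_{\mathrm{sg}5} t$ with $C_{\mathrm{sg}5} = C_{\mathrm{sg},14}$ converts this into the target bound $\exp(-n t^2/(2K^2))$. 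Applying the same argument to $-Y_i$, which is also mean-zero sub-gaussian with tail parameter $K$, produces the matching lower-tail bound, and a union bound over the two tails supplies the factor of $2$.

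There is no substantive obstacle: the only point that needs a line of care is tracking the universal constant relating properties (i) and (iv) in Lemma~\ref{lem:subg}, which is precisely the role of $C_{\mathrm{sg}5} = C_{\mathrm{sg},14}$.
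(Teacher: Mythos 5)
Your proof is correct and is simply the fully spelled-out version of what the paper invokes by citing Chernoff's inequality: pass from the tail-parameter definition to the MGF bound via Lemma~\ref{lem:subg}, factorize over independent coordinates, optimize the exponential Markov bound, and symmetrize for the two-sided tail. The constant bookkeeping with $C_{\mathrm{sg5}} = C_{\mathrm{sg},14}$ is exactly right, so there is nothing to add.
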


The following result can be obtained from \citeappend{Ver18}, Exercise 2.5.10.

\begin{lem} \label{lem:subg-max}
Let $(Y_1,\ldots,Y_n)$ be random variables such that $Y_i$ is sub-gaussian with tail parameter $K$ for $i=1,\ldots,n$. Then
\begin{align*}
\E \max_{i=1,\ldots,n} |Y_i| \le C_{\mathrm{sg6}} K \sqrt{\log(2n)},
\end{align*}
where $C_{\mathrm{sg6}}>0$ is a universal constant.
\end{lem}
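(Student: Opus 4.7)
The plan is to leverage the moment-generating-function characterization of sub-gaussianity already recorded in Lemma~\ref{lem:subg}. By the equivalence of properties (i) and (iii), there exists a universal constant $C > 0$ (depending only on the $C_{\mathrm{sg},ij}$'s) such that sub-gaussianity of $Y_i$ with tail parameter $K$ implies
\[
\E \exp\!\left( \frac{Y_i^2}{C^2 K^2} \right) \le 2 \quad \text{for each } i=1,\ldots,n.
\]
This is the key input; everything else is a standard Orlicz-norm style calculation.

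Next, I would exponentiate the maximum and use a trivial union via a sum. Setting $M = \max_{i=1,\ldots,n} |Y_i|$, observe that
\[
\exp\!\left( \frac{M^2}{C^2 K^2} \right) = \max_{i} \exp\!\left( \frac{Y_i^2}{C^2 K^2} \right) \le \sum_{i=1}^n \exp\!\left( \frac{Y_i^2}{C^2 K^2} \right),
\]
so taking expectations gives $\E \exp(M^2/(C^2 K^2)) \le 2n$.

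Now I would invert the exponential. By Jensen's inequality applied to the convex function $\exp(\cdot)$,
\[
\exp\!\left( \frac{\E M^2}{C^2 K^2} \right) \le \E \exp\!\left( \frac{M^2}{C^2 K^2} \right) \le 2n,
\]
which yields $\E M^2 \le C^2 K^2 \log(2n)$. A second application of Jensen (to $x \mapsto x^2$) gives $(\E M)^2 \le \E M^2$, and therefore
\[
\E \max_{i=1,\ldots,n} |Y_i| \le C K \sqrt{\log(2n)},
\]
establishing the claim with $C_{\mathrm{sg6}} = C$. The only even mildly subtle point is keeping careful track of the universal constant relating the tail parameter $K$ in property (i) of Lemma~\ref{lem:subg} to the Orlicz parameter $K_3$ in property (iii); everything else reduces to two applications of Jensen's inequality and the union-by-sum bound on $\exp(M^2)$. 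I do not anticipate any genuine obstacle, since this is the textbook proof of the sub-gaussian maximal inequality.
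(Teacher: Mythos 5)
Your proof is correct. The paper does not actually give a proof of this lemma; it simply cites Vershynin (2018), Exercise~2.5.10. Your argument — passing from property (i) to the Orlicz-type property (iii) of Lemma~\ref{lem:subg}, bounding $\exp(M^2/(CK)^2) \le \sum_i \exp(Y_i^2/(CK)^2)$, and then applying Jensen's inequality twice — is precisely the canonical proof of the sub-gaussian maximal inequality that the cited exercise has in mind, and every step is sound. The one bookkeeping point you correctly flag (tracking the universal constant $C_{\mathrm{sg},31}$ linking $K_1$ to $K_3$) is handled properly, and no centering assumption is needed since property (iii) of Lemma~\ref{lem:subg} does not require $\E Y_i = 0$.
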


Say that $Y \in \bbR^d$ is a sub-gaussian random vector with tail parameter $K$ if $w^\T Y$ is sub-gaussian with tail parameter $K$ for any $w \in \bbR^d$ with $\| w \|_2=1$.
The following result can be obtained from \citeappend{HKZ12}, Theorem 2.1.

\begin{lem} \label{lem:subg-vec-norm}
Suppose that $Y \in \bbR^d$ with $\E Y=0$ is a sub-gaussian random vector with tail parameter $K$. Then for any $t>0$, we have that with probability at least $1-\me^{-t}$,
\begin{align*}
\| Y \|_2 \le C_{\mathrm{sg7}} K ( \sqrt{ d} + \sqrt{t}) ,
\end{align*}
where $C_{\mathrm{sg7}}>0$ is a universal constant.
\end{lem}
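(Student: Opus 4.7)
The plan is to prove this standard sub-gaussian vector concentration result via the classical $\epsilon$-net argument on the unit sphere $S^{d-1} = \{w \in \bbR^d: \|w\|_2 = 1\}$. The starting point is the identity $\|Y\|_2 = \sup_{w \in S^{d-1}} w^\T Y$, together with the assumption that every linear marginal $w^\T Y$ is sub-gaussian with tail parameter $K$ (and mean zero).

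First I would fix an $\epsilon$-net $\mathcal{N}_\epsilon \subset S^{d-1}$ of the unit sphere with cardinality $|\mathcal{N}_\epsilon| \le (1 + 2/\epsilon)^d$; this bound is standard and follows from a volumetric packing argument. For concreteness, take $\epsilon = 1/2$, giving $|\mathcal{N}_{1/2}| \le 5^d$. A short approximation argument shows that for any $Y$ one has $\sup_{w \in S^{d-1}} w^\T Y \le 2 \max_{w \in \mathcal{N}_{1/2}} w^\T Y$: indeed, for any $w \in S^{d-1}$, pick $w_0 \in \mathcal{N}_{1/2}$ with $\|w-w_0\|_2 \le 1/2$, write $w^\T Y = w_0^\T Y + (w-w_0)^\T Y$, take the sup, and rearrange.

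Next, for each fixed $w \in \mathcal{N}_{1/2}$, Lemma~\ref{lem:subg}(i) yields $\pr(|w^\T Y| > s) \le 2 \me^{-s^2/(2K^2)}$ for any $s > 0$. Applying a union bound over $\mathcal{N}_{1/2}$,
\begin{equation*}
\pr\left( \max_{w \in \mathcal{N}_{1/2}} |w^\T Y| > s \right) \le 2 \cdot 5^d \me^{-s^2/(2K^2)} .
\end{equation*}
Choosing $s = K\sqrt{2(d \log 5 + \log 2 + t)}$ makes the right-hand side at most $\me^{-t}$. Combining with the net-approximation bound gives $\|Y\|_2 \le 2s \le C_{\mathrm{sg7}} K (\sqrt{d} + \sqrt{t})$ with probability at least $1 - \me^{-t}$, for a universal constant $C_{\mathrm{sg7}} > 0$ absorbing $2\sqrt{2(\log 5 + \log 2)}$ together with the elementary inequality $\sqrt{a+b} \le \sqrt{a} + \sqrt{b}$.

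The only mild obstacle is bookkeeping of constants and verifying that the volumetric covering bound $|\mathcal{N}_\epsilon| \le (1 + 2/\epsilon)^d$ holds for the Euclidean unit sphere; this is by now textbook (see, e.g., Vershynin 2018, Corollary 4.2.13). No concentration inequality beyond the scalar sub-gaussian tail from Lemma~\ref{lem:subg} is required, since all probabilistic content is handled coordinate-by-coordinate along the net and then transferred to the sphere by the deterministic doubling estimate.
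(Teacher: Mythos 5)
Your proposal is correct. The paper does not include a proof of this lemma at all; it simply cites Theorem~2.1 of Hsu, Kakade, and Zhang (2012), a result that controls the quadratic form $\|AY\|_2^2$ for sub-gaussian $Y$ directly through moment-generating-function bounds, yielding $\|Y\|_2^2 \le K^2(d + 2\sqrt{dt} + 2t)$ with probability at least $1-\me^{-t}$ (after specializing $A=I$). Your route via a $1/2$-net of $S^{d-1}$, a union bound over the net, and the deterministic doubling estimate $\|Y\|_2 \le 2\max_{w_0 \in \mathcal{N}_{1/2}} w_0^\T Y$ is the other standard textbook argument for this fact and is equally valid. The trade-off: the Hsu--Kakade--Zhang approach gives sharper explicit constants (no $\sqrt{\log 5}$ factor from the net size) and extends uniformly to general quadratic forms $\|AY\|_2$, while your discretization argument is more elementary, requires only the scalar tail bound from Lemma~\ref{lem:subg}(i), and makes the proof self-contained. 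Since the target statement only asserts $C_{\mathrm{sg7}} K(\sqrt{d}+\sqrt{t})$ for an unspecified universal constant $C_{\mathrm{sg7}}$, the loss of constants in your argument is immaterial, and the derivation stands. One small stylistic point: you invoke the two-sided tail $\pr(|w^\T Y| > s) \le 2\me^{-s^2/(2K^2)}$ in the union bound although only the one-sided event $\{w_0^\T Y > s\}$ is needed after the net-doubling step; this costs you a harmless factor of $2$ inside the logarithm and changes nothing.
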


The following result can be obtained from \citeappend{Ver10}, Theorem 5.39 and Remark 5.40(1).
Formally, this is different from \citeappend{Ver18}, Theorem 4.7.1 and Exercise 4.7.3, due to assumption (4.24) used in the latter result.

\begin{lem} \label{lem:subg-vec-var}
Suppose that $Y_1,\ldots,Y_n$ are independent and identically distributed as $Y \in \bbR^d$, where
$Y$ is a sub-gaussian random vector with tail parameter $K$. Then for any $t>0$, we have that with probability at least $1-2 \me^{-t}$,
\begin{align*}
\left\| \frac{1}{n} \sum_{i=1}^n Y_i Y_i^\T - \Sigma \right\|_{\op} \le C_{\mathrm{sg8}} K^2 \left( \sqrt{ \frac{d+t}{n} } + \frac{d+t}{n} \right) ,
\end{align*}
where $\Sigma=\E(YY^\T)$ and $C_{\mathrm{sg8}}$ is a universal constant.
\end{lem}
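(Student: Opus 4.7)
The plan is to prove this operator-norm concentration bound by combining the standard symmetrization of the operator norm via an epsilon-net with a Bernstein-type tail inequality for quadratic forms of sub-gaussian vectors. Since $\hat\Sigma_n := n^{-1}\sum_i Y_iY_i^\T - \Sigma$ is symmetric, $\|\hat\Sigma_n\|_{\op} = \sup_{w \in S^{d-1}} |w^\T \hat\Sigma_n w|$, and for any fixed $w$ this is a centered average of the i.i.d. random variables $Z_i(w) := (w^\T Y_i)^2 - \E(w^\T Y)^2$.

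First I would fix a $(1/4)$-net $\mathcal{N}$ of $S^{d-1}$ with cardinality $|\mathcal{N}| \le 9^d$, and invoke the standard approximation fact that $\|\hat\Sigma_n\|_{\op} \le 2 \max_{w \in \mathcal{N}} |w^\T \hat\Sigma_n w|$. Next, for each fixed $w \in S^{d-1}$, the scalar $w^\T Y$ is sub-gaussian with tail parameter $K$ by the definition of a sub-gaussian vector; in view of Lemma~\ref{lem:subg} (equivalent characterizations with moment growth of order $K\sqrt{p}$), the centered square $Z_i(w)$ is then sub-exponential with parameter of order $K^2$.

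I would then apply Bernstein's inequality for i.i.d. centered sub-exponential variables to obtain, for each fixed $w$ and any $s>0$,
\begin{equation*}
\Pr\!\left( \left| \tfrac{1}{n}\sum_{i=1}^n Z_i(w) \right| > s \right) \;\le\; 2\exp\!\left( - c\, n \min\!\left( \tfrac{s^2}{K^4},\; \tfrac{s}{K^2} \right) \right),
\end{equation*}
with $c>0$ a universal constant. Choosing $s = C_{\mathrm{sg8}} K^2 \bigl( \sqrt{(d+t)/n} + (d+t)/n \bigr)$ makes the right-hand side at most $2\exp(-c'(d+t))$ for a suitable universal $c'$, and a union bound over $\mathcal{N}$ pays at most an entropy factor $9^d = \exp(d \log 9)$. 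Taking $C_{\mathrm{sg8}}$ large enough so that $c'(d+t) - d\log 9 \ge t + \log 2$, and then doubling $s$ to absorb the factor $2$ from the net-approximation step, yields the stated bound with probability at least $1-2e^{-t}$.

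The main obstacle is bookkeeping rather than conceptual: I must verify uniformly across $w$ that $Z_i(w)$ is sub-exponential with parameter bounded by a universal constant times $K^2$ (this follows from Lemma~\ref{lem:subg} applied to $w^\T Y$), and then track the two regimes in Bernstein's inequality so that both the sub-gaussian term $\sqrt{(d+t)/n}$ and the sub-exponential term $(d+t)/n$ appear cleanly on the right-hand side. The epsilon-net discretization and approximation loss for symmetric quadratic forms are standard but need care so that only a universal multiplicative constant is introduced in going from the net to the full sphere.
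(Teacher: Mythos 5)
Your argument is correct and is exactly the standard proof of the cited result: the paper does not prove this lemma but instead cites Vershynin (2010), Theorem 5.39 and Remark 5.40(1), and that theorem's proof proceeds precisely as you describe (a $(1/4)$-net reduction of the operator norm to a symmetric quadratic form, sub-exponentiality of $(w^\T Y)^2 - \E(w^\T Y)^2$ via the paper's Lemmas~\ref{lem:subg}, \ref{lem:subexp-prod} and \ref{lem:subexp-centering}, Bernstein's inequality as in Lemma~\ref{lem:subexp-concentration}, and a union bound over the net that is absorbed into the constant $C_{\mathrm{sg8}}$). Your bookkeeping of the two Bernstein regimes and the net-approximation factor of $2$ is sound, so the proposal would serve as a valid self-contained substitute for the citation.
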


The following result can be obtained from \citeappend{Ver18}, Proposition 2.5.2.

\begin{lem} \label{lem:subexp}
For a random variable $Y$, the following properties are equivalent: there exist universal constants $C_{\mathrm{sx},ij}>0$ such that
the $C_{\mathrm{sx},ij}^{-1} K_j \le K_i \le C_{\mathrm{sx,ij}} K_j$ for all $1\le i \not=j \le 4$,
where $K_i$ is the parameter appearing in property (i).
\begin{itemize}
\item[(i)] $\pr ( |Y| > t) \le 2 \exp  ( - \frac{t}{K_1} )$ for any $t>0$.

\item[(ii)] $\E^{1/p} ( |Y|^p ) \le K_2 p$ for any $p \ge 1$.

\item[(iii)] $\E \exp ( |Y|/ K_3 ) \le 2$.
\end{itemize}
If $\E Y=0$, then properties (i)--(iii) are also equivalent to the following one.
\begin{itemize}
\item[(iv)] $\E \exp( s Y ) \le \exp (\frac{K_4^2 s^2 }{2} )$, for any $s \in \bbR$ satisfying $|s| \le K_4^{-1}$.
\end{itemize}
\end{lem}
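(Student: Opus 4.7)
The plan is to establish the cyclic chain of implications (i) $\Rightarrow$ (ii) $\Rightarrow$ (iii) $\Rightarrow$ (i) for a general random variable $Y$, and then separately show (i) $\Leftrightarrow$ (iv) under the centering assumption $\E Y = 0$. Each implication will be accompanied by a quantitative comparability constant between the respective $K_i$ parameters, which aggregates to yield universal constants $C_{\mathrm{sx},ij}$.

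For (i) $\Rightarrow$ (ii), I would apply tail integration:
\begin{align*}
\E |Y|^p = p\int_0^\infty t^{p-1} \pr(|Y| > t)\,\dif t \le 2p \int_0^\infty t^{p-1} \me^{-t/K_1}\,\dif t = 2 K_1^p \, p\, \Gamma(p),
\end{align*}
and then invoke Stirling's bound $\Gamma(p) \le C^p p^{p-1}$ to obtain $\E^{1/p}|Y|^p \le C' K_1 p$. For (ii) $\Rightarrow$ (iii), I would expand the exponential into its Taylor series $\E \exp(|Y|/K_3) = 1 + \sum_{p \ge 1} \E|Y|^p/(p! K_3^p)$, bound each numerator by $(K_2 p)^p$, and use $p^p/p! \le \me^p$ to see that the sum is dominated by the geometric series $\sum_{p \ge 1} (\me K_2/K_3)^p$, which can be made at most $1$ by choosing $K_3$ as a suitable universal multiple of $K_2$. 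For (iii) $\Rightarrow$ (i), Markov's inequality applied to $\me^{|Y|/K_3}$ immediately yields $\pr(|Y| > t) \le 2\me^{-t/K_3}$, i.e., $K_1 \le K_3$.

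For the equivalence with (iv) under $\E Y = 0$, the direction (iv) $\Rightarrow$ (i) is a standard Chernoff argument: optimizing $\pr(Y > t) \le \me^{-st}\E \me^{sY} \le \exp(K_4^2 s^2/2 - st)$ over admissible $s \in [0, K_4^{-1}]$ yields sub-exponential tails, and an analogous bound holds for $\pr(-Y > t)$. For the forward direction (i) or (ii) $\Rightarrow$ (iv), I would use the series representation $\E \me^{sY} = 1 + s\E Y + \sum_{p\ge 2} s^p \E Y^p /p!$, exploit $\E Y = 0$ to kill the linear term, bound $|\E Y^p| \le (K_2 p)^p$ from (ii), and verify via Stirling that $\sum_{p\ge 2} |s|^p (K_2 p)^p/p! \le K_4^2 s^2 /2$ for $|s| \le K_4^{-1}$ with $K_4$ a constant multiple of $K_2$.

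The main obstacle will be bookkeeping rather than new ideas: the implication (i) $\Rightarrow$ (iv) requires that the linear-term cancellation from centering leaves the remaining series dominated by a clean quadratic in $s$, and establishing this cleanly forces the restriction $|s| \le K_4^{-1}$ (beyond which the series from (ii) diverges). Once the implications are assembled, the universal comparability constants $C_{\mathrm{sx},ij}$ arise by composing the constants produced in each step; since we only need existence, any concrete choice from Stirling suffices and no sharp tracking of constants is needed.
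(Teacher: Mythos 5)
The paper offers no proof of this lemma; it simply cites \citeappend{Ver18}, and indeed the result is Proposition 2.7.1 in that text (the citation to Proposition 2.5.2 in the paper appears to be a typo, since 2.5.2 is the sub-Gaussian analogue). Your argument is the standard Vershynin-style proof and is essentially correct: the cyclic chain (i)$\Rightarrow$(ii)$\Rightarrow$(iii)$\Rightarrow$(i) via tail integration with Stirling, Taylor expansion of the exponential dominated by a geometric series, and Markov's inequality is exactly the textbook route, as is the MGF argument under centering for (iv). One small spot you gloss over: in (iv)$\Rightarrow$(i), the Chernoff optimization over $s \in [0, K_4^{-1}]$ gives a two-regime bound, Gaussian-type for $t \le K_4$ and exponential-type for $t > K_4$, and you still have to massage this (and the sum of the two one-sided tails) into the clean form $2\exp(-t/K_1)$ for all $t > 0$. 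This is routine --- absorb the constant $4$ and the small-$t$ regime by enlarging $K_1$ and using the trivial bound $\pr(|Y|>t) \le 1$ when $t \lesssim K_1$ --- but it deserves a line rather than silence, since this is precisely the bookkeeping that pins down the universal constant $C_{\mathrm{sx},14}$.
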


Say that $Y$ is a sub-exponential random variable with tail parameter $K$ if property (1) holds in Lemma~\ref{lem:subexp} with $K_1=K$.
The following result, from \citeappend{Ver18}, Lemma 2.7.7, provides a link from sub-gaussian to sub-exponential random variables.

\begin{lem} \label{lem:subexp-prod}
Suppose that $Y_1$ and $Y_2$ are sub-gaussian random variables with tail parameters $K_1$ and $K_2$ respectively. Then
$Y_1 Y_2$ is sub-exponential with tail parameter $C_{\mathrm{sx5}} K_1K_2$, where
$C_{\mathrm{sx5}} >0$ is a universal constant.
\end{lem}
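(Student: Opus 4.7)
The plan is to reduce from the tail-probability definition of sub-gaussianity (property (i) in Lemma~\ref{lem:subg}) to the Orlicz-norm characterization (property (iii) of the same lemma), then exploit the elementary inequality $2|ab| \le a^2 + b^2$ to transfer the Orlicz bound to the product, and finally convert back to the tail-probability form via Lemma~\ref{lem:subexp} (the sub-exponential analog).

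Concretely, first I would invoke Lemma~\ref{lem:subg} to obtain universal constants so that $Y_i$ satisfies $\E \exp(Y_i^2/K_i'^2) \le 2$ with $K_i' = C_{\mathrm{sg},13} K_i$ for $i=1,2$. Next, using $|ab| \le (a^2+b^2)/2$ pointwise with $a = Y_1/K_1'$ and $b = Y_2/K_2'$, I get
\begin{align*}
\frac{|Y_1 Y_2|}{K_1' K_2'} \le \frac{1}{2}\left( \frac{Y_1^2}{K_1'^2} + \frac{Y_2^2}{K_2'^2} \right).
\end{align*}
Applying $\exp$ to both sides and then the AM--GM inequality $\exp(\tfrac12(u+v)) = \sqrt{\exp(u)\exp(v)} \le \tfrac12(\exp(u)+\exp(v))$ yields
\begin{align*}
\E \exp\!\left( \frac{|Y_1 Y_2|}{K_1' K_2'} \right) \le \frac{1}{2}\left\{ \E \exp(Y_1^2/K_1'^2) + \E \exp(Y_2^2/K_2'^2) \right\} \le 2,
\end{align*}
which is exactly property (iii) of Lemma~\ref{lem:subexp} for the random variable $Y_1 Y_2$ with parameter $K_3 = K_1' K_2'$.

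Finally, I would apply the equivalence in Lemma~\ref{lem:subexp} once more to pass from the Orlicz form back to the tail-probability form, obtaining property (i) with parameter $K_1 = C_{\mathrm{sx},13} K_3 = C_{\mathrm{sx},13} C_{\mathrm{sg},13}^2 K_1 K_2$. Setting $C_{\mathrm{sx5}} = C_{\mathrm{sx},13} C_{\mathrm{sg},13}^2$ then gives the claim. There is no real obstacle here; the only care needed is bookkeeping of the universal constants introduced by the two equivalences, and ensuring that AM--GM is applied in the direction that preserves the Orlicz-norm bound of $2$ rather than inflating it.
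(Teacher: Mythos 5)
The paper does not prove this lemma; it simply cites Vershynin (2018), Lemma~2.7.7. Your argument is correct and is essentially the proof given there: the only cosmetic difference is that you bound $\E\sqrt{\exp(Y_1^2/K_1'^2)\,\exp(Y_2^2/K_2'^2)}$ via the pointwise AM--GM inequality, whereas the reference applies Cauchy--Schwarz in expectation; both deliver exactly the Orlicz bound $\E \exp(|Y_1 Y_2|/(K_1' K_2')) \le 2$, and your constant bookkeeping through Lemmas~\ref{lem:subg} and~\ref{lem:subexp} is sound.
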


The following result about centering can be obtained from \citeappend{Ver18}, Exercise 2.7.10.

\begin{lem} \label{lem:subexp-centering}
Suppose that $Y$ is sub-exponential random variable with tail parameter $K$. Then $Y - \E Y $ is sub-exponential random variable with tail parameter $C_{\mathrm{sx6}} K$,
where $C_{\mathrm{sx6}}>0$ is a universal constant.
\end{lem}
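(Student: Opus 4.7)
The plan is to reduce to the $L^p$-moment characterization of sub-exponentiality from Lemma~\ref{lem:subexp}(ii), because centering interacts cleanly with $L^p$ norms via the triangle inequality. Specifically, I will show that if $Y$ has bounded moment growth $\E^{1/p}(|Y|^p) \le K_2 p$ for all $p \ge 1$, then so does $Y - \E Y$, up to a universal constant factor, and then translate back to the tail-probability form.

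First, I would invoke Lemma~\ref{lem:subexp}(ii) to obtain a constant $K_2 \le C_{\mathrm{sx},12}\, K$ such that $\E^{1/p}(|Y|^p) \le K_2 p$ for every $p \ge 1$. Taking $p=1$ in particular gives $|\E Y| \le \E |Y| \le K_2$, so the centering constant itself is controlled by the tail parameter.

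Next, I would apply Minkowski's inequality (the $L^p$ triangle inequality) to the centered variable: for every $p \ge 1$,
\begin{align*}
\E^{1/p}\bigl(|Y - \E Y|^p\bigr) \le \E^{1/p}(|Y|^p) + |\E Y| \le K_2 p + K_2 \le 2 K_2 p.
\end{align*}
Thus $Y - \E Y$ satisfies property (ii) of Lemma~\ref{lem:subexp} with parameter $2 K_2$. Invoking the equivalence in Lemma~\ref{lem:subexp} in the reverse direction, $Y - \E Y$ satisfies property (i) with parameter at most $C_{\mathrm{sx},21} \cdot 2 K_2 \le 2 C_{\mathrm{sx},21} C_{\mathrm{sx},12}\, K$, so the claim holds with the universal constant $C_{\mathrm{sx6}} = 2 C_{\mathrm{sx},21} C_{\mathrm{sx},12}$.

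There is essentially no obstacle here; the only point worth being careful about is that one cannot directly add tail-probability bounds, which is why the detour through the moment characterization (ii) is the natural route. An alternative plan, applicable when $\E Y = 0$ is to be verified separately, would be to use the MGF characterization (iv) together with Jensen's inequality $\E \exp\{s(Y - \E Y)\} \le \E \exp(sY) \exp(-s\, \E Y)$, but this requires handling the restricted range of $s$ in (iv), so the $L^p$ route above is cleaner and yields the stated universal constant immediately.
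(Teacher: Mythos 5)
Your proof is correct. The paper does not actually supply a proof of this lemma---it simply cites Exercise 2.7.10 of Vershynin (2018)---and your argument (pass to the $L^p$-moment characterization in Lemma~\ref{lem:subexp}(ii), control $|\E Y|\le \E|Y|\le K_2$ via $p=1$, apply Minkowski to get $\E^{1/p}(|Y-\E Y|^p)\le 2K_2p$, and convert back to the tail form) is precisely the standard way that cited exercise is solved, so it fills the gap cleanly with a valid universal constant.
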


The following result can be obtained from \citeappend{Ver18}, Corollary 2.8.3.

\begin{lem} \label{lem:subexp-concentration}
Suppose that $(Y_1\ldots,Y_n)$ are independent such that $E Y_i=0$ and $Y_i$ is sub-exponential with tail parameter $K$ for $i=1,\ldots,n$. Then
\begin{align*}
\pr \left\{ \left|  \frac{1}{n} \sum_{i=1}^n Y_i \right|  >  C_{\mathrm{sx7}} K \left( \sqrt{\frac{t}{n}} \vee \frac{t}{n} \right) \right\} \le 2 \me^{-t} \quad \text{for any $t>0$}.
\end{align*}
where $C_{\mathrm{sx7}} >0$ is a universal constant.
\end{lem}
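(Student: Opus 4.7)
The plan is to establish this Bernstein-type inequality via the standard Cram\'er--Chernoff method, leveraging the moment generating function (MGF) characterization of sub-exponential variables. The strategy is to (a) upgrade the tail bound to a bound on the cumulant generating function, (b) apply Chernoff's inequality to $S_n = \sum_{i=1}^n Y_i$, and (c) optimize over the dual variable, treating small and large deviations separately.

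First, I would invoke the equivalences in Lemma~\ref{lem:subexp}. Because $\E Y_i = 0$ and $Y_i$ is sub-exponential with tail parameter $K_1 = K$, property (iv) applies with $K_4 \le C K$ for a universal constant $C = C_{\mathrm{sx},14}$. Thus $\E \exp(s Y_i) \le \exp(C^2 K^2 s^2/2)$ for all real $s$ with $|s| \le 1/(CK)$. By independence, $\E \exp(s S_n) \le \exp(n C^2 K^2 s^2/2)$ in the same range of $s$. Verifying property (iv) from property (i) amounts to integrating the tail bound to obtain $\E |Y_i|^p \le (CK)^p p!$ (absorbing constants) and summing the Taylor series of the exponential, which converges to a quadratic MGF bound precisely on $|s|\le 1/(CK)$.

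Next, Chernoff's inequality gives, for any $u > 0$ and any $s \in (0,\, 1/(CK)]$,
$$
\pr(S_n/n \ge u) \le \exp\bigl(-n s u + n C^2 K^2 s^2/2\bigr).
$$
The unconstrained minimizer is $s^* = u/(C^2 K^2)$. When $s^* \le 1/(CK)$, i.e., $u \le K/C$ (small-deviation regime), plugging in $s^*$ yields the sub-gaussian bound $\exp(-n u^2/(2 C^2 K^2))$. When $u > K/C$ (large-deviation regime), $s^*$ is infeasible, so I would take the boundary value $s = 1/(CK)$, obtaining $\exp(-n u/(CK) + n/2) \le \exp(-n u/(2CK))$ since $u/(CK) \ge 1$. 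Combining the two regimes and then applying the same argument to $-Y_i$ with a union bound yields
$$
\pr\bigl(|S_n/n| \ge C_{\mathrm{sx7}} K \bigl(\sqrt{t/n} \vee t/n\bigr)\bigr) \le 2\,\me^{-t}
$$
after inverting the bound in $t$ and choosing $C_{\mathrm{sx7}}$ to absorb $C$ and the factor $2$ lost in the large-deviation regime.

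The main obstacle is careful bookkeeping of universal constants so that the small- and large-deviation bounds splice cleanly into a single $\sqrt{t/n} \vee t/n$ rate, with the transition at $t \asymp n$ corresponding exactly to the transition $u \asymp K$ in the Chernoff optimization. A secondary subtlety lies in propagating the equivalence constants from Lemma~\ref{lem:subexp} so that $C_{\mathrm{sx7}}$ depends only on $C_{\mathrm{sx},14}$ and not on $K$, $n$, or $t$; this is what makes the resulting inequality a truly universal tail bound.
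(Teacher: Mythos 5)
Your argument is the standard Chernoff-method derivation of Bernstein's inequality, which is exactly what the paper invokes: the lemma is stated with a pointer to Corollary 2.8.3 of Vershynin's \emph{High-Dimensional Probability}, whose proof runs precisely along the lines you sketch (MGF bound from Lemma~\ref{lem:subexp}(iv), tensorize by independence, Chernoff, optimize subject to the feasibility constraint on $s$). So there is no methodological divergence to report.

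There is one computational slip worth fixing. From $s^* = u/(C^2K^2)$, the feasibility condition $s^* \le 1/(CK)$ is equivalent to $u \le CK$, not $u \le K/C$. The rest of your argument already tacitly uses the correct threshold: the inequality $\exp(-nu/(CK)+n/2) \le \exp(-nu/(2CK))$ holds because $u/(CK) \ge 1$, i.e.\ because $u \ge CK$, so the logic is internally consistent once the written boundary is repaired. With that fixed, inverting the bound via $u = CK\sqrt{2t/n}$ for $t \le n/2$ and $u = 2CK\,t/n$ for $t > n/2$ gives $\pr(S_n/n > u) \le \mathrm{e}^{-t}$ with $u \le 2CK\bigl(\sqrt{t/n} \vee (t/n)\bigr)$, so one may take $C_{\mathrm{sx7}} = 2C_{\mathrm{sx},14}$; the two-sided statement follows from applying the one-sided bound to $\pm Y_i$ and a union bound, which supplies the factor $2$ in front of $\mathrm{e}^{-t}$.
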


\subsection{Symmetrization and contraction}
The following result can be obtained from the symmetrization inequality (Section 2.3.1 in \citeappend{VW}) and Theorem 7 in \citeappend{MZ03}.

\begin{lem} \label{lem:symm}
Let $X_1, \dots, X_n$ be i.i.d. random vectors and $\mathcal{F}$ be a class of real-valued functions such that $\E f(X_1) < \infty$ for all $f \in \mathcal{F}$. Then we have
$$
\E\sup_{f \in \mathcal{F}}\left\{\frac{1}{n}\sum_{i=1}^n f(X_i) - \E f(X_i) \right\} \le 2\E\sup_{f\in\mathcal{F}}\left\{\frac{1}{n}\sum_{i=1}^n \epsilon_i f(X_i)\right\},
$$
where $\epsilon_1, \dots, \epsilon_n$ are i.i.d. Rademacher random variables that are independent of $X_1, \dots, X_n$.
The above inequality also holds with the left-hand side replaced by
$$\E\sup_{f \in \mathcal{F}}\left\{\E f(X_i) - \frac{1}{n}\sum_{i=1}^n f(X_i) \right\}.$$
\end{lem}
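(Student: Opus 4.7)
The plan is to prove the symmetrization inequality by the classical ghost sample argument, which simultaneously handles both versions of the inequality (with $\frac{1}{n}\sum f(X_i) - \E f(X_i)$ and its negation inside the supremum). The key insight is that for each function $f$, the centered empirical process can be rewritten as a conditional expectation over an independent copy of the data, and then symmetrized via exchangeability.

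First I would introduce a ghost sample $X_1', \dots, X_n'$ that is an independent copy of $X_1, \dots, X_n$, and let $\E'$ denote expectation with respect to the ghost sample alone. Writing $\E f(X_i) = \E' f(X_i')$, we have
\begin{align*}
\frac{1}{n}\sum_{i=1}^n f(X_i) - \E f(X_i) = \E'\left\{\frac{1}{n}\sum_{i=1}^n \bigl(f(X_i) - f(X_i')\bigr)\right\}.
\end{align*}
Taking the supremum over $f \in \mathcal{F}$ and applying Jensen's inequality to pull the convex function $\sup$ outside of the conditional expectation $\E'$ (since $\sup$ is convex), then taking the outer expectation, yields
\begin{align*}
\E\sup_{f \in \mathcal{F}}\left\{\frac{1}{n}\sum_{i=1}^n f(X_i) - \E f(X_i)\right\} \le \E\E'\sup_{f \in \mathcal{F}}\left\{\frac{1}{n}\sum_{i=1}^n \bigl(f(X_i) - f(X_i')\bigr)\right\}.
\end{align*}

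Next I would introduce independent Rademacher variables $\epsilon_1, \dots, \epsilon_n$, independent of all the $X_i, X_i'$. By exchangeability, each pair $(X_i, X_i')$ has the same joint distribution as $(X_i', X_i)$, so $f(X_i) - f(X_i')$ has the same distribution as $\epsilon_i(f(X_i) - f(X_i'))$ jointly across $i$ and uniformly over $f$. Hence the right-hand side above equals
\begin{align*}
\E\sup_{f \in \mathcal{F}}\left\{\frac{1}{n}\sum_{i=1}^n \epsilon_i\bigl(f(X_i) - f(X_i')\bigr)\right\}.
\end{align*}
Splitting via the triangle inequality $\sup_f (a_f + b_f) \le \sup_f a_f + \sup_f b_f$ and using that $(\epsilon_i)$ and $(-\epsilon_i)$ have the same distribution, each of the resulting two terms equals $\E\sup_{f \in \mathcal{F}}\{\frac{1}{n}\sum_{i=1}^n \epsilon_i f(X_i)\}$, giving the factor of $2$.

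The second version of the inequality, with $\E f(X_i) - \frac{1}{n}\sum f(X_i)$ inside the supremum, follows by the same argument after swapping the roles of $X_i$ and $X_i'$ in the first step; alternatively, the Rademacher symmetrization in the third step is invariant under negation, so the same bound applies. I don't expect a real obstacle here since this is a textbook argument; the only point requiring a little care is justifying the interchange in the Jensen step, which is legitimate because $\sup_f$ of an $\E'$-integrable family is bounded above by the $\E'$-integral of the supremum whenever the supremum is measurable (which holds here under the standard integrability assumption $\E f(X_1) < \infty$ for all $f \in \mathcal{F}$, possibly after restricting to a countable subclass or interpreting the supremum as an outer expectation in the sense of \citeappend{VW}).
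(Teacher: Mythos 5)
Your proof is correct and follows essentially the same ghost-sample symmetrization argument as the paper: introduce an independent copy $X_1',\dots,X_n'$, pull the supremum out through the conditional expectation by Jensen, insert Rademacher signs by exchangeability of the difference $f(X_i)-f(X_i')$, and split with the triangle inequality to obtain the factor of $2$. Your additional remark about measurability of the supremum (handled via a countable subclass or outer expectation in the sense of van der Vaart and Wellner) is the standard caveat that the paper implicitly adopts by citing that reference.
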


\begin{proof}
{For completeness, we give a direct proof.} Let $(X_1^\prime, \ldots, X_n^\prime)$ be i.i.d. copies of $(X_1, \ldots, X_n)$. Then we have
\begin{align}
& \quad \E \sup_{f \in \mathcal{F}} \left\{\frac{1}{n}\sum_{i=1}^{n} f(X_i) -  \E f(X)    \right\} \nonumber \\
& = \E_{X_i} \left[\sup_{f \in \mathcal{F}} \E_{X_i^\prime} \left\{\frac{1}{n}\sum_{i=1}^{n} f(X_i) -  f(X_i^\prime)    \right\} \right] \nonumber \\
& \le  \E_{X_i, X_i^\prime} \left[\sup_{f \in \mathcal{F}} \left\{\frac{1}{n}\sum_{i=1}^{n} f(X_i) - \frac{1}{n}\sum_{i=1}^{n} f(X_i^\prime)\right\}\right] \label{eq:symm-1} \\
& = \E_{X_i, X_i^\prime, \epsilon_i} \left[\sup_{f \in \mathcal{F}} \epsilon_i \left\{\frac{1}{n}\sum_{i=1}^{n} f(X_i) - \frac{1}{n}\sum_{i=1}^{n} f(X_i^\prime)\right\}\right] \label{eq:symm-2} \\
& \le \E_{X_i, \epsilon_i} \left\{\sup_{f \in \mathcal{F}}  \frac{1}{n}\sum_{i=1}^{n} \epsilon_i f(X_i) \right\}
 + \E_{X_i^{\prime}, \epsilon_i} \left\{\sup_{f \in \mathcal{F}}  \frac{1}{n}\sum_{i=1}^{n} -\epsilon_i f(X_i^\prime) \right\} \nonumber \\
& = 2\E_{X_i, \epsilon_i} \left\{\sup_{f \in \mathcal{F}}  \frac{1}{n}\sum_{i=1}^{n} \epsilon_i f(X_i) \right\}.  \nonumber
\end{align}
Line (\ref{eq:symm-1}) follows from Jensen's inequality.
Line (\ref{eq:symm-2}) follows because for a pair of i.i.d. random variables, their difference is a symmetric random variable about 0
and its distribution remains the same when multiplied by an independent Rademacher random variable.
A similar argument is also applicable for upper bounding $\E\sup_{f \in \mathcal{F}}\left\{\E f(X_i) - \frac{1}{n}\sum_{i=1}^n f(X_i) \right\}$.
\end{proof}

\begin{lem} \label{lem:contraction}
    Let $\phi$ be a function with a Lipschitz constant $R$. Then in the setting of Lemma~\ref{lem:symm}, we have
    $$
    \E\sup_{f\in\mathcal{F}}\left\{\frac{1}{n}\sum_{i=1}^n \epsilon_i \phi(f(X_i))\right\} \le \E\sup_{f\in\mathcal{F}}\left\{\frac{1}{n}\sum_{i=1}^n \epsilon_i R f(X_i)\right\}.
    $$
\end{lem}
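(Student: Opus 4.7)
The plan is to prove this inequality by the standard ``replace one variable at a time'' induction underlying the Ledoux--Talagrand contraction principle. First, by Fubini I can condition on the data $X_1,\ldots,X_n$ and reduce the claim to a statement about Rademacher averages only. The core step is then a single-variable replacement: fixing $X_1,\ldots,X_n$ and all Rademacher variables except $\epsilon_1$, I aim to show that
\[
\E_{\epsilon_1}\sup_{f\in\mathcal{F}}\bigl\{\epsilon_1\phi(f(X_1))+A(f)\bigr\}
\le \E_{\epsilon_1}\sup_{f\in\mathcal{F}}\bigl\{R\epsilon_1 f(X_1)+A(f)\bigr\},
\]
where $A(f)$ collects all the other summands $\sum_{i\ge 2}\epsilon_i\phi(f(X_i))$. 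Applying this step $n$ times, once for each coordinate $i$ (with $A$ updated to reflect the already-replaced terms), successively replaces every occurrence of $\phi(f(X_i))$ by $Rf(X_i)$ and yields the stated bound after restoring the $1/n$ factor, which is invariant under the supremum.

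To carry out the single-variable step, I will expand the expectation over $\epsilon_1\in\{-1,+1\}$ as a half-sum of two suprema and then combine them into a joint supremum over $(f_1,f_2)$:
\[
\tfrac12\sup_{f_1}\bigl[A(f_1)+\phi(f_1(X_1))\bigr]+\tfrac12\sup_{f_2}\bigl[A(f_2)-\phi(f_2(X_1))\bigr]
=\tfrac12\sup_{f_1,f_2}\bigl[A(f_1)+A(f_2)+\phi(f_1(X_1))-\phi(f_2(X_1))\bigr].
\]
The Lipschitz assumption yields $\phi(f_1(X_1))-\phi(f_2(X_1))\le R|f_1(X_1)-f_2(X_1)|$. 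Because the resulting expression $A(f_1)+A(f_2)+R|f_1(X_1)-f_2(X_1)|$ is symmetric in $(f_1,f_2)$, relabeling $f_1\leftrightarrow f_2$ when necessary lets me replace the absolute value by the signed quantity $R(f_1(X_1)-f_2(X_1))$ without loss. The joint supremum then decouples into two independent suprema, and I recover exactly $\E_{\epsilon_1}\sup_f[A(f)+R\epsilon_1 f(X_1)]$.

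Two side remarks deserve mention. First, no assumption $\phi(0)=0$ is needed: writing $\phi=\tilde\phi+\phi(0)$ produces an extra summand $\phi(0)\sum_i\epsilon_i$ that is free of $f$, factors out of each supremum, and vanishes in expectation since $\E\epsilon_i=0$. Second, the argument is entirely pointwise in the data, so measurability issues are inherited from the setting of Lemma~\ref{lem:symm} and need no separate treatment. The main technical obstacle is precisely the symmetrization/pairing trick in the single-variable step --- in particular, justifying the passage from $R|f_1(X_1)-f_2(X_1)|$ to $R(f_1(X_1)-f_2(X_1))$ via the $(f_1,f_2)$-symmetry of the combined expression. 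Once this step is executed carefully, the iteration and the final rescaling are routine.
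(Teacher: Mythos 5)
The paper states Lemma~\ref{lem:contraction} without its own proof, as a classical result (the contraction principle for Rademacher averages; cf.\ Ledoux--Talagrand and Theorem~7 of Meir and Zhang cited nearby). Your proof is precisely the standard argument for that principle --- the one-coordinate-at-a-time replacement, the half-sum expansion over $\epsilon_1\in\{\pm1\}$ combined into a joint supremum over $(f_1,f_2)$, the Lipschitz bound, the symmetry-in-$(f_1,f_2)$ relabeling that trades $R|f_1(X_1)-f_2(X_1)|$ for the signed quantity, the decoupling of the joint supremum, the reassembly as an expectation over $\epsilon_1$, and iteration over the coordinates --- and it is correct; your side remark that $\phi(0)=0$ is unnecessary for this signed version is also accurate, since the constant contributes only the $f$-free term $\phi(0)\sum_i\epsilon_i$, which has mean zero.
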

\subsection{Entropy and maximal inequality}

For a function class $\mathcal F$ in a metric space endowed with norm $\|\cdot\|$, the covering number
$\mathcal{N} (\delta, \mathcal{F}, \|\cdot\|)$ is defined as the smallest number of balls of radius $\delta$ in the $\|\cdot\|$-metric needed to cover $\mathcal{F}$.
The entropy, $H( \delta, \mathcal{F}, \|\cdot\|)$ is defined as $\log \mathcal{N} (\delta, \mathcal{F}, \|\cdot\|)$.
The following maximal inequality can be obtained from Dudley's inequality for sub-gaussian variables (e.g., \citeappend{Van2000}, Corollary 8.3; \citeappend{BLT18}, Proposition 9.2) including Rademacher variables.

\begin{lem} \label{lem:entropy}
Let $\mathcal{F}$ be a class of functions $f:\mathcal X \to \bbR$, and $(\epsilon_1,\ldots,\epsilon_n)$ be independent Rademacher random variables.
For a fixed set of points $\{x_i\in \mathcal X: i=1,\ldots,n\}$, define the random variable
\begin{align*}
Z_n (\mathcal{F}) = \sup_{f\in\mathcal{F}} \left| \frac{1}{n} \sum_{i=1}^{n}\epsilon_i f(x_i) \right|.
\end{align*}
Suppose that $\sup_{f \in \mathcal{F}} \| f\|_n \le 1$ and
$\int_0^1 H^{1/2} (u, \mathcal{F}, \|\cdot\|_n )\,\dif u \le \Psi_n (\mathcal{F})$,
where $\|\cdot\|_n$ is the empirical $L_2$ norm, $\|f\|_n = \{ n^{-1} \sum_{i=1}^n f^2(x_i) \}^{1/2}$.
Then for any $t>0$,
\begin{align}
\pr \left\{ Z_n (\mathcal{F}) /C_{\mathrm{rad}} >  n^{-1/2} (\Psi_n (\mathcal{F}) + t) \right\} \le 2 \me^{- \frac{t^2}{2}},  \label{eq:entropy-tail}
\end{align}
where $C_{\mathrm{rad}}>0$ is a universal constant.
\end{lem}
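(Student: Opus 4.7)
The plan is to prove this as a standard Dudley-chaining inequality for the Rademacher process, conditionally on the fixed points $\{x_i\}_{i=1}^n$. Throughout, let $G_f = n^{-1/2} \sum_{i=1}^n \epsilon_i f(x_i)$, so that $\sqrt{n}\, Z_n(\mathcal{F}) = \sup_{f \in \mathcal{F}} |G_f|$. The starting observation is that for fixed $x_i$'s, the increments are sub-gaussian with respect to the empirical $L_2$ metric: by Hoeffding's inequality applied to the Rademacher sum, $G_f - G_g = n^{-1/2} \sum_i \epsilon_i (f - g)(x_i)$ satisfies $\Pr(|G_f - G_g| > s) \le 2 \exp(-s^2 / (2 \|f-g\|_n^2))$ for every $s > 0$.

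First I would set up the chaining. For each integer $j \ge 0$, choose a minimal $2^{-j}$-cover of $\mathcal{F}$ in the $\|\cdot\|_n$ metric, with cardinality $N_j = \mathcal{N}(2^{-j}, \mathcal{F}, \|\cdot\|_n)$, and let $\pi_j f$ denote a nearest element of this cover to $f$. Since $\sup_{f \in \mathcal{F}}\|f\|_n \le 1$, the level $j=0$ cover can be taken as $\{0\}$, giving $\pi_0 f \equiv 0$ and hence $G_{\pi_0 f} = 0$. For each $f \in \mathcal{F}$, write the telescoping decomposition $G_f = \sum_{j \ge 1}(G_{\pi_j f} - G_{\pi_{j-1} f})$, where convergence holds in the appropriate sense because $\|\pi_j f - f\|_n \le 2^{-j} \to 0$ and the process is sub-gaussian. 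The triangle inequality yields $\|\pi_j f - \pi_{j-1} f\|_n \le 3 \cdot 2^{-j}$, and the set of possible differences at level $j$ has cardinality at most $N_j N_{j-1} \le N_j^2$.

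Next, I would apply a sub-gaussian maximal inequality at each chaining level. For fixed $t > 0$, choose a level-dependent threshold, e.g., $s_j = 3 \cdot 2^{-j}(\sqrt{2 \log(N_j^2)} + u_j)$ with $u_j = \sqrt{2}(\sqrt{j} + \sqrt{t})$. Then by the sub-gaussian tail and a union bound over the $N_j^2$ pairs,
\begin{align*}
\Pr\bigl( \max_{f \in \mathcal F} |G_{\pi_j f} - G_{\pi_{j-1} f}| > s_j \bigr)
\le 2 N_j^2 \exp\bigl(-\tfrac{1}{2}(\sqrt{2\log N_j^2} + u_j)^2\bigr) \le 2 e^{-j} e^{-t^2/2} \cdot c_j,
\end{align*}
for constants $c_j$ summable in $j$. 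Summing the failure probabilities across $j \ge 1$ gives total failure probability bounded by $2 e^{-t^2/2}$ up to an absolute multiplicative constant, which can be absorbed into $C_{\mathrm{rad}}$ by adjusting the threshold.

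On the complementary event, $\sup_f |G_f| \le \sum_{j \ge 1} s_j$. The dominant deterministic term is $\sum_{j \ge 1} 3 \cdot 2^{-j} \sqrt{2 \log(N_j^2)}$, which by a standard comparison (the integrand $H^{1/2}(u, \mathcal{F}, \|\cdot\|_n)$ is non-increasing in $u$, so $2^{-j} H^{1/2}(2^{-j}) \lesssim \int_{2^{-(j+1)}}^{2^{-j}} H^{1/2}(u)\, du$) is bounded by a universal constant times $\int_0^1 H^{1/2}(u, \mathcal{F}, \|\cdot\|_n)\, du \le \Psi_n(\mathcal{F})$. The remaining terms $\sum_j 3 \cdot 2^{-j} u_j$ contribute at most a universal constant times $(1 + t)$, which can be folded into $\Psi_n(\mathcal{F}) + t$ (using that $\Psi_n(\mathcal{F}) \ge $ a constant, or by enlarging $C_{\mathrm{rad}}$). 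Combining, $\sqrt{n} Z_n(\mathcal F) \le C_{\mathrm{rad}}(\Psi_n(\mathcal F) + t)$ on this event, yielding (\ref{eq:entropy-tail}).

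The main obstacle is purely bookkeeping: balancing the choice of $s_j$ so that the per-level union bound over $N_j^2$ pairs integrates into $\Psi_n$ while the tail contributions from $u_j$ combine cleanly into an additive $t$, without introducing an extra $\sqrt{\log(1/\delta)}$ factor that would spoil the stated form. This is handled by the standard ``$\sqrt{j}$ trick'' in the threshold $u_j$, which ensures $\sum_j e^{-j}$ converges and $\sum_j 2^{-j} \sqrt{j}$ is an absolute constant.
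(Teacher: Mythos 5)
Your overall strategy is the standard one and matches the intent of the paper, which does not prove this lemma itself but cites it as a known form of Dudley's inequality (the note after the statement references van de Geer and Bellec--Lecu\'e--Tsybakov). The scaffolding of your chaining argument is sound: Hoeffding gives the sub-gaussian increment bound in the $\|\cdot\|_n$ metric; the assumption $\sup_f\|f\|_n\le 1$ lets you anchor the chain at $\pi_0 f \equiv 0$; the triangle inequality yields $\|\pi_jf-\pi_{j-1}f\|_n\le 3\cdot 2^{-j}$ with at most $N_j^2$ link pairs; and the dyadic-sum-to-integral comparison is the right way to recover $\Psi_n(\mathcal{F})$.

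The one concrete bug is the specific threshold $u_j=\sqrt{2}(\sqrt{j}+\sqrt{t})$. With that choice $u_j^2/2=j+2\sqrt{jt}+t$, so the per-level failure probability after the union bound is at most $2e^{-j}e^{-t-2\sqrt{jt}}$. For $t\le 2$ this is indeed $\le 2e^{-j}e^{-t^2/2}$, but for $t>2$ one has $e^{-t}>e^{-t^2/2}$, and the claimed bound $2e^{-j}e^{-t^2/2}c_j$ with $c_j$ summable and independent of $t$ fails. A choice that works is, for example, $u_j=\sqrt{j}+t$: then $u_j^2/2\ge j/2+t^2/2$, so the per-level failure is $\le 2e^{-j/2}e^{-t^2/2}$ and the total failure probability is $\le 2e^{-t^2/2}\sum_{j\ge 1}e^{-j/2}$, a constant multiple of $2e^{-t^2/2}$; meanwhile the deterministic cost $\sum_j 3\cdot 2^{-j}u_j\le 3\sum_j 2^{-j}\sqrt{j}+3t$ is an absolute constant plus an absolute-constant multiple of $t$, as you need. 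The remaining multiplicative slack in the probability can be moved into $C_{\mathrm{rad}}$ by the reparametrization $t\mapsto\sqrt{t^2+2\log C}$, exactly as you indicate. So the approach is correct; only the explicit $u_j$ needs to be replaced.
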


The following result, taken from \citeappend{VW}, Theorem 2.6.7, provides an upper bound on the entropy of a function class in terms of the VC index.
For any $r\ge 1$ and probability measure $Q$, the $L_r(Q)$ norm is defined as
$ \| f \|_{r,Q} = (\int |f|^r \,\dif Q )^{1/r}$.

\begin{lem} \label{lem:VC-dim}
Let $\mathcal{F}$ be a VC class of functions such that $\sup_{f \in \mathcal{F}} |f| \le 1$.
Then for any $r \ge 1$ and probability measure $Q$, we have
\begin{align*}
\mathcal{N} ( u , \mathcal{F}, \|\cdot\|_{r,Q} ) \le C_{\mathrm{vc}} V (\mathcal{F}) (16\me)^{V(\mathcal{F})} u ^{ - r ( V(\mathcal{F})-1 ) } \quad \text{for any $u \in (0,1)$},
\end{align*}
where $V (\mathcal{F}) $ denotes the VC index of $\mathcal{F}$ and $C_{\mathrm{vc}} \ge 1$ is a universal constant.
\end{lem}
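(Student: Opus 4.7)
My plan is to reduce the $L_r(Q)$ covering bound for the function class $\mathcal{F}$ to an $L_1$ covering bound for an associated set class (the subgraphs of $\mathcal{F}$), and then invoke the classical Pollard/Dudley-style combinatorial bound for VC classes of sets. First, since $|f| \le 1$ for every $f \in \mathcal{F}$, we have the pointwise estimate $|f-g|^r \le 2^{r-1}|f-g|$, so $\|f-g\|_{r,Q}^r \le 2^{r-1} \|f-g\|_{1,Q}$. Consequently any $L_1(Q)$-cover of radius $u^r / 2^{r-1}$ is an $L_r(Q)$-cover of radius $u$, and it suffices to bound $\mathcal{N}(\eta, \mathcal{F}, \|\cdot\|_{1,Q})$ for $\eta \asymp u^r$.

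Next, I would pass from functions to sets via the subgraph representation. For $f \in \mathcal{F}$ let $G_f = \{(x,t) \in \mathcal{X} \times [-1,1] : t < f(x)\}$, and let $\tilde Q = Q \otimes \lambda$, where $\lambda$ is the uniform distribution on $[-1,1]$. A direct computation using Fubini gives $\|f-g\|_{1,Q} = 2\, \tilde Q(G_f \triangle G_g)$, so an $L_1(\tilde Q)$-cover of the subgraph class $G_\mathcal{F}$ of radius $\eta/2$ yields an $L_1(Q)$-cover of $\mathcal{F}$ of radius $\eta$. By definition, $V(G_\mathcal{F}) = V(\mathcal{F})$, so the problem has been reduced to bounding the $L_1(\mu)$ covering number of a VC class of sets of index $V = V(\mathcal{F})$, uniformly in the probability measure $\mu = \tilde Q$.

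The core step is the classical set-covering bound: for any VC class $\mathcal{C}$ of index $V$ and any probability measure $\mu$,
\begin{equation*}
\mathcal{N}(\epsilon, \mathcal{C}, L_1(\mu)) \;\le\; C_0\, V\, (16e)^{V}\, \epsilon^{-(V-1)}.
\end{equation*}
I would prove this by the standard probabilistic extraction argument: draw $X_1,\ldots,X_n \sim \mu$ i.i.d.\ with $n \asymp V/\epsilon$, apply Sauer's lemma to bound $|\{C \cap \{X_1,\ldots,X_n\} : C \in \mathcal{C}\}|$ by $\sum_{j=0}^{V-1}\binom{n}{j} \le (en/V)^V$, and then argue that with positive probability every pair of sets at $L_1(\mu)$-distance at least $\epsilon$ is separated on the random sample, so the number of equivalence classes on $\{X_1,\ldots,X_n\}$ controls the $\epsilon$-packing number, and hence the $\epsilon$-covering number, of $\mathcal{C}$. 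Assembling the three reductions and plugging $\epsilon \asymp u^r$ into $\epsilon^{-(V-1)}$ yields the stated dependence $u^{-r(V-1)}$ with the universal constant $(16e)^V$ absorbed as claimed.

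The main obstacle is the combinatorial/probabilistic step producing the set-covering bound with the explicit constant $(16e)^V$: Sauer's lemma alone gives only the shattering count, and one must carefully calibrate the sample size $n$ and the union bound over pairs so that the probabilistic argument yields a cover rather than just a packing, while tracking constants tightly enough to produce $(16e)^V$ (as opposed to a looser $C^V$). The preceding two reductions (boundedness and subgraphs) are essentially bookkeeping; the VC-to-covering number inequality is where all the real work lies and is the place where an appeal to \citeappend{VW}, Theorem 2.6.7 is most naturally made.
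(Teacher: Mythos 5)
The paper does not prove this lemma; it is cited directly from van der Vaart and Wellner (1996), Theorem 2.6.7, so there is no in-paper argument to compare against. Judged on its own terms, your architecture is the standard one and essentially the one the cited reference uses: reduce the $L_r(Q)$ covering problem to $L_1$ by boundedness, pass from functions to their subgraphs, and invoke a Pollard/Haussler-type $L_1$ covering bound for VC set classes via the random-sample-plus-Sauer extraction. Each of these steps is sound, and your worry about needing a ``cover rather than just a packing'' is a non-issue: a maximal $\epsilon$-packing is an $\epsilon$-cover, so the packing bound transfers immediately.

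Where your sketch has a genuine gap is not the set-covering step you flag but the very first reduction. The inequality $|f-g|^r \le 2^{r-1}|f-g|$ forces the $L_1$ radius to be $\eta = u^r/2^{r-1}$, and the subgraph step costs a further factor of $2$; substituting into $\epsilon^{-(V-1)}$ then produces an extraneous multiplicative factor of order $2^{r(V-1)}$, which grows without bound in $r$. As written, your bound reads $C V (ce)^{V} 2^{r(V-1)} u^{-r(V-1)}$ for some universal $c$, and that extra $2^{r(V-1)}$ cannot be absorbed into the advertised $r$-independent constant $C_{\mathrm{vc}}(16\me)^{V}$. Eliminating the $r$-dependence requires more care than a naive $L_r\to L_1$ pass --- for example, re-weighting $Q$ by $F^{r-1}$ before applying the $L_1$ envelope bound, with careful bookkeeping of the envelope normalization --- and even then the constant tracking is delicate. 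For the purposes of this paper this is harmless, since Corollary~\ref{cor:entropy-sg2} invokes the lemma only with $r=2$, where your approach comfortably lands inside $(16\me)^V$; but as a proof of the lemma exactly as stated for all $r\ge1$, the argument does not yet close.
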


We deduce the following implications of the preceding results, which can be used in conjunction with Lemmas~\ref{lem:ramp-VC}--\ref{lem:intercept-VC}.

\begin{manualcor}{S1} \label{cor:entropy-sg}
In the setting of Lemma~\ref{lem:entropy}, the random variable $Z_n (\mathcal{F})$ is sub-gaussian with tail parameter $C_{\mathrm{rad}} n^{-1/2} (\Psi_n(\mathcal{F})+1)$.
\end{manualcor}

\begin{proof}
By (\ref{eq:entropy-tail}), the results follows from an application of Lemma~\ref{lem:subg-tail}.
\end{proof}

\begin{manualcor}{S2} \label{cor:entropy-sg2}
In the setting of Lemma~\ref{lem:entropy}, the following results hold.

(i) If $\sup_{f \in \mathcal{F}} |f| \le 1$, then
$Z_n (\mathcal{F})$ is sub-gaussian with tail parameter $C_{\mathrm{rad2}} \sqrt{V(\mathcal{F})/n} $,
where $C_{\mathrm{rad2}} = C_{\mathrm{rad}} \{ 1+\sqrt{ 2 +\log( 16C_{\mathrm{vc}}) } + \int_0^1 \sqrt{2 \log (u^{-1}) }\, \dif u \} $.

(ii) Consider another two classes $\mathcal{G}$ and $\mathcal{H}$ of functions from $\mathcal{X}$ to $\bbR$ in addition to $\mathcal{F}$, and
let $\mathcal{F}_{\mathrm{com}} =\{fg + h: f \in \mathcal{F}, g \in \mathcal{G}, h \in \mathcal{H}\}$.
If $\sup_{f \in \mathcal{F} \cup \mathcal{G} \cup \mathcal{H} } |f| \le 1$, then
$Z_n (\mathcal{F}_{\mathrm{com}}) $ is sub-gaussian with tail parameter $C_{\mathrm{rad3}} \sqrt{\{V(\mathcal{F}) + V(\mathcal{G}) + V(\mathcal{H})\}/n} $,
where $C_{\mathrm{rad3}} = C_{\mathrm{rad}} \{ 1+ \sqrt{ 2 +\log( 16C_{\mathrm{vc}}) } + \int_0^1 \sqrt{2 \log (3u^{-1}) }\, \dif u \} $.
\end{manualcor}

\begin{proof}
(i) Take $r=2$ and $Q$ to be the empirical distribution on $\{x_1,\ldots,x_n\}$. By Lemma~\ref{lem:VC-dim},
the entropy integral $\int_0^1 H^{1/2} (u, \mathcal{F}, \|\cdot\|_n )\,\dif u$ can be upper bounded by
\begin{align*}
& \quad \int_0^1 \log^{1/2} \left\{ C_{\mathrm{vc}} V (\mathcal{F}) (16\me)^{V(\mathcal{F})} u ^{ - 2 ( V(\mathcal{F})-1 ) } \right\} \,\dif u \\
& =  \int_0^1  \left\{ \log( C_{\mathrm{vc}}) + \log V(\mathcal{F}) + V (\mathcal{F}) \log (16 \me) + 2(V(\mathcal{F})-1) \log (u^{-1}) \right\}^{1/2} \,\dif u \\
& \le \sqrt{V(\mathcal{F})} \int_0^1 \left\{ \sqrt{ 2 +\log( 16C_{\mathrm{vc}}) } + \sqrt{2 \log (u^{-1}) } \right\} \, \dif u ,
\end{align*}
using $\log V(\mathcal{F}) \le V (\mathcal{F})$ for $V (\mathcal{F}) \ge 1$ and $\sqrt{u_1+u_2} \le \sqrt{u_1}+\sqrt{u_2}$.
Taking $\Psi_n(\mathcal{F})$ in (i) to be the right-hand side of the preceding display yields the desired result by Corollary \ref{cor:entropy-sg}.

(ii) First, we show that the covering number $\mathcal{N}(u, \mathcal{F}_{\mathrm{com}}, \|\cdot\|_n)$
is upper bounded by the product $\mathcal{N}(u/3, \mathcal{F}, \|\cdot\|_n)\mathcal{N}(u/3, \mathcal{G}, \|\cdot\|_n)\mathcal{N}(u/3, \mathcal{H}, \|\cdot\|_n)$.
Denote as $\hat{\mathcal{F}}$ a $(u/3)$-net of $\mathcal{F}$ with the cardinality $\mathcal{N}(u/3, \mathcal{F}, \|\cdot\|_n)$.
Similarly, denote as $\hat{\mathcal{G}}$ and $\hat{\mathcal{H}}$ those of $\mathcal{G}$, $\mathcal{H}$ with the cardinality
$\mathcal{N}(u/3, \mathcal{G}, \|\cdot\|_n)$ and $\mathcal{N}(u/3, \mathcal{H}, \|\cdot\|_n)$ respectively.
For any $f \in \mathcal{F}$, $g \in \mathcal{G}$ and $h \in \mathcal{H}$, there exist $\hat{f} \in \hat{\mathcal{F}}$, $\hat{g} \in \hat{\mathcal{G}}$ and $\hat{h} \in \hat{\mathcal{H}}$ such that
\begin{align*}
&\|\hat{f} - f \|_n \le u/3,\quad
\|\hat{g} - g \|_n \le u/3,\quad
\|\hat{h} - h \|_n \le u/3.
\end{align*}
By the triangle inequality and $\sup_{f \in \mathcal{F} \cup \mathcal{G}} |f| \le 1$, we have
\begin{align*}
& \quad \|\hat{f}\hat{g} + \hat{h} - fg - h\|_n  \\
&\le \|(\hat{f} - f)\hat{g}\|_n +  \|f(\hat{g} - g)\|_n + \|\hat{h} - h\|_n\\
&\le \|\hat{f} - f\|_n + \|\hat{g} - g\|_n + \|\hat{h} - h\|_n \le u.
\end{align*}
This shows that $\hat{\mathcal{F}}_{\mathrm{com}}= \{\hat{f}\hat{g} + \hat{h}: \hat{f} \in \hat{\mathcal{F}}, \hat{g} \in \hat{\mathcal{G}}, \hat{h} \in \hat{\mathcal{H}}\}$
is a $u$-net of $\mathcal{F}_{\mathrm{com}}$ with respect to $\|\cdot\|_n$.
Hence the covering number
$\mathcal{N}(u, \mathcal{F}_{\mathrm{com}}, \|\cdot\|_n)$
is upper bounded by the cardinality of $\hat{\mathcal{F}}_{\mathrm{com}}$,
that is, $\mathcal{N}(u/3, \mathcal{F}, \|\cdot\|_n)\mathcal{N}(u/3, \mathcal{G}, \|\cdot\|_n)\mathcal{N}(u/3, \mathcal{H}, \|\cdot\|_n)$.

Next, by Lemma~\ref{lem:VC-dim} applied to $\mathcal{F}$, $\mathcal{G}$, and $\mathcal{H}$ and similar calulation as in (i),
the entropy integral $\int_0^1 H^{1/2} (u, \mathcal{F}_{\mathrm{com}}, \|\cdot\|_n )\,\dif u$ can be upper bounded by
\begin{align*}
& \quad \int_0^1 \left\{ \log  \mathcal{N}(u/3, \mathcal{F}, \|\cdot\|_n) + \log \mathcal{N}(u/3, \mathcal{G}, \|\cdot\|_n)+ \log \mathcal{N}(u/3, \mathcal{H}, \|\cdot\|_n) \right\}^{1/2} \,\dif u \\
& \le \sqrt{V(\mathcal{F}) + V(\mathcal{G}) + V(\mathcal{H})} \int_0^1 \left\{ \sqrt{ 2 +\log( 16C_{\mathrm{vc}}) } +  \sqrt{2 \log (3 u^{-1}) } \right\} \, \dif u .
\end{align*}
The desired result follows by applying Corollary \ref{cor:entropy-sg} to the class $\mathcal{F}_{\mathrm{com}}$, with
$\Psi_n(\mathcal{F}_{\mathrm{com}})$ taken to be the right-hand side of the preceding display.
\end{proof}
\bibliographystyleappend{imsart-nameyear} 
\bibliographyappend{final}       

\end{document}